\numberwithin{equation}{section}
\newcommand{\Vol}{\text{Vol}}
\newcommand{\inj}{\text{inj}}
\newcommand{\Sing}{\text{Sing}}
\newcommand{\dN}{\mathds{N}}
\newcommand{\dQ}{\mathds{Q}}
\newcommand{\dR}{\mathds{R}}
\newcommand{\er}{\mathfrak{r}}
\newcommand{\norm}[1]{\left\|#1\right\|}
\newcommand{\ps}[2]{\left\langle#1,#2\right\rangle}
\newcommand{\ton}[1]{\left(#1\right)}
\newcommand{\qua}[1]{\left[#1\right]}
\newcommand{\cur}[1]{\left\{#1\right\}}
\newcommand{\abs}[1]{\left|#1\right|}
\newcommand{\B}[2]{B_{#1}\ton{#2}}
\newcommand{\supp}[1]{\operatorname{supp}\ton{#1}}
\newcommand{\N}{\mathbb{N}}
\newcommand{\R}{\mathbb{R}}
\renewcommand{\paragraph}[1]{\ \newline \ \textbf{#1\ }}
\newcommand{\hol}{H\"older }
\newtheorem{theorem}{Theorem}[section]
\newtheorem{proposition}[theorem]{Proposition}
\newtheorem{lemma}[theorem]{Lemma}
\theoremstyle{definition}
\newtheorem{definition}[theorem]{Definition}
\theoremstyle{remark}
\newtheorem{remark}{Remark}[section]
\theoremstyle{remark}
\newtheorem{example}{Example}[section]
\theoremstyle{remark}
\theoremstyle{remark}
\theoremstyle{remark}
\begin{document}

\title[The Singular Structure and Regularity of Stationary Varifolds]{The Singular Structure and Regularity \\ of Stationary Varifolds}

\author{Aaron Naber and Daniele Valtorta}\thanks{The first author has been supported by NSF grant DMS-1406259, the second author has been supported by SNSF grants 149539 and PZ00P2\_168006}

\begin{abstract}
If one considers an integral varifold $I^m\subseteq M$ with bounded mean curvature, and if $S^k(I)\equiv\{x\in M: \text{ no tangent cone at $x$ is }k+1\text{-symmetric}\}$ is the standard stratification of the singular set, then it is well known that $\dim S^k\leq k$.  In complete generality nothing else is known about the singular sets $S^k(I)$.  In this paper we prove for a general integral varifold with bounded mean curvature, in particular a stationary varifold, that every stratum $S^k(I)$ is $k$-rectifiable.  In fact, we prove for $k$-a.e. point $x\in S^k$ that there exists a unique $k$-plane $V^k$ such that {\it every} tangent cone at $x$ is of the form $V\times C$ for some cone $C$. 

In the case of minimizing hypersurfaces $I^{n-1}\subseteq M^n$ we can go further.  Indeed, we can show that the singular set $S(I)$, which is known to satisfy $\dim S(I)\leq n-8$, is in fact $n-8$ rectifiable with uniformly {\it finite} $n-8$ measure.  An effective version of this allows us to prove that the second fundamental form $A$ has {\it apriori} estimates in $L^7_{weak}$ on $I$, an estimate which is sharp as $|A|$ is not in $L^7$ for the Simons cone.  In fact, we prove the much stronger estimate that the regularity scale $r_I$ has $L^7_{weak}$-estimates.

The above results are in fact just applications of a new class of estimates we prove on the {\it quantitative} stratifications $S^k_{\epsilon,r}$ and $S^k_{\epsilon}\equiv S^k_{\epsilon,0}$.  Roughly, $x\in S^k_{\epsilon}\subseteq I$ if no ball $B_r(x)$ is $\epsilon$-close to being $k+1$-symmetric.  We show that $S^k_\epsilon$ is $k$-rectifiable and satisfies the Minkowski estimate $\Vol(B_r\,S_\epsilon^k)\leq C_\epsilon r^{n-k}$.  The proof requires a new $L^2$-subspace approximation theorem for integral varifolds with bounded mean curvature, and a $W^{1,p}$-Reifenberg type theorem proved by the authors in \cite{NaVa+}.
\end{abstract}

\date{\today}
\maketitle

\tableofcontents

\section{Introduction}

In this paper we will study integral $m$-varifolds $I^m$ with bounded mean curvature, and in particular stationary varifolds and area minimizing currents, see Section \ref{s:Prelim} for an introduction to the basics.  In the case of a varifold $I$ with bounded mean curvature we can consider the stratification of $I$ given by
\begin{align}\label{e:stratification}
S^0(I)\subseteq\cdots\subseteq S^k(I)\subseteq \cdots S^{m}(I)\subseteq I\, , 
\end{align}
where the singular sets are defined as
\begin{align}\label{e:sing_set}
S^k(I)\equiv\{x\in M: \text{ no tangent cone at $x$ is }k+1\text{-symmetric}\}\, ,
\end{align}
see Definition \ref{d:stratification} for a precise definition and more detailed discussion.  An important result of Almgren (see \cite[sections 2.25, 2.26]{almgren_big}) tells us that in the context of area minimizing currents we have the Hausdorff dimension estimate
\begin{align}\label{e:sing_set_haus_est}
\dim S^k(I)\leq k\, .
\end{align}
With some work, this estimate can be carried over to the context of integral varifolds. Unfortunately, essentially nothing else is understood about the structure of the singular sets in any generality.  In the case where $I$ is a codimension $1$ area minimizing current more is understood, at least for the top stratum of the singular set.  
Namely, if $I^{n-1}$ is an area minimizing hypersurface it has been shown in \cite{Simon_mult_1_min} that $S(I)=S^{n-8}(I)$ is rectifiable. This result relies on Simons' work \cite{Simons_MinVar}, where the author proves that all tangent cones to hypersurfaces in $\R^n$ are hyperplanes if $n\leq 7$.

Moreover, for a minimizing hypersurface, i.e., a minimizing current of codimension $1$, one can use the $\epsilon$-regularity theorem of \cite{Fed},\cite{ChNa2} (see theorem \ref{t:eps_reg}) to show that the whole singular set $S(I)$ coincides with $S^{n-8}(I)$. As it is well-known, this is not the case for minimizing currents of codimension higher than one, as singular points in this context may also arise as branching points on the top stratum $S^{m}(I)\setminus S^{m-1}(I)$. \\

The first goal of this paper is to give improved regularity results for stratification and associated quantitative stratification for varifolds under only a bounded mean curvature assumption.  Indeed, we will show for such a varifold that the strata $S^k(I)$ are $k$-rectifiable for every $k$.  In fact, we will show that for $k$-a.e. $x\in S^k(I)$ there exists a unique $k$-dimensional subspace $V\subseteq T_xM$ such that {\it every} tangent cone at $x$ is of the form $V^k\times C$ for some cone $C$.  Note that we are not claiming that the cone factor $C$ is necessarily unique, just that the Euclidean factor $V^k$ of the cone is.  It is a good question as to whether tangent cones need to be unique in general under only the assumption of bounded mean curvature.\\  

For a varifold $I^m=I^{n-1}$ which is an area minimizing hypersurface these results can be improved.  To begin with, we have that the top stratum of the singular set $S(I)=S^{n-8}(I)$ is $n-8$ rectifiable, and that there are {\it a priori} bounds on its $n-8$ Hausdorff measure.  That is,  if the mass $\mu_I(B_2)\leq \Lambda$ is bounded, then we have the estimate $H^{n-8}(S(I)\cap B_1)\leq C(\Lambda,B_2)$.  In fact, we have the stronger Minkowski estimate
\begin{align}
\Vol\ton{B_r\Big(S(I)\Big)\cap B_1},\, r\mu_I\Big(B_r\Big(S(I)\Big)\cap B_1\Big)\leq C r^8\, ,
\end{align}
where $B_r\Big(S(I)\Big)$ is the tube of radius $r$ around the singular set.  Indeed, we can prove much more effective versions of these estimates.  That is, in theorem \ref{t:main_min_weak_L7} we show that the second fundamental form $A$ of $I$, and in fact the regularity scale $r_I$, have {\it a priori} bounds in $L^7_{weak}$.  More precisely, we have that
\begin{align}
\mu_I\ton{\cur{|A|>r^{-1}}\cap B_1}\leq \mu_I\ton{B_r\cur{|A|>r^{-1}}\cap B_1} \leq C(\Lambda,B_2) r^7\, .
\end{align}

Let us observe that these estimates are sharp, in that the Simons' cone satisfies $|A|\not\in L^7_{loc}(I)$.  Let us also point out that this sharpens estimates of \cite{ChNa2}, where it was proven that for minimizing hypersurfaces that $|A|\in L^p$ for all $p<7$.  We refer the reader to Section \ref{ss:minimizing_results} for the precise and most general statements.\\

Now the techniques of this paper all center around the notion of the {\it quantitative} stratification.  In fact, it is for the quantitative stratification that the most important results of the paper hold, everything else can be seen to be corollaries of these statements.  The quantitative stratification was first introduced in \cite{ChNa1}, and later used in \cite{ChNa2} with the goal of giving effective and $L^p$ estimates on stationary and minimizing currents.  It has since been used in \cite{ChNaHa1}, \cite{ChNaHa2}, \cite{ChNaVa}, \cite{FoMaSpa}, \cite{brelamm} to prove similar results in the areas of mean curvature flow, critical sets of elliptic equations, harmonic map flow.  More recently, in \cite{NaVa+} the authors have used ideas similar to those in this paper to prove structural theorems for the singular sets of stationary harmonic maps.

Before describing the results in this paper on the quantitative stratification, let us give more precise definitions of everything.  To begin with, to describe the stratification and quantitative stratification we need to discuss the notion of symmetry associated to an integral varifold with bounded mean curvature.  Specifically:

\begin{definition}
For $y\in \R^n$ and $\lambda>0$, let $\eta_{y,\lambda}:\R^n\to \R^n$ and $\tau_y:\R^n\to \R^n$ be the functions
\begin{gather}
 \eta_{y,\lambda}(x)= y+\frac{x-y}{\lambda}\, , \quad \tau_{y}(x)=x+y\, ,
\end{gather}
and let $\eta_{y,\lambda \ \#}$ and $\tau_{y \#}$ be their pushforwards. We define the following:
\begin{enumerate}
\item An integral varifold $I^m\subseteq \dR^n$ is called $k$-symmetric if $\eta_{0,\lambda\ \#}I=I$ $\forall$ $\lambda>0$, and if there exists a $k$-plane $V^k\subseteq \dR^n$ such that for each $y\in V^k$ we have that $\tau_{y\#}I=I$.
\item Given an integral varifold $I^m\subseteq M$ and $\epsilon>0$, we say a ball $B_r(x)\subseteq M$ with $r<$inj$(x)$ and $x\in I^m$ is $(k,\epsilon)$-symmetric if there exists a $k$-symmetric integral varifold $\tilde I^m\subseteq T_xM$ such that \newline $d(\eta_{0,r \ \#}I^m\cap B_1(0),\tilde I^m\cap B_1(0))<\epsilon r$, where we have used the exponential map to identify $I$ as a varifold on $T_xM$.
\end{enumerate}
\end{definition}
\begin{remark}
The distance $d$ may be taken to be the weak distance induced by the Frechet structure of varifold convergence. In the case of minimizing currents, it is equivalent to take $d$ to be the flat distance.
\end{remark}

Thus, an integral varifold $I^m$ is $k$-symmetric if $I=V^k\times C$ for some cone $C$.  A varifold is $(k,\epsilon)$-symmetric on a ball $B_r(x)$ if it is weakly close to a $k$-symmetric varifold on this ball.\\

With the notion of symmetry in hand, we can define precisely the quantitative stratification associated to a solution.  The idea is to group points together based on the amount of symmetry that balls centered at those points contain.  In fact, there are several variants which will play a role for us.  Let us introduce them all and briefly discuss them:

\begin{definition}\label{d:stratification}
For an integral varifold $I^m$ with bounded mean curvature and finite density we make the following definitions:
\begin{enumerate}
\item For $\epsilon,r>0$ we define the $k^{th}$ $(\epsilon,r)$-stratification $S^k_{\epsilon,r}(I)$ by
\begin{align}
S^k_{\epsilon,r}(I)\equiv\{x\in I\cap B_1:\text{ for no }r\leq s<1\text{ is $B_{s}(x)$ a }(k+1,\epsilon)\text{-symmetric ball}\}.
\end{align}
\item For $\epsilon>0$ we define the $k^{th}$ $\epsilon$-stratification $S^k_{\epsilon}(I)$ by
\begin{align}
S^k_{\epsilon}(I)=\bigcap_{r>0} S^k_{\epsilon,r}(I)\equiv\{x\in I\cap B_1:\text{ for no }0< r<1\text{ is $B_{r}(x)$ a }(k+1,\epsilon)\text{-symmetric ball}\}.
\end{align}
\item We define the $k^{th}$-stratification $S^k(I)$ by
\begin{align}
S^k(I)=\bigcup_{\epsilon>0} S^k_{\epsilon}(I)=\{x\in I\cap B_1:\text{ no tangent cone at $x$ is $k+1$-symmetric}\}.
\end{align}
\end{enumerate}
\end{definition}
\begin{remark}
It is a small but important exercise to check that the standard stratification $S^k(I)$ as defined in \eqref{e:stratification} agrees with the set $\bigcup_{\epsilon>0} S^k_{\epsilon} (I)$.
\end{remark}

Let us discuss in words the meaning of the quantitative stratification, and how it relates to the standard stratification.  As discussed at the beginning of the section, the stratification $S^k(I)$ of $I$ is built by separating points of $I$ based on the infinitesimal symmetries of $I$ at those points.  The quantitative stratifications $S^k_{\epsilon}(I)$ and $S^k_{\epsilon,r}(I)$ are, on the other hand, instead built by separating points of $I$ based on how many symmetries exist on balls of definite size around the points.  In practice, the quantitative stratification has two advantages over the standard stratification.  First, on minimizing hypersurfaces the quantitative stratification allows to prove effective estimates.  In particular, in \cite{ChNa2} the $L^p$ estimates 
\begin{align}\label{e:sing_set_Lp_est}
\fint_{B_1\cap I}|A|^{7-\delta}\, <C_\delta \text{  }\forall\, \delta>0\, ,
\end{align}
on minimizing hypersurfaces were proved by exploiting this fact.  The second advantage is that the estimates on the quantitative stratification are much stronger than those on the standard stratification.  Namely, in \cite{ChNa2} the Hausdorff dimension estimate \eqref{e:sing_set_haus_est} on $S^k(I)$ was improved to the Minkowski content estimate 
\begin{align}\label{e:sing_set_mink_delta_est}
\Vol\ton{B_r\ton{ S^k_{\epsilon,r}}}\leq C_{\epsilon,\delta} r^{n-k-\delta}\text{  }\forall \delta>0\, .
\end{align}
One of the key technical estimates of this paper is that in theorem \ref{t:main_quant_strat_stationary} we drop the $\delta$ from this estimate and obtain an estimate of the form
\begin{align}\label{e:quant_strat_mink_r}
 \Vol\ton{B_r\ton{S^k_{\epsilon,r}}}\leq C_\epsilon r^{n-k}\, .
\end{align}
From this we are able to conclude in theorem \ref{t:main_eps_stationary} an estimate on $S^k_\epsilon$ of the form
\begin{align}\label{e:quant_strat_mink}
\Vol\ton{B_r\ton{S^k_{\epsilon}}}\leq C_\epsilon r^{n-k}\, .
\end{align}
In particular, this estimate allows us to conclude that $S^k_{\epsilon}$ has uniformly finite $k$-dimensional measure.  In fact, the techniques will prove much more for us.  They will show us that $S^k_\epsilon$ is $k$-rectifiable, and that for $k$-a.e. point $x\in S^k_\epsilon$ there is a unique $k$-plane $V^k\subseteq T_xM$ such that {\it every} tangent cone at $x$ is $k$-symmetric with respect to $V$.  By observing that $S^k(I)=\bigcup S^k_\epsilon(I)$, this is what allows us to prove in theorem \ref{t:main_stationary} our main results on the classical stratification.  This decomposition of $S^k$ into the pieces $S^k_\epsilon$ is crucial for the proof.

On the other hand, \eqref{e:quant_strat_mink}, combined with the $\epsilon$-regularity theorems of \cite{Fed},\cite{ChNa2}, allow us to conclude in the minimizing hypersurface case both the weak $L^7$ estimate on $|A|$, and the $m-7$-finiteness of the singular set of $I$.  Thus we will see that theorems \ref{t:main_min_finite_measure} and \ref{t:main_min_weak_L7} are fairly quick consequences of \eqref{e:quant_strat_mink}.\\

Thus we have seen that \eqref{e:quant_strat_mink_r} and \eqref{e:quant_strat_mink}, and more generally theorem \ref{t:main_quant_strat_stationary} and theorem \ref{t:main_eps_stationary}, are the main challenges of the paper.  We will give a more complete outline of the proof in Section \ref{ss:outline_proof}, however let us mention for the moment that two of the new ingredients to the proof are a new $L^2$-subspace approximation theorem for integral varifolds with bounded mean curvature, proved in Section \ref{s:best_approx}, and a $W^{1,p}$-Reifenberg theorem described in Section \ref{s:biLipschitz_reifenberg}.  
The $L^2$-approximation result roughly tells us that the $L^2$-distance of a measure from being contained in a $k$-dimensional subspace may be estimated by integrating the volume drop of the integral varifold over the measure.  To exploit the estimate we prove a new $W^{1,p}$-Reifenberg type theorem.  The classical Reifenberg theorem states that if we have a set $S$ which is $L^\infty$-approximated by an affine $k$-dimensional subspace at every point and scale, then $S$ is bi-\hol to a $k$-dimensional manifold, see theorem \ref{t:classic_reifenberg} for a precise statement.  It is important for us to improve on this bi-\hol estimate, at least enough that we are able to control the $k^{th}$-dimensional measure of the set and prove rectifiability.  In particular, we want to improve the $C^\alpha$-maps to $W^{1,p}$-maps for $p>k$, and we will want to do it using a condition which is integral in nature.  More precisely, we will only require a form of summable $L^2$-closeness of the subset $S$ to the approximating subspaces.  We will see in theorem \ref{t:best_approximation} that by using the $L^2$-subspace approximation theorem that the conditions of this new $W^{1,p}$-Reifenberg are in fact controllable for the quantitative stratifications $S^k_\epsilon$.

\subsection{Results for Varifolds with Bounded Mean Curvature}\label{ss:stationary_results}

We now turn our attention to giving precise statements of the main results of this paper.  In this subsection we focus on those concerning the singular structure of integral varifolds with bounded mean curvature.  Precisely, let $(M^n,g,p)$ be a Riemannian manifold satisfying
\begin{align}\label{e:manifold_bounds}
&|\sec_{B_2(p)}|\leq K,\,\, \inj(B_2(p))\geq K^{-1}\, ,
\end{align}
and let $I^m$ be an integral varifold on $M$ with mean curvature bounded by $H$ on $B_2(p)$.  That is, for every smooth vector field $X$ on $M$ with compact support in $B_2(p)$ we have the estimate
\begin{align}\label{e:mean_curvature_bound}
\big|\delta \mu_I(X)\big|\leq H\int_{I} |X|\,dI\, ,
\end{align}
see Section \ref{ss:int_varifolds} for more on this.  Let us begin by discussing our main theorem for the quantitative stratifications $S^k_{\epsilon,r}(I)$:\\

\begin{theorem}[$(\epsilon,r)$-Stratification of Integral Varifolds with BMC]\label{t:main_quant_strat_stationary}
Let $I^m$ be an integral varifold on $M^n$ satisfying the curvature bound \eqref{e:manifold_bounds}, the mean curvature bound \eqref{e:mean_curvature_bound}, and the mass bound $\mu_I(B_2(p))\leq \Lambda$.  Then for each $\epsilon>0$ there exists $C_\epsilon(n,K,H,\Lambda,\epsilon)$ such that
\begin{align}
\Vol\Big(B_r\Big(S^k_{\epsilon,r}(I)\Big) \cap \B 1 p\Big)\leq C_\epsilon r^{n-k}\, .
\end{align}
\end{theorem}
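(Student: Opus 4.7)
The plan is to reduce the Minkowski estimate to a covering count: I will show that $S^k_{\epsilon,r}(I) \cap \B 1 p$ can be covered by at most $C_\epsilon r^{-k}$ balls of radius $r$. The central quantity is the volume density $\theta_I(x,s) \equiv \mu_I(\B s x)/(\omega_m s^m)$, which by the monotonicity formula (with its correction from the mean curvature bound $H$) is almost monotone non-decreasing in $s$, with total variation uniformly bounded by a constant depending only on $\Lambda, H, K$. The argument will be a top-down induction on dyadic scales $s_j = 2^{-j}$ from $s_0 = 1$ down to the stopping scale $\approx r$, maintaining at each stage a controlled covering of $S^k_{\epsilon,r}$ by balls of radius $s_j$.

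\textbf{Cone-splitting and the $L^2$-best approximation.} The geometric engine that forces $S^k_{\epsilon,r}$ to be at most $k$-dimensional is cone-splitting: if $\B s x$ is approximately $0$-symmetric at $x$ and also approximately $0$-symmetric at points $y_0,\ldots,y_k \in \B s x$ that are $\delta$-independent modulo any $k$-plane through $x$, then $\B s x$ is in fact $(k+1,\epsilon)$-symmetric. Since no such ball centered on $S^k_{\epsilon,r}$ exists for $s \in [r,1]$, for a suitable $\eta = \eta(n,\epsilon)>0$ the ``$\eta$-pinched'' set
\begin{equation}
F_{x,s} \equiv \cur{y \in \B s x \cap I : \theta_I(y, 2s) - \theta_I(y, s/2) < \eta}
\end{equation}
must lie in an $\epsilon s$-tubular neighborhood of some $k$-plane $V_{x,s}$. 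The new $L^2$-best approximation theorem of Section \ref{s:best_approx} upgrades this qualitative fact to a quantitative integral bound: for any positive Radon measure $\mu$ supported on $F_{x,s}$,
\begin{equation}
\inf_{V^k}\int d_V(y)^2\, d\mu(y)\;\leq\; C\, s^2 \int \qua{\theta_I(y, 8s) - \theta_I(y, s/8)}\, d\mu(y).
\end{equation}

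\textbf{Reifenberg iteration and main obstacle.} Since the total variation of $\theta_I$ is uniformly bounded, summing the $L^2$-estimate across the dyadic scales produces a Jones-type $\beta_{k,2}^2$ summability condition with total sum $\leq C(\Lambda, H, K)$ along the pinched portion of $S^k_{\epsilon,r}$. This is precisely the hypothesis of the $W^{1,p}$-Reifenberg theorem of \cite{NaVa+} (see Section \ref{s:biLipschitz_reifenberg}), which outputs a $k$-rectifiable set of $\mathcal{H}^k$-measure $\leq C_\epsilon$ containing the pinched points; thickening by $r$ yields the Minkowski bound for this piece. Non-pinched points are handled by a budget argument: each descent through a non-pinched ball consumes at least $\eta$ of the bounded density variation, so such events occur at most $O(\Lambda/\eta)$ times along any chain, contributing only a bounded multiplicative factor to the count. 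The main obstacle, and the technical heart of the paper, is the bookkeeping of this iteration: one must assemble the local $L^2$-approximation inequalities into a single global summability statement along the entire covering tree, and verify that the smallness hypothesis of the $W^{1,p}$-Reifenberg theorem is preserved at every scale. Only then can the Reifenberg theorem be applied once with constants independent of the depth of refinement, delivering the uniform Minkowski bound $\Vol\ton{\B r {S^k_{\epsilon,r}(I)} \cap \B 1 p} \leq C_\epsilon r^{n-k}$.
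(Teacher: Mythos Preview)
Your strategy is essentially the paper's: the $L^2$-best approximation theorem (Section~\ref{s:best_approx}) converts the mass-drop $W_0$ into control on the $k$-displacement $D^k_\mu$, and summing over dyadic scales feeds the Reifenberg machinery of Section~\ref{s:biLipschitz_reifenberg}, while the finite total variation of $\theta$ supplies the energy budget for non-pinched scales. The paper's organization differs from your single top-down induction in two ways worth noting. First, the paper packages one pass of the argument into a reusable Covering Lemma (Lemma~\ref{l:covering}): on any ball $B_s(x)$ it produces $S^k_{\epsilon,r}\cap B_s(x)\subseteq U_r\cup U_+$, where $U_r$ is already a union of at most $C r^{-k}$ balls of radius $r$, and $U_+$ is a collection of larger balls each carrying a definite mass drop $E\to E-\eta$; the outer proof then simply iterates this lemma $\lceil\Lambda/\eta\rceil$ times on the $U_+$ balls until $U_+=\emptyset$, picking up only a multiplicative constant per step. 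This modular decomposition cleanly separates the ``budget argument'' (outer iteration) from the Reifenberg analysis (inside Lemma~\ref{l:covering}) and avoids having to track pinched and non-pinched scales simultaneously. Second, for the Minkowski estimate at scale $r$ you need a \emph{packing} count, not a Hausdorff-measure bound: a $\lambda^k$ estimate on a rectifiable set does not by itself control $\Vol(B_r(\cdot))$. The paper accordingly uses the \emph{discrete} Reifenberg (Theorem~\ref{t:reifenberg_W1p_discrete}) rather than the $W^{1,p}$ or rectifiable version to estimate $U_r$ and $U_+$; your ``thickening by $r$'' step should invoke that discrete form. Finally, in your statement of the $L^2$-approximation inequality, the hypothesis is not that $\mu$ is supported on pinched points but that the ambient ball $B_s(x)$ is $(0,\delta)$-symmetric and not $(k+1,\epsilon)$-symmetric; the pinching enters to verify the former, and membership in $S^k_{\epsilon,r}$ gives the latter.
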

\vspace{1cm}

When we study the quantitative stratification $S^k_{\epsilon}(I)$ we can refine the above to prove structure theorems on the set itself: \\

\begin{theorem}[$\epsilon$-Stratification of Integral Varifolds with BMC]\label{t:main_eps_stationary}
Let $I^m$ be an integral varifold on $M^n$ satisfying the curvature bound \eqref{e:manifold_bounds}, the mean curvature bound \eqref{e:mean_curvature_bound}, and the mass bound $\mu_I(B_2(p))\leq \Lambda$.   Then for each $\epsilon>0$ there exists $C_\epsilon(n,K,H,\Lambda,\epsilon)$ such that
\begin{align}
\Vol\Big(B_r\big(S^k_{\epsilon}(I)\big)\cap \B 1 p \Big)\leq C_\epsilon r^{n-k}\, .
\end{align}
In particular, we have the $k^{th}$-dimensional Hausdorff measure estimate $\lambda^{k}(S^k_\epsilon(I))\leq C_\epsilon$.  Further, $S^k_\epsilon(I)$ is $k$-rectifiable, and for $k$-a.e. $x\in S^k_\epsilon$ there exists a {\it unique} $k$-plane $V^k\subseteq T_xM$ such that {\it every} tangent cone of $x$ is $k$-symmetric with respect to $V^k$.
\end{theorem}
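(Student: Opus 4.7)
The plan is as follows. First, the Minkowski estimate for $S^k_\epsilon(I)$ reduces immediately to the already-established estimate on the finite-scale stratification $S^k_{\epsilon,r}$. Since $S^k_\epsilon = \bigcap_{s > 0} S^k_{\epsilon,s} \subseteq S^k_{\epsilon,r}$ for every $r > 0$, one has the inclusion $B_r(S^k_\epsilon) \subseteq B_r(S^k_{\epsilon,r})$, and Theorem \ref{t:main_quant_strat_stationary} gives
\[
\Vol\bigl(B_r(S^k_\epsilon(I)) \cap B_1(p)\bigr) \leq \Vol\bigl(B_r(S^k_{\epsilon,r}(I)) \cap B_1(p)\bigr) \leq C_\epsilon r^{n-k}.
\]
The Hausdorff estimate $\mathcal{H}^k(S^k_\epsilon) \leq C_\epsilon$ follows from a standard Vitali covering argument: cover $S^k_\epsilon$ by at most $C_\epsilon r^{-k}$ balls of radius $r$ and let $r \to 0$.

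Next, for rectifiability the plan is to apply the $W^{1,p}$-Reifenberg theorem of Section \ref{s:biLipschitz_reifenberg}. That theorem requires only a summable $L^2$-closeness of the set to approximating $k$-planes at each point and dyadic scale, a hypothesis that is integral rather than pointwise in nature. The new $L^2$-subspace approximation theorem of Section \ref{s:best_approx} is the crucial input: it estimates the $L^2$-distance of the varifold measure from its best-approximating $k$-plane in terms of an integrated density drop of $I$. Since the mean-curvature-corrected density is almost monotone, its total variation along any chain of nested dyadic balls centered at $x$ is bounded uniformly by the mass bound $\Lambda$, supplying precisely the summability the Reifenberg theorem needs. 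After decomposing $S^k_\epsilon$ into countably many pieces compatible with the covering used to prove Theorem \ref{t:main_quant_strat_stationary}, the $W^{1,p}$-Reifenberg theorem then parameterizes each piece as a bi-Lipschitz image of a subset of $\R^k$, yielding $k$-rectifiability of $S^k_\epsilon$.

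Finally, for the uniqueness of the Euclidean factor, at $\mathcal{H}^k$-a.e.\ point $x \in S^k_\epsilon$ the rectifiable structure furnishes a classical approximate tangent $k$-plane $V(x) \subseteq T_xM$; from the Reifenberg construction this plane will be identified with $\lim_{r \to 0} V_{x,r}$, where $V_{x,r}$ is the best $L^2$-approximating $k$-plane to the rescaled varifold $\eta_{x,r\#} I$ on $B_1$. If $V' \times C'$ is any tangent cone of $I$ at $x$, obtained along a sequence $r_i \to 0$, then passing to the limit in the $L^2$-best-approximation theorem applied to each $\eta_{x,r_i\#} I$ forces the Euclidean invariance factor of the limit to coincide with $\lim V_{x,r_i} = V(x)$, and in particular $V' = V(x)$. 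The hard part will be this last step: the best-approximating planes $V_{x,r}$ depend on the scale, and one must verify that along \emph{every} blow-up sequence they converge to the Euclidean factor of the resulting cone rather than to some strictly larger or smaller subspace. This will be carried out by combining the Reifenberg summability of the sequence $\{V_{x,r}\}$ (which is independent of the chosen blow-up scales) with the continuity of the $L^2$-best-approximation functional under weak varifold convergence, using that a $k$-symmetric cone $V' \times C'$ achieves zero $L^2$-distance precisely along $V'$.
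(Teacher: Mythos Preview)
The volume-estimate paragraph is correct and matches the paper's argument exactly.

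For rectifiability, your sketch is close in spirit to the paper but contains a real gap: you say the total variation of the density along dyadic scales is ``bounded uniformly by the mass bound $\Lambda$'' and that this ``supplies precisely the summability the Reifenberg theorem needs.'' Boundedness is not enough; the rectifiable-Reifenberg (Theorem~\ref{t:reifenberg_W1p_holes}) requires the sum $\sum_\alpha \int D^k(y,r_\alpha)\,d\lambda^k$ to be smaller than a fixed dimensional constant $\delta^2$, not merely finite. The paper obtains \emph{smallness} by restricting to density-pinched subsets: with $E=\sup_{B_r}\theta_r$ one works on $\tilde S^k_\epsilon=\{y:\theta_0(y)>E-\eta\}$, so that $\sum_\alpha|\theta_{8r_\alpha}(y)-\theta_{r_\alpha}(y)|\le|\theta_{8r}(y)-\theta_0(y)|<\eta$ there, and then chooses $\eta$ small depending on $\delta$. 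Concretely this is implemented via the iterative Covering Lemma~\ref{l:covering}, which at each stage peels off a rectifiable set $U_0$ of pinched points and leaves balls on which the supremum of $\theta$ has dropped by $\eta$; after $\lceil\Lambda/\eta\rceil$ iterations nothing remains and $S^k_\epsilon$ sits inside a finite union of rectifiable sets. Your phrase ``decomposing $S^k_\epsilon$ into countably many pieces compatible with the covering'' gestures at this, but the mechanism you describe (bounded total variation $\Rightarrow$ Reifenberg hypothesis) is not the one that works.

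For the uniqueness of the $k$-plane, your route through best-approximating planes $V_{x,r}$ to the rescaled varifold is problematic and differs from the paper. First, Theorem~\ref{t:best_approximation} takes as input an auxiliary measure $\mu$ supported on $B_1$---in all applications a measure concentrated on or near $S^k_\epsilon$---not the varifold $I$ itself, so ``the best $L^2$-approximating $k$-plane to $\eta_{x,r\#}I$'' is not the object that theorem produces. Second, even interpreting $V_{x,r}$ as the best plane for $\lambda^k|_{S^k_\epsilon}$, the assertion that passing to the limit in Theorem~\ref{t:best_approximation} ``forces the Euclidean invariance factor of the limit to coincide with $\lim V_{x,r_i}$'' is unjustified: that theorem bounds the distance of $\mu$ to \emph{some} $k$-plane by the integrated density drop, but does not identify that plane with the invariance plane of a limiting cone of $I$. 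The paper proceeds instead by cone splitting: it stratifies $S^k_\epsilon=\bigcup_\alpha W^{k,\alpha}_{\epsilon,\eta}$ into level sets $\{\theta_0\in[\alpha\eta,(\alpha+1)\eta)\}$; for a.e.\ $x$ in a piece the rectifiable tangent plane $V_x$ exists, and for small $r$ one finds $k{+}1$ points of $W^{k,\alpha}_{\epsilon,\eta}\cap B_r(x)$ which are $10^{-1}r$-independent and each have density drop $<2\eta$, hence $(0,\delta_\eta)$-symmetric balls by Theorem~\ref{t:quantitative_0_symmetry}. Theorem~\ref{t:con_splitting} then makes $B_r(x)$ $(k,\delta_\eta)$-symmetric with respect to $V_x$, and intersecting over $\eta=j^{-1}$ gives the full-measure set on which every tangent cone of $I$ is $k$-symmetric with respect to $V_x$.
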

\vspace{.5cm}

Finally, we end this subsection by stating our main results when it comes to the classical stratification $S^k(I)$.  The following may be proved from the previous theorem in only a few lines given the formula $S^k(I)=\bigcup S^k_\epsilon(I)$:\\

\begin{theorem}[Stratification of Integral Varifolds with BMC]\label{t:main_stationary}
Let $I^m$ be an integral varifold on $M^n$ satisfying the curvature bound \eqref{e:manifold_bounds}, the mean curvature bound \eqref{e:mean_curvature_bound}, and the mass bound $\mu_I(B_2(p))<\infty$.  Then for each $k$ we have that $S^k(I)$ is countably $k$-rectifiable.  Further, for $k$-a.e. $x\in S^k(I)$ there exists a {\it unique} $k$-plane $V^k\subseteq T_xM$ such that {\it every} tangent cone of $x$ is $k$-symmetric with respect to $V^k$.
\end{theorem}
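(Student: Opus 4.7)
The plan is to derive Theorem \ref{t:main_stationary} as a direct consequence of the $\epsilon$-stratification Theorem \ref{t:main_eps_stationary} via the decomposition
\begin{align*}
S^k(I) \;=\; \bigcup_{\epsilon>0} S^k_\epsilon(I) \;=\; \bigcup_{j\in\mathbb{N}} S^k_{1/j}(I),
\end{align*}
noting that the family $\{S^k_{1/j}(I)\}_{j\in\mathbb{N}}$ is monotone nondecreasing in $j$. Since the hypotheses of Theorem \ref{t:main_eps_stationary} are local and the manifold-level conclusion is invariant under countable unions, the first step is to cover $M$ by countably many balls on which the local mass bound $\mu_I(B_2(p))<\infty$ applies, so that it suffices to prove the theorem on a single such $B_1(p)$.

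For the rectifiability assertion, I would simply invoke Theorem \ref{t:main_eps_stationary} for each $\epsilon = 1/j$ to conclude that each slice $S^k_{1/j}(I)\cap B_1(p)$ is $k$-rectifiable with $H^k$-estimate $C_{1/j}$. Since a countable union of $k$-rectifiable sets is $k$-rectifiable, $S^k(I)\cap B_1(p)$ is $k$-rectifiable, and combining over the covering gives that $S^k(I)$ is $k$-rectifiable globally on $M$. Note that the $H^k$-measure of $S^k(I)$ itself need not be finite, as the constants $C_{1/j}$ blow up as $j\to\infty$; this is consistent with the statement of the theorem, which only claims rectifiability.

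For the uniqueness of the $k$-plane at almost every point, let $N_j \subseteq S^k_{1/j}(I)\cap B_1(p)$ denote the $H^k$-null exceptional set provided by Theorem \ref{t:main_eps_stationary}, outside of which there exists a unique $k$-plane $V^k \subseteq T_x M$ such that every tangent cone at $x$ is $k$-symmetric with respect to $V^k$. Set $N \equiv \bigcup_j N_j$; by countable subadditivity $H^k(N)=0$. For any $x \in S^k(I)\cap B_1(p) \setminus N$, choose the minimal $j_0$ with $x \in S^k_{1/j_0}(I)$; since $x \notin N_{j_0}$, the unique plane $V^k(x)$ furnished by Theorem \ref{t:main_eps_stationary} applied to $\epsilon = 1/{j_0}$ gives exactly the conclusion of Theorem \ref{t:main_stationary}. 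The key point is that the property ``every tangent cone at $x$ splits off $V^k$'' is an \emph{intrinsic} property of the point $x$ alone and does not depend on the auxiliary parameter $j$, so uniqueness at a single scale propagates to uniqueness in the classical sense. Finally, patching the null sets $N$ across the countable cover of $M$ preserves $H^k$-nullity, completing the argument.

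The entire proof is bookkeeping once Theorem \ref{t:main_eps_stationary} is in hand; there is no serious obstacle here, as the hard analytic work—controlling the quantitative stratifications via the $L^2$-subspace approximation theorem and the $W^{1,p}$-Reifenberg theorem—has already been absorbed into the previous result. The only delicate point to watch is the covering argument, to ensure that rectifiability and the $H^k$-null exceptional set behave well under the patching.
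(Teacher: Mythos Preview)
Your proposal is correct and follows essentially the same approach as the paper: decompose $S^k(I)=\bigcup_j S^k_{1/j}(I)$, invoke Theorem~\ref{t:main_eps_stationary} on each piece to get rectifiability via countable unions, and handle the a.e.\ unique $k$-plane by taking a countable union of the $H^k$-null exceptional sets. The paper phrases the second part dually (taking the union of the full-measure \emph{good} sets $\tilde S^k_{2^{-\beta}}$ rather than the complement of the union of the bad sets $N_j$), and it spells out the verification of $S^k=\bigcup_\epsilon S^k_\epsilon$ in the proof, but these are cosmetic differences.
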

\vspace{1cm}

\subsection{Results for Minimizing Hypersurfaces}\label{ss:minimizing_results}

In this section we restrict ourselves to studying codimension one integral currents $I^{n-1}$ which minimize the area functional on $B_2(p)$ with respect to compact variations.  That is, if $I'$ is another integral current with $\partial I=\partial I'$ in $B_2(p)$ and $\supp{I-I'}\Subset \B 2 p$ , then $\mu_I(B_2)\leq \mu_I'(B_2)$.  One could easily restrict to local minimizers to obtain similar results.  Most of the results of this section follow quickly by combining the quantitative stratification results of Section \ref{ss:stationary_results} with the $\epsilon$-regularity of \cite{Fed},\cite{ChNa2}, see Section \ref{ss:eps_reg} for a review of these points.  \\

Our first estimate is on the singular set $\Sing(I)$ of a minimizing hypersurface.  Recall that $\Sing(I)$ is the set of points where $I$ is not smooth.  \\

\begin{theorem}[Structure of Singular Set]\label{t:main_min_finite_measure}
Let $I^{m}=I^{n-1}$ be a minimizing integral current on $M^n$ with $\partial I \cap \B 2 0 = 0$  satisfying the curvature bound \eqref{e:manifold_bounds} and the mass bound $\mu_I(B_2(p))\leq \Lambda$.  Then $\Sing(I)$ is $m-7$-rectifiable and there exists $C(n,K,\Lambda)$ such that
\begin{align}
\operatorname{Vol}\Big(B_r\big(\Sing(I)\big)\cap B_1(p)\Big)\leq Cr^8\, ,\notag\\
\mu_I\Big(B_r\big(\Sing(I)\big)\cap B_1(p)\Big)\leq Cr^7\, .\label{e:main_min_finite_measure}
\end{align}
In particular, $\lambda^{m-7}(\Sing(I))\leq C$.
\end{theorem}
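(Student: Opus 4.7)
The plan is to deduce Theorem \ref{t:main_min_finite_measure} as a short corollary of Theorem \ref{t:main_eps_stationary} combined with the codimension-one $\epsilon$-regularity theorem of \cite{Fed},\cite{ChNa2}.

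First I would invoke Theorem \ref{t:eps_reg}, which supplies a threshold $\epsilon_0 = \epsilon_0(n,K,\Lambda) > 0$ such that whenever $B_r(x)$ is $(n-7,\epsilon_0)$-symmetric, the current $I$ is smooth in $B_{r/2}(x)$. The geometric reason is that the only minimizing hypercones in $\dR^d$ with $d \leq 7$ are hyperplanes (the smallest singular minimizing hypercone, the Simons cone, lives in $\dR^8$), so being $(n-7,\epsilon_0)$-symmetric forces closeness to a hyperplane and Allard-type regularity applies. Consequently,
\[
\Sing(I) \cap B_1(p) \subseteq S^{n-8}_{\epsilon_0}(I),
\]
because a singular point can have no scale at which such symmetry is present.

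Next I would apply Theorem \ref{t:main_eps_stationary} with $k = n-8$ and $\epsilon = \epsilon_0$. This directly yields the $(n-8)$-rectifiability of $S^{n-8}_{\epsilon_0}(I)$ together with the Minkowski bound $\Vol(B_r(S^{n-8}_{\epsilon_0}(I)) \cap B_1(p)) \leq C r^8$. Since $\Sing(I) \cap B_1(p)$ is a Borel subset of $S^{n-8}_{\epsilon_0}(I)$ it inherits $(n-8)$-rectifiability, while the monotone containment $B_r(\Sing(I)) \subseteq B_r(S^{n-8}_{\epsilon_0}(I))$ gives the first volume estimate and, by a standard covering argument, the $\mathcal{H}^{n-8}$-bound on $\Sing(I)$.

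For the sharper current-mass bound $I(B_r(\Sing(I)) \cap B_1(p)) \leq C r^7$, I would pick a maximal $r$-separated set $\{x_i\} \subseteq \Sing(I) \cap B_1(p)$. The balls $B_{r/2}(x_i)$ are disjoint and sit inside $B_r(\Sing(I))$, so by the previous step their total Euclidean volume is at most $C r^8$, which forces $N \leq C r^{8-n}$. The dilated balls $\{B_{2r}(x_i)\}$ cover $B_r(\Sing(I) \cap B_1(p))$, and the monotonicity formula for a minimizing current gives $\mu_I(B_{2r}(x_i)) \leq C(\Lambda) r^{n-1}$ for each $i$. Summing produces
\[
I(B_r(\Sing(I)) \cap B_1(p)) \leq \sum_i \mu_I(B_{2r}(x_i)) \leq N \cdot C r^{n-1} \leq C r^{7},
\]
as required. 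The substantive difficulty of the paper is contained entirely in Theorem \ref{t:main_eps_stationary}, whose proof via the new $L^2$-subspace approximation theorem and the $W^{1,p}$-Reifenberg theorem occupies the bulk of the work. Conditional on that result, the present theorem is essentially bookkeeping, so I expect no genuine obstacle in this reduction beyond correctly packaging the $\epsilon$-regularity threshold and the elementary covering/monotonicity estimate above.
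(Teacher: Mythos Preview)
Your proposal is correct and follows essentially the same route as the paper: reduce to $\Sing(I)\cap B_1(p)\subseteq S^{n-8}_{\epsilon_0}(I)$ via the $\epsilon$-regularity Theorem \ref{t:eps_reg}, then invoke Theorem \ref{t:main_eps_stationary} for rectifiability and the Minkowski bound, and derive the $I$-mass estimate from the uniform density bound $\theta(x,r)\leq c\Lambda$. Your covering argument for the $r^7$ estimate is exactly the fleshed-out version of what the paper dismisses as ``a simple corollary.''
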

\vspace{.5 cm}

The above can be extended to effective regularity estimates on $I$.  To state the results in full strength let us recall the notion of the regularity scale associated to a function.  Namely:

\begin{definition}\label{d:regularity_scale}
Let $I^m$ be an integral varifold.  For $x\in I\cap B_1(p)$ we define the regularity scale $r_I(x)$ by
\begin{align}
r_I(x)\equiv \max\cur{0\leq r\leq 1: \sup_{B_r(x)}|A|\leq r^{-1} }\, .
\end{align}
By definition, $r_I(x)\equiv 0$ if $I$ is not $C^2$ in a neighborhood of $x$.
\end{definition}
\begin{remark}
The regularity scale is {\it scale invariant}.  That is, if $r\equiv r_I(x)$ and we rescale $B_r(x)\to B_1(x)$, then on the rescaled ball we will have $|A|\leq 1$ on $B_1(x)$.
\end{remark}
\begin{remark}
We have the easy estimate $|A|(x)\leq r_I(x)^{-1}$.  However, a lower bound on $r_I(x)$ at a point is in principle {\it much} stronger than an upper bound on $|A|(x)$.
\end{remark}
\begin{remark}
Notice that the regularity scale is a Lipschitz function with $|\nabla r_I|\leq 1$.
\end{remark}
\begin{remark}
If $I$ satisfies an elliptic equation, e.g. is a stationary varifold, then we have estimates of the form
\begin{align}
\sup_{B_{r_I/2}(x)} |\nabla^k A|\leq C_k \,r_I(x)^{-(k+1)}\, .
\end{align}
In particular, control on $r_I$ gives control on all higher order derivatives.
\end{remark}
\vspace{.5 cm}

Now let us state our main estimates for minimizing hypersurfaces:\\

\begin{theorem}[Estimates on Minimizing Hypersurfaces]\label{t:main_min_weak_L7}
Let $I^m=I^{n-1}$ be a minimizing integral current on $M^n$ with $\partial I \cap \B 2 0 = 0$ satisfying the curvature bound \eqref{e:manifold_bounds} and the mass bound $\mu_I(B_2(p))\leq \Lambda$.  Then there exists $C(n,K,\Lambda)$ such that
\begin{align}
\Vol\Big(\{x\in B_1(p): |A|> r^{-1}\}\cap \B 1 p \Big)&\leq Cr^8\, , \notag\\
\Vol\Big(\{x\in B_1(p): r_I(x)< r\}\cap \B 1 p  \Big)&\leq Cr^8\, ,\notag\\
\mu_I\Big(\{x\in B_1(p): r_I(x)< r\}\cap \B 1 p  \Big)&\leq Cr^7\, \label{e:main_min_weak_L7:1} .
\end{align}
In particular, both $|A|$ and $r_I^{-1}$ have bounds in $L^7_{weak}\Big(I\cap B_1(p)\Big)$, the space of weakly $L^7$ functions on $I$.
\end{theorem}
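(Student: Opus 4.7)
The plan is to deduce everything from Theorem \ref{t:main_quant_strat_stationary} applied to the quantitative stratum $S^{n-8}_{\epsilon_0, r}(I)$, using the $\epsilon$-regularity Theorem \ref{t:eps_reg} to translate smallness of the regularity scale into absence of quantitative symmetry. The first step is to fix $\epsilon_0 = \epsilon_0(n,K,\Lambda)$ so that the $\epsilon$-regularity theorem for minimizing hypersurfaces applies: if $y \in I$ and $B_s(y)$ is $(n-7, \epsilon_0)$-symmetric, then (using Simons' classification of low-dimensional minimizing cones combined with Allard-type regularity) $I$ is smooth on $B_{s/2}(y)$ with $|A| \le C/s$, i.e.\ $r_I(y) \ge s/C_0$ for some $C_0 = C_0(n,K,\Lambda)$. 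Taking the contrapositive yields the crucial containment
\begin{align}
\cur{x \in I \cap B_1(p) : r_I(x) < r} \subseteq S^{n-8}_{\epsilon_0, C_0 r}(I),
\end{align}
since $r_I(x) < r$ precludes $(n-7,\epsilon_0)$-symmetry at every scale $s \ge C_0 r$.

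The two volume estimates now follow immediately from the quantitative stratification bound. Applying Theorem \ref{t:main_quant_strat_stationary} with $k = n-8$ at radius $C_0 r$ gives
\begin{align}
\Vol\ton{B_{C_0 r}\ton{\cur{r_I < r}} \cap B_1(p)} \leq \Vol\ton{B_{C_0 r}\ton{S^{n-8}_{\epsilon_0, C_0 r}(I)} \cap B_1(p)} \leq C_{\epsilon_0}(C_0 r)^{8} \leq C r^8,
\end{align}
and the companion estimate on $\cur{|A| > r^{-1}}$ follows from the trivial inclusion $\cur{|A| > r^{-1}} \subseteq \cur{r_I < r}$ coming from $|A|(x) \le r_I(x)^{-1}$.

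For the mass bound $I(\cur{r_I < r} \cap B_1(p)) \le C r^7$, I would pass from the ambient volume estimate to mass by a Vitali covering argument. One covers the tube $B_{C_0 r}(S^{n-8}_{\epsilon_0, C_0 r}(I)) \cap B_1(p)$ by at most $N \le C r^{8-n}$ balls $B_{C_0 r}(x_i)$ centered at points of the set, using the tube volume estimate above. The monotonicity formula for varifolds with bounded mean curvature on a manifold of bounded sectional curvature then gives the universal bound $\mu_I(B_{C_0 r}(x_i)) \le C r^{n-1}$ on each ball. Summing yields
\begin{align}
I\ton{\cur{r_I < r} \cap B_1(p)} \le N \cdot C r^{n-1} \le C r^{8-n} \cdot r^{n-1} = C r^{7},
\end{align}
which is the desired bound. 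The $L^7_{weak}$ conclusions for $|A|$ and $r_I^{-1}$ are then immediate from the definition of weak $L^p$ applied to the level-set estimate $I(\cur{r_I^{-1} > r^{-1}}) = I(\cur{r_I < r}) \le C r^7$.

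The main obstacle is not in this deduction itself — which is short once Theorem \ref{t:main_quant_strat_stationary} is in hand — but in the sharpness of the exponent. The earlier $L^{7-\delta}$ estimate of \cite{ChNa2} used the weaker Minkowski bound \eqref{e:sing_set_mink_delta_est} with a $\delta$-loss, so our ability to conclude the sharp $L^7_{weak}$ rate hinges entirely on Theorem \ref{t:main_quant_strat_stationary} removing that loss; the whole difficulty has effectively been pushed into the $L^2$-subspace approximation and $W^{1,p}$-Reifenberg machinery underlying Theorem \ref{t:main_quant_strat_stationary}.
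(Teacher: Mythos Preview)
Your proposal is correct and follows essentially the same route as the paper: use the $\epsilon$-regularity Theorem \ref{t:eps_reg} to obtain the inclusion $\{r_I<r\}\subseteq S^{n-8}_{\epsilon,r}(I)$, invoke the Minkowski estimate of Theorem \ref{t:main_quant_strat_stationary} for the $r^8$ volume bound, deduce the $|A|$ estimate from $|A|\leq r_I^{-1}$, and pass to the $r^7$ mass bound via the uniform density control $\theta(x,r)\leq c\Lambda$ (your Vitali-plus-monotonicity argument is exactly how this last step is made explicit). Your closing remark that all the real work has been absorbed into Theorem \ref{t:main_quant_strat_stationary} is precisely the paper's point of view as well.
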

\vspace{1cm}



\subsection{Outline of Proofs and Techniques}\label{ss:outline_proof}
 
In this subsection we give a brief outline of the proof of the main theorems.  To describe the new ingredients involved it will be helpful to give a comparison to the proofs of previous results in the area, in particular the dimension estimate \eqref{e:sing_set_haus_est} of Federer and the Minkowski and $L^p$ estimates \eqref{e:sing_set_Lp_est} of \cite{ChNa2}.

Indeed, the starting point for the study of singular sets for solutions of geometric equations typically looks the same, that is, one needs a monotone quantity.  In the case of integral varifolds $I^m$ with bounded mean curvature, we consider the volume density
\begin{align}
\theta_r(x) \equiv r^{-m}\mu_I(B_r(x))\, . 
\end{align}
For simplicity sake let us take $M\equiv \dR^n$ and $I^m$ to be stationary in this discussion, which is really of no loss except for some small technical work.  Then $\frac{d}{dr}\theta_r\geq 0$, so $\theta_r(x)$ is precisely a monotone quantity, and it is independent of $r$ if and only if $I$ is $0$-symmetric, see Section \ref{ss:monotonicity} for more on this.  Interestingly, this is the only information one requires to prove the dimension estimate \eqref{e:sing_set_haus_est}.  Namely, since $\theta_r(x)$ is monotone and bounded, it must converge as $r$ tends to zero.  Therefore, if we consider the sequence of scales $\er_\alpha=2^{-\alpha}$ then we have for each $x$ that
\begin{align}\label{e:convergence}
\lim_{\alpha\to \infty} \Big|\theta_{\er_\alpha}(x)-\theta_{\er_{\alpha+1}}(x)\Big|\to 0\, .
\end{align}
From this one can conclude that {\it every} tangent cone of $I$ is $0$-symmetric.  This fact combined with some very general dimension reduction arguments originating with Federer from geometric measure theory \cite{simon_GMT}, yield the dimension estimate \eqref{e:sing_set_haus_est}.

The improvement in \cite{ChNa2} of the Hausdorff dimension estimate \eqref{e:sing_set_haus_est} to the Minkowski content estimate \eqref{e:sing_set_mink_delta_est}, and from there the $L^p$ estimate of \eqref{e:sing_set_Lp_est}, requires exploiting more about the monotone quantity $\theta_r(x)$ than just that it limits as $r$ tends to zero.  Indeed, an effective version of \eqref{e:convergence} says that for each $\delta>0$ there exists $N(\Lambda,\delta)>0$ such that 
\begin{align}\label{e:convergence_effective}
\Big|\theta_{\er_\alpha}(x)-\theta_{\er_{\alpha+1}}(x)\Big|<\delta\, 
\end{align}
holds for all except for at most $N$ scales $\alpha\in \{\alpha_1,\ldots,\alpha_N\}\subseteq\dN$.  These {\it bad} scales where \eqref{e:convergence_effective} fails may differ from point to point, but the number of such scales is uniformly bounded.  This allows one to conclude that, for all but at most $N(\Lambda,\epsilon)$-scales, $B_{\er_{alpha}}(x)$ is $(0,\epsilon)$-symmetric, see Section \ref{ss:cone_splitting}.  To exploit this information, a new technique other than dimension reduction was required in \cite{ChNa2}.  Indeed, in \cite{ChNa2} the quantitative $0$-symmetry of \eqref{e:convergence_effective} was instead combined with the notion of cone splitting and an energy decomposition in order to conclude the estimates \eqref{e:sing_set_Lp_est},\eqref{e:sing_set_mink_delta_est}.  Since we will use them in this paper, the quantitative $0$-symmetry and cone splitting will be reviewed further in Section \ref{ss:cone_splitting}.\\

Now let us begin to discuss the results of this paper.  The most challenging aspect of this paper is the proof of the estimates on the quantitative stratifications of theorems \ref{t:main_quant_strat_stationary} and \ref{t:main_eps_stationary}, and so we will focus on these in our outline.  Let us first observe that it might be advantageous to replace \eqref{e:convergence_effective} with a version that forces an actual rate of convergence, see for instance \cite{Simon_CylTanUniq}, \cite{Simon_mult_1_min}.  More generally, if one is in a context where an effective version of tangent cone uniqueness can be proved then this may be exploited.  In fact, in the context of critical sets of elliptic equations one can follow exactly this approach, see the authors' work \cite{NaVa} where versions of theorems \ref{t:main_quant_strat_stationary} and \ref{t:main_eps_stationary} were first proved in this context.  However, in the general context of this paper such an approach fails, as tangent cone uniqueness is not available, and potentially not correct.\\

Instead, we will first replace \eqref{e:convergence_effective} with the following relatively simple observation.  Namely, for each $x$ there exists $N(\Lambda,\delta)$ and a finite number of scales $\{\alpha_1,\ldots,\alpha_N\}\subseteq \dN$ such that
\begin{align}\label{e:convergence_effective2}
\sum_{\alpha_{j}< \alpha<\alpha_{j+1}} \Big|\theta_{\er_{alpha}}(x)-\theta_{\er_{alpha+1}}(x)\Big| < \delta\, .
\end{align}
That is, not only does the mass density drop by less than $\delta$ between these scales, but the sum of the all the mass density drops is less than $\delta$ between these scales.\\

Unfortunately, exploiting \eqref{e:convergence_effective2} in order to prove estimates on the singular set turns out to be {\it substantially} harder to use than exploiting either \eqref{e:convergence} or even \eqref{e:convergence_effective}.  In essence, this is because it is not a local assumption in terms of scale, and one needs estimates which can see many scales simultaneously, but which do not require any form of tangent cone uniqueness statements.  Accomplishing this requires four new ideas, two of which have been introduced in the last few years in \cite{ChNa2},\cite{NaVa}, and two of which are new to this paper.  The first point is to replace the study of the singular set with the study of the quantitative singular set, as introduced in \cite{ChNa2} for integral varifolds.  It will be clear from the proofs that there is no direct way to apply the ideas of this paper to the singular set itself without decomposing it into these quantitative pieces.  The sharp estimates themselves also depend on a new covering argument, first introduced by the authors in \cite{NaVa}.  We will only briefly discuss the covering in the outline, but this covering argument has the advantage of giving very sharp packing estimates on sets and exploits well the condition \eqref{e:convergence_effective2}, see Section \ref{s:covering_main} for more on this in the context of this paper.  The disadvantage of the strategy of this covering is that it requires comparing balls of arbitrarily different sizes.  In \cite{NaVa} this was accomplished by proving an effective tangent cone uniqueness statement in the context of critical sets of elliptic equations.  Unfortunately, in the context of this paper it is not clear such a statement exists.\\

Thus, we arrive at discussing the new ideas supplied in this paper.  As discussed, in order to apply the strategy of the covering argument of \cite{NaVa} we need to be able to estimate collections of balls of potentially arbitrarily different radii.  Accomplishing this requires two ingredients, a rectifiable-Reifenberg type theorem, and a new $L^2$-best subspace approximation theorem for integral varifolds with bounded mean curvature, which will allow us to apply the rectifiable-Reifenberg.  Let us discuss these two ingredients separately.\\

We begin by briefly discussing the $W^{1,p}$-Reifenberg and rectifiable-Reifenberg theorems, which are stated in Section \ref{s:biLipschitz_reifenberg}.  Recall that the classical Reifenberg theorem, reviewed in Section \ref{ss:reifenberg}, gives criteria under which a set becomes $C^\alpha$-\hol equivalent to a ball $B_1(0^k)$ in Euclidean space.  In the context of this paper, it is important to improve on this result so that we have gradient and volume control of our set.  
Let us remark that there have been many generalizations of the classical Reifenberg theorem in the literature, see for instance \cite{Toro_reif} \cite{davidtoro}, however those results have hypotheses which are much too strong for the purposes of this paper.  
Instead, we will follow the approach introduced by the authors in \cite{NaVa+} to prove rectifiability and Minkowski-type bounds on the singular sets of harmonic maps. In particular, we need to improve the $C^\alpha$-equivalence to a $W^{1,p}$-equivalence.  This is strictly stronger by Sobolev embedding, and if $p>k$ then this results in volume estimates and a rectifiable structure for the set.  
More generally, we will require a version of the theorem which allows for more degenerate structural behavior, namely a rectifiable-Reifenberg theorem.  In this case, the assumptions will conclude that a set $S$ is rectifiable with volume estimates.  Of course, what is key about this result is that the criteria will be checkable for our quantitative stratifications, thus let us discuss this criteria briefly.  
Roughly, if $S\subseteq B_1(0^n)$ is a subset equipped with the $k$-dimensional Hausdorff measure $\lambda^k$, then let us define the $k$-dimensional distortion of $S$ by
\begin{align}
D^k_S(y,s)\equiv s^{-2}\inf_{L^k}s^{-k}\int_{S\cap B_{s}(x)}d^2(y,L^k)\,d\lambda^k(y)\, ,
\end{align}
where the $\inf$ is over all $k$-dimensional affine subspaces of $\dR^n$.  That is, $D^k$ measures how far $S$ is from being contained in a $k$-dimensional subspace.  Our rectifiable-Reifenberg then requires this be small on $S$ in an integral sense, more precisely that
\begin{align}\label{e:reifenberg_integral_estimate}
&r^{-k}\int_{S\cap B_r(x)}\sum_{\er_\alpha\leq r} D^k(y,\er_\alpha) d\lambda^k(x)<\delta^2\, .
\end{align}
For $\delta$ sufficiently small, the conclusions of the rectifiable-Reifenberg theorem \ref{t:reifenberg_W1p_holes} are that the set $S$ is rectifiable with effective bounds on the $k$-dimensional measure.  Let us remark that one cannot possibly conclude better than rectifiable under this assumption, see for instance the Examples of Section \ref{ss:examples}.\\

Thus, in order to prove the quantitative stratification estimates of theorems \ref{t:main_quant_strat_stationary} and \ref{t:main_eps_stationary}, we will need to verify that the integral conditions \eqref{e:reifenberg_integral_estimate} hold for the quantitative stratifications $S^k_\epsilon(I)$, $S^k_{\epsilon,r}(I)$ on all balls $\B{r}{x}$.  In actuality the proof is more complicated.  We will need to apply a discrete version of the rectifiable-Reifenberg, which will allow us to build an iterative covering in Section \ref{s:covering_main} of the quantitative stratifications, and each of these will satisfy \eqref{e:reifenberg_integral_estimate}.  This will allow us to keep effective track of all the estimates involved.  However, let us for the moment just focus on the main estimates which allows us to turn \eqref{e:reifenberg_integral_estimate} into information about our integral varifolds, without worrying about such details.\\  

Namely, in Section \ref{s:best_approx} we prove a new and very general approximation theorem for integral varifolds with bounded mean curvature.  As always in this outline, let us assume $M\equiv \dR^n$ and that $I^m$ is stationary, the general case is no harder.  Thus we consider an arbitrary measure $\mu$ which is supported on $B_1(0^n)$.  We would like to study how closely the support of $\mu$ can be approximated by a $k$-dimensional affine subspace $L^k\subseteq \dR^n$, in the appropriate sense, and we would like to estimate this distance by using properties of $I$.  Indeed, if we assume that $B_8(0^n)$ is {\it not} $(k+1,\epsilon)$-symmetric with respect to $I$, then for an arbitrary $\mu$ we will prove in theorem \ref{t:best_approximation} that
\begin{align}\label{e:measure_L2_dist}
\inf_{L^k\subseteq \dR^n}\int d^2(x,L^k)\,d\mu\leq C\int |\theta_8(x)-\theta_{1}(x)|\,d\mu \, ,
\end{align}
where $C$ will depend on $\epsilon$, the mass of $I$, and the geometry of $M$.  That is, if $I$ does not have $k+1$ degrees of symmetry, then how closely the support of an arbitrary measure $\mu$ can be approximated by a $k$-dimensional subspace can be estimated by looking at the mass drop of $I$ along $\mu$.  In applications, $\mu$ will be the restriction to $B_1$ of some discrete approximation of the $k$-dimensional Hausdorff measure on $S^k_\epsilon$, and thus the symmetry assumption on $I$ will hold for all balls centered on the support of $\mu$.\\

In practice, applying \eqref{e:measure_L2_dist} to \eqref{e:reifenberg_integral_estimate} is subtle and must be done inductively on scale.  Additionally, in order to prove the effective Hausdorff estimates $\lambda^k(S^k_\epsilon\cap B_r)\leq Cr^{k}$ we will need to use the Covering lemma \ref{l:covering} to break up the quantitative stratification into appropriate pieces, and we will apply the estimates to these.  This decomposition is based on a covering scheme first introduced by the authors in \cite{NaVa}.  Thus for the purposes of our outline, let us assume we have already proved the Hausdorff estimate $\lambda^k(S^k_\epsilon\cap B_r)\leq Cr^{k}$, and use this to be able to apply the rectifiable-Reifenberg in order to conclude the rectifiability of the singular set.  This may feel like a cheat, however it turns out the proof of the Hausdorff estimate will itself be proved by a very similar tactic, though will also require an inductive argument on scale and use the discrete rectifiable-Reifenberg of theorem \ref{t:reifenberg_W1p_discrete} in replace of the rectifiable-Reifenberg of theorem \ref{t:reifenberg_W1p_holes}.\\

Thus let us choose a ball $B_r$ and let $E\equiv \sup_{B_{r}} \theta_r(y)$.  Let us consider the subset $\tilde S^k_{\epsilon}\subseteq S^k_\epsilon\cap B_r$ defined by
\begin{align}
\tilde S^k_{\epsilon}\equiv\{y\in S^k_\epsilon\cap B_r: \theta_0(y)>E-\eta\}\, ,
\end{align}
where $\eta=\eta(n,K,H,\Lambda,\epsilon)$ will be chosen appropriately later.  We will show now that $\tilde S^k_{\epsilon}$ is rectifiable.  Since $\eta$ is fixed and the ball $B_r$ is arbitrary, the rectifiability of all of $S^k_\epsilon$ will follow quickly by an easy covering argument.  Thus, let us estimate \eqref{e:reifenberg_integral_estimate} by plugging in \eqref{e:measure_L2_dist} and the Hausdorff estimate to conclude:
\begin{align}\label{e:outline:2}
r^{-k}\int_{\tilde S^k_\epsilon}&\sum_{\er_\alpha\leq r} D^k(x,\er_\alpha)\,d\lambda^k \notag\\
&= r^{-k}\int_{\tilde S^k_\epsilon}\sum_{\er_\alpha\leq r}\Big(\inf_{L^k}\er_\alpha^{-2-k}\int_{\tilde S^k_\epsilon\cap B_{\er_\alpha}(x)}d^2(y,L^k)d\lambda^k(y) \Big) d\lambda^k(x)\notag\\
&\leq Cr^{-k}\int_{\tilde S^k_\epsilon}\sum_{\er_\alpha\leq r}\Big(\er_\alpha^{-k}\int_{\tilde S^k_\epsilon\cap B_{\er_\alpha}(x)}|\theta_{8\er_\alpha}(y)-\theta_{\er_\alpha}(y)|d\lambda^k(y) \Big) d\lambda^k(x)\notag
\end{align}
\begin{align}
&= Cr^{-k}\sum_{\er_\alpha\leq r}\er_\alpha^{-k}\int_{\tilde S^k_\epsilon}\lambda^k(\tilde S^k_\epsilon\cap B_{\er_\alpha}(y))|\theta_{8\er_\alpha}(y)-\theta_{\er_\alpha}(y)|\,d\lambda^k(y)\, \notag\\
&\leq Cr^{-k}\int_{\tilde S^k_\epsilon\cap B_r(x)}\sum_{\er_\alpha\leq r}|\theta_{8\er_\alpha}(y)-\theta_{\er_\alpha}(y)|\, d\lambda^k(y)\,\notag\\
&\leq Cr^{-k}\int_{\tilde S^k_\epsilon}|\theta_{8r}(y)-\theta_{0}(y)|\, d\lambda^k(y)\notag\\
&\leq Cr^{-k}\lambda_k(\tilde S^k_\epsilon)\cdot \eta\notag\\
&<\delta^2\, ,
\end{align}
where in the last line we have chosen $\eta=\eta(n,K,H,\Lambda,\epsilon)$ so that the estimate is less than the required $\delta$ from the rectifiable-Reifenberg.  Thus we can apply the rectifiable-Reifenberg of theorem \ref{t:reifenberg_W1p_holes} in order to conclude the rectifiability of the set $\tilde S^k_\epsilon$, which in particular proves that $S^k_\epsilon$ is itself rectifiable, as claimed.

\vspace{1.5cm}
  
\section{Preliminaries}\label{s:Prelim}

\subsection{Integral Varifolds and Mean Curvature}\label{ss:int_varifolds}

Let us begin by giving a very brief introduction to integral varifolds, the first variation, and their relationship to mean curvature.  See \cite{delellis_allard,simon_GMT} for a more complete and nice introduction.  We begin with the definition of an integral varifold:

\begin{definition}
Given a smooth Riemannian manifold $(M^n,g)$, consider the set of pairs $(S,\theta)$, where $S$ is a countably $m$-dimensional rectifiable set in $M^n$ and $\theta$ is a positive function locally integrable wrt $\lambda^m_S$. We define the equivalence relation $(S,\theta)\sim(S',\theta')$ by asking that $\lambda^m (S\setminus S') + \lambda^m(S'\setminus S) =0$ and $\theta=\theta'$ $\lambda^m$-a.e. on $S\cap S'$. An $m$-dimensional rectifiable varifold $I^m$ in $(M^n,g)$ is the equivalence class of pairs $(S,\theta)$.  We say that $I$ is an \textit{integral} $m$-rectifiable varifold, or simply integral $m$-varifold, if the function $\theta$ takes values in the positive integers $\lambda^m_S$-a.e.

\end{definition}
\begin{remark}
From now on, with an abuse of notation, we will denote the varifold $I$ simply by one representative in its equivalence class $(S,\theta)$. 
\end{remark}

We associate to $I$ the measure $\mu_I \equiv \theta\, d\lambda^m_S$, where $\lambda^m_S$ is the $m$-dimensional Hausdorff measure restricted to $S$.  Then the total mass of $I$ can be denoted 
\begin{align}
\mu_I(M) = \int_M d\mu_I=\int_M \theta d\lambda^m_S\, .
\end{align}

To define the mean curvature of an integral varifold we begin by recalling the notion of the first variation.  Given a smooth vector field $X$ on $M$ with compact support, let $\phi_t^X$ be the family of diffeomorphisms generated by $X$, then we can define the first variation of $I$ as the distribution
\begin{align}
\delta \mu_I(X) \equiv \frac{d}{dt}\Big|_{t=0} |\phi_{t\#}I| = \int_M \text{div}_{I}X\,d\mu_I\, ,
\end{align}
where $\text{div}_I(X)$ is the divergence of $I$ on the tangent space of $I_S$, which is well defined a.e.  We can now say that $I$ has mean curvature bounded by $H$ (or first variation bounded by $H$) on an open set $U\subseteq M$ if for every smooth vector field whose support is in $U$ we have that
\begin{align}
\big|\delta \mu_I(X)\big|\leq H\int |X|\,d\mu_I\, .
\end{align}
We say that $I$ is a stationary integral varifold on $U$ if the mean curvature is bounded by $0$, and in particular we have that $I$ is then a critical point of the area functional, with respect to variations in $U$.  
\vspace{.5cm}

An important compactness theorem for integral $m$-varifolds due to Allard states that given a sequence of integral varifolds with uniformly bounded mass and mean curvature has a converging subsequence in the same class. Here we state the theorem for the reader's convenience, and refer to \cite{Allard_firstvariation} or \cite[chapter 8]{simon_GMT} for the proof and more details on the subject.
\begin{theorem}[Allard compactness theorem]\label{th_All_cmpt}
 Let $I_j$ be a sequence of integral $m$-varifolds in $(M^n,g)$ with mass bound $\mu_{I_j}(M)\leq \Lambda$ and mean curvature uniformly bounded by $H$. Then, up to passing to a subsequence, $I_j$ converges in the sense of varifolds to some integral $m$-varifold $I$ with $\mu_I(M)\leq \Lambda$ and mean curvature bounded by $H$.
\end{theorem}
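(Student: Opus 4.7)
The plan is to follow the classical strategy: extract a weak subsequential limit by Banach--Alaoglu, verify that the limit has bounded first variation, establish rectifiability via a monotonicity-based density bound, and finally upgrade to integrality using blow-up arguments.

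First I would view each integral varifold $I_j$ as a Radon measure on the Grassmannian bundle $G_m(M)$ of unoriented $m$-planes over $M$. The uniform mass bound $\mu_{I_j}(M)\leq\Lambda$ translates to a uniform total mass bound for these Radon measures, so by the Banach--Alaoglu theorem applied to $C_c(G_m(M))^*$ we can extract a subsequence, still denoted $I_j$, converging in the weak sense of varifolds to some Radon measure $V$ on $G_m(M)$. Lower semicontinuity of mass under weak convergence gives $\mu_V(M)\leq\Lambda$ immediately. For the mean curvature bound, observe that the first variation is defined by testing against compactly supported vector fields $X$; the functional $I\mapsto\delta I(X)$ is continuous under weak varifold convergence, and $I\mapsto\int|X|\,d\mu_I$ is also continuous for nonnegative continuous $|X|$, so the pointwise inequality $|\delta I_j(X)|\leq H\int|X|\,d\mu_{I_j}$ passes to the limit to yield $|\delta V(X)|\leq H\int|X|\,d\mu_V$.

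Next I would establish that $V$ is rectifiable by appealing to Allard's rectifiability theorem, which requires a positive lower density bound $\Theta^m(\mu_V,x)>0$ for $\mu_V$-a.e. $x$. This is where the monotonicity formula enters: for an integral varifold with mean curvature bounded by $H$ and mass bound $\Lambda$, the quantity $e^{Hr}r^{-m}\mu_I(B_r(x))$ is monotone (up to the ambient curvature correction on $M$), and since each $I_j$ is integral we have $\Theta^m(\mu_{I_j},x)\geq 1$ at every point of the support of $I_j$. Combining this with weak convergence and the monotonicity inequality yields $\Theta^m(\mu_V,x)\geq c(n,K,H)>0$ at every $x\in\supp{\mu_V}$, which is more than enough to invoke Allard's rectifiability theorem and conclude that $V$ is a rectifiable $m$-varifold with well-defined multiplicity function $\theta$.

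The main obstacle, and the technical heart of Allard's theorem, is upgrading rectifiability to integrality, namely showing $\theta\in\NN$ $\mu_V$-a.e. The strategy is to work at a rectifiable point $x$ where $V$ has a unique tangent varifold $V_x$ of the form $\theta(x)\,\llbracket T\rrbracket$ for some $m$-plane $T$. By a diagonal argument combining the tangent cone construction with the weak convergence $I_j\to V$, one can realize $V_x$ as a weak limit of rescalings of suitable $I_{j_k}$'s; each such rescaling is still integral (integrality is scale invariant), so it suffices to show that the weak varifold limit of integral varifolds supported closer and closer to a single $m$-plane must have integer multiplicity. This is handled via the constancy theorem for stationary varifolds supported on a plane once one verifies that the tangent has vanishing first variation (the rescaled mean curvature bound $r_k H\to 0$ gives this in the limit), and then the integer multiplicity is preserved because each approximating integral varifold contributes integer mass to small cylinders over $T$. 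Assembling these pieces gives $\theta\in\NN$ $\mu_V$-a.e., completing the proof that $V$ is an integral $m$-varifold satisfying the stated mass and mean curvature bounds.
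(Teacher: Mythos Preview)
The paper does not prove this theorem; it is stated for the reader's convenience and the proof is deferred to \cite{Allard_firstvariation} and \cite[section 8]{simon_GMT}. So there is no in-paper argument to compare against.

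That said, your outline is the standard route taken in those references: weak compactness of Radon measures on the Grassmannian bundle gives a limiting varifold $V$, continuity of the first variation under varifold convergence transfers the bound $|\delta V(X)|\leq H\int|X|\,d\mu_V$, the monotonicity formula together with $\Theta^m(\mu_{I_j},\cdot)\geq 1$ on the support yields a uniform positive lower density bound on $\supp{\mu_V}$ so that Allard's rectifiability theorem applies, and finally integrality is obtained by a blow-up and slicing argument. The one place where your sketch is a bit thin is the last step: the actual mechanism in \cite{simon_GMT} is not merely the constancy theorem on a plane but rather a Lipschitz approximation and slicing argument showing that the multiplicity of the limit over a.e.\ point of the approximate tangent plane is an integer, using that integer-multiplicity is preserved under slicing and that the sliced $0$-varifolds converge. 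Your description ``each approximating integral varifold contributes integer mass to small cylinders over $T$'' gestures at this but would need the slicing machinery to be made rigorous. Apart from that, the proposal matches the cited proofs.
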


\subsection{Minimizing integral currents}
Next, let us consider the class of integral currents. These objects arise naturally in the study of minimal surfaces, here we briefly recall their definition and main properties. We refer the reader to \cite{Fed,morgan_GMT,delellis_areamin} for a more complete introduction on currents.

Integral currents arise naturally in the study of the Plateau's problem. The lack of compactness in the family of classical manifolds with respect to their volume makes it natural to introduce a sort of ``weak version'' of these objects in order to apply classical variational methods to prove the existence of minimizers. In this spirit, De Rham defines the currents as duals of smooth forms in a domain.
\begin{definition}
Let $m \leq n$. Given a domain $\Omega\subseteq M^n$, let $\Lambda^m(\Omega)$ be the space of smooth compactly supported $m$-forms on $\Omega$ with the strong topology. We denote by $\norm{\lambda}$ the comass of the form $\lambda$, i.e.
\begin{gather}
 \norm{\lambda}_c= \max\cur{\abs{\ps{\lambda(p)}{v_1\wedge\cdots \wedge v_m}} \ \ \text{with} \ \ p\in \Omega \ \ \text{and} \ \ \abs{v_1\wedge\cdots v_m}=1}\, ,
\end{gather}
where $\abs{v_1\wedge\cdots v_m}$ is the $m$-dimensional Hausdorff measure in $T_p(M)$ of the parallelogram determined by the tangent vectors $v_1,\cdots, v_m$.

An $m$-dimensional current $I$ is a continuous linear functional $I:\Lambda^m\to \R$.
\end{definition}
By integration, it is evident that any smooth $m$-dimensional orientable submanifold can be viewed as a current. The boundary $\partial I$ is defined (when it exists) as the only $m-1$ dimensional current such that for all $\lambda\in \Lambda^{m-1}$ the integration by parts holds:
\begin{gather}
 I(d \lambda)=\partial I (\lambda)\, .
\end{gather}
It is natural to associate to each current $I$ and open set $A\subseteq \Omega$ a mass $\norm{I}(A)$ by setting
\begin{gather}
 \norm{I}(A)=\sup\cur{ \frac{\abs{I(\lambda)}}{\norm{\lambda}_c}\, \ s.t. \ \ \lambda \in \Lambda^m\, , \ \lambda\neq 0\, , \ \operatorname{supp}(\lambda)\subset A}\, ,
\end{gather}
where $\norm{\lambda}_c$ is the comass of $\lambda$.

\begin{definition}
 We say that an $m$-dimensional current $I$ on $\Omega\subset \R^{m+n}$ is an \textit{integer current} if $\norm{I}(\Omega)<\infty$ and there exists a sequence of $C^1$ oriented $m$-dimensional submanifolds $M_i\subset \R^{m+n}$, a sequence of pairwise disjoint closed sets $K_i\subset M_i$ and a sequence of integers $k_i$ such that
  \begin{gather}
   I(\lambda) = \sum_i k_i \int_{K_i} \lambda
  \end{gather}
The current $I$ is said to be an \textit{integral current} if both $I$ and $\partial I$ are integer currents.
\end{definition}
It is clear that we can naturally associate a varifold $V_I$ to each integral current $I$. Note that the mass of the current $\norm{I}(A)$ coincides with the mass in the varifold sense $\mu_{V_I}(A)$.

We say that an integral current $I$ is \textit{minimizing} if it minimizes the mass among all other integral currents with the same boundary, in particular 
\begin{definition}
 Given an integral current $I$ on $\Omega$, we say that $I$ is area-minimizing in $\Omega$ if $\norm{I}(A)\leq \norm{J}(A)$ for all integral currents $J$ such that $\partial I=\partial J$ in $\Omega$ and $A\equiv \supp{I-J}$ is a compact subset of $\Omega$.
\end{definition}
It is worth mentioning that integral currents enjoy an important compactness property with respect to their mass. Indeed, given a sequence $I_i$ of integral currents with $\norm{I_i}(\Omega)+\norm{\partial I_i}(\Omega)<C<\infty$, there exists a converging subsequence, where the convergence is intended in the weak-$*$ topology with respect to $\Lambda^m(\Omega)$. This compactness property, proved by Federer and Flemming in \cite{fedfle}, immediately implies the existence of minimizers of the generalized Plateau problem, in the sense that
\begin{theorem}
 Given an $m-1$ integer rectifiable currents $J$ such that there exists an integral current $I$ with $\partial I=J$ and $\norm{I}<\infty$, there exists an integral current $I'$ minimizing the norm $\norm{I'}$ of all currents with $\partial I'=J$.
\end{theorem}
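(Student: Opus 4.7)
The plan is to apply the direct method of the calculus of variations, using precisely the Federer--Fleming compactness theorem cited immediately before the statement. Let $\mathfrak{I}_J$ denote the class of integral currents $I'$ on $\Omega$ with $\partial I' = J$, and set $m_0 \equiv \inf\{\|I'\| : I' \in \mathfrak{I}_J\}$. By assumption $\mathfrak{I}_J$ is nonempty and contains an element of finite mass, so $m_0 \leq \|I\| < \infty$. Pick a minimizing sequence $\{I_i\} \subset \mathfrak{I}_J$ with $\|I_i\| \to m_0$; in particular $\|I_i(\Omega)\| \leq m_0 + 1$ for $i$ large, and $\|\partial I_i(\Omega)\| = \|J(\Omega)\| < \infty$ is constant along the sequence. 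Thus the uniform bound $\|I_i(\Omega)\| + \|\partial I_i(\Omega)\| \leq C$ required for the Federer--Fleming compactness theorem holds.

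Next I would extract a subsequence (not relabelled) converging in the weak-$\ast$ topology of $\Lambda^m(\Omega)^\ast$ to some integral current $M$ with $\|M(\Omega)\| < \infty$ and $\|\partial M(\Omega)\| < \infty$. The key observation is that the boundary operator is continuous with respect to weak-$\ast$ convergence: for any $\lambda \in \Lambda^{m-1}(\Omega)$, the form $d\lambda \in \Lambda^m(\Omega)$ is fixed, so
\begin{equation*}
\partial M(\lambda) = M(d\lambda) = \lim_i I_i(d\lambda) = \lim_i \partial I_i(\lambda) = J(\lambda).
\end{equation*}
Hence $\partial M = J$ in $\Omega$, so $M \in \mathfrak{I}_J$ is an admissible competitor.

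The final step is to invoke lower semicontinuity of mass under weak-$\ast$ convergence: from the definition
\begin{equation*}
\|M(\Omega)\| = \sup\Bigl\{\tfrac{|M(\lambda)|}{\|\lambda\|} : \lambda \in \Lambda^m,\ \lambda \neq 0\Bigr\} = \sup_\lambda \lim_i \tfrac{|I_i(\lambda)|}{\|\lambda\|} \leq \liminf_i \|I_i(\Omega)\| = m_0,
\end{equation*}
while $\|M(\Omega)\| \geq m_0$ by admissibility. Therefore $\|M(\Omega)\| = m_0$ and $M$ is the desired minimizer.

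The only genuinely delicate point is the \emph{integrality} of the limit $M$: a weak-$\ast$ limit of integer rectifiable currents is a priori only a current, and one must know that the limit inherits the integer rectifiable structure both for $M$ and for $\partial M$. This is exactly the non-trivial content of the Federer--Fleming closure/compactness theorem quoted in the excerpt, so the step is simply to invoke it. Continuity of $\partial$ and lower semicontinuity of mass under weak-$\ast$ convergence are direct from the defining formulas and the duality definition of mass, so apart from the citation of Federer--Fleming the argument is essentially a one-page application of the direct method.
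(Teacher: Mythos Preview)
Your argument is correct and is exactly the approach the paper has in mind: the paper does not give a detailed proof but simply states that the Federer--Fleming compactness theorem ``immediately implies'' the existence of minimizers, and your proposal is precisely the standard direct-method expansion of that remark (minimizing sequence, compactness to extract an integral limit, weak-$\ast$ continuity of $\partial$, and lower semicontinuity of mass).
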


\subsection{Bounded Mean Curvature and Monotonicity}\label{ss:monotonicity}

For the purposes of this paper, the most important property of an integral varifold with bounded mean curvature is the existence of a monotone quantity at each point.  For simplicity let us first consider the case of a stationary integral varifold $I^m$ in $\dR^n$.  Then for $x\in I$ and $r>0$ we can consider the density function
\begin{align}
\theta_r(x) = r^{-m}\mu_I\big(B_r(x)\big)\, .
\end{align}
Then we have that $\theta_r(x)$ is monotone increasing and for $0<s\leq r$
\begin{align}
 \theta_r(x)-\theta_s(x) = \int_{B_r(x)\setminus \overline{B_s(x)} } d_x^{-m}\langle N_y I, n_x\rangle^2\,d\mu_I(y)\, ,
\end{align}
 where $N_yI$ is the orthogonal complement of the tangent space of $I$ at $y$, which is defined $\mu_I$-a.e., $d_x(y)=|x-y|$ is the distance function to $x$, and $n_x(y)=\frac{y-x}{|y-x|}$ is the normal vector field from $x$.  
From this it is easy to check that $\theta_r(x)$ is independent of $r$ iff $I$ is $0$-symmetric, see \cite[section 5]{Allard_firstvariation} or \cite{Almgren_InteriorRegMin}.  More generally, if $\theta_r(x)=\theta_s(x)$ then $I$ is $0$-symmetric on the annulus $A_{s,r}(x)$.  In Section \ref{ss:cone_splitting} we will recall a quantitative version of this statement introduced in \cite{ChNa2}.  Motivated by this, we see that what we are really interested in is the amount the density drops from one scale to the next, and thus we define for $0<s\leq r$
\begin{align}
W_{s,r}(x) \equiv \theta_r(x)-\theta_s(x)\geq 0\, .
\end{align}
Often times we will want to enumerate our choice of scale, so let us define the scales $\er_\alpha\equiv 2^{-\alpha}$ for $\alpha\geq 0$, and the corresponding mass density drop:
\begin{align}
W_{\alpha}(x) \equiv W_{\er_{\alpha},\er_{\alpha-3}}(x)\equiv \theta_{\er_{\alpha-3}}(x)-\theta_{\er_{\alpha}}(x)\geq 0\, .
\end{align}

From this one can prove that at every point, every tangent cone is $0$-symmetric, which is the starting point for the dimension estimate \eqref{e:sing_set_haus_est} of Federer.  In Section \ref{ss:cone_splitting} we discuss quantitative versions of this point, first introduced in \cite{ChNa2} and used in this paper as well, and also generalizations which involve higher degrees of symmetry.  These points were first used in \cite{ChNa2} to prove Minkowski estimates on the quantitative stratification of a stationary varifold.  They will also play a role in our arguments, though in a different manner.\\

In the general case when $M\neq \dR^n$ and/or the mean curvature is only bounded, essentially the same statements may be made, however $\theta_r(x)$ is now only {\it almost} monotone, meaning that $e^{Cr}\theta_r(x)$ is monotone for some constant $C=C(n,K,H)$ which depends only on the geometry of $M$ and the mean curvature bound $H$, see for example \cite{delellis_allard,ColdMin_min}. In particular, for the almost monotonicity of the normalized volume see \cite[pag 234]{ColdMin_min}, where the authors carry out the computations for two dimensional surfaces using the Hessian comparison theorem.

\subsection{\texorpdfstring{Quantitative $0$-Symmetry and Cone Splitting}{Quantitative 0-Symmetry and Cone Splitting}}\label{ss:cone_splitting}

In this subsection we review some of the quantitative symmetry and splitting results of \cite{ChNa2}, in particular those which will play a role in this paper.

The first result we will discuss acts as an {\it effective} formulation of the fact that every tangent cone is $0$-symmetric.  Namely, the quantitative $0$-symmetry of \cite{ChNa2} says that for each $\epsilon>0$ and point, that away from a finite number of scales every ball looks $(0,\epsilon)$-symmetric.  To be precise, let us consider the radii $\er_\alpha\equiv 2^{-\alpha}$, and then the statement is the following:\\

\begin{theorem}[Quantitative $0$-Symmetry \cite{ChNa2}]\label{t:quantitative_0_symmetry}
Let $I^m$ be an integral varifold on $M^n$ satisfying the curvature bound \eqref{e:manifold_bounds}, the mean curvature bound \eqref{e:mean_curvature_bound}, and the mass bound $\mu_I(B_2(p))\leq \Lambda$.  Then for each $\epsilon>0$ the following hold:
\begin{enumerate}
\item There exists $\delta(n,\Lambda,K,H,\epsilon)>0$ such that for each $x\in B_1(p)$ and $0<r\leq r(n,\Lambda,K,H,\epsilon)$, if we have $|\theta_{r}(x)-\theta_{\delta r}(x)|<\delta$, then $B_r(x)$ is $(0,\epsilon)$-symmetric.
\item For each $x\in B_1(p)$ there exists a finite number of scales $\{\alpha_1,\ldots,\alpha_N\}\subseteq \dN$ with $N\leq N(n,\Lambda,K,H,\epsilon)$ such that for $r\not\in (\er_{\alpha_j}/2,2\er_{\alpha_j})$ we have that $B_{r}(x)$ is $(0,\epsilon)$-symmetric.
\end{enumerate}
\end{theorem}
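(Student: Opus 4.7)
I would prove (1) and (2) separately: (1) by a compactness-and-contradiction argument, and (2) by a counting argument based on the almost-monotonicity of $\theta_r$.

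\emph{Proof plan for (1).} Suppose (1) fails. Then there exist $\epsilon>0$ and sequences $\delta_i\downarrow 0$, $r_i\downarrow 0$, ambient manifolds $(M_i,g_i,p_i)$ satisfying the curvature bounds, integral varifolds $I_i^m$ with the stated mass and mean-curvature bounds, and points $x_i\in I_i\cap B_1(p_i)$ for which $|\theta_{r_i}(x_i)-\theta_{\delta_i r_i}(x_i)|<\delta_i$ while $B_{r_i}(x_i)$ is not $(0,\epsilon)$-symmetric. I would pull back to $T_{x_i}M_i$ via $\exp_{x_i}$ and rescale by $\eta_{0,r_i}$. The resulting varifolds $\tilde I_i$ live in a ball of a metric whose sectional curvature is at most $r_i^2 K\to 0$ and whose mean-curvature bound becomes $r_i H\to 0$, so the ambient geometry flattens. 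Since the rescaling preserves $\theta$, the rescaled hypothesis reads $|\theta^{\tilde I_i}_1(0)-\theta^{\tilde I_i}_{\delta_i}(0)|<\delta_i$. By Allard's compactness Theorem \ref{th_All_cmpt}, a subsequence converges in the varifold sense to a stationary integral varifold $I_\infty$ in $\mathbb{R}^n$. Combining the almost-monotonicity of $e^{Cr}\theta_r$ recalled in Section \ref{ss:monotonicity} (which degenerates to honest monotonicity in the limit) with varifold convergence at a.e. fixed radius $s\in(0,1]$, I would conclude $\theta_s^{I_\infty}(0)$ is independent of $s$. The Euclidean monotonicity identity then forces $I_\infty$ restricted to $B_1(0)$ to be a cone, hence $0$-symmetric; for $i$ large this makes $B_1(0)$ in $\tilde I_i$ $(0,\epsilon)$-close to $I_\infty$, i.e. $B_{r_i}(x_i)$ is $(0,\epsilon)$-symmetric in $I_i$, contradiction.

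\emph{Proof plan for (2).} Fix $\delta=\delta(n,\Lambda,K,H,\epsilon)$ and $r_0=r(n,\Lambda,K,H,\epsilon)$ from part (1), set $k_0=\lceil\log_2(1/\delta)\rceil$, and recall the almost-monotone function $e^{Cr}\theta_r(x)$ with $C=C(n,K,H)$. The associated total-variation bound gives
\[
\sum_{\alpha\geq 0}\bigl(\theta_{r_{\alpha-1}}(x)-\theta_{r_\alpha}(x)\bigr)\;\leq\;\Lambda'(n,\Lambda,K,H).
\]
Regrouping in blocks of length $k_0$, the number of indices $\alpha$ for which the density drop $\theta_{r_{\alpha-k_0}}(x)-\theta_{r_\alpha}(x)$ exceeds $\delta$ is then at most $N=N(n,\Lambda,K,H,\epsilon)$; call these indices $\alpha_1,\ldots,\alpha_N$. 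For every $r\notin \bigcup_j(r_{\alpha_j}/2,2r_{\alpha_j})$ with $r\leq r_0$, the complement condition on $r$ places $r$ and $\delta r$ inside a block where the aggregate density drop is strictly less than $\delta$ (after possibly shrinking the part-(1) $\delta$ by a bounded factor to absorb the dyadic endpoints), so part (1) applies and gives $(0,\epsilon)$-symmetry of $B_r(x)$. The finitely many scales $r>r_0$ can be absorbed into the list $\{\alpha_j\}$ at the cost of increasing $N$ by a constant depending only on $r_0$.

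\emph{Main obstacle.} The delicate point is transferring the mass-drop hypothesis to the varifold limit while simultaneously flattening the ambient metric and driving the mean-curvature bound to zero through the rescaling. This requires careful bookkeeping of the error terms in the almost-monotonicity formula on $M$ at small scale, plus a verification that the fixed-scale densities $\theta^{\tilde I_i}_s(0)$ converge to $\theta^{I_\infty}_s(0)$ at a sufficiently dense set of radii $s$; once this is in hand, the Euclidean monotonicity identity immediately yields the cone structure of $I_\infty$ and the remainder of the argument is routine compactness and counting.
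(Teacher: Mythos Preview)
Your proposal is correct and follows essentially the same approach as the paper: for (1) you use the identical compactness-and-contradiction argument via rescaling and Allard compactness (the paper only writes out the flat case $K=0$ but the pull-back and rescaling you describe is the standard reduction), and for (2) the paper does not write out a proof at all, simply pointing to \cite{ChNa2}, while your counting argument from the total density drop is precisely the one sketched in the outline around \eqref{e:convergence_effective}. The only cosmetic difference is that the paper identifies the limiting cone directly through the vanishing of the radial term $\int d_x^{-m}\langle N_y\tilde I,n_x\rangle^2\,d\mu_{\tilde I}$ in the monotonicity formula, whereas you phrase it as ``the Euclidean monotonicity identity forces $I_\infty$ to be a cone''; these are the same statement.
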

\begin{remark}
In \cite{ChNa2} the result is stated for a stationary varifold, however the verbatim (quick) proof holds just as well for integral varifolds with bounded mean curvature. For the reader's convenience, here we give a sketch of it.
\end{remark}
\begin{proof}
 Assume for simplicity that the ambient space is Euclidean, i.e., $K=0$, and $p=0$. Consider by contradiction a sequence of varifolds $I_i$ in $\B 2 0$ with uniformly bounded mass and mean curvature, and consider a sequence of balls $\B {r_i}{x_i}$ with $x_i\in \B 1 0$ such that $r_i\leq i^{-1}$ and $\theta(x_i,r_i)-\theta(x_i,i^{-1}r_i)\leq i^{-1}$ but $\B {r_i}{x_i}$ is not $(0,\epsilon)$-symmetric, for some $\epsilon>0$ fixed.
 
 After rescaling $\B {r_i}{x_i}\to \B 1 0$, we obtain a sequence $\tilde I_i$ of varifolds of bounded mass with mean curvature $H_i\to 0$ and such that $\theta(0,1)-\theta(0,i^{-1})\leq i^{-1}$. Allard compactness theorem ensures that $\tilde I_i$ converges weakly to some $\tilde I$ with bounded mass. Given the pinching condition on $\theta$, we obtain that for all $r>0$:
 \begin{gather}
  \lim_{i\to \infty} \int_{\B 1 0 \setminus \B r 0}d_x^{-m}\langle N_y \tilde I_i, n_x\rangle^2\,d\mu_{\tilde I_i}(y) =0\, ,
 \end{gather}
thus $\tilde I$ is a $0$-symmetric varifold. Since $\tilde I_i$ converges weakly to $\tilde I$, we arrived at a contradiction.

A mildly more technical but morally verbatim argument works when the ambient space is not Euclidean but has sectional curvature bounds.
\end{proof}

Another technical tool that played an important role in \cite{ChNa2} was that of cone splitting.  This will be used in this paper when proving the existence of unique tangent planes of symmetry for the singular set, so we will discuss it here.  In short, cone splitting is the idea that multiple $0$-symmetries add up to give rise to a $k$-symmetry.  To state it precisely let us give a careful definition of the notion of independence of a collection of points:

\begin{definition}\label{d:independent_points}
We say a collection of points $\{x_1,\ldots,x_\ell\}\subset \dR^n$ is independent if they are linearly independent.  We say the collection is $\tau$-independent at $x$ if $d(x_{k+1},\text{span}\{x_1,\ldots,x_k\})>\tau$ for each $k$.  If $\{x_1,\ldots,x_\ell\}\subset M$ then we say the collection is $\tau$-independent with respect to $x$ if $d(x,x_j)<\inj(x)$ and the collection is $\tau$-independent when written in exponential coordinates at $x$.
\end{definition}

Now we are in a position to state the effective cone splitting of \cite{ChNa2}:

\begin{theorem}[Cone Splitting \cite{ChNa2}]\label{t:con_splitting}
Let $I^m$ be an integral varifold on $M^n$ satisfying the curvature bound \eqref{e:manifold_bounds}, the mean curvature bound \eqref{e:mean_curvature_bound}, the mass bound $\mu_I(B_3(p))\leq \Lambda$, and let $\epsilon,\tau >0$ be fixed.  Then there exists $\delta(n,\Lambda,\epsilon,\tau)>0$ such that if $K+H<\delta$ and $x_1,\ldots,x_k\in B_1(p)$ are such that
\begin{enumerate}
\item $B_2(x_j)$ are $(0,\delta)$-symmetric, and $\B 2 p$ is $(0,\delta)$-symmetric
\item $\{x_1,\ldots,x_k\}$ are $\tau$-independent at $p$,
\end{enumerate}
then $B_1(p)$ is $(k,\epsilon)$-symmetric.
\end{theorem}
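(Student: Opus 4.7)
I would prove the theorem by contradiction and compactness, exactly in the spirit of the proof of Theorem~\ref{t:quantitative_0_symmetry} sketched just above. The point is that $0$-symmetry with respect to $k+1$ ``sufficiently spread'' points $p, x_1, \dots, x_k$ forces a full $k$-symmetry at $p$ in the smooth, exact limit, and Allard's compactness theorem lets us promote this to a quantitative statement.

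\textbf{Setup of the contradiction.} Suppose the conclusion fails. Then there exist $\epsilon, \tau > 0$ and a sequence of integral varifolds $I_i^m$ on $(M_i^n, g_i, p_i)$ satisfying the mass bound $\mu_{I_i}(B_3(p_i)) \leq \Lambda$, with $K_i + H_i \to 0$, together with $\tau$-independent collections $\{x_1^i, \dots, x_k^i\} \subset B_1(p_i)$ such that $B_2(p_i)$ and each $B_2(x_j^i)$ are $(0, i^{-1})$-symmetric, yet $B_1(p_i)$ is not $(k,\epsilon)$-symmetric. Since $K_i \to 0$ and $\mathrm{inj}(B_2(p_i)) \to \infty$ (because $K_i^{-1} \leq \mathrm{inj}$), we may use the exponential chart at $p_i$ to identify each $B_3(p_i)$ with a region in $\mathbb{R}^n$ and the metrics converge smoothly to the flat Euclidean metric. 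The points $x_j^i$ lie in a compact set and, after passing to a subsequence, converge to limit points $x_j^\infty \in \overline{B_1(0)} \subset \mathbb{R}^n$. The $\tau$-independence at $p_i$ passes to the limit, so $\{x_1^\infty, \dots, x_k^\infty\}$ is still $\tau$-independent at $0$.

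\textbf{Limiting varifold and its symmetries.} By Allard's compactness Theorem~\ref{th_All_cmpt}, and the fact that $H_i \to 0$, a further subsequence of $I_i$ converges in the varifold sense to a stationary integral $m$-varifold $I_\infty$ on $\mathbb{R}^n$ with $\mu_{I_\infty}(B_3(0)) \leq \Lambda$. The $(0, i^{-1})$-symmetry of the balls $B_2(p_i)$ and $B_2(x_j^i)$ means that on each of these balls the distance (in the varifold sense) to a $0$-symmetric varifold goes to zero; since varifold convergence is metrized by this very distance, $I_\infty$ restricted to each of these balls actually coincides with a genuine $0$-symmetric varifold with vertex at $0$ or at $x_j^\infty$, respectively. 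By unique continuation of cones (a $0$-symmetric varifold is determined by its restriction to any ball around its vertex), $I_\infty$ is globally $0$-symmetric at each of the $k+1$ points $0, x_1^\infty, \dots, x_k^\infty$.

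\textbf{Algebraic cone splitting in the limit.} It is a classical observation that if a stationary integral varifold in $\mathbb{R}^n$ is $0$-symmetric at two distinct points $y$ and $z$, then it is translation invariant along the line $\mathbb{R}(z - y)$. Applying this to the pairs $(0, x_j^\infty)$, we obtain that $I_\infty$ is translation invariant along each direction $x_j^\infty$, hence along the entire subspace $V^k \equiv \mathrm{span}\{x_1^\infty, \dots, x_k^\infty\}$, which has dimension exactly $k$ by $\tau$-independence. Combined with $0$-symmetry at $0$, this makes $I_\infty$ an honest $k$-symmetric varifold of the form $V^k \times C$. In particular $B_1(0)$ is $(k, \epsilon/2)$-symmetric with respect to $I_\infty$. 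Since $I_i \to I_\infty$ in the varifold sense on $B_2(p_i) \cong B_2(0)$, for $i$ large enough $B_1(p_i)$ is $(k,\epsilon)$-symmetric with respect to $I_i$, contradicting our assumption.

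\textbf{Main obstacle.} The one subtlety is making precise the passage from ``$(0,i^{-1})$-symmetric ball'' to ``globally $0$-symmetric limit''; this requires that the weak varifold distance $d$ used in the definition of $(k,\epsilon)$-symmetry is compatible with Allard compactness, and that $0$-symmetric integral varifolds are closed under this convergence, which is standard. The rest of the argument is purely a combination of Allard compactness with the elementary fact that $0$-symmetry at independent points forces translation symmetry.
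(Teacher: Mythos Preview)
Your proposal is correct and follows exactly the approach the paper indicates: the paper's own proof consists of the single sentence ``The proof can be carried out with a simple compactness argument similar to the one used in the proof of theorem~\ref{t:quantitative_0_symmetry},'' and what you have written is precisely that argument spelled out in detail. The only cosmetic remark is that your ``unique continuation of cones'' step is slightly informal---what you really use is that both vertices $0$ and $x_j^\infty$ lie in the overlap $B_2(0)\cap B_2(x_j^\infty)$, so the cone structure at each vertex extends and agrees---but the substance is right.
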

\begin{remark}
The assumption $K+H<\delta$ is of little consequence, since this just means focusing the estimates on balls of sufficiently small radius after rescaling.
\end{remark}
\begin{proof}
 The proof can be carried out with a simple compactness argument similar to the one used in the proof of theorem \ref{t:quantitative_0_symmetry}.
\end{proof}

\vspace{.5 cm}

We end with the following, which one can view as a quantitative form of dimension reduction.

\begin{theorem}[Quantitative Dimension Reduction]\label{t:quant_dim_red}
Let $I^m$ be an integral varifold on $M^n$ satisfying the curvature bound \eqref{e:manifold_bounds}, the mean curvature bound \eqref{e:mean_curvature_bound}, and the mass bound $\mu_I(B_2(p))\leq \Lambda$.  Then for each $\epsilon>0$ there exists $\delta(n,\Lambda,\epsilon), r(n,\Lambda,\epsilon)>0$ such that if $K+H<\delta$ and $B_2(p)$ is $(k,\delta)$-symmetric with respect to some $k$-plane $V^k$, then for each $x\in B_1(p)\setminus B_\epsilon(V^k)$ we have that $B_r(x)$ is $(k+1,\epsilon)$-symmetric.
\end{theorem}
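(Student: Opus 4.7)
The plan is a double compactness argument via theorem \ref{th_All_cmpt}, together with the observation that a $k$-symmetric stationary cone acquires an additional translation symmetry in every tangent cone at a point off its $k$-plane of symmetry.

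The main step is an auxiliary claim: there exists $r_0=r_0(n,\Lambda,\epsilon)>0$ such that for every stationary integral $m$-varifold $\widehat I = V^k\times\widehat J$ in $\R^n$ (with $V^k$ a linear $k$-plane and $\widehat J$ a $0$-symmetric stationary integral varifold on $V^\perp$) satisfying $\mu_{\widehat I}(B_2(0))\leq\Lambda$, and for every $\widehat x\in\overline{B_1(0)}$ with $d(\widehat x,V^k)\geq\epsilon$, the ball $B_{r_0}(\widehat x)$ is $(k+1,\epsilon/4)$-symmetric in $\widehat I$.

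Granted this uniform claim, the theorem follows by a compactness/contradiction. Fix the $r_0$ above, and suppose there is no valid $\delta$; then we obtain sequences $\delta_i\to 0$ and $I_i$ on $(M_i^n,g_i,p_i)$ with $K_i+H_i<\delta_i$, $\mu_{I_i}(B_2(p_i))\leq\Lambda$, $k$-planes $V_i^k$ making $B_2(p_i)$ $(k,\delta_i)$-symmetric, and points $x_i\in B_1(p_i)\setminus B_\epsilon(V_i^k)$ such that $B_{r_0}(x_i)$ is not $(k+1,\epsilon)$-symmetric. Since $K_i\to 0$, the exponential charts identify the ambient balls with Euclidean balls in the limit, and theorem \ref{th_All_cmpt} produces a subsequential varifold limit $I_\infty$, stationary integral on $\R^n$ of mass $\leq\Lambda$; extracting further, $V_i^k\to V^k$ in the Grassmannian and $x_i\to x_\infty=(v_\infty,w_\infty)\in V^k\oplus V^\perp$ with $|w_\infty|\geq\epsilon$. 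The vanishing of $\delta_i$ forces $I_\infty$ to be exactly $k$-symmetric with respect to $V^k$, so $I_\infty=V^k\times J$ with $J$ a $0$-symmetric stationary integral varifold on $V^\perp$. Applying the uniform claim to $(I_\infty,x_\infty)$ produces an exactly $(k+1)$-symmetric model $\widetilde I$ with $d(I_\infty\cap B_{r_0}(x_\infty),\widetilde I)<(\epsilon/4)r_0$. Transporting the same $\widetilde I$ (recentred at $x_i$) and using varifold convergence of $I_i\to I_\infty$ on $B_{r_0}(x_\infty)$ then yields $(k+1,\epsilon/2)$-symmetry of $B_{r_0}(x_i)$ in $I_i$ for $i$ large, contradicting the choice of the sequence.

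To prove the uniform claim, argue again by contradiction: one extracts $(\widehat I_j,\widehat x_j)=(V_j^k\times\widehat J_j,(\widehat v_j,\widehat w_j))$ with $|\widehat w_j|\geq\epsilon$ such that $B_{1/j}(\widehat x_j)$ is not $(k+1,\epsilon/4)$-symmetric in $\widehat I_j$. Rescaling by factor $j$ at $\widehat x_j$ and using $\widehat v_j\in V_j^k$, the pushforward decouples as
\[
(\eta_{\widehat x_j,1/j})_\#\widehat I_j = V_j^k\times (\eta_{\widehat w_j,1/j})_\#\widehat J_j.
\]
Monotonicity gives uniform mass bounds on bounded balls, so theorem \ref{th_All_cmpt} produces a subsequential limit $V\times\widetilde J_\infty$. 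A standard diagonalization (choosing the subsequence so that the varifold distance $d(\widehat J_j,\widehat J_\infty)$ decays faster than $1/j$) identifies $\widetilde J_\infty$ with a tangent cone of the limit $\widehat J_\infty$ at $\widehat w_\infty=\lim\widehat w_j$. Since $\widehat J_\infty$ is a cone with vertex at $0\in V^\perp$ and $|\widehat w_\infty|\geq\epsilon>0$, this tangent cone is translation invariant in the radial direction $\widehat w_\infty/|\widehat w_\infty|$, making the full limit exactly $(k+1)$-symmetric; varifold semicontinuity then contradicts the non-$(k+1,\epsilon/4)$-symmetry of $B_1(0)$ in the rescaled $\widehat I_j$.

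The principal obstacle is this diagonal step inside the uniform claim: ensuring that $(\eta_{\widehat w_j,1/j})_\#\widehat J_j$ converges, along a subsequence, to a genuine tangent cone of $\widehat J_\infty$ at $\widehat w_\infty$, rather than to some unrelated weak tangent. This is handled by the standard diagonalization recalled above, using the metrizability of varifold convergence on compact sets. The remaining pieces (the product decoupling of the rescaling, the translation symmetry of tangent cones of cones, and semicontinuity of varifold distance under varifold convergence) are routine.
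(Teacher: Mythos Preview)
Your overall strategy is correct and is a reasonable fleshing-out of what the paper leaves as a one-line remark (``standard; follows from theorem~\ref{t:quantitative_0_symmetry} and a contradiction argument''). The two-layer contradiction—first a uniform statement over all exactly $k$-symmetric cones, then a compactness reduction from $(k,\delta)$-symmetric to exactly $k$-symmetric—is the right architecture, and the outer layer is unproblematic.

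The place that is not yet a proof is the diagonal step inside the uniform claim. You assert that by passing to a subsequence with $d(\widehat J_j,\widehat J_\infty)$ decaying ``faster than $1/j$'' one can identify the limit of $(\eta_{\widehat w_j,1/j})_\#\widehat J_j$ with a tangent cone of $\widehat J_\infty$ at $\widehat w_\infty$. This does not follow: the metric $d$ inducing varifold convergence is built from a fixed countable family of test functions, and after blow-up by factor $j$ you are testing against functions whose supports shrink like $1/j$ and whose $C^1$ norms grow like $j$. No rate of decay of $d(\widehat J_j,\widehat J_\infty)$ controls these pairings, so closeness of $\widehat J_j$ to $\widehat J_\infty$ in $d$ says nothing about closeness of the blow-ups. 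Put differently, you have established that any subsequential limit $\widetilde J_\infty$ is translation-invariant in the radial direction $\widehat w_\infty/|\widehat w_\infty|$ (this part is fine, using that the cone vertices escape to infinity), but the $0$-symmetry of $\widetilde J_\infty$---which is what makes it a legitimate $(k+1)$-symmetric model---is exactly what the tangent-cone identification was meant to supply, and it does not come for free.

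The fix is standard but different from what you wrote. One clean route: for each fixed scale $s$, the map $(\widehat J,\widehat w)\mapsto(\eta_{\widehat w,s})_\#\widehat J$ is continuous, so one first extracts a sequence of scales $s_i\to 0$ along which the blow-ups of $\widehat J_\infty$ at $\widehat w_\infty$ converge to a genuine ($1$-symmetric) tangent cone $T$, and then, \emph{for each $i$}, chooses $j_i$ large enough that $(\eta_{\widehat w_{j_i},s_i})_\#\widehat J_{j_i}$ is within $1/i$ of $(\eta_{\widehat w_\infty,s_i})_\#\widehat J_\infty$. The point is that one must diagonalise over both indices \emph{and} scales, using continuity of the blow-up at each fixed scale; one then re-invokes the negation of the uniform claim at the scales $s_i$ rather than $1/j$ to manufacture the contradiction. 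Alternatively, one can bypass the diagonal entirely by arguing via compactness of the class of pairs $(\widehat J,\widehat w)$ together with the openness (at each fixed scale) of the condition ``$B_r(\widehat w)$ is $(1,\epsilon/4)$-symmetric''.
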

The proof of this theorem is standard and it follows from theorem \ref{t:quantitative_0_symmetry} and a contradiction argument.
\vspace{.5 cm}

\subsection{\texorpdfstring{$\epsilon$-regularity for Minimizing Hypersurfaces}{epsilon-regularity for Minimizing Hypersurfaces}}\label{ss:eps_reg}

In this subsection we quickly review the $\epsilon$-regularity theorem of \cite{ChNa2}, which itself follows quickly from the difficult work of \cite{Simons_MinVar},\cite{Fed} after an easy contradiction argument.  This will be our primary technical tool in upgrading the structural results on varifolds with bounded mean curvature to the regularity results for area minimizing hypersurfaces.  The main theorem of this subsection is the following:

\begin{theorem}[$\epsilon$-Regularity \cite{Simons_MinVar},\cite{Fed},\cite{ChNa2}]\label{t:eps_reg}
Let $I^{n-1}\subseteq B_2$ be a minimizing hypersurface with $\partial I \cap \B 2 0 = 0$ satisfying the bounds \eqref{e:manifold_bounds}, and the mass bound $\mu_I(B_2(p))\leq \Lambda$.  Then there exists $\epsilon(n,\Lambda)>0$ such that if $K<\epsilon$ and $B_2(p)$ is $(n-7,\epsilon)$-symmetric, then $$r_I(p)\geq 1\, .$$
\end{theorem}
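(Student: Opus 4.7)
The plan is to prove this by a standard compactness–contradiction argument, reducing to the classical regularity theory of Simons and Allard. Suppose the statement fails. Then there exist sequences $\epsilon_i \to 0$, Riemannian manifolds $(M_i^n,g_i,p_i)$ with $K_i \to 0$, and minimizing hypersurfaces $I_i^{n-1} \subseteq B_2(p_i)$ with $\mu_{I_i}(B_2(p_i)) \leq \Lambda$, such that $B_2(p_i)$ is $(n-7,\epsilon_i)$-symmetric but $r_{I_i}(p_i) < 1$. Since $K_i \to 0$, after identifying $B_2(p_i) \subseteq T_{p_i} M_i$ via the exponential map, the ambient manifolds converge smoothly to Euclidean space $\mathbb{R}^n$.

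Next I would extract a limit varifold. By the Allard compactness theorem \ref{th_All_cmpt} (with $H_i = 0$ since $I_i$ is minimizing), a subsequence of $I_i$ converges in the varifold sense to an integral $(n-1)$-varifold $I_\infty$ in $\mathbb{R}^n$ with $\mu_{I_\infty}(B_2(0)) \leq \Lambda$. Because the $I_i$ are minimizing, standard lower semicontinuity of mass together with the Federer--Fleming compactness for integer currents shows that $I_\infty$ is itself an area-minimizing integral current in $B_2(0) \subseteq \mathbb{R}^n$. Meanwhile, the $(n-7,\epsilon_i)$-symmetry gives $(n-7)$-symmetric comparison varifolds $\tilde I_i$ with $d(I_i,\tilde I_i) < \epsilon_i$ on $B_2$. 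The $\tilde I_i$ have uniformly bounded mass (since $I_i$ does) and so a subsequence converges, necessarily to $I_\infty$. Since the $k$-symmetry conditions $\eta_{0,\lambda\#}\tilde I_i = \tilde I_i$ and $\tau_{y\#}\tilde I_i = \tilde I_i$ for $y \in V_i^{n-7}$ are closed under varifold convergence (after extracting a further subsequence so the subspaces $V_i^{n-7}$ converge to some $V^{n-7}$), the limit $I_\infty$ is itself $(n-7)$-symmetric. Hence we may write $I_\infty = V^{n-7} \times C$ where $C$ is an area-minimizing hypercone in $\mathbb{R}^7$.

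Now the core ingredient is Simons' theorem \cite{Simons_MinVar}, which states that every area-minimizing hypercone in $\mathbb{R}^n$ with $n \leq 7$ is a hyperplane. Applied to $C \subseteq \mathbb{R}^7$, this forces $C$ to be a multiplicity-one $6$-plane, and hence $I_\infty$ is a multiplicity-one hyperplane in $\mathbb{R}^n$ through $0$. Finally, Allard's $\epsilon$-regularity theorem applied to the $I_i$ (which are minimizing, hence stationary, hypersurfaces converging in the varifold sense to a flat plane) yields that for all $i$ large, $I_i$ is a smooth graph of small $C^{1,\alpha}$ norm over $V^{n-7} \times \{$hyperplane$\}$ on a ball around $p_i$ of definite size, with $\sup_{B_1(p_i)}|A| \leq 1$. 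In particular $r_{I_i}(p_i) \geq 1$ for large $i$, contradicting the assumption.

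The main subtleties, rather than a single hard step, lie in justifying that the varifold limit inherits both the minimizing property and the $(n-7)$-symmetry: the minimality requires invoking current compactness alongside Allard's varifold compactness (since the former provides lower semicontinuity of mass among competitors with prescribed boundary), while the symmetry passes to the limit only after taking a subsequence for which the approximating subspaces $V_i^{n-7}$ also converge. Everything else is a direct application of Simons' cone rigidity and Allard's regularity theorem, which together encapsulate the deep classical content; the contradiction framework just packages these into the quantitative $\epsilon$-regularity statement.
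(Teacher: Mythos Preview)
Your proposal is correct and is precisely the ``easy contradiction argument'' the paper alludes to; the paper itself does not give a proof but merely cites \cite{Simons_MinVar}, \cite{Fed}, \cite{ChNa2} and remarks that the result follows from those works by compactness. One small point: Simons' theorem only forces the cross-section $C$ to be a plane with some integer multiplicity $m\geq 1$, not necessarily $m=1$; to conclude $r_{I_i}(p_i)\geq 1$ when $m>1$ you should invoke the codimension-one decomposition of minimizing currents into multiplicity-one sheets (so that Allard applies to each sheet), rather than Allard directly.
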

\begin{remark}
The assumption $K<\epsilon$ is of little consequence, since this just means focusing the estimates on balls of sufficiently small radius after rescaling. Note that since $I$ is a minimizing hypersurface, we know that $H=0$ in this case.
\end{remark}
\vspace{.5cm}

\subsection{Hausdorff, Minkowski, and packing Content}\label{ss:haus_mink}

In this subsection we give a brief review of the notions of Hausdorff, Minkowski, and packing content.  We will also use this to recall the definition of Hausdorff measure.  The results of this subsection are completely standard, but this gives us an opportunity to introduce some notation for the paper.  For a more detailed reference, we refer the reader to \cite{mattila,Fed}. Let us begin with the notions of content:

\begin{definition}\label{d:content}
Given a set $S\subseteq \dR^n$ and $r>0$ we define the following:
\begin{enumerate}
\item The $k$-dimensional Hausdorff $r$-content of $S$ is given by
\begin{align}
\lambda^k_r(S)\equiv \inf\Big\{\sum \omega_k r_i^k : S\subseteq \bigcup E_i \text{ and } \operatorname{diam}(E_i)\leq 2r\Big\}\, .
\end{align}
\item The $k$-dimensional Minkowski $r$-content of $S$ is given by
\begin{align}
m^k_r(S)\equiv (2r)^{k-n}\Vol \ton{\B r S}\, .
\end{align}
\item The $k$-dimensional packing $r$-content of $S$ is given by
\begin{align}
p^k_r(S)\equiv \sup\Big\{\sum \omega_k r_i^k : \, x_i\in S\text{ and }\{B_{r_i}(x_i)\} \text{ are disjoint}\text{ and }r_i\leq r\Big\}\, .
\end{align}
\end{enumerate}
\end{definition}

These definitions make sense for any $k\in [0,\infty)$, though in this paper we will be particularly interested in integer valued $k$.  Notice that if $S$ is a compact set then $\lambda^k_r(S),m^k_r(S)<\infty$ for any $r>0$, and that we always have the relations
\begin{align}
\lambda^k_r(S) \lesssim m^k_r(S) \lesssim p^k_r(S)\, .
\end{align}
In particular, bounding the Hausdorff content is less powerful than bounding the Minkowski content, which is itself less powerful than bounding the packing content.  

Primarily in this paper we will be mostly interested in content estimates, because these are the most effective estimates.  However, since it is classical, let us go ahead and use the Hausdorff content to define a measure.  To accomplish this, let us more generally observe that if $r\leq r'$ then $\lambda^k_r(S)\geq \lambda^{k}_{r'}(S)$. In particular, we can define the limit
\begin{align}
\lambda^k_0(S)\equiv \lim_{r\to 0} \lambda^k_r(S)=\sup_{r>0}\lambda^k_r(S)\, .\notag
\end{align}

It turns out that $\lambda^k_0$ is a genuine measure.  

\begin{definition}\label{d:haus_meas}
Given a set $S\subseteq \dR^n$ we define its $k$-dimensional Hausdorff measure by $\lambda^k(S)\equiv \lambda^k_0(S)$.
\end{definition}

Similar constructions can be carried out for the Minkowski and packing content. In particular, we can define 
\begin{gather}
 \overline{m}^k_0 (S)\equiv \limsup_{r\to 0} m^k_r(S)\, ,\quad \underline{m}^k_0 (S)\equiv \liminf_{r\to 0} m^k_r(S)\, ,\\
 p^k_0(S)=\lim_{r\to 0}p^k_r(S)=\inf_{r>0} p^k_r(S)\, . 
\end{gather}

\begin{definition}\label{d:dimension}
Given a set $S\subseteq \dR^n$ we define its Hausdorff and Minkowski dimension (or box-dimension) by
\begin{align}
\dim_H S\equiv \inf\Big\{k\geq 0: \lambda^k_0(S)=0\Big\}\, ,\notag\\
\dim_M S\equiv \inf\Big\{k\geq 0: \overline m^k_0(S)=0\Big\}\, .
\end{align}
\end{definition}
\begin{remark}
 Note that we could define an upper and lower Minkowski dimension by
 \begin{gather}
 \overline{\dim}_M S\equiv \inf\Big\{k\geq 0: \overline m^k_0(S)=0\Big\}\, ,\quad  \underline{\dim}_M S\equiv \inf\Big\{k\geq 0: \underline m^k_0(S)=0\Big\}\, .
 \end{gather}
In general, $\underline{\dim}_M S\leq \overline{\dim}_M S$, where the inequality may be strict. However, for the purposes of this paper we will only be interested in the \textit{upper} Minkowski dimension.
\end{remark}

As an easy example consider the rationals $\dQ^n\subseteq \dR^n$.  Then it is a worthwhile exercise to check that $\dim_H \dQ^n = 0$, while $\dim_M \dQ^n = n$.  \\

A very important notion related to measures is the \textit{density} at a point. Although this is standard, for completeness we briefly recall the definition of Hausdorff density, and refer the reader to \cite[chapter 6]{mattila} for more on this subject.

\begin{definition}
 Given a set $S\subset \R^n$ which is $\lambda^k$-measurable, and $x\in \R^n$, we define the $k$-dimensional upper and lower density of $S$ at $x$ by
 \begin{gather}
  \theta^{\star k}(S,x)  = \limsup_{r\to 0} \frac{\lambda^k(S\cap \B{r}{x})}{\omega_k r^k}\, ,\quad 
  \theta^{k}_\star (S,x) = \liminf_{r\to 0} \frac{\lambda^k(S\cap \B{r}{x})}{\omega_k r^k}\, .
 \end{gather}
\end{definition}
In the following, we will use the fact that for almost any point in a set with finite $\lambda^k$-measure, the density is bounded from above and below.
\begin{proposition}[ \cite{mattila}]\label{prop_dens}
 Let $S\subset \R^n$ be a set with $\lambda^k(S)<\infty$. Then for $k$-a.e. $x\in S$:
 \begin{gather}
  2^{-k}\leq \theta^{\star k}(S,x)\leq 1\, ,
 \end{gather}
while for $k$-a.e. $x\in \R^n \setminus S$
\begin{gather}
 \theta^{\star k}(S,x)=0\, .
\end{gather}
\end{proposition}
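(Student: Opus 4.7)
The statement is a classical density lemma for Hausdorff measures on $\mathbb{R}^n$, and all three inequalities follow from Vitali or Besicovitch covering arguments combined with the finiteness $\lambda^k(S) < \infty$. In each case I would isolate an ``exceptional'' set on which the density bound fails, produce a fine cover of it by balls or sets tailored to the density assumption, and derive a contradiction from the chain of inequalities relating $\sum \omega_k r_j^k$, the measures $\lambda^k(S \cap B_{r_j}(x_j))$, and $\lambda^k(S)$.

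For the upper bound $\theta^{\star k}(S,x) \leq 1$ for $\lambda^k$-a.e.\ $x \in S$, fix $t > 1$ and set $A_t = \{x \in S : \theta^{\star k}(S,x) > t\}$. Each $x \in A_t$ admits arbitrarily small radii $r$ with $\lambda^k(S \cap B_r(x)) > t \omega_k r^k$, yielding a Besicovitch-fine cover of $A_t$. Extract from it a countable disjoint subfamily $\{B_{r_j}(x_j)\}$ with $r_j \leq \delta$ covering $\lambda^k$-almost all of $A_t$ and contained in a prescribed open $U \supseteq A_t$; then
\begin{equation*}
\lambda^k_\delta(A_t) \;\leq\; \sum_j \omega_k r_j^k \;\leq\; t^{-1}\sum_j \lambda^k\bigl(S \cap B_{r_j}(x_j)\bigr) \;\leq\; t^{-1}\lambda^k(S \cap U).
\end{equation*}
Outer regularity of $\lambda^k$ restricted to $S$ (a Radon measure since $\lambda^k(S) < \infty$) lets us take $\lambda^k(S \cap U) \to \lambda^k(A_t)$, and $\delta \to 0$ then gives $\lambda^k(A_t) \leq t^{-1}\lambda^k(A_t)$, hence $\lambda^k(A_t) = 0$; a countable union over $t = 1+1/n$ completes the argument. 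The exterior statement $\theta^{\star k}(S,x) = 0$ for $\lambda^k$-a.e.\ $x \in \mathbb{R}^n \setminus S$ is proved by the same argument applied to $C_t = \{x \notin S : \theta^{\star k}(S,x) > t\}$, choosing $U \supseteq C_t$ with $\lambda^k(S \cap U)$ arbitrarily small, which is possible since $C_t \cap S = \emptyset$.

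For the lower bound $\theta^{\star k}(S,x) \geq 2^{-k}$ for $\lambda^k$-a.e.\ $x \in S$, fix $s < 2^{-k}$, set $B_s = \{x \in S : \theta^{\star k}(S,x) < s\}$, and stratify by the scale $r_0(x) > 0$ below which $\lambda^k(S \cap B_r(x)) < s\omega_k r^k$, so it suffices to show $\lambda^k(B_s^j) = 0$ for $B_s^j = \{x \in B_s : r_0(x) > 1/j\}$. Take a near-optimal cover $\{E_i\}$ of $B_s^j$ with $\operatorname{diam}(E_i) \leq 2\delta \leq 2/j$; for each $E_i$ hitting $B_s^j$ pick $y_i \in E_i \cap B_s^j$, note $E_i \subseteq \overline{B_{\operatorname{diam}(E_i)}(y_i)}$, and use the density bound at scale $\operatorname{diam}(E_i) < r_0(y_i)$ to get
\begin{equation*}
\lambda^k(S \cap E_i) \;<\; s\,\omega_k\,\operatorname{diam}(E_i)^k \;=\; 2^k s \cdot \omega_k\bigl(\operatorname{diam}(E_i)/2\bigr)^k.
\end{equation*}
Summing gives $\lambda^k(B_s^j) \leq 2^k s \cdot \lambda^k_\delta(B_s^j)$, and letting $\delta \to 0$ forces $\lambda^k(B_s^j) = 0$ since $2^k s < 1$. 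The one mildly subtle point is precisely this factor $2^{-k}$: it comes from the diameter-versus-radius mismatch between the definition of $\lambda^k$ (via $\operatorname{diam}(E_i) \leq 2 r_i$) and the definition of density (via balls of radius $r$). Otherwise the three steps are essentially parallel textbook computations, and the main ``obstacle'' is just careful bookkeeping of the two covering theorems.
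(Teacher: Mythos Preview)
The paper does not prove this proposition at all: it is quoted with a citation to Mattila's book and used as background. Your argument is the standard textbook proof (essentially Mattila, Theorem~6.2), and it is correct.

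One small remark on the exterior statement. When you say the same Besicovitch--Vitali extraction applies to $C_t=\{x\notin S:\theta^{\star k}(S,x)>t\}$, note that you cannot use the disjoint-subfamily version of Besicovitch with respect to the measure $\lambda^k|_S$, since $\lambda^k|_S(C_t)=0$ and ``covering $\lambda^k|_S$-a.e.\ of $C_t$'' is vacuous. Use instead the bounded-overlap form of Besicovitch (the $c(n)$ disjoint families whose union genuinely covers $C_t$); this introduces a harmless dimensional constant in the inequality $\lambda^k_\delta(C_t)\le c(n)\,t^{-1}\lambda^k(S\cap U)$, and since you are driving $\lambda^k(S\cap U)\to 0$ by outer regularity the conclusion $\lambda^k(C_t)=0$ is unaffected. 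With that adjustment your three steps are complete.
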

\vspace{.5 cm}

\subsection{Examples}\label{ss:examples}

In this subsection we present a few examples which motivate the sharpness of our results.  

\subsubsection{The Simons Cone and Sharp Estimates}

In order to study the sharpness of the estimates of theorem \ref{t:main_min_weak_L7} we study some examples.  One may use either the Simons cone or the Lawson cone for this analysis.  The Simons cone $C\subseteq \dR^8$ is a cone over the surface
\begin{align}
S^3\Big(\frac{1}{\sqrt{2}}\Big)\times S^3\Big(\frac{1}{\sqrt{2}}\Big)\subseteq S^7\, .
\end{align}
It has been shown \cite{BGG_simonscone} that $C$ is an area minimizing cone.  It is easy to check that for $x\in C$ we have that $|A|(x)=\sqrt 6 |x|^{-1}$, where $A$ is the second fundamental form (see \cite[remark B.3]{simon_GMT}).  In particular, we get that $|A|\in L^7_{weak}$, but $|A|\not\in L^7_{loc}$, showing that the estimates of theorem \ref{t:main_min_weak_L7} are sharp.

\subsubsection{Rectifiable-Reifenberg Example I}

Let us begin with an easy example, which shows that the rectifiable conclusions of theorem \ref{t:reifenberg_W1p_holes} is sharp.  That is, one cannot hope for better structural results under the hypothesis.  Indeed, consider any $k$-dimensional subspace $V^k\subseteq \dR^n$, and let $S\subseteq V^k\cap B_2(0^n)$ be an arbitrary measurable subset.  Then clearly $D(x,r)\equiv 0$ for each $x$ and $r>0$, and thus the hypotheses of theorem \ref{t:reifenberg_W1p_holes} are satisfied, however $S$ clearly need not be better than rectifiable.  In the next example we shall see that $S$ need not even come from a single rectifiable chart, as it does in this example.\\

\subsubsection{Rectifiable-Reifenberg Example II}\label{sss:reifenberg}

With respect to the conclusions of theorem \ref{t:reifenberg_W1p_holes} there are two natural questions regarding how sharp they are.  First, is it possible to obtain more structure from the set $S$ than rectifiable?  In particular, in theorem \ref{t:reifenberg_W1p} there are topological conclusions about the set, is it possible to make such conclusions in the context of theorem \ref{t:reifenberg_W1p_holes}?  In the last example we saw this is not the case.  Then a second question is to ask whether we can at least find a single rectifiable chart which covers the whole set $S$.  
This example taken from \cite[counterexample 12.4]{davidtoro} shows that the answer to this question is negative as well. \\ 

To build our examples let us first consider a unit circle $S^1\subseteq \dR^3$.  Let $M^2\supset S^1$ be a smooth M\"{o}bius strip around this circle, and let $S_\epsilon\subseteq M^2\cap B_\epsilon(S^1)\equiv M^2_\epsilon$ be an arbitrary $\lambda^2$-measurable subset of the M\"{o}bius strip, contained in a small neighborhood of the $S^1$.  In particular, $Area(S_\epsilon)\leq C\epsilon\to 0$ as $\epsilon\to 0$.  It is not hard, though potentially a little tedious, to check that assumptions of theorem \ref{t:reifenberg_W1p_holes} hold for $\delta\to 0$ as $\epsilon\to 0$.\\

However, we have learned two points from these example.  First, since $S_\epsilon$ was an arbitrary measurable subset of a two dimensional manifold, we have that it is $2$-rectifiable, however that is the most which may be said of $S_\epsilon$.  That is, structurally speaking we cannot hope to say better than $2$-rectifiable about the set $S_\epsilon$.  More than that, since $S_\epsilon$ is a subset of the M\"{o}bius strip, we see that even though $S_\epsilon$ is rectifiable, we cannot even cover $S_\epsilon$ by a single chart from $B_1(0^2)$, as a M\"{o}bius strip is not orientable. See \cite{davidtoro} for more on this.\\

\vspace{.5cm}

\subsection{The Classical Reifenberg theorem}\label{ss:reifenberg}

In this Section we recall the classical Reifenberg theorem, as well as some more recent generalizations.  The Reifenberg theorem gives criteria on a closed subset $S\subseteq B_2\subseteq \dR^n$ which determine when $S\cap B_1$ is bi-H\"older to a ball $B_1(0^k)$ in a smaller dimensional Euclidean space.  The criteria itself is based on the existence of {\it good} best approximating subspaces at each scale.  We start by recalling the Hausdorff distance.
\begin{definition}\label{d:haus_dist}
Given two sets $A,B\subseteq \R^n$, we define the Hausdorff distance between these two by
\begin{gather}
 d_H(A,B)=\inf \cur{r\geq 0 \ \ s.t. \ \ A\subset \B{r}{B} \ \ \text{and} \ \ B\subset \B{r}{A} }\, .
\end{gather}
Recall that $d_H$ is a distance on closed sets, meaning that $d_H(A,B)=0$ implies $\overline A = \overline B$.
\end{definition}
\vspace{.5cm}

The classical Reifenberg theorem says the following:\\

\begin{theorem}[Reifenberg theorem \cite{reif_orig,simon_reif}]\label{t:classic_reifenberg}
For each $0<\alpha<1$ and $\epsilon>0$ there exists $\delta(n,\alpha,\epsilon)>0$ such that the following holds.  Assume $0^n\in S\subseteq B_2\subseteq \dR^n$ is a closed subset, and that for each $x\in S\cap \B 1 0 $ and $r\in (0,1]$ we have
\begin{align}\label{e:reifenberg_L_infty_orig}
\inf_{L^k} d_H\big( S\cap B_r(x),L^k\cap B_r(x)\big)<\delta\, r\, ,
\end{align}
where the $\inf$ is taken over all $k$-dimensional affine subspaces $L^k\subseteq \dR^n$.  Then there exists $\phi:B_1(0^k)\to S$ which is a $C^\alpha$ bi-H\"older homeomorphism onto its image with $[\phi]_{C^\alpha},[\phi^{-1}]_{C^\alpha}<1+\epsilon$ and $S\cap B_1\subseteq \phi(B_1(0^k))$.

\end{theorem}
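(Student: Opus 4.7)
First, I would fix a dyadic sequence of scales $r_\alpha = 2^{-\alpha}$ and, for every $x \in S$ and every $\alpha \ge 0$, select a $k$-plane $L_{x,\alpha}$ through a point of $S$ essentially achieving the infimum in \eqref{e:reifenberg_L_infty_orig} at the ball $B_{r_\alpha}(x)$. The crucial geometric consequences of the hypothesis are: (i) if $x,y \in S$ with $|x-y| \le r_\alpha$, then $d_H\bigl(L_{x,\alpha}\cap B_{r_\alpha}(x),\, L_{y,\alpha}\cap B_{r_\alpha}(x)\bigr) \le C\delta\, r_\alpha$; and (ii) $d_H\bigl(L_{x,\alpha}\cap B_{r_\alpha}(x),\, L_{x,\alpha+1}\cap B_{r_\alpha}(x)\bigr) \le C\delta\, r_\alpha$. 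Both follow by triangle inequality from the $\delta$-Hausdorff hypothesis applied at scales $r_\alpha$ and $r_{\alpha+1}$. A standard volume argument converts these Hausdorff closeness statements into a Grassmannian angle bound $\angle(L_{x,\alpha}, L_{y,\alpha}) \le C\delta$ whenever $|x-y| \le r_\alpha$, and similarly for consecutive scales.

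Next, I would carry out the classical iterative construction. At each scale $\alpha$ choose a maximal $r_\alpha$-separated net $\{x_i^\alpha\} \subset S$, fix a smooth partition of unity $\{\varphi_i^\alpha\}$ subordinate to $\{B_{2r_\alpha}(x_i^\alpha)\}$ with $|\nabla \varphi_i^\alpha| \le C r_\alpha^{-1}$, and define a ``squishing'' map
\begin{equation*}
\sigma_\alpha(z) \;=\; z + \sum_i \varphi_i^\alpha(z)\,\bigl(\pi_{L_{x_i^\alpha,\alpha}}(z) - z\bigr),
\end{equation*}
where $\pi_L$ denotes orthogonal projection onto the affine plane $L$. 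Setting $\phi_0 = \mathrm{id}$ and $\phi_{\alpha+1} = \sigma_{\alpha+1} \circ \phi_\alpha$, the planewise closeness (i) and (ii) yield $\|\phi_{\alpha+1} - \phi_\alpha\|_{L^\infty(B_{3/2})} \le C\delta\, r_\alpha$, so $\phi \equiv \lim_\alpha \phi_\alpha$ exists uniformly and is continuous. Restricting $\phi$ to $B_1(0^k) \subseteq L_{0,0}$ (after an affine identification of $L_{0,0}$ with $\R^k$) yields the candidate parametrization, and the $\delta$-approximation hypothesis applied at every scale guarantees $\phi(B_1(0^k)) \supseteq S \cap B_1$.

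The main technical obstacle is the bi-H\"older estimate with sharp constant $1+\epsilon$. The standard route is to fix $p, q \in B_1(0^k)$, let $\alpha$ be such that $r_{\alpha+1} \le |p-q| \le r_\alpha$, and compare $|\phi(p) - \phi(q)|$ with the ``frozen'' distance $|\pi_{L_{x,\alpha}}(p) - \pi_{L_{x,\alpha}}(q)|$ at the last scale for which $p,q$ lie in a common ball. The error at each intermediate scale $\alpha' < \alpha$ is controlled by the rotation angle $\angle(L_{x,\alpha'}, L_{x,\alpha'+1}) \le C\delta$, and these rotation errors enter multiplicatively. Summing the geometric-type series across the $\alpha$ scales between $|p-q|$ and $1$ gives
\begin{equation*}
(1 - C\delta)\,|p-q|^{1/(1-C'\delta)} \;\le\; |\phi(p)-\phi(q)| \;\le\; (1+C\delta)\,|p-q|^{1-C'\delta},
\end{equation*}
which for $\delta = \delta(n,\alpha,\epsilon)$ sufficiently small produces the H\"older exponent $\alpha$ and bi-H\"older constant $1+\epsilon$.

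The hard part is precisely the last paragraph: converting pointwise rotation estimates at consecutive scales into a sharp multiplicative H\"older bound, rather than a merely bounded one. One must track not only that $\phi_\alpha$ converges but that the composition of $O(\log(1/|p-q|))$ small rotations accumulates to at most a factor $1 + \epsilon$; this is what forces $\delta \to 0$ as $\alpha \to 1$ or $\epsilon \to 0$. Everything else (existence of the net, the partition of unity, $L^\infty$ convergence, surjectivity onto $S \cap B_1$) is essentially bookkeeping once the Grassmannian closeness (i), (ii) is established.
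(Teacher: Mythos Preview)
The paper does not actually prove this theorem: it is stated as the classical Reifenberg theorem with citations to \cite{reif_orig,simon_reif}, and no proof is given in the text. So there is no ``paper's own proof'' to compare against.

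That said, your outline is precisely the classical construction (nets at dyadic scales, partition-of-unity interpolation of affine projections, iterated maps $\sigma_\alpha$, and accumulation of rotation errors into a H\"older exponent), and it matches the machinery the paper develops in Section~\ref{ss:best_comparison} for its \emph{new} Reifenberg-type results---in particular your map $\sigma_\alpha$ is exactly the map of Definition~\ref{deph_sigma}, and your ``squishing'' step is the content of the squash lemma~\ref{lemma_squash}. So while the paper does not prove the classical statement, your approach is consistent with both the cited references and the techniques the paper itself uses for its generalizations.
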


\begin{remark}\label{rem:classic_reifenberg_+}
 In fact, one can prove a little more. In particular, under the hypothesis of the previous theorem, there exists a closed subset $S'\subset \R^n$ such that $S'\cap \B 1 0 = S \cap \B 1 0$ and which is homeomorphic to a $k$-dimensional subspace $0^n\in T_0\subseteq \R^n$ via the $C^\alpha$ bi-H\"older homeomorphism $\phi:T_0\to S'$. Moreover, $\abs{\phi(x)-x}\leq C(n)\delta$ for all $x\in T_0$ and $\phi(x)=x$ for all $x\in T_0\setminus \B 2 0$.
\end{remark}

One can paraphrase the above to say that if $S$ can be well approximated on every ball by a subspace in the $L^\infty$-sense, then $S$ must be bi-\hol to a ball in Euclidean space.\\

Let us also mention that there are several more recent generalizations of the classic Reifenberg theorem.  In \cite{Toro_reif}, the author proves a strengthened version of \eqref{e:reifenberg_L_infty_orig} that allows one to improve bi-\hol to bi-Lipschitz.  Unfortunately, for the applications of this paper the hypotheses of \cite{Toro_reif} are much too restrictive.  We will require a weaker condition than in \cite{Toro_reif}, which is more integral in nature, see theorem \ref{t:reifenberg_W1p}.  In exchange, we will improve the bi-\hol of the classical Reifenberg to $W^{1,p}$.  \\

We will also need a version of the classical Reifenberg which only assumes that the subset $S$ is contained near a subspace, not conversely that the subspace is also contained near $S$.  In exchange, we will only conclude the set is rectifiable.  A result in this direction was first proved in \cite{davidtoro}, but again the hypotheses are too restrictive for the applications of this paper, and additionally there is a topological assumption necessary for the results of \cite{davidtoro}, which is not reasonable in the context in this paper.  We will see how to appropriately drop this assumption in theorem \ref{t:reifenberg_W1p_holes}. \\  

\section{\texorpdfstring{The $W^{1,p}$-Reifenberg theorem}{The W(1,p)-Reifenberg theorem}}\label{s:biLipschitz_reifenberg}

In this Section we recall some Reifenberg-type theorems first introduced in \cite{NaVa+}.  In \cite{NaVa+} we focused our attention on proving the rectifiable-Reifenberg of theorem \ref{t:reifenberg_W1p_holes}, and in this section we will focus our attention on proving the discrete Reifenberg of theorem \ref{t:reifenberg_W1p_discrete}.  The proofs of the two results are very similar.\\

\subsection{\texorpdfstring{Statement of Main $W^{1,p}$-Reifenberg and rectifiable-Reifenberg Results}{Statement of Main W 1p - Reifenberg and rectifiable-Reifenberg Results}}\label{ss:reifenberg_statements}


Before turning to the statements of the theorems, let us introduce some definitions in order to keep the statements as clean and intuitive as possible.



\begin{definition}
Let $\mu$ be a measure on $B_2$ with $r>0$ and $k\in \dN$.  Then we define the $k$-dimensional displacement by
\begin{align}\label{eq_deph_D}
D^{k}_\mu(x,r)\equiv \inf_{L^k}r^{-(k+2)}\int_{B_{r}(x)}d^2(y,L^k)\,d\mu(y)\, ,
\end{align}
if $\mu(B_r(x))\geq\epsilon_n r^k\equiv (1000n)^{-7n^2}  r^k$, and $D^{k}_\mu(x,r)\equiv 0$ otherwise, where the $\inf$'s are taken over all $k$-dimensional affine subspaces $L^k\subseteq \dR^n$.  If $S\subseteq B_2$, then we can define its $k$-displacement $D^{k}_S(x,r)$ by associating to $S$ the $k$-dimensional Hausdorff measure $\lambda^k_S$ restricted to $S$. 

Sometimes, we will omit the index $k$ and the subscript $\mu$ or $S$ when there can be no risk of confusion from the context. In particular, we will often write $D(x,r)$ for $D^{k}_\mu (x,r)$.
\end{definition}

\begin{remark}
One can replace $\epsilon_n$ by any smaller lower bound, and the proofs and statements will all continue to hold.
\end{remark}

\begin{remark}
Notice that the definitions are scale invariant.  In particular, if we rescale $B_r\to B_1$ and let $\tilde S$ be the induced set, then $D^{k}_S(x,r)\to D^{k}_{\tilde S}(x,1)$.
\end{remark}
\begin{remark}\label{rem_mon}
Notice the monotonicity given by the following:  If $\mu'\leq \mu$, then $D^k_{\mu'}(x,r)\leq D^k_\mu(x,r)$.
\end{remark}


\begin{remark}\label{r:D_scale_control}
 It is immediate to see from the definition that, up to dimensional constants, $D^k_\mu(x,r)$ is controlled on both sides by $D^k_\mu(x,r/2)$ and $D^k_\mu(x,2r)$.  In particular, if $\mu(B_{r}(x))\geq \gamma_k r^k=\omega_k 40^{-k} r^k>>\epsilon_n r^k$, then for all $y\in \B r x$,  $D^k_\mu(x,r)\leq 2^{k+2} D^k_\mu(y,2r)$. As a corollary we have the estimate
 \begin{align}\label{eq_estDint}
  &D^k_\mu(x,r)\leq 2^{k+2} \fint_{\B r x } D^k_\mu(y,2r)d\mu(y)\, .
 \end{align}
\end{remark}

\vspace{.5cm}

Before introducing the results which are really needed for the paper, it is worth mentioning the $W^{1,p}$-Reifenberg theorem obtained in \cite{NaVa+}. This is a natural generalization of the Reifenberg and gives intuition and motivation for the rest of the statements, which are essentially more complicated versions of it.  We do not prove the theorem in this paper, instead we refer the reader to \cite[theorem 3.2]{NaVa+}.

\begin{theorem}[$W^{1,p}$-Reifenberg]\cite[theorem 3.2]{NaVa+}\label{t:reifenberg_W1p}
For each $\epsilon>0$ and $p\in [1,\infty)$ there exists $\delta(n,\epsilon,p)>0$ such that the following holds.  Let $S\subseteq B_4\subseteq \dR^n$ be a closed subset with $0^n\in S$, and assume for each $x\in S\cap B_1$ and $B_r(x)\subseteq B_4$ that
\begin{align}
&\inf_{L^k} d_H\big( S\cap B_r(x),L^k\cap B_r(x)\big)<\delta r \, , \label{e:reifenberg_L_infty} \\
&\int_{S\cap B_r(x)}\,\ton{\int_0^r D^k_S(y,s)\,\frac{ds}{s}}\, d\lambda^k(y)<\delta^2r^{k}\, . \label{e:reifenberg_excess_L2}
\end{align}
Then the following hold:
\begin{enumerate}
\item there exists a mapping $\phi:\R^k\to \R^n$ which is a $1+\epsilon$ bi-$W^{1,p}$ map onto its image and such that $S\cap \B 1 {0^n} = \phi(B_1(0^k))\cap \B 1 {0^n}$.
\item $S\cap B_1(0^n)$ is countably $k$-rectifiable.
\item For each ball $B_r(x)\subseteq B_1$ with $x\in S$ we have 
\begin{gather}\label{eq_lambda_lower_upper}
(1-\epsilon)\omega_k r^k\leq \lambda^k(S\cap B_r(x))\leq (1+\epsilon)\omega_k r^k\, .
\end{gather}
\end{enumerate}
\end{theorem}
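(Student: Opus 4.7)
The plan is to upgrade the classical Reifenberg iteration, using the additional integral excess hypothesis \eqref{e:reifenberg_excess_L2} to promote the bi-Hölder conclusion to a bi-$W^{1,p}$ one. I work at dyadic scales $r_\alpha = 2^{-\alpha}$ and follow the standard outline: (i) at each $x \in S \cap B_1$ and scale $\alpha$, fix a $k$-plane $L_{x,\alpha}$ that nearly attains the infimum defining $D^k_S(x,r_\alpha)$, with $\pi_{x,\alpha}$ its linear projection; (ii) on a Vitali cover $\{B_{r_\alpha/10}(x_i)\}$ with $x_i \in S$ and subordinate partition of unity $\{\psi_{i,\alpha}\}$, define
\[
\sigma_\alpha(y) = \sum_i \psi_{i,\alpha}(y)\, p_{L_{x_i,\alpha}}(y),
\]
where $p_L$ denotes orthogonal projection onto the affine plane $L$; (iii) compose to $T_\alpha = \sigma_\alpha \circ \cdots \circ \sigma_0$ and define $\phi$ as the limit of $T_\alpha$ restricted to the initial plane $L_0$.

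Hypothesis \eqref{e:reifenberg_L_infty} alone gives $\|\pi_{x,\alpha+1} - \pi_{x,\alpha}\| \lesssim \delta$ and is enough to yield the classical bi-Hölder convergence, injectivity of $\phi$, and the identity $\phi(L_0 \cap B_1) = S \cap B_1$; this follows the argument of Reifenberg and Simon, and gives conclusion (1) at the Hölder level. The new content is the Sobolev bound, which is extracted from \eqref{e:reifenberg_excess_L2} via a best-plane tilting lemma: if $\mu(B_r(x)) \gtrsim r^k$ and $L, L'$ are two $k$-planes with $\mu$-quadratic errors $D, D'$ on $B_r(x)$, then the Grassmannian angle satisfies $\angle(L,L')^2 \lesssim D + D'$ (essentially Chebyshev applied to the $L^2$ distance to each plane). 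Applied between consecutive scales this yields the per-scale tilt bound $\theta_{x,\alpha}^2 \lesssim D^k_S(x,r_\alpha) + D^k_S(x,r_{\alpha-1})$.

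Since $|\nabla \sigma_\alpha - \mathrm{Id}|$ at a point is controlled by the maximum of the tilts $\theta_{x_i,\alpha}$ over the few partition elements active at that point, telescoping the chain rule along $T_\alpha$ and pulling back via $T_{\alpha-1}$ (which is bi-Lipschitz with constants $1 + O(\delta)$) gives
\[
\int_{L_0 \cap B_1}|\nabla \phi - \pi_{L_0}|^2 \lesssim \int_{S\cap B_1}\sum_{\alpha \geq 0} \theta_{x,\alpha}^2 \, d\lambda^k \lesssim \int_{S\cap B_1}\int_0^1 D^k_S(x,s)\,\frac{ds}{s}\, d\lambda^k \leq C\delta^2,
\]
by \eqref{e:reifenberg_excess_L2}. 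Upgrading from $L^2$ to $L^p$ for arbitrary $p < \infty$ uses that each $\theta_{x,\alpha} \lesssim \delta$ pointwise, giving $\||\nabla \phi| - 1\|_{L^\infty} \lesssim \delta$, then interpolating this $L^\infty$ bound against the $L^2$ bound above. The identical argument applied to $\phi^{-1}$, using the same tilt estimates for the reversed composition, gives the bi-$W^{1,p}$ assertion in (1).

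The principal obstacle is the simultaneous induction on $\alpha$ needed to maintain three interdependent conditions: an Ahlfors-type lower bound $\lambda^k(S\cap B_r(x)) \gtrsim r^k$ on every $B_r(x) \subseteq B_1$ with $x \in S$ (required to turn small $D^k$ into small tilts); bi-Lipschitz comparability of $T_\alpha$ with the identity with constants $1+O(\delta)$ (required for the change-of-variable in the estimate above to be faithful); and the cumulative Sobolev bound on $T_\alpha$. Each must be bootstrapped from the previous scale, and the sharp measure bound (3) is effectively the closing condition of this induction: the area formula applied to $\phi$ together with smallness of $\|\nabla \phi - \pi_{L_0}\|_{L^p}$ yields both the upper and lower bounds in \eqref{eq_lambda_lower_upper}. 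Conclusion (2) then follows immediately, since a bi-$W^{1,p}$ image of a $k$-plane is a rectifiable set.
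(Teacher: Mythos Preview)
The paper does not contain a proof of this theorem: immediately after the statement it says ``We do not prove the theorem in this paper, instead we refer the reader to \cite[theorem 3.2]{NaVa+}.'' So there is no in-paper proof to compare your proposal against. What the paper \emph{does} prove is the closely related discrete Reifenberg (theorem~\ref{t:reifenberg_W1p_discrete}), and the machinery developed there highlights the main gap in your outline.

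Your telescoping step is not correct as written. You say $|\nabla\sigma_\alpha-\mathrm{Id}|\lesssim\theta_{x,\alpha}$ and then conclude $\int|\nabla\phi-\pi_{L_0}|^2\lesssim\int\sum_\alpha\theta_{x,\alpha}^2$. But the chain rule gives $|\nabla\phi-\mathrm{Id}|\lesssim\sum_\alpha\theta_{x,\alpha}$ \emph{pointwise}, so squaring and integrating yields $\int\big(\sum_\alpha\theta_{x,\alpha}\big)^2$, not $\int\sum_\alpha\theta_{x,\alpha}^2$; there is no orthogonality across scales to collapse the cross terms. The actual mechanism, which the paper isolates in the squash lemma (lemma~\ref{lemma_squash}, item~\eqref{it_s4}), is that when $\sigma_\alpha$ is restricted to a graph already $\theta$-close to the target plane, its tangential bi-Lipschitz distortion is $1+c\theta_\alpha^2$, i.e.\ \emph{quadratic} in the tilt rather than linear. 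That quadratic gain is exactly what makes $\prod_\alpha(1+c\theta_\alpha^2)\le\exp\big(c\sum_\alpha\theta_\alpha^2\big)$ converge under only an $\ell^2$ condition on the tilts, and it is not visible from the crude gradient bound on $\sigma_\alpha$ as a map on~$\dR^n$.

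Your $L^\infty$ upgrade is also broken: having $\theta_{x,\alpha}\lesssim\delta$ for each fixed $\alpha$ does not give $\||\nabla\phi|-1\|_{L^\infty}\lesssim\delta$, because there are infinitely many scales and $\sum_\alpha\delta=\infty$. Once the quadratic bi-Lipschitz estimate is in place, the $W^{1,p}$ conclusion comes not from interpolation against a nonexistent $L^\infty$ bound but from integrating the scale-by-scale distortion $1+c\theta_\alpha^2$ and using the integral hypothesis \eqref{e:reifenberg_excess_L2} directly.
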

\begin{remark}
Results $(2)$ and $(3)$ both follow from $(1)$.  
We get $(3)$ by applying the result of $(1)$ to all smaller balls $B_r(x)\subseteq B_1$, since the assumptions of the theorem hold on these balls as well.  
\end{remark}
\begin{remark}
Note that, for $p>k$, a bi-$W^{1,p}$ map is a bi-$C^\alpha$ map, in particular we see that $\phi(B_1(0^k))$ is homeomorphic to the ball $B_1(0^k)$. 
\end{remark}

\begin{remark}
 As it is easily seen, the requirement that $S$ is closed is essential for this theorem, and in particular for the lower bound on the Hausdorff measure. As an example, consider any set $S\subseteq \R^k$ which is dense but has zero Hausdorff measure. In the following theorems, we will not be concerned with lower bounds on the measure, and we will be able to drop the closed assumption.
\end{remark}

\vspace{.5cm}

We are going to state another generalization of Reifenberg's theorem, more discrete in nature, which will be particularly important in the proof of the main theorems of this paper:\\

\begin{theorem}[Discrete Reifenberg]\cite[theorem 3.4]{NaVa+}\label{t:reifenberg_W1p_discrete}
There exists $\delta(n)>0$ and $D(n)$ such that the following holds.  Let $\{B_{r_s}(x_s)\}_{s\in S}\subseteq B_2$ be a collection of disjoint balls with $x_s\in \B 1 0$, and let $\mu\equiv \sum_{s\in S}\omega_k r^k_s \delta_{x_s}$ be the associated measure.  Assume that for each $B_r(x)\subseteq B_4$ with $\mu(\B{r}{x})\geq \gamma_k  r^k=\omega_k (r/40)^k$ we have
\begin{align}\label{e:reifenberg_L_2_discrete}
&\int_{B_r(x)}\ton{\int_0^r D^k_\mu(y,t)\,{\frac{dt}{t}}}\, d\mu(y)<\delta^2 r^{k}\, .
\end{align}
Then we have the estimate 
\begin{align}
\sum_{s\in S} r_s^k<D(n)\, .
\end{align}
\end{theorem}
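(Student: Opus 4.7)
The plan is to prove the packing bound $\sum_{s\in S} r_s^k < D(n)$ by an inductive construction across dyadic scales, using at each scale the best $k$-plane approximation provided by the hypothesis on $D^k_\mu$ together with a Carleson-type summation of the $L^2$-displacement along the resulting covering tree. After rescaling it suffices to treat $\mu$ on $B_1$; the disjointness of the balls $B_{r_s}(x_s)$ means the packing sum equals the total mass $\mu(B_1)\cdot \omega_k^{-1}$ of a measure concentrated at the leaves of a multi-scale covering, so the task is to bound the mass at the top scale by a dimensional constant.

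The first technical step is a one-scale covering lemma. For any ball $B_r(x)$ with $\mu(B_r(x)) \geq \gamma_k r^k$, choose an affine $k$-plane $L^k_{x,r}$ realizing $D^k_\mu(x,r)$. Chebyshev applied to $r^{-k-2}\int_{B_r(x)} d^2(y, L^k_{x,r})\, d\mu(y) \leq D^k_\mu(x,r)$ shows that, outside a subset of $\mu$-mass proportional to $D^k_\mu(x,r)\,r^k/\eta^2$, the support lies in the $\eta r$-tube of $L^k_{x,r}$ for any threshold $\eta > 0$. Covering $L^k_{x,r}\cap B_r(x)$ by dimensionally many balls of radius $r/2$ centered on the plane then produces a refinement of $B_r(x)$ into children balls, and comparing best planes at successive scales gives a control of the form $\angle(L^k_{x,r}, L^k_{y,r/2})^2 \lesssim D^k_\mu(x,r) + D^k_\mu(y,r/2)$, ensuring the refinement process remains stable.

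Iterating this construction produces a tree of balls whose leaves are either the original terminal balls $B_{r_s}(x_s)$ or balls where the local mass assumption $\mu(B_r(y)) \geq \gamma_k r^k$ fails (in which case that subtree contributes negligibly). The packing sum telescopes into the top-scale mass plus a sum, over all tree nodes $(y_\alpha, r_\alpha)$, of errors bounded by $D^k_\mu(y_\alpha, r_\alpha)\, r_\alpha^k$. A Carleson-type identity rewrites this sum as a double integral against $d\mu \otimes dt/t$, which by hypothesis is bounded by $\delta^2$. Choosing $\delta(n)$ small then absorbs this error into a fixed fraction of the mass being estimated and yields the packing bound $D(n)$.

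The main obstacle is exactly the error accumulation across scales: the bound on $D^k_\mu$ is only $L^2$-integral in nature, not pointwise $L^\infty$, so at individual scales the plane $L^k_{x,r}$ may deviate significantly from the true local geometry. This prevents any direct pointwise matching of planes at successive generations and forces one to carry out the entire refinement argument at the level of measures, using Cauchy--Schwarz and a Carleson embedding in place of the classical pointwise Reifenberg geometry; making this averaging argument compatible with the stopping-time construction that handles the terminal balls $B_{r_s}(x_s)$ of many different sizes is the crucial bookkeeping step.
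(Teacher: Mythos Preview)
Your outline captures the right overall architecture --- excess sets controlled by Chebyshev, a tree of coverings across scales, and a Carleson-type summation of the $D^k_\mu$ terms --- and these ingredients do all appear in the paper's proof. But the central claim of your sketch, that ``the packing sum telescopes into the top-scale mass plus a sum of errors bounded by $D^k_\mu(y_\alpha,r_\alpha)\,r_\alpha^k$,'' hides exactly the step where the argument lives or dies, and as written it does not close.

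The issue is multiplicative loss at each generation. Covering $L^k_{x,r}\cap B_r(x)$ by balls of radius $r/2$ produces $N=c(n)2^k$ children with $c(n)>1$, so $\sum_{\text{children}}(r/2)^k \geq c(n)\,r^k$ with a constant strictly larger than one. Iterating over $\sim\log(1/\bar r)$ generations this blows up, and the $L^2$ smallness of $D^k_\mu$ cannot absorb a fixed multiplicative loss per scale --- it can only absorb \emph{additive} errors of total size $O(\delta^2)$. Your telescoping claim would require the per-generation comparison to be of the form $1+O(D^k_\mu)$, not $c(n)+O(D^k_\mu)$, and nothing in your ball-covering step provides that.

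The paper closes this gap by replacing the discrete ball tree with a sequence of genuine $k$-dimensional $C^1$ manifolds $T_j\supset T_{j+1}'\supset\cdots$ built via the squash lemma: at each step one constructs a map $\sigma_{i+1}:T_i\to T_{i+1}$ which is bi-Lipschitz with constant $1+c\big(\sum D^k_\mu\big)$ on the relevant region, so that $\lambda^k(T_{i+1})\leq \lambda^k(T_i)+c\int D^k_\mu(\cdot,r_i)\,d\mu$ with no multiplicative loss. Bad and final balls are then accounted for as ``holes'' removed from $T_i'$, and their total $\mu$-mass is dominated by the $\lambda^k$-measure of the holes, hence by $\lambda^k(T_j)(1+c\delta)\lesssim r_j^k$. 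This manifold construction, together with the double induction (upward on the target scale $r_j$, downward in the building of $T_i$), is precisely the mechanism that converts the integral hypothesis into a packing bound; your Chebyshev-plus-ball-cover step is not a substitute for it.
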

\begin{remark}
Instead of \eqref{e:reifenberg_L_2_discrete} we may assume the estimate
\begin{align}
\sum_{\er_\alpha\leq r/2}\int_{B_r(x)}D^k_\mu(y,\er_\alpha)\, d\mu(y)<\delta^2 r^k\, .
\end{align}
In the applications, this will be the more convenient phrasing.
\end{remark}

In order to prove rectifiability of the strata, we will also need the following version of Reifenberg's theorem. The proof of this theorem relies on the same ideas as the discrete-Reifenberg, for this reason we do not report it here and we refer the interested reader to \cite[theorem 3.3]{NaVa+}.\\

\begin{theorem}[Rectifiable-Reifenberg]\cite[theorem 3.3]{NaVa+}\label{t:reifenberg_W1p_holes}
For every $\epsilon>0$, there exists $\delta(n,\epsilon)>0$ such that the following holds.  Let $S\subseteq B_4\subseteq \dR^n$ be a $\lambda^k$-measurable subset, and assume for each $B_r(x)\subseteq B_4$  with $\lambda^k(S\cap B_r(x))\geq \gamma_k r^k$ that
\begin{align}\label{e:reifenberg_displacement_L2}
\int_{S\cap B_r(x)}\,\ton{\int_0^r D^k_S(y,s)\,\frac{ds}{s}}\, d\lambda^k(y)<&\delta^2r^{k}\, . 
\end{align}
Then the following holds:
\begin{enumerate}
\item For each ball $B_r(x)\subseteq B_1$ with $x\in S$ we have 
\begin{gather}\label{eq_lambda_obj}
\lambda^k(S\cap B_r(x))\leq (1+\epsilon)\omega_k r^k \, .
\end{gather}
\item $S\cap B_1(0^n)$ is countably $k$-rectifiable.
\end{enumerate}
\end{theorem}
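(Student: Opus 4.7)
The plan is to reduce the theorem to the discrete Reifenberg Theorem~\ref{t:reifenberg_W1p_discrete} via a Vitali-type discretization of the Hausdorff measure $\lambda^k_S$, and then to extract rectifiability from the resulting packing bounds together with the integral flatness hypothesis.

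First I would prove the measure bound (1). Fix a ball $\B{r}{x}$ with $x\in S$ and $\lambda^k(S\cap \B{r}{x})\geq \gamma_k r^k$. At each dyadic scale $r_\alpha=2^{-\alpha}r$, choose a maximal collection of disjoint balls $\{\B{r_\alpha/5}{x_{\alpha,i}}\}_{i\in I_\alpha}$ with centers $x_{\alpha,i}\in S\cap \B{r}{x}$, and form the discrete measure $\mu_\alpha\equiv \sum_{i\in I_\alpha} \omega_k (r_\alpha/5)^k \delta_{x_{\alpha,i}}$. By maximality the dilates $\{\B{2r_\alpha/5}{x_{\alpha,i}}\}$ cover $S\cap \B{r}{x}$, so $\mu_\alpha$ converges weakly (up to a dimensional factor) to the restriction of $\lambda^k_S$ to $\B{r}{x}$. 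The key estimate is to show that the integral hypothesis \eqref{e:reifenberg_displacement_L2} for $\lambda^k_S$ implies the discrete hypothesis \eqref{e:reifenberg_L_2_discrete} for $\mu_\alpha$ uniformly in $\alpha$: using Remark~\ref{r:D_scale_control} together with the covering property, each subspace that is nearly optimal for $\lambda^k_S$ on $\B{2t}{y}$ is nearly optimal for $\mu_\alpha$ on $\B{t}{y}$ provided $t\geq C(n) r_\alpha$. A swap of integration order then gives
\begin{equation*}
\int_{\B{r}{x}}\int_0^r D^k_{\mu_\alpha}(y,t)\,\frac{dt}{t}\,d\mu_\alpha(y) \leq C(n)\int_{S\cap \B{r}{x}}\int_0^r D^k_{\lambda^k_S}(y,s)\,\frac{ds}{s}\,d\lambda^k(y) < C(n)\delta^2 r^k.
\end{equation*}
Choosing $\delta$ small enough allows applying Theorem~\ref{t:reifenberg_W1p_discrete} to $\mu_\alpha$, yielding $\sum_{i\in I_\alpha} r_\alpha^k\leq D(n)r^k$, hence an initial dimensional upper bound $\lambda^k(S\cap \B{r}{x})\leq C(n)r^k$.

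To upgrade this dimensional bound to the sharp $(1+\epsilon)\omega_k r^k$, I would bootstrap. Combining \eqref{e:reifenberg_displacement_L2} with a Chebyshev estimate forces $D^k_S(y,s)\to 0$ on a subset of $(y,s)$ of nearly full $\lambda^k\otimes \frac{ds}{s}$-measure as $s\to 0$, so on most small balls $S$ is $L^\infty$-close to a $k$-plane; combined with the upper Ahlfors regularity from the previous step, $\lambda^k_S$ then agrees with Lebesgue measure on the approximating plane up to a factor of $(1+\epsilon)$. A final Vitali refinement at these small scales matches the sharp Euclidean constant. For rectifiability (2), Fubini applied to \eqref{e:reifenberg_displacement_L2} shows $\int_0^1 D^k_S(y,s)\frac{ds}{s}<\infty$ for $\lambda^k$-a.e.\ $y\in S$, so $D^k_S(y,s_n)\to 0$ along some sequence $s_n\to 0$. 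Together with the measure bounds from (1) this produces an approximate tangent $k$-plane in the sense of Federer at $\lambda^k$-a.e.\ point, and the classical Besicovitch--Federer characterization of rectifiable sets (see \cite{mattila,Fed}) yields $k$-rectifiability of $S\cap \B{1}{0^n}$.

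The chief obstacle is the transfer step above: showing that the integral flatness hypothesis for the continuous measure $\lambda^k_S$ implies the discrete analogue for the Vitali measures $\mu_\alpha$ with a uniform constant. This is delicate because $D^k_\mu$ depends nonlinearly on the underlying measure, and one must carefully track how optimal subspaces change under discretization, particularly at scales comparable to $r_\alpha$ where the two measures disagree most strongly. A secondary difficulty is obtaining the sharp Euclidean volume constant $(1+\epsilon)\omega_k$ instead of the merely dimensional constant $D(n)$ produced by Theorem~\ref{t:reifenberg_W1p_discrete}, which is what forces the bootstrap refinement at small scales.
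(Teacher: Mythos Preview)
The paper does not prove this theorem; it explicitly defers to \cite[theorem 3.3]{NaVa+} and only remarks that the proof ``relies on the same ideas as the discrete-Reifenberg''. That phrase means the argument in \cite{NaVa+} \emph{re-runs} the inductive manifold construction of Section~\ref{sec_proof_main} (the maps $\sigma_i$, the approximating manifolds $T_i$, the good/bad/final decomposition, the bi-Lipschitz volume bounds \eqref{eq_volT}--\eqref{eq_volTFB}) directly with the continuous measure $\lambda^k_S$ in place of the discrete packing measure. In particular the sharp constant $(1+\epsilon)\omega_k$ in \eqref{eq_lambda_obj} is obtained because $T_j$ is a genuine $k$-plane and each $\sigma_{i+1}$ is $(1+c\delta)$-bi-Lipschitz on $T_i$; this is not a bootstrap but the direct output of the construction.

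Your plan, by contrast, attempts to \emph{reduce} Theorem~\ref{t:reifenberg_W1p_holes} to Theorem~\ref{t:reifenberg_W1p_discrete} via a Vitali discretization. This runs into a circularity that you flag but do not resolve. To transfer the hypothesis \eqref{e:reifenberg_displacement_L2} from $\lambda^k_S$ to your discrete measures $\mu_\alpha$ you need, for each center $x_{\alpha,i}$, to control $d^2(x_{\alpha,i},L)$ by an average of $d^2(\cdot,L)$ over $S\cap \B{r_\alpha}{x_{\alpha,i}}$; but this requires a lower bound $\lambda^k(S\cap \B{r_\alpha}{x_{\alpha,i}})\geq c\,r_\alpha^k$, which is precisely the Ahlfors regularity you do not have a priori. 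Remark~\ref{rem_mon} does not help here since $\mu_\alpha$ is not dominated by $\lambda^k_S$. The weak-convergence claim $\mu_\alpha\rightharpoonup c\,\lambda^k_S$ is likewise unjustified without some density assumption on $S$. Separately, even if the transfer went through, Theorem~\ref{t:reifenberg_W1p_discrete} only yields the packing constant $D(n)$, and your ``bootstrap'' to $(1+\epsilon)\omega_k$ is not an argument: Chebyshev on \eqref{e:reifenberg_displacement_L2} gives smallness of $D^k_S$ on \emph{most} balls at small scales, but a small $D^k_S(y,s)$ only says $S$ lies near a plane, not that its $\lambda^k$-measure is close to that of the plane. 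Finally, your rectifiability sketch is too weak: $D^k_S(y,s_n)\to 0$ along a subsequence does not produce an approximate tangent $k$-plane in Federer's sense, and you cannot invoke the Besicovitch--Federer criterion without first establishing positive lower density, which again you do not have.
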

\begin{remark}
Notice that for the statement of the theorem we do not need control over balls which already have small measure.  This will be quite convenient for the applications.
\end{remark}
\begin{remark}
Instead of \eqref{e:reifenberg_displacement_L2} we may assume the essentially equivalent estimate
\begin{align}\label{eq_sum2^alpha}
\sum_{\er_\alpha\leq r/2}\int_{S\cap B_r(x)}D^k_S(y,\er_\alpha)\, d\lambda^k(y)<\delta^2 r^k\, .
\end{align}
In the applications, this will be the more convenient phrasing.
\end{remark}
\vspace{1cm}

\subsection{Explanatory example}
In order to understand better the idea behind the improvement of the Reifenberg theorem, we use the famous snow-flake as a test case.

\begin{wrapfigure}{l}{37mm}
\begin{center}
 \begin{tikzpicture}[]
  \draw decorate{
       (0,0) - +(0:3)  };     
\end{tikzpicture}

\vspace{2mm}
\begin{tikzpicture}[decoration=Koch snowflake]
  \draw decorate{
       (0,0) -- ++(0:3)  };     
\end{tikzpicture}
\vspace{2mm}

\begin{tikzpicture}[decoration=Koch snowflake]
  \draw decorate{ decorate{
       (0,0) -- ++(0:3)  }};
\end{tikzpicture}
\vspace{2mm}

\begin{tikzpicture}[decoration=Koch snowflake]
  \draw decorate{decorate{ decorate{
       (0,0) -- ++(0:3)  }}};
\end{tikzpicture}
\end{center}
\end{wrapfigure}

\noindent The construction of a snowflake of parameter $\eta>0$ is well known (see for example \cite[section 4.13]{mattila}). Take the unit segment $[0,1]\times\{0\}\subseteq \R^2$, and replace the middle part $[1/3,2/3]\times\{0\}$ with the top part of the isosceles triangle with base $[1/3,2/3]\times\{0\}$ and of height $\eta \cdot \operatorname{lenght}([1/3,2/3]\times \{0\})$. In other words, you are replacing the segment $[1/3,2/3]\times\{0\}$ with the two segments joining $(1/3,0)$ to $(1/2,\eta/3)$, and $(1/2,\eta/3)$ to $(2/3,0)$. Then repeat this construction inductively on each of the $4$ straight segments in the new set. Here on the left hand side you can see the very classical picture of the first three steps in the construction of the standard snowflake, with $\eta=\sqrt 3 /2 $.

It is clear that the length of the curve at step $i$ is equal to the length at step $i-1$ times $2/3+\sqrt{1+\eta^2}/3$, so the length of the snowflake will be infinity for any $\eta>0$. This is a simple application of the Pythagorean theorem, and the extra square power on $\eta$ comes from the fact that at each step we are adding some length $\eta$ to the curve, but in a direction perpendicular to it.

However, if we replace the fixed parameter $\eta$ with a variable parameter $\eta_i$, we see immediately that the length of the limit curve will be finite if and only if $\sum \eta_i^2<\infty$.

This suggests that the finiteness of the Hausdorff measure of the set $S$ is related to the summability properties of $D^k_S(x,\er_\alpha)$ over scales. 

Indeed, if we introduce the following $L^\infty$ analogue of the distortion $D$:
\begin{gather}
 \bar D^k_S(x,r)\equiv r^{-2}\inf_{L^k} \qua{d_H\ton{S\cap \B r x ,L^k\cap \B r x}}^2\,
\end{gather}
and require that the sum over scales of $D(x,r)$ is small in the sense that
\begin{gather}
\sum_{\alpha=0}^\infty \sup_{x\in S\cap \B 1 0} D^k_S(x,\er_\alpha) <\delta^2\, ,
\end{gather}
then we obtain that $S$ is a bi-Lipschitz image of a $k$-dimensional disk. This result was proved by Toro in \cite{Toro_reif}. 

However, one can also lower these requirements and replace the $L^\infty$ norms with more flexible $L^2$ norms and integrals, and still obtain finiteness of the $k$-dimensional Hausdorff measure and rectifiability under the less restrictive assumptions of theorem \ref{t:reifenberg_W1p_holes}. Moreover, by a simple covering argument, it is not necessary to ask control over $d_H\ton{S\cap \B r x ,L^k\cap \B r x}$, but just on $d(x, L^k)$ for $x\in S$. In other words, it is only important to have control over how close $S$ is to some $k$-dimensional subspace, and not vice-versa. Thus it is not a problem to have ``holes'' in the set $S$.

As mentioned above, similar results to the ones stated in this section were proved in \cite{davidtoro}, but still the theorems proved there have slightly stronger assumptions than the ones used here, and on the other hand obtain stronger topological results than the ones needed for this paper.

\section{Technical Constructions toward New Reifenberg Results}\label{ss:best_comparison}
In this section, we prove some technical lemmas needed for dealing with the relation between best $L^2$ subspaces.  These elementary results will be used in many of the estimates of subsequent sections.
\subsection{Hausdorff distance and subspaces}
We start by recalling some standard facts about affine subspaces in $\R^n$ and Hausdorff distance.

\begin{definition}
 Given two linear subspaces $L,V\subseteq \R^n$, we define the Grassmannian distance between these two as
 \begin{gather}
  d_G(L,V)= d_H(L\cap \B 1 0, V\cap \B 1 0 )=d_H\ton{L\cap \overline{\B 1 0}, V\cap \overline{\B 1 0} }\, .
 \end{gather}
Note that if $\dim(L)\neq \dim(V)$, then $d_G(L,V)=1$.
\end{definition}
\vspace{.3 cm}

For general subsets in $\R^n$, it is evident that $A\subseteq \B{\delta}{B}$ does not imply $B\subseteq \B{c\delta}{A}$. However, if $A$ and $B$ are affine spaces with the same dimension, then it is not difficult to see that this property holds.  More precisely:

 \begin{lemma}\label{lemma_hdv}
 Let $V,\, W$ be two $k$-dimensional affine subspaces in $\R^n$, and suppose that $V\cap \B {1/2}{0}\neq \emptyset$. There exists a constant $c(k,n)$ such that if $V\cap \B 1 0\subseteq \B{\delta}{W\cap \B 1 0}$, then $W\cap \B 1 0\subseteq \B{c\delta}{V\cap \B 1 0}$. Thus in particular $d_H(V\cap \B 1 0,W\cap \B 1 0)\leq c\delta$.
\end{lemma}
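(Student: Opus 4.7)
My plan is to reduce the lemma to linear-algebraic perturbation by building an affine frame inside $V \cap B_1(0)$ whose image under the hypothesis is a comparable affine frame inside $W \cap B_1(0)$.

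First I would dispose of the trivial range: if $\delta$ exceeds some $\delta_0(k,n)$, the bound $W \cap B_1(0) \subseteq B_{c\delta}(V \cap B_1(0))$ follows by choosing $c$ large, since everything lies in $B_1(0)$ and $V \cap B_1(0)$ is non-empty (pick the point from $V \cap B_{1/2}(0)$). So assume $\delta \leq \delta_0$ small. Fix $p_0 \in V \cap B_{1/2}(0)$ and an orthonormal basis $e_1,\dots,e_k$ of the linear part of $V$; set $p_i \equiv p_0 + \tfrac{1}{4}e_i \in V \cap B_{3/4}(0)$. These $k+1$ points form a regular affine simplex spanning $V$. By hypothesis there are $q_i \in W \cap B_1(0)$ with $|p_i-q_i| \leq \delta$, so the difference vectors $q_i-q_0$ equal $\tfrac14 e_i + \eta_i$ with $|\eta_i| \leq 2\delta$. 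For $\delta_0$ small enough these remain linearly independent, so their span is the full linear part of $W$, and their Gram matrix is close to $\tfrac{1}{16}I_k$.

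Next I would transfer affine coordinates. Given $w \in W \cap B_1(0)$, write $w-q_0 = \sum_i \mu_i (q_i-q_0)$; the Gram-matrix control yields $|\mu_i| \leq C(k)|w-q_0| \leq 2C(k)$. Define the companion point $v \equiv p_0 + \sum_i \mu_i \cdot \tfrac14 e_i \in V$. Then
\begin{equation*}
|v-w| \;=\; \Bigl|(p_0-q_0) - \sum_i \mu_i \eta_i\Bigr| \;\leq\; \delta + C(k)\,k\,(2\delta) \;\leq\; C_1(k)\,\delta\, .
\end{equation*}
This proves $d(w,V) \leq C_1\delta$, but I still need to find the witness inside $V \cap B_1(0)$, not merely $V$.

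The final step, which I expect to be the main subtle point, is pulling $v$ back into the unit ball along $V$. Since $|v| \leq 1 + C_1\delta$, consider the segment $\gamma(t) = p_0 + t(v-p_0) \subseteq V$, $t \in [0,1]$. The scalar $t \mapsto |\gamma(t)|^2$ is a convex quadratic that starts at $|p_0|^2 \leq 1/4$ and ends at $|v|^2 \leq 1 + 3C_1\delta$. By convexity together with $|\gamma(0)|^2 \leq 1/4$ and the intermediate-value picture, the last crossing $t^*$ of the level $|\gamma(t)|^2 = 1$ satisfies a lower bound on the slope of $|\gamma(t)|^2$ at $t^*$ (at least $3/4$, using that the secant slope on $[0,t^*]$ is at least $3/4$ and convexity bootstraps this to all $t \geq t^*$). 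Hence $|v|^2 - 1 \geq \tfrac{3}{4}(1-t^*)$, which together with $|v|^2 \leq 1 + 3C_1\delta$ gives $1-t^* \leq 4C_1\delta$. Then $v' \equiv \gamma(t^*) \in V \cap B_1(0)$ and $|v - v'| = (1-t^*)|v-p_0| \leq 8 C_1\delta$, so $|w - v'| \leq C_2(k)\delta$, establishing the desired containment with $c = C_2(k)$ and completing the proof.
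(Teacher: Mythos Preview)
Your proof is correct and follows essentially the same approach as the paper: both build an affine frame $p_0,\dots,p_k$ in $V\cap B_1(0)$ using a point of $V\cap B_{1/2}(0)$ and an orthonormal basis of the linear part, push it to a nearly orthogonal frame $q_0,\dots,q_k$ in $W$, and then expand an arbitrary $w\in W\cap B_1(0)$ in this frame to bound $d(w,V)$ by $C\delta$. The paper estimates $|\pi_{V^\perp}(w)|$ directly via the frame, whereas you construct the companion point $v=p_0+\sum\mu_i\tfrac14 e_i$ and bound $|v-w|$; these are the same computation. Your treatment of the final step (retracting the witness into $V\cap B_1(0)$ along the segment to $p_0$ via the convexity of $t\mapsto|\gamma(t)|^2$) is more explicit than the paper's, which simply asserts that $\pi_V(y)\in V\cap B_{1+c'\delta}(0)$ implies $d(y,V\cap B_1(0))\leq 2c'\delta$.
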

\begin{proof}
 The proof relies on the fact that $V$ and $W$ have the same dimension. Let $x_0\in V$ be the point of minimal distance from the origin. By assumption, we have that $\norm{x_0}\leq 1/2$. Let $x_1,\cdots,x_k\in V\cap \overline{\B 1 0}$ be a sequence of points such that
 \begin{gather}
  \norm{x_i-x_0}= 1/2\, \quad \text{ and for }\, i\neq j\, ,  \quad \ps{x_i-x_0}{x_j-x_0}=0\, .
 \end{gather}
In other words, $\cur{x_i-x_0}_{i=1}^k$ is an affine base for $V$. Let $\cur{y_i}_{i=0}^k\subseteq W\cap \overline{\B 1 0}$ be such that $d(x_i,y_i)\leq \delta$. Then
 \begin{gather}
  \norm{y_i-y_0}\geq 1/2-2\delta\, \quad \text{ and for }\, i\neq j\, ,  \quad \abs{\ps{y_i-y_0}{y_j-y_0}}\leq 4\delta+4\delta^2\, .
 \end{gather}
This implies that for $\delta\leq\delta_0(n)$, $\cur{y_i-y_0}_{i=1}^k$ is an affine base for $W$ and for all $y\in W$
\begin{gather}
 y=y_0+ \sum_{i=1}^k \alpha_i (y_i-y_0)\, , \quad \abs{\alpha_i}\leq 10 \norm{y-y_0}\, .
\end{gather}
Now let $y\in W\cap \overline{\B 1 0}$ be the point of maximum distance from $V$, and let $\pi$ be the projection onto $V$ and $\pi^\perp$ the projection onto $V^\perp$, which is the linear subspace orthogonal to $V$. Then
\begin{gather}
 d(y,V)= d(y,\pi(y))=\norm{\pi^\perp (y-x_0)}\leq \norm{\pi^{\perp}(y_0-x_0) }+\sum_{i=1}^k \abs{\alpha_i}\norm{\pi^\perp (y_i-y_0)}\leq c'(n,k)\delta\, .
\end{gather}
Since $y\in \overline{\B 1 0}$ and $\norm{x_0}\leq 1/2$, by a simple geometric argument $\pi(y)\in V \cap \B{1+c'\delta}{0}$, and thus $d(y,V\cap B_1(0))\leq 2c'\delta\equiv c\delta$. This proves the claim. 
 \end{proof}
\vspace{.5 cm}
Next we will see that the Grassmannian distance between two subspaces is enough to control the projections with respect to these planes. In order to do so, we recall a standard estimate.
\begin{lemma}
 Let $V,W$ be linear subspaces of a Hilbert space. Then $d_G(V,W)=d_G\ton{V^\perp,W^\perp}$.
\end{lemma}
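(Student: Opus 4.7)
The plan is to express each Hausdorff supremum defining $d_G$ as an operator norm of orthogonal projections and then exploit the identity $\norm{A}_{op} = \norm{A^*}_{op}$. First I would reduce to the case $\dim V = \dim W$: if the dimensions differ, then (working in $\R^n$) the orthogonal complements also have different dimensions, so by the convention recorded in the definition $d_G(V,W) = 1 = d_G(V^\perp, W^\perp)$ and there is nothing to prove.

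Assume now $\dim V = \dim W$, and let $P_V, P_W$ denote the orthogonal projections onto $V$ and $W$. For any $v \in V$ with $\norm{v} \leq 1$ the foot of the perpendicular from $v$ to $W$ is $P_W v$, which satisfies $\norm{P_W v} \leq 1$ and so already lies in $W \cap \overline{\B 1 0}$; hence
\begin{gather}
 d\ton{v, W \cap \overline{\B 1 0}} = \norm{v - P_W v} = \norm{P_{W^\perp} v}.
\end{gather}
Taking the supremum over $v \in V \cap \overline{\B 1 0}$ rewrites one half of the Hausdorff distance as
\begin{gather}
 \sup_{v \in V \cap \overline{\B 1 0}} d(v, W) = \norm{P_{W^\perp} P_V}_{op}.
\end{gather}

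Now I would apply $\norm{A}_{op} = \norm{A^*}_{op}$ together with $P_V^* = P_V$ and $P_{W^\perp}^* = P_{W^\perp}$, yielding
\begin{align*}
 \norm{P_{W^\perp} P_V}_{op} &= \norm{(P_{W^\perp} P_V)^*}_{op} = \norm{P_V P_{W^\perp}}_{op} \\
 &= \sup_{u \in W^\perp \cap \overline{\B 1 0}} \norm{P_V u} = \sup_{u \in W^\perp \cap \overline{\B 1 0}} d(u, V^\perp),
\end{align*}
where the last equality uses the symmetric observation that for $u \in W^\perp$ the vector $P_V u = u - P_{V^\perp} u$ is the foot of the perpendicular from $u$ to $V^\perp$ and lies in the unit ball. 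Swapping the roles of $V$ and $W$ in the same chain gives $\sup_{w \in W \cap \overline{\B 1 0}} d(w, V) = \sup_{u \in V^\perp \cap \overline{\B 1 0}} d(u, W^\perp)$, and taking the maximum of the two identities produces exactly $d_G(V^\perp, W^\perp)$.

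The only delicate point, and the one to be careful about, is the reduction to operator norms: the suprema defining $d_H$ are over the unit balls in $V$ and $W$, not over the full subspaces, so a priori they are not operator norms. What saves us is that orthogonal projections are norm-non-increasing, so the foot of the perpendicular from any unit-ball point automatically lands inside the unit ball of the target subspace and $d(\cdot, W \cap B_1) = d(\cdot, W)$ on $V \cap B_1$. Once that reduction is justified, the self-adjointness of projections together with $\norm{A}_{op} = \norm{A^*}_{op}$ does all the work in swapping $(V, W)$ with $(W^\perp, V^\perp)$.
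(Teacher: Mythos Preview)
Your proof is correct and takes a genuinely different route from the paper.  The paper argues by fixing a unit vector $x\in V^\perp$, writing $z=\pi_W(x)$ and $y=\pi_V(z)$, passing to the three--dimensional span of $x,y,z$, choosing explicit coordinates, and solving the resulting algebraic system to show $\norm{\pi_W(x)}\leq d_G(V,W)$; equality then follows by symmetry.  Your argument is more structural: you identify each half of the Hausdorff distance with an operator norm $\norm{P_{W^\perp}P_V}$, use self-adjointness of orthogonal projections together with $\norm{A}=\norm{A^*}$ to flip it to $\norm{P_V P_{W^\perp}}$, and read off the corresponding half of $d_G(V^\perp,W^\perp)$.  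This yields the exact equality in one stroke, with no coordinates and no case analysis, and in fact makes the preliminary reduction to equal dimensions unnecessary (the operator-norm identity holds for arbitrary closed subspaces).

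One small slip worth cleaning up: in the last displayed chain you write that ``$P_V u = u - P_{V^\perp} u$ is the foot of the perpendicular from $u$ to $V^\perp$.''  The foot is $P_{V^\perp}u$, not $P_V u$; what you need (and what is true) is simply $d(u,V^\perp)=\norm{u-P_{V^\perp}u}=\norm{P_V u}$, together with the observation $\norm{P_{V^\perp}u}\leq\norm{u}\leq 1$ so that the foot already lies in the unit ball.  With that wording corrected the argument is complete.
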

\begin{proof}
We will prove that $d_G\ton{V^\perp,W^\perp}\leq d_G\ton{V,W}$. By symmetry, this is sufficient. 

Take $x\in V^\perp$ such that $\norm{x}=1$, and consider that $d(x,W^\perp)=\norm{\pi_W(x)}$. Let $z=\pi_W(x)$ and $y=\pi_V(z)$. We want to show that if $d_G(V,W)\leq \epsilon<1$, then $\norm{z}\leq \epsilon$. We can limit our study to the space spanned by $x,y,z$, and assume wlog that $x=(1,0,0)$, $y=(0,b,0)$ and $z=(a,b,c)$. By orthogonality between $z$ and $z-x$, we have
\begin{gather}
 a^2+b^2+c^2 +(1-a)^2+b^2+c^2=1 \, \quad \Longrightarrow \quad a=a^2+b^2+c^2 \, ,
\end{gather}
and since $z\in W$, we also have $\norm{z-y}\leq \epsilon \norm{z}$, which implies
\begin{gather}
 a^2+c^2\leq \epsilon^2 \ton{a^2+b^2+c^2} \, \quad \Longrightarrow \quad a^2+c^2 \leq \frac{\epsilon^2}{1-\epsilon^2} b^2\, .
\end{gather}
Since the function $f(x)=x^2/(1-x^2)$ is monotone increasing for $0\leq x <1$, we can define $0\leq \alpha <1$ in such a way that
\begin{gather}
 a^2+c^2 = \frac{\alpha^2}{1-\alpha^2} b^2\, , \quad a=a^2+b^2+c^2 = \frac{1}{1-\alpha^2} b^2\, .
\end{gather}
Note that necessarily we will have $\alpha\leq \epsilon$. Now we have
\begin{gather}
 \frac{1}{(1-\alpha^2)^2} b^4 =a^2\leq \frac{\alpha^2}{1-\alpha^2} b^2 \quad \Longrightarrow \quad b^2\leq \alpha^2\ton{1-\alpha^2} \quad \Longrightarrow \quad \norm z^2 = a^2+b^2+c^2\leq \alpha^2\leq \epsilon^2\, .
\end{gather}
This proves that $V^\perp\cap\B 1 0 \subset \B{\epsilon}{W^\perp}$. In a similar way, one proves the opposite direction.
\end{proof}

As a corollary, we prove that the Grassmannian distance $d_G(V,W)$ is equivalent to the distance given by $\norm{\pi_V-\pi_W}$.
\begin{lemma}\label{lemma_epsproj}
Let $V,W$ be linear subspaces of $\R^n$. Then for every $x\in \R^n$, 
 \begin{gather}
  \norm{\pi_V(x)-\pi_W(x)}\leq 2d_G(V,W)\norm x\, .
 \end{gather}
In particular, if $x\in W^\perp$, then $\norm{\pi_V(x)}\leq 2d_G(V,W)\norm x$.

Conversely, we have
\begin{gather}
 d_G(V,W)\leq \sup_{x\in \R^n\setminus \{0\}} \cur{\frac{\norm{\pi_V(x)-\pi_W(x)}}{\norm x}}\, .
\end{gather}
\end{lemma}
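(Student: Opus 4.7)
The plan is to prove the two inequalities separately, leveraging the preceding lemma which gives $d_G(V,W) = d_G(V^\perp, W^\perp)$.

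For the forward inequality, let $\epsilon = d_G(V,W)$. I would first unpack what $d_G(V,W) \leq \epsilon$ means concretely for vectors: for any unit vector $w \in W$, we have $d(w,V) = \|\pi_{V^\perp}(w)\| \leq \epsilon$, and by the symmetric statement applied to $V^\perp, W^\perp$, for any unit vector $u \in W^\perp$ we also have $\|\pi_V(u)\| = d(u,V^\perp) \leq \epsilon$. Now given an arbitrary $x \in \mathbb{R}^n$, I would orthogonally decompose
\begin{equation*}
x = \pi_W(x) + \pi_{W^\perp}(x),
\end{equation*}
apply $\pi_V - \pi_W$ to both sides, and rewrite as
\begin{equation*}
\pi_V(x) - \pi_W(x) = \bigl(\pi_V(\pi_W(x)) - \pi_W(x)\bigr) + \pi_V(\pi_{W^\perp}(x)) = -\pi_{V^\perp}(\pi_W(x)) + \pi_V(\pi_{W^\perp}(x)).
\end{equation*}
The first term is bounded by $\epsilon\,\|\pi_W(x)\|$ since $\pi_W(x)/\|\pi_W(x)\| \in W$, and the second by $\epsilon\,\|\pi_{W^\perp}(x)\|$ since $\pi_{W^\perp}(x)/\|\pi_{W^\perp}(x)\| \in W^\perp$ together with the Grassmannian identity $d_G(V,W) = d_G(V^\perp,W^\perp)$. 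Cauchy--Schwarz (or just $\|\pi_W(x)\| + \|\pi_{W^\perp}(x)\| \leq \sqrt{2}\,\|x\|$) then yields $\|\pi_V(x)-\pi_W(x)\| \leq \sqrt{2}\,\epsilon\,\|x\| \leq 2\,d_G(V,W)\,\|x\|$.

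For the reverse inequality, I would just test the operator-norm-type quantity against unit vectors lying in $V$ and $W$. For $v \in V \cap \overline{B_1(0)}$, we have $\pi_V(v) = v$ and $\pi_W(v) \in W$, so
\begin{equation*}
d(v, W \cap \overline{B_1(0)}) \leq \|v - \pi_W(v)\| = \|\pi_V(v) - \pi_W(v)\| \leq \|v\|\,\sup_{x\neq 0}\frac{\|\pi_V(x)-\pi_W(x)\|}{\|x\|},
\end{equation*}
where in the last step I'd note $\|\pi_W(v)\| \leq \|v\| \leq 1$ so $\pi_W(v) \in W \cap \overline{B_1(0)}$. The symmetric argument swapping the roles of $V$ and $W$ bounds $d(w, V \cap \overline{B_1(0)})$, and taking the supremum gives $d_H(V \cap \overline{B_1(0)}, W \cap \overline{B_1(0)}) \leq \sup_{x \neq 0}\|\pi_V(x)-\pi_W(x)\|/\|x\|$, as desired.

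I do not anticipate any real obstacle here: the forward direction only requires the orthogonal decomposition trick together with the self-duality $d_G(V,W) = d_G(V^\perp,W^\perp)$ (already established just above in the excerpt), and the reverse direction is a direct unwinding of definitions. The one small point that requires care is verifying that $\pi_W(v)$ for $v \in V \cap \overline{B_1(0)}$ really lies in $W \cap \overline{B_1(0)}$, which is immediate since projections are non-expansive.
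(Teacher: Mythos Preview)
Your proof is correct and is essentially the same as the paper's: the paper also decomposes $x$ orthogonally (with respect to $V$ rather than $W$, a symmetric choice), bounds the two resulting cross-terms via $d_G(V,W)$ and the identity $d_G(V,W)=d_G(V^\perp,W^\perp)$, and declares the reverse inequality immediate from the definition. Your reverse-direction argument just spells out that last step.
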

 \begin{proof}
The proof is just a corollary of the previous lemma. Assume wlog that $\norm{x}=1$, and let $x=y+z$ where $y=\pi_V(x)$ and $z=\pi_{V^\perp}(x)$. Then
\begin{gather}
 \norm{\pi_V(x)-\pi_W(x)}=\norm{y-\pi_W(y)-\pi_W(z)} \leq \norm{y-\pi_W(y)}+\norm{z-\pi_{W^\perp}(z)} = d(y,W)+d(z,W^\perp)\, .
\end{gather}
Since $\norm y^2 + \norm z^2 =\norm x^2= 1$, by the previous lemma we get the first estimate.

The reverse estimate is an immediate consequence of the definition. 
\end{proof}
\vspace{.5 cm}

\subsection{\texorpdfstring{Distance between $L^2$ best planes}{Distance between L2 best planes}}
Here we study the distance between best approximating subspaces for our measure $\mu$ on different balls. Let us begin by fixing our notation for this subsection, and pointing out the interdependencies of the constants chosen here.  Throughout this subsection, our choice of scale $\rho=\rho(n,M)>0$ is a constant which will eventually be fixed according to lemma \ref{lemma_alpharho}.  For applications to future sections, it is sufficient to know that we can take $\rho(n,M)= 10^{-10} (100n)^{-n} M^{-1}$. We also point out that in Section \ref{sec_proof_main}, we will fix $M=40^n$, and so $\rho$ will be a constant depending only on $n$. In particular, we can use the very coarse estimate
\begin{gather}\label{eq_rho_rough}
\rho=10^{-10}(100n)^{-3n}\, .
\end{gather}
We will also introduce a threshold value $\gamma_k= \omega_k 40^{-k}$.  The dimensional constant $\gamma_k$ is chosen simply to be much smaller than any covering errors which will appear.

We will consider a positive Radon measure $\mu$ supported on $S\subseteq \B 1 0$, and use $D(x,r)\equiv D^{k}_{\mu}(x,r)$ to bound the distances between best $L^2$ planes at different points and scales. By definition let us denote by $V(x,r)$ a best $k$-dimensional plane on $\B r x $, i.e., a $k$-dimensional affine subspace minimizing $\int_{\B r x} d(x,V)^2 d\mu$. Note that, in general, this subspace may not be unique. We want to prove that, under reasonable hypothesis, the distance between $V(x,r)$ and $V(y,r')$ is small if $d(x,y)\sim r$ and $r'\sim r$.\\

In order to achieve this, we will need to understand some minimal properties of $\mu$.  First, we need to understand how concentrated $\mu$ is on any given ball. For this reason, for some $\rho>0$ and all $x\in \B 1 0$ we will want to consider the upper mass bound 

\begin{gather}\label{eq_rho}
 \mu(\B \rho x )\leq M \rho^k\,\,\, \forall x\in B_1(0)\, .
\end{gather}

However, an upper bound on the measure is not enough to guarantee best $L^2$-planes are close, as the following example shows:
\begin{example}
 Let $V,V'$ be $k$-dimensional subspaces, $0\in V\cap V'$, and set $S=\ton{V\cap \B 1 0 \setminus \B{1/10}{0}}\cup S'$, where $S'\subseteq V'\cap \B {1/10}{0}$ and $\mu=\lambda^k|_S$. Then evidently $D(0,1)\leq \lambda^k(S')$ and $D(0,1/10)=0$, independently of $V$ and $V'$. However, $V(0,1)$ will be close to $V$, while $V(0,1/10)=V'$. Thus, in general, we cannot expect $V(0,1)$ and $V(0,1/10)$ to be close if $\mu(\B {1/10}{0})$ is too small.
\end{example}

Thus, in order to prove that the best planes are close, we need to have some definite amount of measure on the set, in such a way that $S$ ``effectively spans'' a $k$-dimensional subspace, where by effectively span we mean the following:
\begin{definition}
 Given a sequence of points $p_i\in \R^n$, we say that $\cur{p_i}_{i=0}^k$ $\alpha-$effectively span a $k$-dimensional affine subspace if for all $i=1,\cdots,k$
 \begin{gather}\label{eq_eff_span}
  \norm{p_i-p_0}\leq \alpha^{-1}\, , \quad p_i\not \in \B{\alpha}{p_0+\operatorname{span}\cur{p_1-p_0,\cdots,p_{i-1}-p_0}}\, .
 \end{gather}
\end{definition}
 Note that this definition is basically an affine version of Definition \ref{d:independent_points}. The definition implies that the vectors $p_i-p_0$ are linearly independent in a quantitative way. In particular, we obtain immediately that
\begin{lemma}\label{lemma_effspa}
 If $\cur{p_i}_{i=0}^k$ $\alpha$-effectively span the $k$-dimensional affine subspace $$V=p_0+\operatorname{span}\cur{p_1-p_0,\cdots,p_k-p_0}\, ,$$ then for all $x\in V$ there exists a \textit{unique} set $\cur{\alpha_i}_{i=1}^k$ such that
\begin{gather}
 x=p_0+\sum_{i=0}^k \alpha_i (p_i-p_0)\, , \quad \abs{\alpha_i}\leq c(n,\alpha)\norm{x-p_0}\, .
\end{gather}
\end{lemma}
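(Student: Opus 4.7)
The plan is to translate the $\alpha$-effective spanning hypothesis into a quantitative well-conditioning bound on the basis $\{v_i:=p_i-p_0\}_{i=1}^k$ of $V-p_0$, and then read off both uniqueness and the estimate on the coefficients $\alpha_i$ by comparison with an orthogonal basis obtained via Gram--Schmidt.

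First, uniqueness is automatic once we verify that $v_1,\dots,v_k$ are linearly independent in $\R^n$. This is immediate from the definition of $\alpha$-effective spanning: for each $i$, $v_i=p_i-p_0\notin \operatorname{span}\cur{v_1,\dots,v_{i-1}}$ (in fact its distance to this span is at least $\alpha>0$), so $v_1,\dots,v_k$ form a basis of $V-p_0$, and the expansion of $x-p_0$ in this basis is unique.

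For the quantitative bound, I would apply Gram--Schmidt to the ordered basis $v_1,\dots,v_k$ to produce an orthogonal (not normalized) basis $w_1,\dots,w_k$ by
\begin{gather*}
w_i = v_i - \pi_{\operatorname{span}\cur{w_1,\dots,w_{i-1}}}(v_i).
\end{gather*}
Since $\operatorname{span}\cur{w_1,\dots,w_{i-1}}=\operatorname{span}\cur{v_1,\dots,v_{i-1}}$, the hypothesis gives $\norm{w_i}\geq \alpha$. Writing $x-p_0=\sum_i \beta_i w_i$ and using orthogonality of the $w_i$'s,
\begin{gather*}
\norm{x-p_0}^2 = \sum_{i=1}^k \beta_i^2\norm{w_i}^2 \geq \alpha^2\sum_{i=1}^k \beta_i^2,
\end{gather*}
so $\abs{\beta_i}\leq \alpha^{-1}\norm{x-p_0}$ for each $i$.

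It remains to relate the $\alpha_i$ to the $\beta_j$. The change of basis matrix $T$ from $\{w_j\}$ to $\{v_i\}$ (i.e.\ $v_i=\sum_j T_{ji}w_j$) is upper triangular with $T_{ii}=1$ and $\abs{T_{ji}}\leq \norm{v_i}/\norm{w_j}\leq \alpha^{-2}$ for $j<i$, using $\norm{v_i}\leq \alpha^{-1}$ and $\norm{w_j}\geq \alpha$. Inverting a $k\times k$ upper triangular matrix with unit diagonal and off-diagonal entries bounded by $\alpha^{-2}$ produces a matrix $T^{-1}$ whose entries are bounded by some $c(n,\alpha)$ (a routine bound of the form $k!\,\alpha^{-O(k)}$ suffices, and this is the only place where dimension enters). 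Since $\alpha_i=\sum_j (T^{-1})_{ij}\beta_j$, we conclude
\begin{gather*}
\abs{\alpha_i}\leq c(n,\alpha)\max_j \abs{\beta_j}\leq c'(n,\alpha)\norm{x-p_0},
\end{gather*}
as claimed. The only mildly delicate step is the control on $T^{-1}$, but this is purely linear algebra; no geometric input beyond the two quantitative pieces of the definition ($\norm{v_i}\leq \alpha^{-1}$ and $\mathrm{dist}(v_i,\operatorname{span}\cur{v_1,\dots,v_{i-1}})\geq \alpha$) is needed.
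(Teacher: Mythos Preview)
Your proposal is correct and follows essentially the same approach as the paper: Gram--Schmidt on the vectors $v_i=p_i-p_0$, then a change-of-basis bound. The paper phrases it slightly differently (it normalizes to an orthonormal basis $e_i$ and asserts by induction that each $e_i=\sum_{j\leq i}\alpha'_j v_j$ with $|\alpha'_j|\leq c(n,\alpha)$, then expands $x-p_0=\sum\langle x-p_0,e_i\rangle e_i$), but this is exactly your triangular inversion written in the other direction; there is no substantive difference.
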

\begin{proof}
 The proof is quite straightforward. Since $\cur{p_i-p_0}_{i=1}^k$ are linearly independent, we can apply the Gram-Schmidt orthonormalization process to obtain an orthonormal basis $e_1,\cdots,e_k$ for the linear space $\operatorname{span} \cur{p_i-p_0}_{i=1}^k$. By induction and \eqref{eq_eff_span}, it is easy to check that for all $i$
 \begin{gather}
  e_i = \sum_{j=1}^i \alpha'_{ij} (p_j-p_0)\, , \quad \abs{\alpha'_{ij}}\leq c(n,\alpha)\, .
 \end{gather}
Now the estimate follows from the fact that for all $x\in V$
\begin{gather}
 x=p_0+\sum_{i=1}^k \ps{x-p_0}{e_i} e_i\, .
\end{gather}

\end{proof}

With these definitions, we are ready to prove that in case $\mu$ is not too small, then its support must effectively span something $k$-dimensional.

\begin{lemma}\label{lemma_alpharho}
Let $\gamma_k=\omega_k 40^{-k}$. There exists a $\rho_0(n,\gamma_k,M)=\rho_0(n,M)$ such that if \eqref{eq_rho} holds for some $\rho\leq \rho_0$ and if $\mu(\B 1 0)\geq \gamma_k$, then for every affine subspace $V\subseteq \R^n$ of dimension $\leq k-1$, there exists an $x\in S\cap \B 1 0$ such that $\B{10\rho}{x}\cap V = \emptyset$ and $\mu\ton{\B{\rho}{x}\cap \B 1 0}\geq c(n,\rho) = c(n)\rho^n>0$. 
 \end{lemma}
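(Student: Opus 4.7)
The plan is a two-step covering-pigeonhole argument that exploits the dimensional mismatch between the at most $(k-1)$-dimensional subspace $V$ and the $k$-dimensional upper mass bound \eqref{eq_rho}. First I would bound the $\mu$-measure of the $10\rho$-tubular neighborhood of $V$ and, on choosing $\rho_0$ small enough, show that it absorbs at most half of the lower mass $\gamma_k$ in $\B 1 0$; then I would pigeonhole the surplus mass over a Vitali-type cover of the complement to extract the required ball.

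\textbf{Step 1 (mass bound near $V$).} Since $\dim V\leq k-1$, I can find a $\rho$-net $\cur{v_1,\dots,v_N}\subseteq V\cap \B 2 0$ with $N\leq C(n)\rho^{-(k-1)}$, so that
\begin{gather*}
\B{10\rho}{V}\cap \B 1 0 \;\subseteq\; \bigcup_{i=1}^N \B{11\rho}{v_i}\, .
\end{gather*}
In turn, each $\B{11\rho}{v_i}\cap \B 1 0$ is contained in a union of $C(n)$ balls of radius $\rho$ centered inside $\B 1 0$, so that \eqref{eq_rho} applies to each and yields $\mu(\B{11\rho}{v_i})\leq C(n)M\rho^{k}$. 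Summing over $i$,
\begin{gather*}
\mu\ton{\B{10\rho}{V}\cap \B 1 0}\;\leq\; C(n)M\rho\, .
\end{gather*}
Thus taking $\rho_0=\rho_0(n,M)$ so that $C(n)M\rho_0\leq \gamma_k/2$ (noting $\gamma_k=\omega_k 40^{-k}$ depends only on $n$ since $k\leq n$), one gets $\mu(A)\geq \gamma_k/2$ for the complement $A\equiv \B 1 0\setminus \B{10\rho}{V}$.

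\textbf{Step 2 (pigeonhole for $x$).} Next I would pick a maximal $\rho$-separated subset $\cur{x_i}\subseteq A\cap \supp{\mu}$. By maximality $A\cap \supp{\mu}\subseteq \bigcup_i \B\rho{x_i}$, and volume packing in $\B 1 0$ gives $\abs{\cur{x_i}}\leq C(n)\rho^{-n}$. Hence
\begin{gather*}
\gamma_k/2 \;\leq\; \mu(A) \;\leq\; \sum_i\mu(\B\rho{x_i}) \;\leq\; C(n)\rho^{-n}\max_i\mu(\B\rho{x_i})\, ,
\end{gather*}
so some $x_i$ achieves $\mu(\B\rho{x_i})\geq c(n)\rho^{n}$ with $c(n)=\gamma_k/(2C(n))$. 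Since $x_i\in A$ we have $d(x_i,V)\geq 10\rho$, so $\B{10\rho}{x_i}\cap V=\emptyset$, and $x_i\in\supp{\mu}\subseteq S\cap \B 1 0$; this $x_i$ is the desired point.

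The argument has no real obstacle: the essential input is the gap between the $(k-1)$-dimensional scaling of the covering of $V$ and the $k$-dimensional scaling of the upper mass bound, with the extra factor of $\rho$ providing the room to absorb the covering constants for small $\rho$. The purpose of the lemma within the paper is precisely to guarantee that when $\mu(\B 1 0)\geq \gamma_k$ the support of $\mu$ cannot hug any $(k-1)$-plane, which is the non-degeneracy underpinning the subsequent best-$L^2$-subspace comparison estimates.
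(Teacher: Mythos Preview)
Your proposal is correct and follows essentially the same two-step covering/pigeonhole argument as the paper: bound $\mu\ton{\B{10\rho}{V}\cap\B 1 0}\leq C(n)M\rho$ via a covering of the tube, choose $\rho_0$ so this absorbs at most half of $\gamma_k$, then pigeonhole the remaining mass over $O(\rho^{-n})$ balls of radius $\rho$. Your choice to take the maximal $\rho$-separated set inside $A\cap\supp{\mu}$ is in fact slightly cleaner than the paper's version, since it directly ensures $x_i\in S$ rather than leaving this implicit.
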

\begin{proof}
Let $V$ be any $k-1$-dimensional subspace, and consider the set $\B {11\rho}{V}$. Let $B_i=\B{\rho}{x_i}$ be a sequence of balls that cover the set $\B{11\rho}{V}\cap \B 1 0$ and such that $B_i/2\equiv \B {\rho /2}{x_i}$ are disjoint and $x_i\in \B{11\rho}{V}\cap \B 1 0$. If $N$ is the number of these balls, then a standard covering argument gives
\begin{align}
 &N\omega_n \rho^n/2^n\leq \omega_{k-1} (1+\rho)^{k-1} \omega_{n-k+1} (12\rho)^{n-k+1}\leq 24^n \omega_{k-1} \omega_{n-k+1} \rho^{n-k+1}\, \quad \notag\\
 \Longrightarrow \quad &N\leq 48^{n} \frac{\omega_{k-1} \omega_{n-k+1}}{\omega_n} \rho^{1-k}\, .
\end{align}
By \eqref{eq_rho}, the measure of the set $\B{11\rho}{V}$ is bounded by
\begin{gather}
 \mu(\B{11\rho}{V})\leq \sum_i \mu\ton{B_i}\leq MN \rho^k \leq 48^n \frac{\omega_{k-1} \omega_{n-k+1}}{\omega_n} M \rho \leq 10^{5} (50n)^n M \rho =  c(n)M\rho \, .
\end{gather}
where the next-to-last estimate is an extremely rough bound on the constants involved. Thus if 
\begin{gather}
 \rho\leq 10^{-5}(50n)^{-n}\gamma_k/(4M)\, , 
\end{gather}
then $\mu(\B{11\rho}{V})\leq \gamma_k/4$.  In particular, we get that there must be some point of $S$ not in $\B{11\rho}{V}$.  More effectively, let us consider the set $S\cap \B 1 0 \setminus \B {11\rho}{V}$. This set can be covered by at most $c(n,\rho)=4^{n}\rho^{-n}$ balls of radius $\rho$ centered in $x\in S\cap \B 1 0 \setminus \B {11\rho}{V}$, and we also see that
\begin{gather}
 \mu\ton{\B {1}{0}\setminus \B{11\rho}{V}}\geq \frac{3\gamma_k}{4}\, .
\end{gather}
Thus, there must exist at least one ball of radius $\rho$ centered in $x$ and disjoint from $\B{10\rho}{V}$ such that
\begin{gather}
 \mu\ton{\B{\rho}{x}\cap \B 1 0}\geq \frac{3\gamma_k}{4} 4^{-n}\rho^n\geq c(n)\rho^n\, .
\end{gather}
\end{proof}
\vspace{.5 cm}

%
%

Now if at two consecutive scales there are some balls on which the measure $\mu$ {\it effectively spans} $k$-dimensional subspaces, we show that these subspaces have to be close together.

\begin{lemma}\label{lemma_vw}
 Let $\mu$ be a positive Radon measure and assume $\mu(\B 10 )\geq \gamma_k $. Additionally, let $B_\rho(x)\subset \B 1 0$ be a ball such that $\mu(\B \rho x )\geq \gamma_k \rho^k$ and for each $y\in \B \rho x$ we have $\mu(\B {\rho^2} y )\leq M \rho^{2k}$, where $\rho\leq \rho_0$.  Then if $A=V(0,1)\cap \B \rho x$ and $B=V(x,\rho)\cap \B \rho x$ are $L^2$-best subspace approximations of $\mu$ with $d(x,A)<\rho/2$, then 
 \begin{gather}\label{eq_distD}
  d_H(A,B)^2 \leq c(n,\rho,M) \ton{D^k_\mu(x,\rho)+D^k_\mu(0,1)}\, .
 \end{gather}
\end{lemma}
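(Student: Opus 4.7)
The plan is to locate $k+1$ points in $\supp{\mu} \cap \B{\rho}{x}$ that effectively span a $k$-dimensional affine subspace and that carry a definite amount of $\mu$-mass on small balls, then to use Chebyshev's inequality to find nearby points simultaneously close to both best $L^2$-planes $V(0,1)$ and $V(x,\rho)$, and finally to invoke Lemmas \ref{lemma_effspa} and \ref{lemma_hdv} to pass from pointwise proximity to the Hausdorff bound. First I would rescale $\B{\rho}{x} \to \B{1}{0}$ via $y \mapsto (y-x)/\rho$, letting $\tilde\mu$ denote the corresponding normalized pushforward; the hypothesis $\mu(\B{\rho}{x}) \geq \gamma_k \rho^k$ becomes $\tilde\mu(\B{1}{0}) \geq \gamma_k$, and $\mu(\B{\rho^2}{y}) \leq M\rho^{2k}$ becomes $\tilde\mu(\B{\rho}{y'}) \leq M\rho^k$, so the rescaled measure satisfies the hypotheses of Lemma \ref{lemma_alpharho}. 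Iterating that lemma $k$ times (taking the affine subspace at step $j$ to be the affine span of the points already chosen) yields $p'_0,\ldots,p'_k \in \supp{\tilde\mu} \cap \B{1}{0}$ which $10\rho$-effectively span, with $\tilde\mu(\B{\rho}{p'_i}) \geq c(n)\rho^n$; pulling back produces $p_0,\ldots,p_k \in \B{\rho}{x}$ which $10\rho^2$-effectively span at scale $\rho$, with $\mu(\B{\rho^2}{p_i}) \geq c(n)\rho^{n+k}$. A small shrinking of the target ball at the start of the iteration forces the $p_i$ into $\B{\rho/2}{x}$, so that $\B{\rho^2}{p_i} \subseteq \B{1}{0} \cap \B{\rho}{x}$.

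\textbf{Pointwise approximation via Chebyshev.} Write $D_1 = D^k_\mu(0,1)$ and $D_2 = D^k_\mu(x,\rho)$, so by the definition of the best planes
\begin{gather*}
\int_{\B{1}{0}} d(y,V(0,1))^2\, d\mu(y) = D_1, \qquad \int_{\B{\rho}{x}} d(y,V(x,\rho))^2\, d\mu(y) = \rho^{k+2} D_2.
\end{gather*}
Applying Chebyshev on each ball $\B{\rho^2}{p_i}$ and using $\mu(\B{\rho^2}{p_i}) \geq c(n)\rho^{n+k}$, the level sets $\{d(\cdot,V(0,1)) > C\sqrt{D_1}\}$ and $\{d(\cdot,V(x,\rho)) > C\sqrt{D_2}\}$, with $C = C(n,\rho,M)$ chosen large enough, each have $\mu$-mass below $\mu(\B{\rho^2}{p_i})/2$. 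Hence their union does not exhaust $\B{\rho^2}{p_i}$, producing $q_i \in \B{\rho^2}{p_i}$ with $d(q_i,V(0,1)) \leq C\sqrt{D_1}$ and $d(q_i,V(x,\rho)) \leq C\sqrt{D_2}$ simultaneously. Since $\|q_i - p_i\| \leq \rho^2$ is a small perturbation of the $10\rho^2$-effective spanning of the $p_i$ and $k$ is bounded by $n$, the $q_i$ still $\alpha$-effectively span a $k$-plane $V_q$ for some $\alpha = \alpha(n,\rho,M) > 0$.

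\textbf{From pointwise bounds to Hausdorff bounds.} By Lemma \ref{lemma_effspa}, every $p \in V_q$ with $\norm{p-q_0} \leq c(n,\rho)$ is an affine combination of the $q_i$ with coefficients bounded by $c(n,\rho,M)$; since distance to an affine plane is sublinear under affine combinations, this propagates the pointwise bounds to $d(p,V(0,1)) \leq C\sqrt{D_1}$ and $d(p,V(x,\rho)) \leq C\sqrt{D_2}$ for every $p \in V_q \cap \B{\rho}{x}$. The hypothesis $d(x,A) < \rho/2$ guarantees $V(0,1) \cap \B{\rho/2}{x} \neq \emptyset$, while the mass bound $\mu(\B{\rho}{x}) \geq \gamma_k \rho^k$ forces $V(x,\rho) \cap \B{\rho/2}{x} \neq \emptyset$ as well (otherwise $D_2 \geq c(n,\rho,M)$ and \eqref{eq_distD} is trivial). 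Rescaling $\B{\rho}{x} \to \B{1}{0}$ and applying Lemma \ref{lemma_hdv} twice upgrades the one-sided distance bounds above to
\begin{gather*}
d_H(V_q \cap \B{\rho}{x}, V(0,1) \cap \B{\rho}{x}) \leq C\sqrt{D_1}, \qquad d_H(V_q \cap \B{\rho}{x}, V(x,\rho) \cap \B{\rho}{x}) \leq C\sqrt{D_2};
\end{gather*}
the triangle inequality combined with $(a+b)^2 \leq 2(a^2+b^2)$ then yields \eqref{eq_distD}. The principal technical difficulty is the first step: ensuring the $p_i$ can be placed inside $\B{\rho/2}{x}$ with the correct mass concentration and effective-spanning constant, and verifying that this constant is preserved (quantitatively in $n,\rho,M$) under the $\rho^2$-perturbation from $p_i$ to $q_i$ used in the final step.
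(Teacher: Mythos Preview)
Your proposal is correct and follows the same overall architecture as the paper's proof: use Lemma~\ref{lemma_alpharho} iteratively inside $\B{\rho}{x}$ to produce $k+1$ sub-balls $\B{\rho^2}{p_i}$ carrying definite mass and whose centers effectively span at scale $\rho^2$, extract from each a point close to both best planes, then invoke Lemmas~\ref{lemma_effspa} and~\ref{lemma_hdv}.

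The differences are minor but worth noting. Where you use Chebyshev on each $\B{\rho^2}{p_i}$ to find a point $q_i$ simultaneously $C\sqrt{D_1}$-close to $V(0,1)$ and $C\sqrt{D_2}$-close to $V(x,\rho)$, the paper instead takes the \emph{center of mass} $p(y_i)$ of $\mu|_{\B{\rho^2}{y_i}}$ and applies Jensen's inequality, which gives the same distance bounds deterministically and with slightly cleaner constants. Second, you introduce an auxiliary plane $V_q$ through the $q_i$ and triangulate $d_H(A,B)$ through $V_q$; the paper instead projects the centers of mass onto $V(x,\rho)$, shows those projections effectively span $V(x,\rho)$, and bounds $d(y,V(0,1))$ directly for $y\in V(x,\rho)$. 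Both variants are equivalent in strength; the center-of-mass route avoids having to verify that effective spanning survives the $\rho^2$-perturbation from $p_i$ to $q_i$, which is the step you flag as the main technical difficulty.
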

\begin{proof}
Let us begin by observing that if $c(n,\rho,M)>4\rho^2\delta^{-1}(n,\rho,M)$, which will be chosen later, then we may assume without loss of generality that 
\begin{align}\label{e:lemma_vw:1}
D^k_\mu(x,\rho)+D^k_\mu(0,1)\leq \delta = \delta(n,\rho,M)\, ,
\end{align}
since otherwise \eqref{eq_distD} is trivially satisfied. Moreover, note that $\gamma_k >> \epsilon_n$, so equation \eqref{eq_deph_D} is valid on $\B \rho x $ and on $\B 1 0$. \\

We will estimate the distance $d_H(A,B)$ by finding $k+1$ balls $\B{\rho^2}{y_i}$ which have enough mass and effectively span in the appropriate sense $V(x,\rho)$. Given the upper bounds on $D^k_\mu$, we will then be in a position to prove our estimate.

\vspace{5mm}

Consider any $\B {\rho^2} {y}\subseteq \B 1 0$ with $\mu\ton{\B {\rho^2} {y}}>0$ and let $p(y)\in \B{\rho^2}{y}$ be the center of mass of $\mu$ restricted to $\B {\rho}{x}\cap \B {\rho^2}{y}$.
Let also $\pi(p)$ be the orthogonal projection of $p$ onto $V(x,\rho)$.  By Jensen's inequality:
\begin{gather}\label{eq_vw1}
 d(p(y),V(x,\rho))^2=d(p(y),\pi(p(y)))^2=d\ton{\fint_{B_{\rho^2}(y)}z\,d\mu(z),V(x,\rho)}^2 \leq \frac{1}{\mu(\B{\rho^2}{y})}\int_{\B {\rho^2}{y}} d(z,V(x,\rho))^2 d\mu(z)\, . 
\end{gather}

Using this estimate and lemma \ref{lemma_alpharho} (or better its rescaled version applied to $\B {\rho}{x}$), we want to prove that there exists a sequence of $k+1$ balls $\B{\rho^2}{y_i}$ with $y_i\in \B{\rho}{x}$ such that
\begin{enumerate}
\def\theenumi{\roman{enumi}}
 \item\label{it_1} $\mu\ton{\B \rho {x} \cap \B{\rho^2}{y_i}}\geq c(n,\rho,M)>0 $
 \item\label{it_2} $\cur{\pi(p(y_i))}_{i=0}^k\equiv\cur{\pi_i}_{i=0}^k$ effectively spans $V(x,\rho)$. In other words for all $i=1,\cdots,k$, $\pi_i\in V(x,\rho)$ and 
 \begin{gather}\label{eq_vw2}
\pi_i\not \in \B{5\rho^2}{\pi_0+\operatorname{span}\ton{\pi_1-\pi_0,\cdots,\pi_{i-1}-\pi_0}}\, .
 \end{gather}
\end{enumerate}
We prove this statement by induction on $i=0,\cdots,k$. For $i=0$, the statement is trivially true since $\mu(\B{\rho}{x})\geq \gamma_k \rho^k$. In order to find $y_{i+1}$, consider the subspace $V^{(i)} = \pi_0 + \operatorname{span}\ton{\pi_1-\pi_0,\cdots,\pi_{i}-\pi_0}$. By lemma \ref{lemma_alpharho} applied to the ball $\B{\rho}{x}$, there exists some $\B{\rho^2}{y_{i+1}}$ such that $\mu\ton{\B {\rho}{x}\cap \B{\rho^2}{y_{i+1}}}\geq c(n,\rho,M)>0 $, $y_{i+1}\in \B{\rho}{x}$ and 
 \begin{gather}
 y_{i+1}\not \in \B{10\rho^2}{\pi_0+\operatorname{span}\ton{\pi_1-\pi_0,\cdots,\pi_{i}-\pi_0}}\, .
 \end{gather}
By definition of center of mass, it is clear that $d(y_{i+1},p(y_{i+1}))\leq \rho^2$. Moreover, by item \eqref{it_1} and equation \eqref{eq_vw1}, we get
\begin{gather}
 d(p(y_{i+1}),V(x,\rho))^2\leq c \int_{\B {\rho}{x}\cap \B {\rho^2}{y_{i+1}}} d(z,V(x,\rho))^2 d\mu(z) \leq c D^k_\mu(x,\rho)\leq c\delta\, .
\end{gather}
Thus by the triangle inequality we have $d(y_{i+1},\pi_{i+1})\leq 2\rho^2$ if $\delta\leq \delta_0(n,\rho,M)$ is small enough. This implies \eqref{eq_vw2}.
Using similar estimates, we also prove $d(p(y_{i+1}),V(0,1))^2\leq c' D^k_\mu(0,1)$ for all $i=-1,0,\cdots,k-1$. Thus by the triangle inequality
\begin{gather}
 d(\pi_{i+1},V(0,1))\leq d(\pi_{i+1},p(y_{i+1}))+d(p(y_{i+1}),V(0,1))\leq c(n,\rho,M)\ton{D^k_\mu(x,\rho)+D^k_\mu(0,1)}^{1/2}\, .
\end{gather}

\vspace{5mm}

Now consider any $y\in V(x,\rho)$. By item \eqref{it_2} and lemma \ref{lemma_effspa}, there exists a unique set $\cur{\beta_i}_{i=1}^k$ such that
\begin{gather}
 y=\pi_0+\sum_{i=1}^k \beta_i (\pi_i-\pi_0)\, , \quad \abs{\beta_i}\leq c(n,\rho)\norm{y-\pi_0}\, .
\end{gather}
Hence for all $y\in V(x,\rho)\cap \B{\rho}{x}$, we have
\begin{gather}
 d(y,V(0,1))\leq d(\pi_0,V(0,1)) + \sum_i \abs{\beta_i}  [d(\pi_i,V(0,1))+d(\pi_0,V(0,1))]\leq c(n,\rho,M)\ton{D^k_\mu(x,\rho)+D^k_\mu(0,1)}^{1/2}\, .
\end{gather}
By lemma \ref{lemma_hdv}, this completes the proof of \eqref{eq_distD}. 
\end{proof}
\vspace{.5 cm}

\subsection{\texorpdfstring{Comparison between $L^2$ and $L^\infty$ planes}{Comparison between L2 and L-infinity planes}} Given $\B r x $, we denote as before by $V(x,r)$ one of the $k$-dimensional subspace minimizing $\int_{\B r x } d(y,V)^2 d\mu$. Suppose that the support of $\mu$ satisfies a uniform one-sided Reifenberg condition, i.e. suppose that there exists a $k$-dimensional plane $L(x,r)$ such that $x\in L(x,r)$ and
\begin{gather}\label{eq__}
\supp{\mu} \cap \B{r}x \subseteq \B{\delta r}{L(x,r)}\, . 
\end{gather}
Then, by the same technique used in lemma \ref{lemma_vw}, we can prove that
\begin{lemma}\label{lemma_LV}
  Let $\mu$ be a positive Radon measure with $\mu\ton{\B 1 0}\geq \gamma_k$ and such that for all $\B \rho y \subseteq \B 1 0$ we have $\mu(\B \rho y )\leq M \rho^k$ and \eqref{eq__}. Then
 \begin{gather}
  d_H(L(0,1)\cap \B 1 0, V(0,1)\cap \B 1 0)^2 \leq c(n,\rho,M)\ton{\delta^2+D^k_\mu(0,1)}\, .
 \end{gather}
\end{lemma}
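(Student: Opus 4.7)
The plan is to adapt the argument of Lemma \ref{lemma_vw}, now comparing the $L^\infty$ Reifenberg plane $L(0,1)$ against the $L^2$-best plane $V(0,1)$ at the same scale (rather than two $L^2$-best planes at different scales). First I would reduce to the regime $\delta^2 + D^k_\mu(0,1) \leq \delta_0(n,\rho,M)$ for a small $\delta_0$ to be chosen below; outside this regime the conclusion holds trivially by taking $c(n,\rho,M)$ large. Note that $\gamma_k \gg \epsilon_n$, so $V(0,1)$ is genuinely defined via \eqref{eq_deph_D}.

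Next I would produce $k+1$ points that effectively pin down both planes. Applying Lemma \ref{lemma_alpharho} inductively, I extract $y_0, \ldots, y_k \in \supp{\mu} \cap \B 1 0$ with $\mu(\B{\rho}{y_i}) \geq c(n,\rho) > 0$ and with each $y_{i+1}$ staying at quantitative distance from the affine span of $y_0, \ldots, y_i$. Let $p_i \equiv \fint_{\B{\rho}{y_i}} z\, d\mu(z)$ be the corresponding center of mass. Jensen's inequality applied to the convex function $d(\cdot, V(0,1))^2$, together with the minimality of $V(0,1)$, gives
\[
d(p_i, V(0,1))^2 \leq \frac{1}{\mu(\B{\rho}{y_i})}\int_{\B{\rho}{y_i}} d(z, V(0,1))^2\, d\mu(z) \leq c(n,\rho,M)\, D^k_\mu(0,1),
\]
while the one-sided Reifenberg hypothesis forces $\supp{\mu} \cap \B{\rho}{y_i} \subseteq \B{\delta}{L(0,1)}$, hence $d(p_i, L(0,1)) \leq \delta$. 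Writing $\pi_i$ for the orthogonal projection of $p_i$ onto $L(0,1)$, we have $\norm{p_i - \pi_i} \leq \delta$, and the triangle inequality gives $d(\pi_i, V(0,1)) \leq c(n,\rho,M)(\delta + D^k_\mu(0,1)^{1/2})$. For $\delta_0$ small enough relative to $\rho$, the $\delta$-perturbation in going from $y_i$ through $p_i$ to $\pi_i$ preserves the effective affine independence of the $y_i$, so $\{\pi_i\}_{i=0}^k$ effectively spans $L(0,1)$.

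Finally, Lemma \ref{lemma_effspa} lets me write any $y \in L(0,1) \cap \B 1 0$ as $y = \pi_0 + \sum_{i=1}^k \beta_i(\pi_i - \pi_0)$ with $\abs{\beta_i} \leq c(n,\rho)$, so
\[
d(y, V(0,1)) \leq d(\pi_0, V(0,1)) + \sum_{i=1}^k \abs{\beta_i}\, d(\pi_i - \pi_0, V(0,1)) \leq c(n,\rho,M)\ton{\delta + D^k_\mu(0,1)^{1/2}}.
\]
This yields the one-sided containment $L(0,1) \cap \B 1 0 \subseteq \B{c\sqrt{\delta^2 + D^k_\mu(0,1)}}{V(0,1)}$, and Lemma \ref{lemma_hdv} promotes this one-sided bound between two $k$-dimensional affine subspaces into the two-sided Hausdorff estimate claimed. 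The step I expect to require the most care is verifying the effective spanning of the $\pi_i$: one has to calibrate $\delta_0$ small enough relative to $\rho$ so that projecting the well-separated $y_i$ (supplied by Lemma \ref{lemma_alpharho}) through the intermediate centers of mass $p_i$ onto $L(0,1)$ does not collapse their affine independence — this is analogous to the corresponding step in Lemma \ref{lemma_vw} and is the only genuine technicality.
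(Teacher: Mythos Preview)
Your proposal is correct and follows exactly the approach the paper indicates: the paper does not give a separate proof for this lemma but simply states that it follows ``by the same technique used in lemma \ref{lemma_vw},'' and your adaptation of that argument---replacing one of the $L^2$-best planes by the Reifenberg plane $L(0,1)$ and using the containment $\supp{\mu}\subseteq\B{\delta}{L(0,1)}$ in place of a Jensen estimate---is precisely the intended one. The only technicality you flag (preserving effective independence of the projections $\pi_i$) is indeed the analogue of the corresponding step in Lemma~\ref{lemma_vw} and is handled identically.
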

\vspace{.5 cm}

\subsection{bi-Lipschitz equivalences}
In this subsection, we study a particular class of maps with nice local properties. These maps are a slightly modified version of the maps which are usually exploited to prove Reifenberg's theorem, see for example \cite{reif_orig,Toro_reif,davidtoro}, \cite[section 10.5]{morrey} or \cite{simon_reif}. The estimates in this section are standard in literature.

We start by defining the functions $\sigma$. For some $0<r\leq 1$, let $\cur{x_i}$ be an $r/10$-separated subset of $\R^n$, i.e., 
\begin{enumerate}
\def\theenumi{\roman{enumi}}
 \item \label{item_1}$d(x_i,x_j)\geq r/10$.
\end{enumerate}
Let also $p_i$ be a points in $\R^n$ with
\begin{enumerate}
\def\theenumi{\roman{enumi}}\setcounter{enumi}{1}
 \item $p_i\in \B{10r}{x_i}$
\end{enumerate}
and let $V_i$ be a sequence of $k$-dimensional linear subspaces.

By standard theory, it is easy to find a locally finite smooth partition of unity $\lambda_i:\dR^n\to [0,1]$ such that
\begin{enumerate}\label{deph_sigma_0}
\def\theenumi{\roman{enumi}}\setcounter{enumi}{2}
 \item \label{item_3r}$\supp{\lambda_i}\subseteq \B {3r}{x_i}$ for all $i$,
 \item for all $x\in \bigcup_{i} \B{2 r}{x_i}$, $\sum_i \lambda_i(x)=1$ and $\sum_i \lambda_i(x')\leq 1$ for all $x'\in\dR^n$ ,
 \item $\sup_i \norm{\nabla \lambda_i}_\infty \leq c(n)/r$ ,
 \item \label{item_last} if we set $1-\psi(x)=\sum_i \lambda_i(x)$, then $\psi$ is a nonnegative smooth function with $\norm{\nabla \psi}_\infty \leq c(n)/r$ .
\end{enumerate}
Note that by \eqref{item_3r}, and since $x_i$ is $r$-separated, there exists a constant $c(n)$ such that for all $x$, $\lambda_i(x)>0$ for at most $c(n)$ different indexes.

For convenience of notation, set $\pi_V(v)$ to be the orthogonal projection onto the linear subspace $V$ of the free vector $v$, and set
\begin{gather}
\pi_{p_i,V_i}(x)=p_i+\pi_{V_i}(x-p_i)\, .
\end{gather}
In other words, $\pi_{p_i,V_i}$ is the affine projection onto the affine subspace $p_i+V_i$. Recall that $\pi_{V_i}$ is a linear map, and so the gradients of $\pi_{V_i}$ and of $\pi_{p_i,V_i}$ at every point are equal to $\pi_{V_i}$. 

\begin{definition}\label{deph_sigma}
 Given $\cur{x_i,p_i,\lambda_i}$ satisfying \eqref{item_1} to \eqref{item_last}, and given a family of linear $k$-dimensional spaces $V_i$, we define a smooth function $\sigma:\R^n\to \R^n$ by
 \begin{gather}  
  \sigma(x)= x+\sum_i \lambda_i(x) \pi_{V_i^\perp}\ton{p_i-x} = \psi(x) x +\sum_i \lambda_{i}(x)\pi_{p_i,V_i}\ton{x}\, .
 \end{gather}
\end{definition}
By local finiteness, it is evident that $\sigma$ is smooth. Moreover, if $\psi(x)=1$, then $\sigma(x)=x$. It is clear that philosophically $\sigma$ is a form of ``smooth interpolation'' between the identity and the projections onto the subspaces $V_i$. It stands to reason that if $V_i$ are all close together, then this map $\sigma$ is close to being an orthogonal projection in the region $ \bigcup_{i} \B{2 r}{x_i}$. 

\begin{lemma}\label{lemma_sigma_simon}
 Suppose that there exists a $k$-dimensional linear subspace $V\subseteq \R^n$ and a point $p\in \R^n$ such that for all $i$
 \begin{gather}\label{eq_sigmadelta}
  d_G(V_i,V)\leq \delta\, , \quad d(p_i,p+V)\leq \delta\, .
 \end{gather}
Then the map $\sigma$ restricted to the set $U=\psi^{-1}(0)=\ton{\sum_i \lambda_i}^{-1}(1)$ can be written as
\begin{gather}
 \sigma(x)=\pi_{p,V}(x)+e(x)\, ,
\end{gather}
and $e(x)$ is a smooth function with 
\begin{gather}
 \norm{e}_\infty + \norm{\nabla e}_\infty\leq c(n)\delta/r=c(n,r)\delta\, .
\end{gather}
\end{lemma}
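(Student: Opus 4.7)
The plan is to compute $e$ directly. First I would observe that on $U = \psi^{-1}(0)$ we have $\sum_i \lambda_i \equiv 1$, so the formula for $\sigma$ collapses to $\sigma(x) = \sum_i \lambda_i(x)\,\pi_{p_i, V_i}(x)$, and since $\pi_{p,V}(x) = \sum_i \lambda_i(x)\,\pi_{p,V}(x)$ we may write
\[
e(x) := \sigma(x) - \pi_{p,V}(x) = \sum_i \lambda_i(x)\bigl[\pi_{p_i, V_i}(x) - \pi_{p,V}(x)\bigr].
\]
Smoothness of $e$ is inherited from that of the $\lambda_i$ and the affine maps $\pi_{p_i,V_i}$, so the whole task reduces to pointwise bounds on each summand and its derivative.

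The crux of the pointwise estimate is the algebraic identity
\[
\pi_{p_i,V_i}(x) - \pi_{p,V}(x) = \pi_{V^\perp}(p_i - p) + [\pi_{V_i} - \pi_V](x - p_i),
\]
obtained by expanding both affine projections, adding and subtracting $\pi_V(x - p_i)$, and using $\pi_V(x-p_i)-\pi_V(x-p) = \pi_V(p-p_i)$ together with $(p_i-p) - \pi_V(p_i-p) = \pi_{V^\perp}(p_i-p)$. The first term has norm equal to $d(p_i, p+V)\leq\delta$ by hypothesis; by Lemma \ref{lemma_epsproj} combined with $d_G(V_i,V)\leq\delta$, the second has norm $\leq 2\delta\,\|x - p_i\|$. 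The support condition $\supp \lambda_i \subseteq B_{3r}(x_i)$ together with $p_i \in B_{10r}(x_i)$ restricts attention to indices with $\|x - p_i\| \leq 13r$, so each summand is at most $c(n)\delta$ (in the regime $r\leq 1$ implicit in the statement). Since $\lambda_i \geq 0$ and $\sum_i \lambda_i(x) = 1$, averaging yields $\|e\|_\infty \leq c(n)\delta \leq c(n)\delta/r$.

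For the gradient, differentiating gives
\[
\nabla \sigma(x) = \sum_i \pi_{p_i,V_i}(x) \otimes \nabla \lambda_i(x) + \sum_i \lambda_i(x)\, \pi_{V_i},
\]
and subtracting $\nabla \pi_{p,V} = \pi_V$ together with $\sum_i \lambda_i \equiv 1$ and its consequence $\sum_i \nabla \lambda_i \equiv 0$ (valid on the open set $\bigcup_i B_{2r}(x_i) \subseteq U$) allows me to inject the vanishing quantities $\pi_{p,V}(x) \otimes \sum_i \nabla\lambda_i(x)$ and $\sum_i \lambda_i(x)\,\pi_V$ to obtain
\[
\nabla e(x) = \sum_i \bigl[\pi_{p_i,V_i}(x) - \pi_{p,V}(x)\bigr] \otimes \nabla \lambda_i(x) + \sum_i \lambda_i(x)\bigl[\pi_{V_i} - \pi_V\bigr].
\]
The second sum is bounded by $2\delta$ via Lemma \ref{lemma_epsproj}. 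For the first, I combine the pointwise $c(n)\delta$ bound from the previous step with $\|\nabla \lambda_i\|_\infty \leq c(n)/r$, and use that at most $c(n)$ of the $\lambda_i$ are simultaneously nonzero near $x$; this yields $c(n)\delta/r$, whence $\|\nabla e\|_\infty \leq c(n)\delta/r$.

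I do not expect any genuine obstacle. The heart of the argument is the decomposition in the second paragraph, which isolates both hypothesized smallness parameters in a single step; everything else is bookkeeping with Lemma \ref{lemma_epsproj} and the standard bounds on the partition of unity. The only mildly delicate point is that $\sum_i \nabla \lambda_i \equiv 0$ is used on the open portion of $U$ where the partition sums to $1$ in a full neighborhood, but since $\sigma$ and $\pi_{p,V}$ are globally smooth, continuity extends the estimate to all of $U$.
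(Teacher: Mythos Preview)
Your proof is correct and is essentially the same argument as the paper's: both write $e(x)=\sum_i\lambda_i(x)\bigl[\pi_{p_i,V_i}(x)-\pi_{p,V}(x)\bigr]$ on $U$, split the bracket into a translation part $\pi_{V^\perp}(p_i-p)$ and a rotation part $[\pi_{V_i}-\pi_V](x-p_i)$, bound these via the hypotheses and Lemma~\ref{lemma_epsproj}, and then differentiate termwise for the gradient. Your identity for $\pi_{p_i,V_i}(x)-\pi_{p,V}(x)$ is in fact exactly the paper's expansion regrouped, and your route to $\nabla e$ via $\nabla\sigma$ and $\sum_i\nabla\lambda_i=0$ (on the open set $\bigcup_i B_{2r}(x_i)$, extended by continuity) lands on the same formula the paper obtains by differentiating $e$ directly.
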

\begin{remark}
Thus, on $U$ we have that $\sigma$ is the affine projection onto $V$ plus an error which is small in $C^1$.
\end{remark}

\begin{proof}
 On the set $U$, we can define 
 \begin{gather}
  e(x)=\sigma(x)-\pi_{p,V}(x)=-\pi_{p,V}(x)+ \sum_i \lambda_i(x) \cdot \ton{\pi_{p_i,V_i}(x)}\notag\\
  =\sum_i \lambda_i(x) \cdot \ton{p_i-p-\pi_V(p_i-p) +\pi_V(p_i) - \pi_{V_i}(p_i) + \pi_{V_i}(x)  -\pi_V(x)}\, .
 \end{gather}
 By \eqref{eq_sigmadelta} and lemma \ref{lemma_epsproj}, we have the estimates
 \begin{gather}\label{eq_222}
  \norm{p_i-p-\pi_V(p_i-p)}\leq\delta\, , \quad \norm{\pi_V(x-p_i)-\pi_{V_i}(x-p_i)}\leq 2\delta \norm{x-p_i}\leq 20 \delta r\, . 
 \end{gather}
 This implies
 \begin{gather}
  \norm{e}_{L^\infty(U)} \leq c(n)(1+13 r)\delta \leq c(n)\delta\, .
 \end{gather}

As for $\nabla e$, we have
\begin{gather}
 \nabla e = \sum_i \nabla \lambda_i(x) \cdot \ton{p_i-p-\pi_V(p_i-p) +\pi_V(p_i) - \pi_{V_i}(p_i) + \pi_{V_i}(x)  -\pi_V(x)} + \sum_i \lambda_i(x) \nabla \ton{\pi_{V_i}(x)-\pi_V(x)}\, .
\end{gather}
The first sum is easily estimated, and since $\ps{\nabla (\pi_{W})|_x }{w}=\pi_{W}(w)$, we can still apply lemma \ref{lemma_epsproj} and conclude:
\begin{gather}
 \norm{\nabla e}_{L^\infty(U)} \leq \frac{c(n)}{r} \delta\, .
\end{gather}
\end{proof}
\vspace{.5 cm}
As we have seen, $\sigma$ is in some sense close to the affine projection to $p+V$. In the next lemma, which is similar in spirit to \cite[squash lemma]{simon_reif}, we prove that the image through $\sigma$ of a graph over $V$ is again a graph over $V$ with nice bounds. 
\begin{lemma}[squash lemma]\label{lemma_squash}
 Fix $\rho\leq 1$ and some $\B{r/\rho}{y}\subseteq \R^n$, let $I=\cur{x_i}\cap \B{5r/\rho}{y}$ be an $r/10$-separated set and define $\sigma$ as in Definition \ref{deph_sigma}. Suppose that there exists a $k$-dimensional subspace $V$ and some $p\in \R^n$ such that $d(y,p+V)\leq \delta r$ and for all $i$:
 \begin{gather}\label{eq_111}
  d(p_i,p+V)\leq \delta r\, \quad \text{ and } \quad d_G(V_i,V)\leq \delta\, .
 \end{gather}
 Suppose also that there exists a $C^1$ function $g:V\to V^\perp$ such that $G\subseteq \R^n$ is the graph $$G=\cur{p+x+g(x)\, \ \ \text{for } \ \ x\in V}\cap \B{r/\rho}{y}\, ,$$ and $r^{-1}\norm{g}_\infty + \norm{\nabla g}_\infty \leq \delta'$. There exists a $\delta_0(n)>0$ sufficiently small such that if $\delta \leq \delta_0\rho $ and $\delta'\leq 1$, then
 \begin{enumerate}
 \def\theenumi{\roman{enumi}}
  \item \label{it_s1} $\forall z\in G$, $ r^{-1}\abs{\sigma(z)-z}\leq c(n)(\delta +\delta')\rho^{-1}$, and $\sigma$ is a $C^1$ diffeomorphism from $G$ to its image,
  \item \label{it_s2} the set $\sigma(G)$ is contained in a $C^1$ graph $\cur{p+x+\tilde g(x)\, , \ \ x\in V}$ with
  \begin{gather}
   r^{-1}\norm{\tilde g}_\infty + \norm{\nabla \tilde g}_\infty \leq c(n) (\delta+\delta')\rho^{-1}\, .
  \end{gather}
  \item \label{it_s3} moreover, if $U'$ is such that $\B{c(\delta +\delta')\rho^{-1}}{U'}\subseteq \psi^{-1}(0)$, then the previous bound is \textit{independent} of $\delta'$, in the sense that
  \begin{gather}
   r^{-1}\norm{\tilde g}_{L^\infty(U'\cap V)} + \norm{\nabla \tilde g}_{L^\infty(U'\cap V)}\leq c(n) \delta\rho^{-1}\, .
  \end{gather}
  For example, if $\delta'\leq \delta_0(n)\rho^{-1}$, we can take $U'=\bigcup_{i} \B{1.5 r}{x_i}$.
  \item \label{it_s4} the map $\sigma$ is a bi-Lipschitz equivalence between $G$ and $\sigma(G)$ with bi-Lipschitz\newline constant $\leq 1+c(n)(\delta+\delta')^2\rho^{-2}$.
 \end{enumerate}
\end{lemma}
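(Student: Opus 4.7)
I plan to reduce all four claims to two representations of $\sigma$: on $U\equiv\psi^{-1}(0)$ I will use $\sigma(x)=\pi_{p,V}(x)+e(x)$ with $\|e\|_\infty+\|\nabla e\|_\infty\leq c(n)\delta/r$ from Lemma \ref{lemma_sigma_simon}, while on the interpolating region $\mathrm{supp}\,(1-\psi)\setminus U$ I will use the explicit formula $\sigma(x)=x+\sum_i\lambda_i(x)\pi_{V_i^\perp}(p_i-x)$ directly. The common geometric input is Lemma \ref{lemma_epsproj}: since every vector to which $\pi_{V_i^\perp}$ is applied (either the offset $p_i-z$ for $z\in G\cap\mathrm{supp}\,\lambda_i$ or a tangent vector $w\in T_zG$) is a near-$V$ vector, its $V_i^\perp$-projection is automatically of size $O(\delta+\delta')$.

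\textbf{Item (i).} For $z=p+u+g(u)\in G$ with $u\in V$, I will split $p_i-p=v_i^\parallel+v_i^\perp$ with $v_i^\parallel\in V$ and $|v_i^\perp|\leq\delta r$, so that $p_i-z=(v_i^\parallel-u)+(v_i^\perp-g(u))$ decomposes along $V\oplus V^\perp$. Since $\lambda_i(z)\neq 0$ forces $|p_i-z|\leq 13r$, Lemma \ref{lemma_epsproj} yields $|\pi_{V_i^\perp}(v_i^\parallel-u)|\leq c(n)\delta r$, and the orthogonal piece contributes at most $(1+c\delta)(\delta+\delta')r$. Summing against the partition of unity gives the $C^0$ bound of (i); the diffeomorphism claim will follow once I produce the $C^1$ graph representation of $\sigma(G)$ in the next step and invoke the inverse function theorem.

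\textbf{Items (ii) and (iii).} On $U$, since $g(u)\in V^\perp$ we have $\pi_V(z-p)=u$, so Lemma \ref{lemma_sigma_simon} yields $\sigma(z)=p+u+e(z)$. Splitting $e=e^\parallel+e^\perp$ along $V,V^\perp$, the map $u\mapsto u+e^\parallel(p+u+g(u))$ is a $C^1$-diffeomorphism of $V$ since its derivative is $\mathrm{Id}_V+O(\delta)$, so setting $\tilde g(u+e^\parallel)=e^\perp$ represents this piece of $\sigma(G)$ as a graph with $r^{-1}\|\tilde g\|_\infty+\|\nabla\tilde g\|_\infty\leq c(n)\delta\rho^{-1}$; crucially the bound depends only on $\delta$ because on $U$ the function $e$ is built from $\lambda_i,V_i,p_i$ and does not see $g$, which is precisely (iii). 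Outside $U$, $\sigma$ is a convex combination of the identity and the affine projections $\pi_{p_i,V_i}$, each of which displaces $z\in G$ by at most $c(n)(\delta+\delta')r$ by the computation of Step 1 (since $\pi_{p_i,V_i}(z)-z=-\pi_{V_i^\perp}(z-p_i)$); bookkeeping of these contributions in $C^1$ yields the weaker global graph bound of (ii).

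\textbf{Item (iv), the main obstacle.} Differentiating the explicit formula one has
\[
d\sigma_x(w)=w-\sum_i\lambda_i(x)\,\pi_{V_i^\perp}(w)+\sum_i(\nabla\lambda_i(x)\cdot w)\,\pi_{V_i^\perp}(p_i-x).
\]
For $w\in T_zG$ the decomposition $w=v+\nabla g(v)$ with $v\in V$ together with Lemma \ref{lemma_epsproj} gives $|\pi_{V_i^\perp}(w)|\leq c(\delta+\delta')|w|$ and $|\pi_{V_i^\perp}(p_i-x)|\leq c(\delta+\delta')r$, so a naive triangle-inequality bound would only yield Lipschitz constant $1+c(\delta+\delta')\rho^{-1}$. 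To upgrade this to the squared bound in (iv) I will square both sides and exploit the identity $\langle w,\pi_{V_i^\perp}(w)\rangle=|\pi_{V_i^\perp}(w)|^2\geq 0$, so that the first-order contribution to $|d\sigma(w)|^2-|w|^2$ is negative definite and can be absorbed, while the mixed term is rewritten (using $\sum_i\nabla\lambda_i=0$ on $U$) as a product of two quantities each of size $O(\delta+\delta')$. The surviving contributions are then all $O((\delta+\delta')^2\rho^{-2}|w|^2)$. This cancellation is the one genuinely nontrivial input: the first-order term has a sign and cannot be estimated by absolute values, so one is forced to expand the square and track signs carefully to extract the quadratic improvement.
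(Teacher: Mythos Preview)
Your plan is broadly correct and close to the paper's, but the organization and the mechanism you invoke for (iv) differ from the paper in ways worth noting.

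For (ii)--(iii) the paper does not split into ``on $U$'' versus ``off $U$''. Instead it parametrizes $G$ by $x\mapsto z(x)=x+g(x)$ with $x\in V$, writes $\sigma(z(x))=\sigma^T(x)+\sigma^\perp(x)$, and isolates a single function $h$ via $(1-\psi(z))x+h(x)=\sum_i\lambda_i(z)\pi_{p_i,V_i}(z)$. The key computation is that $|h^T|+|\nabla h^T|$ and $|h^\perp|+|\nabla h^\perp|$ are each $\leq c\delta\rho^{-1}$, \emph{independent of $\delta'$} as long as $\delta'\leq 1$. One then inverts $\sigma^T(x)=x+h^T(x)$ globally and sets $\tilde g=h^\perp\circ Q+\psi(z\circ Q)\,g\circ Q$. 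The $\psi g$ term is the only place $\delta'$ enters, which immediately gives both (ii) and (iii) from one formula, and also covers the transition region $0<\psi<1$ without a separate argument. Your two-region treatment works too, but you will need to verify that the two graph descriptions patch $C^1$ across $\partial U$.

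For (iv) the paper takes a different route: it decomposes $\nabla\sigma|_z[w]=A+B$ with $A=\psi w+\sum_i\lambda_i\pi_{V_i}[w]$ and $B=\sum_i(\pi_{p_i,V_i}(z)-z)\nabla\lambda_i[w]$, and proves an \emph{improved horizontal estimate}: $|\pi_V(A)-\pi_V[w]|\leq c(\delta^2+\delta\delta')$ and $|\pi_V B|\leq c\delta\delta'$. Then $|\nabla\sigma[w]|^2-1=|\pi_{V^\perp}\nabla\sigma[w]|^2+\big(|\pi_V\nabla\sigma[w]|^2-|\pi_V[w]|^2\big)-|\pi_{V^\perp}[w]|^2$, and each piece is $O((\delta+\delta')^2)$.

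Your expansion-of-the-square approach also works, but the tools you name are slightly off. The identity $\langle w,\pi_{V_i^\perp}(w)\rangle=|\pi_{V_i^\perp}(w)|^2$ already makes $-2\langle w,P(w)\rangle$ of order $(\delta+\delta')^2$ in \emph{magnitude}; no sign absorption is needed, and you must have the two-sided bound anyway. For the cross term, $\sum_i\nabla\lambda_i=0$ is the wrong lever: it holds only on $U$, whereas you need the estimate on the transition region too. The correct observation is again self-adjointness of the orthogonal projection,
\[
\langle w,\pi_{V_i^\perp}(p_i-z)\rangle=\langle \pi_{V_i^\perp}(w),\,\pi_{V_i^\perp}(p_i-z)\rangle,
\]
which is a product of two $O(\delta+\delta')$ factors (times $r$) and works for every $z\in G$. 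With this correction your route to (iv) is clean and arguably more direct than the paper's horizontal-vertical split.
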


\begin{proof}
For convenience, we fix $r=1$ and $p=0$. By notation, given any map $f:\R^n\to \R^m$, $p\in \R^n$ and $w\in T_p(\R^n)=\R^n$, we will denote by $\nabla|_p f[w]$ the gradient of $f$ evaluated at $p$ and applied to the vector $w$.

Recall that 
 \begin{gather}
  \sigma(x+g(x))=\psi(z) (x+g(x)) + \sum_{x_i\in I} \lambda_i(z) \ton{\pi_{p_i,V_i}(x+g(x))}\, , \quad 1-\psi(x)=\sum_{x_i\in I} \lambda_i(x)\, ,
 \end{gather}
where we have set for convenience $z=z(x)=x+g(x)$. Define $h(x)$ by 
\begin{gather}
 (1-\psi(z))x + h(x) \equiv \sum_{i} \lambda_i(z) \ton{\pi_{p_i,V_i}(x+g(x))}\, .
\end{gather}
Set also $h^T(x)=\pi_V(h(x))$ and $h^\perp(x)=\pi_{V\perp}(h(x))$. By projecting the function $\sigma(x+g(x))$ onto $V$ and its orthogonal complement we obtain
\begin{gather}
 \sigma(x+g(x))\equiv \sigma^T(x)+\sigma^\perp(x) \, ,\notag\\
 \sigma^T(x)= x + h^T(x)\, , \quad \sigma^\perp(x)=\psi(z)g(x)+h^\perp(x)\, .
\end{gather}

We claim that if $\delta'\leq 1$, then
\begin{gather}\label{eq_hT}
 \norm{h^T(x)}+\norm{\nabla {h^T(x)}}\leq \frac{c\delta}{\rho}\, ,
\end{gather}
where this bound is \textit{independent} of $\delta'$ as long as $\delta'\leq 1$. Indeed, for all $x\in V$ we have
\begin{gather}
 h^T(x)=\pi_V \qua{\sum_i \lambda_i(z) \ton{\pi_{p_i,V_i}(x+g(x))  - x}  }=\sum_i \lambda_i(z) \pi_V \qua{\ton{\pi_{p_i,V_i}(x)  - \pi_{V}(x) } +\pi_{V_i}(g(x))}
\end{gather}
Given \eqref{eq_111} and lemma \ref{lemma_epsproj}, with computations similar to \eqref{eq_222}, we get $\norm{h^T(x)}\leq c\delta (1+\rho^{-1})\leq c\delta \rho^{-1}$. As for the gradient, we get for any vector $w\in V$
\begin{gather}
 \nabla h^T|_x [w] = \pi_V \qua{ \sum_i \nabla\lambda_i |_z \qua{w+\nabla g|_x [w]} \ton{\pi_{p_i,V_i}(x+g(x))  - x} +\sum_i \lambda_i(z)\ton{\pi_{V_i}\ton{w+\nabla g[w]} -w }}\, ,
\end{gather}
In particular, we obtain
\begin{gather}
 \norm{\nabla h^T|_x [w]}\leq  \sum_i \norm{\nabla\lambda_i} \ton{1+\norm{\nabla g}}{\norm w}\norm{\pi_{p_i,V_i}(x+g(x))  - x} +\sum_i \lambda_i(z)\ton{\norm{\pi_{V_i}\ton{w}-w} + \norm{\pi_{V_i}\ton{\nabla g[w]}}}\, .
\end{gather}
For the first term, we can estimate
\begin{gather}
 \norm{\nabla \lambda_i}\leq c(n)\, , \quad \norm{\nabla g}\leq \delta'\leq 1\, , \quad \norm{\pi_{p_i,V_i}(x+g(x))  - x}\leq \norm{\pi_{p_i,V_i}(x)-x}+\norm{\pi_{V_i}(g(x))}\, .
\end{gather}
Since $x\in V$ with $\norm x \leq \rho^{-1}$, and $g(x)\in V^\perp$, by \eqref{eq_111} and lemma \ref{lemma_epsproj} we obtain
\begin{gather}
 \norm{\pi_{p_i,V_i}(x+g(x))  - x}\leq c\delta \rho^{-1}\, .
\end{gather}
As for the second term, we have
\begin{gather}
 \norm{\pi_{V_i}\ton{w}-w}\leq  c\delta \norm w\, , \quad \norm{\pi_{V_i}\ton{\nabla g[w]}}\leq c\delta \delta' \norm w \leq c \delta \norm w\, .
\end{gather}
Summing all the contributions, we obtain \eqref{eq_hT} as wanted.

Thus we can apply the inverse function theorem on the function $\sigma^T(x):V\to V$ and obtain a $C^1$ inverse $Q$ such that for all $x\in V$, $\norm{Q(x)-x}+ \norm{\nabla Q-id }\leq c(n)\delta \rho^{-1}$ , and if $\psi(x+g(x))=1$, then $Q(x)=x$ . So we can write that for all $x\in V$
\begin{gather}
 \sigma(x+g(x))=\sigma^T(x)+\tilde g(\sigma^T(x))\, \quad \text{where} \quad \tilde g(x)= \sigma^\perp (Q(x))=h^\perp(Q(x))+\psi\ton{z(Q(x))}g(Q(x)) \, .
\end{gather}

Arguing as above, we see that $h^\perp(x)$ is a $C^1$ function with 
\begin{gather}\label{eq_hp}
 \norm{h^\perp(x)}+\norm{\nabla h^\perp(x)}\leq \frac{c\delta}{\rho}\, ,
\end{gather}
and this bound is independent of $\delta'$ (as long as $\delta'\leq 1$).

Thus the function $\tilde g:V\to V^\perp$ satisfies for all $x$ in its domain 
\begin{gather}
\norm{\tilde g(x)}+ \norm{\nabla \tilde g(x)}\leq c(n)(\delta+\delta') \rho^{-1}\, ,
\end{gather}
Moreover, for those $x$ such that $\psi(Q(x)+g(Q(x)))=0$, the estimates on $\tilde g$ are independent of $\delta'$, in the sense that $\norm{\tilde g(x)}+ \norm{\nabla \tilde g(x)}\leq c(n)\delta \rho^{-1}$ . Note that by the previous bounds we have
\begin{gather}
 \norm{Q(x)+g(Q(x)) -x}\leq c (\delta+\delta')\rho^{-1}\, ,
\end{gather}
and so if $\B {c(\delta+\delta')\rho^{-1}}{U'}\subset \psi^{-1}(0)$, then for all $x\in U'\cap V$, $\psi(Q(x)+g(Q(x)))=0$. This proves items \eqref{it_s2}, \eqref{it_s3}. As for item \eqref{it_s1}, it is an easy consequence of the estimates in \eqref{eq_hT}, \eqref{eq_hp}.

Now since both $G$ and $\sigma(G)$ are Lipschitz graphs over $V$, it is clear that the bi-Lipschitz map induced by $\pi_V$ would have the right bi-Lipschitz estimate. Since $\sigma$ is close to $\pi_V$, it stands to reason that this property remains true. In order to check the estimates, we need to be a bit careful about the horizontal displacement of $\sigma$.

\paragraph{bi-Lipschitz estimates}
In order to prove the estimate in \eqref{it_s4}, we show that for all $z=x+g(x)\in G$ and for all unit vectors $w\in T_z(G)\subset \R^n$, we have
\begin{gather}\label{eq_nabla_2lip}
 \abs{ \norm{\nabla \sigma|_z [w]}^2-1}\leq c(\delta+\delta')^2\, .
\end{gather}
First of all, note that if $\psi(z)=1$, then $\sigma$ is the identity, and there's nothing to prove.

In general, we have that
\begin{gather}
 \nabla \sigma|_z [w]= \underbrace{\ton{\psi(z) w + \sum_i \lambda_i (z) \pi_{V_i}[w]}}_{:=A} + \underbrace{\ton{ z \nabla \psi [w]+ \sum_i \pi_{p_i,V_i} (z) \nabla \lambda_i [w]}}_{:=B} \, .
\end{gather}
Since $\psi(z)+\sum_i \lambda_i(z)=1$ everywhere by definition, we have
\begin{gather}
 \norm{B} =\norm{ \sum_i (\pi_{p_i,V_i} (z) -z) \nabla \lambda_i [w]}\leq c \sup_i \cur{\norm{\pi_{p_i,V_i} (z) -z}}\leq c(\delta+\delta')\, .
\end{gather}
This last estimate comes from the fact that $G$ is the graph of $g$ over $V$ with $\norm{g}_\infty \leq \delta'$. Moreover, we can easily improve the estimate for $B$ in the horizontal direction using lemma \ref{lemma_epsproj}. Indeed, since $\pi_{p_i,V}(z)-z=-\pi_{V_i^\perp}(z-p_i)$, we have
\begin{gather}
 \norm{\pi_V B} =\norm{ \sum_i \pi_V \ton{\pi_{p_i,V_i} (z) -z} \nabla \lambda_i [w]}\leq c \sup_i \cur{\norm{\pi_V\ton{\pi_{p_i,V_i} (z) -z}}} \\
 \leq c \sup_i \cur{\norm{\pi_V\ton{\pi_{V_i^\perp} (x+g(x)) -\pi_{V_i^\perp}(p_i)}}}\leq  c(\delta^2+\delta'\delta)\, .\notag
\end{gather}
As for $A$, by adapting the proof of lemma \ref{lemma_sigma_simon}, we get $\norm{A-\pi_V[w]}\leq c\ton{\delta+\delta'}$. Moreover, also in this case we get better estimates for $A$ in the horizontal direction. Indeed, we have
\begin{gather}
 \norm{\pi_V(A) -\pi_V[w]} = \norm{\psi(z) \pi_V [w] +\sum_i \ton{ \lambda_i(z) \pi_V[\pi_{V_i}[w]]} -\pi_V[w]} = \norm{\sum_i  \lambda_i(z) \ton{\pi_V[\pi_{V_i}[w] -\pi_V[w]]}}\, .
\end{gather}
Now let $w=\pi_V[w]+\pi_{V^\perp}[w]=w_V+w_{V^\perp}$. Then we have
\begin{gather}
 \norm{\pi_V(A) -\pi_V[w]}\leq \sum_i  \lambda_i(z) \ton{\norm{\pi_V[\pi_{V_i}[w_V] -w_V]} + \norm{\pi_V[\pi_{V_i}[w_{V^\perp}]]}}\\
 \notag =\sum_i  \lambda_i(z) \ton{\norm{\pi_V[\pi_{V_i^\perp}[w_V]]} + \norm{\pi_V[\pi_{V_i}[w_{V^\perp}]]}}\, .
\end{gather}
Since $G$ is the Lipschitz graph of $g$ over $V$ with $\norm{\nabla g}\leq c\delta'$, then $\norm{\pi_{V^\perp}[w]}\leq c\delta'$. Then, by lemma \ref{lemma_epsproj}, we have
\begin{gather}
 \norm{\pi_V(A) -\pi_V[w]}\leq c\sum_i  \lambda_i(z) \ton{\delta^2+\delta\delta'}\, .
\end{gather}
Summing up, since $\norm{\pi_V[w]}\leq \norm w =1$, we obtain that
\begin{gather}
 \abs{\norm{\nabla\sigma|_z[w]}^2 -1} = \abs{\norm{\pi_{V^\perp}\nabla\sigma|_z[w]}^2+\norm{\ton{\pi_{V}\nabla\sigma|_z[w] - \pi_V[w]}+\pi_V[w]}^2  -1} \notag \\
 \leq c(\delta+\delta')^2+\abs{\norm{\pi_V[w]}^2 -1}=c(\delta+\delta')^2+\norm{\pi_{V^\perp}[w]}^2\leq c(\delta+\delta')^2\, .
\end{gather}
\end{proof}

\vspace{.5 cm}

\subsection{\texorpdfstring{Pointwise Estimates on $D$}{Pointwise Estimates on D}}  We wish to see in this subsection how \eqref{e:reifenberg_L_2_discrete} implies pointwise estimates on $D$, which will be convenient in the proof of the generalized Reifenberg results.  Indeed, the following is an almost immediate consequence of Remark \ref{r:D_scale_control}:

\begin{lemma}\label{l:D_pointwise_bound}
Assume $B_r(x)\subseteq B_2(0)$ satisfies $\mu(B_r(x))\geq \gamma_k r^k>>\epsilon_n r^k$ and $\int_{B_{2r}(x)}D^k_\mu(y,2r)\,d\mu(y)<\delta^2 (2r)^k$.  Then there exists $c(n)$ such that $D^k_\mu(x,r)<c\delta^2$.  In particular, if \eqref{e:reifenberg_L_2_discrete} holds then for every $B_{r}(x)\subseteq B_1(0)$ such that $\mu(B_r(x))\geq 4^k\epsilon_n r^k$ we have that $D^k_\mu(x,r)<c\delta^2$.
\end{lemma}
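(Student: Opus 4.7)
The plan is to prove this lemma via a straightforward integration of the pointwise scale-comparison inequality from Remark \ref{r:D_scale_control}. The first step is to record the key pointwise bound: for any $y \in B_r(x)$, we have $B_r(x) \subseteq B_{2r}(y)$, so if $L$ is the $k$-plane minimizing $\int_{B_{2r}(y)} d(z,L)^2 d\mu(z)$ (well-defined as the actual infimum provided $\mu(B_{2r}(y)) \geq \epsilon_n (2r)^k$), then
\begin{align*}
r^{k+2} D^k_\mu(x,r) \leq \int_{B_r(x)} d(z,L)^2\, d\mu(z) \leq \int_{B_{2r}(y)} d(z,L)^2\, d\mu(z) = (2r)^{k+2} D^k_\mu(y,2r),
\end{align*}
which gives $D^k_\mu(x,r) \leq 2^{k+2} D^k_\mu(y,2r)$.

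For the main claim of the lemma, the mass hypothesis $\mu(B_r(x)) \geq \gamma_k r^k$ guarantees (since $\gamma_k = \omega_k 40^{-k} \gg 2^k \epsilon_n$) that for every $y \in B_r(x)$, $\mu(B_{2r}(y)) \geq \mu(B_r(x)) \geq \epsilon_n(2r)^k$, so the pointwise inequality above is valid. Integrating it against $\mu$ over $y \in B_r(x)$ and using the integral hypothesis,
\begin{align*}
\mu(B_r(x)) \cdot D^k_\mu(x,r) \leq 2^{k+2} \int_{B_r(x)} D^k_\mu(y,2r)\, d\mu(y) \leq 2^{k+2} \int_{B_{2r}(x)} D^k_\mu(y,2r)\, d\mu(y) < 2^{k+2}\delta^2(2r)^k.
\end{align*}
Dividing by $\mu(B_r(x)) \geq \gamma_k r^k$ yields $D^k_\mu(x,r) < (4^{k+1}/\gamma_k)\,\delta^2 = c(n)\delta^2$.

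For the ``in particular'' statement, two small additional points are needed: first, derive the integral hypothesis from \eqref{e:reifenberg_excess_L2}; second, check that the weaker mass bound is still sufficient. For the first, apply \eqref{e:reifenberg_excess_L2} on $B_{2r}(x)$ to obtain $\int_{B_{2r}(x)}\int_0^{2r} D^k_\mu(y,t)\,\frac{dt}{t}\,d\mu(y) < \delta^2(2r)^k$, and lower-bound the inner integral by $\int_r^{2r} D^k_\mu(y,t)\,\frac{dt}{t}$. By Remark \ref{r:D_scale_control}, $D^k_\mu(y,2r) \leq c(n) D^k_\mu(y,t)$ for $t\in[r,2r]$, hence $D^k_\mu(y,2r) \leq c(n)\int_r^{2r} D^k_\mu(y,t)\,\frac{dt}{t}$, and integrating over $B_{2r}(x)$ produces $\int_{B_{2r}(x)} D^k_\mu(y,2r)\,d\mu(y) < c(n)\delta^2(2r)^k$. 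For the second, observe that $4^k\epsilon_n r^k = 2^k \cdot \epsilon_n (2r)^k$, so the hypothesis $\mu(B_r(x)) \geq 4^k\epsilon_n r^k$ still forces $\mu(B_{2r}(y)) \geq \epsilon_n(2r)^k$ for every $y \in B_r(x)$, keeping the pointwise bound valid. Repeating the integration argument then gives $D^k_\mu(x,r) \leq (c(n)\delta^2(2r)^k)/(4^k\epsilon_n r^k) = c(n)\epsilon_n^{-1}\delta^2 = c'(n)\delta^2$.

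There is no serious obstacle in this proof; everything is a short computation combining Remark \ref{r:D_scale_control} with an integration. The only delicate point is tracking the mass thresholds so that $D^k_\mu(y,2r)$ is always the genuine infimum rather than the default value $0$; this is exactly what dictates the specific constant $4^k\epsilon_n$ appearing in the hypothesis.
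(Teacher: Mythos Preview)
Your proof of the main claim is correct and is exactly the argument the paper intends: it is the averaged pointwise comparison of Remark \ref{r:D_scale_control} combined with the integral hypothesis.

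There is, however, a genuine gap in your ``in particular'' paragraph. You invoke Remark \ref{r:D_scale_control} to assert $D^k_\mu(y,2r)\leq c(n)\,D^k_\mu(y,t)$ for $t\in[r,2r]$, but the scale comparison only goes the other way: from $B_t(y)\subseteq B_{2r}(y)$ one obtains $t^{k+2}D^k_\mu(y,t)\leq (2r)^{k+2}D^k_\mu(y,2r)$, i.e.\ $D^k_\mu(y,t)\leq c\,D^k_\mu(y,2r)$, and the reverse inequality is false in general (three generic points with two of them in $B_t(y)$ on a line and the third in $B_{2r}(y)\setminus B_t(y)$ off that line give $D(y,t)=0$ while $D(y,2r)>0$). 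The phrase ``controlled on both sides'' in the remark means sandwiched between the half and doubled scale in the monotone sense, not two-sided equivalence.

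The fix is immediate: integrate over scales \emph{above} $2r$ rather than below. For $y\in B_r(x)$ and $t\in[2r,4r]$ one has $B_r(x)\subseteq B_t(y)$, hence $D^k_\mu(x,r)\leq 4^{k+2}D^k_\mu(y,t)$; the mass threshold $\mu(B_t(y))\geq \mu(B_r(x))\geq 4^k\epsilon_n r^k\geq \epsilon_n t^k$ is satisfied for $t\leq 4r$. Averaging over $y\in B_r(x)$ and $t\in[2r,4r]$ and applying \eqref{e:reifenberg_excess_L2} on $B_{4r}(x)$ then gives
\[
D^k_\mu(x,r)\,\mu(B_r(x))\,\log 2 \;\leq\; 4^{k+2}\int_{B_{4r}(x)}\int_0^{4r} D^k_\mu(y,t)\,\frac{dt}{t}\,d\mu(y)\;<\;4^{k+2}\delta^2(4r)^k,
\]
and dividing by $\mu(B_r(x))\geq 4^k\epsilon_n r^k$ yields the desired $D^k_\mu(x,r)<c(n)\delta^2$.
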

\begin{proof}
 First of all, note that in all of our theorems we just investigate properties of $\mu|_{\B 1 0}$, thus the first assumption is not too restrictive. Under this assumption, it is easy to see that for all $x\in \B 2 0$ and $r\geq 1/16$, we have
 \begin{gather}
  D(x,r)\leq 16^{-k-2} D(0,1)=c(k) D(0,1)\, .
 \end{gather}

\end{proof}

\vspace{.5 cm}

\section{Proof of theorem \ref{t:reifenberg_W1p_discrete}: The Discrete-Reifenberg}\label{sec_proof_main}
First of all, note that, by definition of $\mu$, the statement of this theorem is equivalent to
\begin{gather}
 \mu(\B 1 0)\leq D(n)/\omega_k\, .
\end{gather}
In the proof, we will fix the constant $C_1(k)\leq 40^k\omega_k$ and therefore the positive scale $\rho(n,C_1(k))=\rho(n)<1$ according to lemma \ref{lemma_alpharho}.  For convenience, we will assume that $\rho=2^q$, $q\in \N$, so that we will be able to use the sum bounds \eqref{eq_sum2^alpha} more easily.


\paragraph{Reduction to a quantized measure}
It will be convenient to consider a measure $\mu\equiv \sum_{s\in S}\omega_k r^k_s \delta_{x_s}$ where $r_s$ is ``quantized'', meaning that
\begin{gather}\label{eq_quantum}
 r_s\in \rho(n)^\N=\cur{t\in \R \ \ s.t. \ \ t=\rho(n)^m \ \ \text{for some} \ m\in \N}\, .
\end{gather}
For this reason, we define $\tilde r_s = \max \cur{t\in \rho(n)^\N \ \ s.t. \ \ t\leq r_s}$ and
\begin{gather}
 \tilde \mu = \sum_{s\in S} \omega_k \tilde r_s^k \delta_{x_s}\, .
\end{gather}
It is clear that $\tilde \mu\leq \mu \leq \rho(n)^k \tilde \mu$, and given the monotonicity of $D_\mu^k$ wrt $\mu$ explained in remark \ref{rem_mon}, the bound \eqref{e:reifenberg_L_2_discrete} is still valid with $\tilde \mu$ in place of $\mu$.

We will prove the theorem for $\tilde \mu$, and in particular we will prove that
\begin{gather}
 \tilde \mu(\B 1 0)\leq C_1(k)\, .
\end{gather}
With this bound, it is evident that
\begin{gather}
 \mu(\B 1 0)\leq \rho(n)^k \tilde \mu(\B 1 0)\leq C_1(k)\rho(n)^k \equiv D(n)/\omega_k\, .
\end{gather}
In other words, we can assume without essential loss of generality that $r_s\in \rho(n)^\N$. 

For convenience of notation, we will still denote $\tilde \mu$ simply by $\mu$.

\vspace{5mm}
\paragraph{Bottom scale} In the proof, it will be convenient to assume that $r_s\geq \bar r >0$. It is clear that, by means of a simple limiting argument, this assumption is not restrictive.
In particular, fix any positive radius $\bar r= r_A=\rho^A$ for some $A\in \N$, and consider the measure $\mu_{\bar r}\leq \mu$ defined by
\begin{gather}
 \mu_{\bar r} = \sum_{s \ \ s.t. \ \ r_s\geq \bar r} \omega_k r_s^k \delta_{x_s} \, .
\end{gather}
Note that this is a finite sum if $\bar r$ is positive. By Remark \ref{rem_mon}, we see that $\mu_{\bar r}$ satisfies all the hypothesis of this theorem, and since $\mu_{\bar r} \nearrow \mu$, if we prove uniform bounds on $\mu_{\bar r}(\B 1 0)$ which are \textit{independent} of $\bar r$, we can conclude the theorem. 

For this reason, in the rest of the proof we will assume for simplicity that $r_s\geq \bar r =\rho^A>0$ for all $s$. 
\vspace{5mm}

\subsection{First induction: upwards} We are going to prove inductively on $j=A,\cdots,0$ that for all $x\in \B 1 0 \subset \R^n$ and $\er_j=\rho^j\leq 1$, either $\B{\er_j}{x}$ is contained in one of the balls $\cur{\B{r_s}{x_s}}_{s\in S}$, or we have the bound
\begin{gather}\label{eq_srx}
 \mu\ton{\B{\er_j}{x}} \leq C_1(k) \er_j^k\, .
\end{gather}
Note that, for $j=A$, this bound follows from the definition of the measure $\mu$ and the assumption that $r_s\geq \bar r$. Note also that this implies $\er_j\leq 2r_s$ for all $s\in S$ and $x_s\in \overline{\B {\er_j}{x}}$.

Clearly, we can assume wlog that $\mu\ton{\B{\er_j}{x}} \geq \gamma_k \er_j^k$, otherwise there is nothing to prove. This observation will be essential in order to apply lemma \ref{lemma_vw}.

Moreover, as long as we are trying to prove \eqref{eq_srx}, we can replace wlog $\mu$ with $\mu|_{\B {\er_j}{x}}$. Indeed, by Remark \ref{rem_mon}, all the hypotheses of theorem \ref{t:reifenberg_W1p_discrete} hold also for any restriction of $\mu$, in particular equation \eqref{e:reifenberg_L_2_discrete}. Thus, from now on, $\mu$ will indicate $\mu|_{\B {\er_j}{x}}$ and $S=\supp \mu \subset \overline{\B {\er_j}{x}}$.

\begin{remark}
 Note that if $\nu=\mu|_{\B {\er_j}{x}}$, then all the  $D^k_{\nu}$ on balls $\B s y \supset \B {\er_j}{x}$ are controlled. Indeed, we have
 \begin{gather}
  D^k_\mu (x,\er_j)=D^k_\nu (x,\er_j) = \er_j^{-k-2} \int_{\B {\er_j}{x}} d(z,V(x,\er_j))^2 d\mu(z) = \er_j^{-k-2} \int_{\B {s}{y}} d(z,V(x,\er_j))^2 d\nu(z) \geq \frac{s^{k+2}}{\er_j^{k+2}} D^k_{\nu}(y,s)\, .
 \end{gather}
In particular, this implies that if $\B {\er_j}{x}\subseteq \B{s}{y}$, then
\begin{gather}
 D^k_{\nu}(y,s)\leq \ton{\frac{\er_j}{s}}^{k+2} D^k_\nu(x,\er_j)=\ton{\frac{\er_j}{s}}^{k+2} D^k_\mu(x,\er_j)\, .
\end{gather}
In turn, as long as $\mu(\B {\er_j}{x})\leq c(n) \er_j^k$, we also have the bound
\begin{align}
&\int_{\R^n}\ton{\int_0^\infty D^k_\nu(z,t)\,{\frac{dt}{t}}}\, d\nu(z)< c \delta^2 s^{k}\, .
\end{align}
\end{remark}

\subsection{Rough estimate} Fix some $j$, and suppose that \eqref{eq_srx} holds on all scales below $\er_j$, i.e., for all $y\in \B 1 0$ and $\bar r \leq \er_{i} \leq \er_{j}$, $\mu(\B {\er_{i}}{y})\leq C_1(k) \er_{i}^k$.
 
Let us first observe that we can easily obtain a bad upper bound on $\mu\ton{\B{\chi \er_{j}}{x}}$ for any fixed $\chi>1$. Consider the points in $\cur{x_s}_{s\in S} \cap \B{\chi \er_{j}}{x}$, and divide them into two groups: the ones with $r_s\leq \er_{j}$ and the ones with $r_s> \er_{j}$. Note that by \eqref{eq_quantum}, $r_s>\er_j$ is equivalent to $r_s\geq \er_{j-1}$.

For the first group, cover them by balls $\B{\er_{j}}{z_i}$ such that $\B{\er_{j}/2}{z_i}$ are disjoint. Since there can be at most $c(n,\chi)$ balls of this form, and for all of these balls the upper bound \eqref{eq_srx} holds, we have an induced upper bound on the measure of this set.

As for the points with $r_s> \er_{j}$, by construction there can be only $c(n,\chi)$ many of them, and we also have the bound $r_s\leq 2\chi \er_{j}$. Summing up the two contributions, we get the very rough estimate
\begin{gather}\label{eq_lambdarough}
 \mu\ton{\B{\chi \er_{j}}{x}}\leq C_2(n,\chi) \er_{j}^k\, ,
\end{gather}
where $C_2>>C_1$.  Note that, as long as the inductive hypothesis holds, $C_2$ is independent of $j$.  However, it is clear that successive repetitions of the above estimate will not lead to \eqref{eq_srx}.
\vspace{.5 cm}

\subsection{Second induction: downwards. Outline of the proof}

Suppose that \eqref{eq_srx} is true for all $x\in \B 1 0$ and $i=j+1,\cdots,A$. Fix $x\in \R^n$, and consider the set $B=\B{\er_j}{x}$. Recall that we always assume that $B$ is not contained in one of the balls $\B{r_s}{x_s}$, otherwise the bound \eqref{eq_srx} might fail for trivial reasons.  
We are going to build by induction on $i\geq j$ a sequence of smooth maps $\sigma_i:\R^n\to \R^n$ and smooth $k$-dimensional manifolds $T_i$ which will serve as approximations for the support of $\mu$ at scale $\er_{i}$. Let us outline the inductive procedure now, and introduce all the relevant terminology.  Everything described in the remainder of this subsection will be discussed more precisely over the coming pages.  To begin with, we will have at the first step that
\begin{align}
 &\sigma_j=id,\, \notag\\
 &T_j=V(x,\er_j)\subset \R^n\, ,
\end{align}
where $V(x,\er_j)$ is one of the $k$-dimensional affine subspaces which minimizes $\int_{\B{\er_j}{x}} d^2(y,V)\,d\mu$.  Thus, the first manifold $T_j$ is a $k$-dimensional affine subspace which best approximates $\B{\er_j}{x}$.  At future steps we can recover $T_{i+1}$ from $T_{i}$ and $\sigma_{i+1}$ from the simple relation
\begin{align}
 &T_{i+1}=\sigma_{i+1}(T_{i})\, .
\end{align}
We will see that $\sigma_{i+1}$ is a diffeomorphism when restricted to $T_{i}$, and thus each additional submanifold $T_{i+1}$ is also diffeomorphic to $\dR^k$.  As part of our inductive construction we will build at each stage a Vitali covering of $T_i$ given by
\begin{align}\label{e:downward_induction:1}
\B{\er_{i}}{T_i}\cap \B{\er_j}{x}\sim \bigcup_{t=j}^i\ton{\bigcup_{y\in I_b^t}\B {\er_{t}}{y}\cup \bigcup_{x_s\in I_f^t} \B{r_s}{x_s}}\cup \bigcup_{y\in I_g^i} \B{\er_{i}}{y}\, ,
\end{align}
where $I_g$, $I_b$, and $I_f$ represent the {\it good}, {\it bad}, and {\it final} balls in the covering. Final balls are balls belonging to the original covering $\B{r_s}{x_s}$ such that $r_s\in[\er_{i},\er_{i-1})$ (equivalently, by \eqref{eq_quantum}, $r_s=\er_i$), and the other balls in the covering are characterized as good or bad according to how much measure they carry. Good balls are those with large measure, bad balls the ones with small measure. More precisely, we have
\begin{align}\label{e:downward_induction:2}
&\mu\big(\B {\er_{i}}{y}\big)\geq \gamma_k \er_{i}^k\, ,\quad\text{if}\quad y\in I^i_g\, ,\notag\\
&\mu\big(\B {\er_{i}}{y}\big)<\gamma_k \er_{i}^k\, ,\quad\text{if}\quad y\in I^i_b\, .
\end{align}

We will see that, over each good ball $\B{\er_{i}}{y}$ in this covering, $T_i$ can be written as a graph over the best approximating subspace $V(y,\er_{i})$ with good estimates.\\

Our goal in these constructions is the proof of \eqref{eq_srx} for the ball $  B=\B{\er_j}{x}$, and thus we will need to relate the submanifolds $T_i$, and more importantly the covering \eqref{e:downward_induction:1}, to the set $  B$.  Indeed, this covering of $T_i$ almost covers the set $  B$, at least up to an excess set $  E_{i-1}$.  That is,
\begin{align}
  \supp \mu \cap B\subseteq   E_{i-1} \cup \bigcup_{t=j}^i\ton{\bigcup_{y\in I_b^t}\B {\er_{t}}{y}\cup \bigcup_{x_s\in I_f^t} \B{r_s}{y}}\cup \bigcup_{y\in I_g^i} \B{\er_{i}}{y}\, .
\end{align}
We will see that the set $E_{i-1}$ consists of those points of $  B$ which do not satisfy a uniform Reifenberg condition.  Thus in order to prove \eqref{eq_srx} we will need to estimate the covering \eqref{e:downward_induction:1}, as well as the excess set $  E_{i-1}$.\\

Let us now outline the main properties used in the inductive construction of the mapping $\sigma_{i+1}:\dR^n\to \dR^n$, and hence $T_{i+1}=\sigma_{i+1}(T_i)$.  As is suggested in \eqref{e:downward_induction:1}, it is the good balls and not the bad and final balls which are subdivided at further steps of the induction procedure.  In order to better understand this construction let us begin by analyzing the good balls $\B{\er_{i}}{y}$ more carefully.  On each such ball we may consider the best approximating $k$-dimensional subspace $  V(y,\er_{i})$.  Since $\B{\er_{i}}{y}$ is a good ball, one can check that most of $  \operatorname{supp}(\mu)\cap \B{\er_{i}}{y}$ must satisfy a uniform Reifenberg and reside in a small neighborhood of $  V(y,\er_{i})$.  We denote those points which don't by $  E(y,\er_{i})$, see \eqref{eq_E} for the precise definition.  Then we can define the next step of the excess set by
\begin{align}
  E_i =   E_{i-1}\cup \bigcup_{y\in I^i_g}   E(y,\er_{i})\, . 
\end{align}
Thus our excess set represents all those points which do not lie in an appropriately small neighborhood of the submanifolds $T_i$.  With this in hand we can then find a submanifold $T'_i\subseteq T_i$, which is roughly defined by 
\begin{align}
T'_i \approx T_{i}\setminus \ton{\bigcup_{t=j}^{i+1}\bigcup_{y\in I_b^{t}}\B{\er_{t}/6}{y}\cup \bigcup_{t=j}^{i+1}\bigcup_{x_s\in I_f^{t}}\B{r_s/6}{x_s} }\, ,
\end{align}
see \eqref{e:downward_induction:map_manifold:1} for the precise inductive definition, such that
\begin{align}
  \supp \mu \cap B \subseteq   E_{i}\cup \bigcup_{t=j}^i\ton{\bigcup_{y\in I_b^t}\B {\er_{s}}{y}\cup \bigcup_{x_s\in I_f^t} \B{r_s}{y}} \cup B_{\er_{i+1}/4}\big(T'_i\big) \equiv   R_i \cup \bigcup B_{\er_{i+1}/4}\big(T'_i\big)\, ,
\end{align}
where $  R_i$ represents our remainder term, and consists of those balls and sets which will not be further subdivided at the next stage of the induction.  

The basic idea is that if in our induction we find a bad ball or a final ball $\B {\er} x$, we know that the measure carried by this ball is bounded by $C\er^k$. Because of this upper bound, in order to get the final estimate on $\mu$ we do not need to further analyze the measure inside any of these balls. However, we do need to keep track of the measure carried by these balls in successive induction steps. This is why every time we find a bad or final ball, we create a corresponding ``hole'' in the manifold $T_i$, and obtain as a result $T_i'$. By construction, the $k$-dimensional measure of these holes is comparable to $\mu(\B {\er} x )$, and thus the $k$-dimensional measure of $T_i$ (without holes) already ``includes'' the $\mu$-measure of the final and bad balls at all bigger scales.

Now in order to finish the inductive step of the construction, we can cover $B_{\er_{i+1}/4}\big(T'_i\big)$ by some Vitali set
\begin{align}
B_{\er_{i+1}/4}\big(T'_i\big)\subseteq \bigcup_{y\in I} \B{\er_{i+1}}{y}\, ,
\end{align}
where $y\in I\subseteq T'_i$.  We may then decompose the ball centers 
\begin{align}
I^{i+1}=I^{i+1}_g\cup I^{i+1}_b\cup I^{i+1}_f\, ,
\end{align}
based on \eqref{e:downward_induction:2}.  Now we will use Definition \ref{deph_sigma} and the best approximating subspaces $V(y,\er_{i+1})$ to build $\sigma_{i+1}:\dR^n\to \dR^n$ such that
\begin{align}
\text{supp}\{\sigma_{i+1}-Id\}\subseteq \bigcup_{y\in I^{i+1}_g}\B{3\er_{i+1}}{y}\, .
\end{align}
In order to prove the final bounds, we need to track the measure of the approximating manifolds $T_i$ as $i$ goes to infinity. We can use the local bi-Lipschitz estimates for $\sigma_{i}$ at scale $\er_{i}$ and integrate them along each manifold $T_i$ to obtain uniform bounds on $\lambda^k(T_i)$ as $i$ goes to infinity. This completes the outline of the inductive construction.  

\subsection{First steps in the induction}

In order to make the proof more understandable, we give in detail the proof of the first steps in the downwards induction, which contains most of the necessary ideas to carry out the whole construction. 

Fix any $\B {\er_j}{x}$. Without loss of generality, we assume that
\begin{gather}
 \mu(\B {\er_j}{x}) \geq 2\gamma_k \er_j^k\, ,
\end{gather}
otherwise we clearly have the measure estimate we want to prove.

With this condition, it makes sense to talk about a best $L^2$ approximating subspace for the support of $\mu$ on $\B {\er_j}{x}$. Denote this subspace by $V(x,\er_j)\equiv T_j$. 

Now, we want to cover the support of $\mu$ with balls of radius $9\er_{j+1}/10$ (roughly one scale smaller) in such a way to have good $k$-dimensional packing estimates on these balls. The idea is that condition \eqref{e:reifenberg_L_2_discrete} will force the measure $\mu$ to be \textit{almost} supported in a small tubular neighborhood of $T_j$, up to small measure. So we split our ball in
\begin{gather}
 \B{\er_j}{x}=\ton{\B{\er_j}{x} \cap \B {\er_{j+1}/11} {V(x,\er_j)} }\cup\ton{\B{\er_j}{x} \cap \B {\er_{j+1}/11} {V(x,\er_j)} ^C }\, .
\end{gather}
The second part in this splitting is in some sense preventing the measure $\mu$ to satisfy an $L^\infty$ Reifenberg condition. We call this part \textit{excess set}, in particular
\begin{gather}\label{eq_excess_j}
 E(x,\er_j)= \B{\er_j}{x} \cap \B {\er_{j+1}/11} {V(x,\er_j)} ^C \, .
\end{gather}
Although $\mu(E)$ can be positive, it cannot be too big. Indeed, we have the trivial estimate
\begin{align}
\int_{\B{\er_j}{x}\setminus   E(x,\er_j)} &d(y,  V(x,\er_j))^2 \ d\mu(y)+\mu\ton{  E(x,\er_j)} (\er_{j+1}/11)^2\leq \int_{\B{\er_j}{x}} d(y,  V(x,\er_j))^2 \ d\mu(y)
= \er_j^{k+2} D^k_\mu (x,\er_j)\leq c(n,\rho) \er_{j}^{k+2} \delta\, .
\end{align}

Now, almost all of the measure $\mu$ must be concentrated in $\B{\er_j}{x} \cap \B {\er_{j+1}/11} {V(x,\er_j)}$. In order to estimate this part, we build a covering with good overlapping properties of this set by balls of radius $\geq 9\er_{j+1}/10$ centered on $V(x,\er_j)\cap \B{\er_j}{x}$. 

This covering is built in the following way. First of all, we consider separately all the balls $\cur{\B{r_s}{x_s}}_{s\in S}$ with $r_s\geq \er_{j+1}$. Recall that by construction $r_s\leq 2\er_j$ for all $s\in S$ with $x_s\in \B {\er_j}{x}$, otherwise there's nothing to prove since $\B {\er_j}{x}$ would be contained in $\B {r_s}{x_s}$ for some $s$. Note that, all of these balls are pairwise disjoint. We will call these balls \textit{final balls}, and set $I^f_j$ to be the set of centers of these balls.

In most cases, or at least in the most interesting cases, the set of final balls will be empty or very small. We complete this partial covering of $\B{\er_j}{x} \cap \B {\er_{j+1}/11} {V(x,\er_j)}$ with other balls centered on $V(x,\er_j)$ of radius $9\er_{j+1}/10$ in such a way that this covering have a Vitali property.

Thus we obtain
\begin{gather}
 \B{\er_j}{x} \cap \B {\er_{j+1}/11} {V(x,\er_j)} \subset \bigcup_{x_s\in I^f_{j} } \B{r_s}{x_s} \cup \bigcup_{q\in Q} \B {9\er_{j+1}/10}{x_q}\, .
\end{gather}

Now, we split the set $Q$ according to how much measure is contained in $\B {\er_{j+1}}{x_q}$. In particular, if $\mu\ton{\B {\er_{j+1}}{x_q}}\geq \gamma_k \er_{j+1}^k$, we say that this is a \textit{good ball}, otherwise we say that this is a \textit{bad ball}.

Now we want to build a new best approximating manifold $T_{j+1}$ at this scale. On good balls, we have a best approximating subspace $V(x_q,\er_{j+1})$, and since these balls carry enough measure, we can apply lemma \ref{lemma_vw} and obtain a quantitative estimate on the distance between $V(x,\er_j)$ and $V(x_q,\er_{j+1})$. In turn, this will allow us to apply the construction in the squash lemma \ref{lemma_squash}. In particular, we have a smooth map $\sigma$ defined on $\R^n$, and moreover $\sigma(T_j)\equiv T_{j+1}$ is a diffeomorphism onto its image when restricted to $T_j$ with good quantitative bi-Lipschitz estimates. The details of this construction are carried out in subsection \ref{sec_sigma}.

As for bad balls, we don't have to worry too much about those, since they carry really small measure. In particular, 
\begin{gather}
 \mu\ton{\B {\er_{j+1}}{x_q}}< \lambda^k\ton{T_{j+1} \cap \B {\er_{j+1}/6}{x_q}}\, .
\end{gather}
Thus, in order to keep track of the measure carried by bad balls, we can simply keep track of the $k$-dimensional measure of the approximating manifolds $T_{i}$. In some sense, every time we hit a bad ball, we can compare its measure $\mu$ to the $k$-dimensional Hausdorff measure of the ``hole'' $T_{j+1} \cap \B {\er_{j+1}/6}{x_q}$. Since, as we will prove, for $i\geq j+1$, $T_i \cap \B {\er_{j+1}/6}{x_q}$ and $T_j \cap \B {\er_{j+1}/6}{x_q}$ are substantially equal, by estimating the measure of $T_i$ we also estimate the total measure of all the bad balls.

Moreover, since this estimate covers the measure of the whole bad ball, from this step forward we do not have to worry about $\mu|_{\B {\er_{j+1}}{x_q}}$ any longer in the induction. We can use a similar argument to track the measure of final balls. 

This construction is carried out in Subsection \ref{sec_holes}

Evidently, we cannot hope to apply these considerations also to good balls in order to get the estimates we want, because the measure of good balls is not small (a priori it could be anything). Instead, on the new good balls, we start over the same construction we outlined here (excess set, construction of the new best approximating manifold, and so on) and keep going by induction. The inductive estimates on the $k$-dimensional measure are carried out in Subsection \ref{sec_volmani}.

\subsection{Second induction: details of the construction}
Let us now describe precisely the proof of this inductive construction which will lead to \eqref{eq_srx}. For $j\leq i\leq A$, we will define a sequence of approximating manifolds $T_i$ for the support of $\mu$ and a sequence of smooth maps $\sigma_i$ such that
\begin{enumerate}
 \def\theenumi{\roman{enumi}}
 \item \label{it_i1} $\sigma_j=id$, $T_j=V(x,\er_j)\subset \R^n$,
 \item \label{it_i2} $T_i=\sigma_i(T_{i-1})$,
 \item \label{it_i3} for $i\geq j+1$ and $y\in T_{i-1}$,
 \begin{gather}\label{eq_distdelta}
  d(\sigma_{i}(y),y)\leq c\delta \er_{i}\, ,
 \end{gather}
 and $\sigma_{i}|_{T_{i-1}}$ is a diffeomorphism,
 \item \label{it_i5} for every $y\in T_i$, $T_i\cap \B{2 \er_{i}}{y}$ is the graph over some $k$-dimensional affine subspace of a smooth function $f$ satisfying
  \begin{gather}\label{eq_lipTi}
  \frac{\norm{f}_{\infty}}{\er_{i}} + \norm{\nabla f}_\infty \leq c\delta\, .
 \end{gather}
\end{enumerate}
As outlined before, the manifolds $T_i$ will be good approximations of the set $S$ up to some ``excess'' set of small measure. Moreover, we will also introduce the concept of \textit{good, bad} and \textit{final} balls (whose centers will be in the sets $I_g^i$, $I_b^i$ and $I_f^i$), a \textit{remainder set} $R_i$, and the manifolds $T_i'\subseteq T_i$. Before giving the precise definitions (which are in equations \eqref{eq_deph_Igb}, \eqref{eq_deph_If}, \eqref{eq_R} and \eqref{e:downward_induction:map_manifold:1} respectively), let us group here all the properties that we will need (and prove) for these objects, so that the reader can always come back to this page to have a clear picture of what are the objectives of the proof.
\begin{enumerate}
 \setcounter{enumi}{4} \def\theenumi{\roman{enumi}}
 \item\label{it_i4} for every $i\geq j+1$ and $y\in I_g^i$, $d(y, V(y,\er_{i}))\leq c\delta \er_{i}$, the set $T_i\cap \B{1.5 \er_{i}}{y}$ is the graph over $ V(y,\er_{i})$ of a smooth function $f$ satisfying \eqref{eq_lipTi}, where $ V(y,r)$ is one of the $k$-dimensional affine subspaces minimizing $\int_{\B{r}{y}} d^2(y, V)\,d\lambda^k$,
 \item \label{it_i6} for all $i$, we have the inclusion
 \begin{gather}\label{eq_sub}
  \supp \mu \cap B \subseteq \B{\er_{i+1}/10}{T_{i}'}\cup   R_i\, ,
 \end{gather}
\end{enumerate}
The last two properties needed are the key for the final volume estimates:
\begin{enumerate}
 \setcounter{enumi}{6} \def\theenumi{\roman{enumi}}
\item \label{it_i8} we can estimate
 \begin{gather}\label{eq_volFB}
 \lambda^k(\sigma_i^{-1}(T_i'\cap \B {2\er_j}{x}))+  \#\ton{I_b^i}\omega_k (\er_{i}/10)^k +\omega_k\sum_{x_s\in I_f^i } (r_s/10)^k\leq \lambda^k(T_{i-1}'\cap \B {2\er_j}{x})\, ,
 \end{gather}
 \item\label{it_i9} we can estimate the excess set by
 \begin{align}
 \mu\big( E(y,\er_{i})\big) \er_{i+1}^2\leq C(n) \er_{i}^{k+2}D^k_{\mu}(y,2\er_{i})\, .
 \end{align}
\end{enumerate}
\vspace{.5 cm}

At the first step of our induction, we can assume wlog that $\mu(\B {\er_j}{x}) \geq 2\gamma_k \er_j^k$. We set $I^{j}_b=I^j_f=\emptyset$, $I_g^j=\{\bar x\}$, where $\bar x$ is the center of mass of $\mu|_{\B {\er_j}{x}}$, $T_j=V(x,\er_j)=T_j'$ and $\sigma_j=id$. We set $E(x,\er_j)$ to be the excess set defined by \eqref{eq_excess_j}, and $R_j=E(x,\er_j)$. It is clear from these definitions that all the properties \eqref{it_i1}-\eqref{it_i9} are satisfied. 

Now we proceed by induction assuming that we have defined for all $t\in \cur{j,\cdots, i}$ $I_g^t,I_b^t,I_f^t$, the maps $\sigma_t$ and the manifolds $T_t,T'_t$. Moreover, we assume that we also have defined for all $t\in \cur{j,\cdots,i-1}$ the excess sets $\cur{E(y,\er_{t})}_{y\in I_g^t}$ and the remainder $R_{i-1}$.

In the induction step, we will first build $\cur{E(y,\er_{i})}_{y\in I_g^i}$ and $R_i$, and then move on to the construction of $I_g^{i+1},I_b^{i+1},I_f^{i+1}$, $\sigma_{i+1}$ and $T_{i+1},T'_{i+1}$.

\subsection{Excess set.}
Let us begin by describing the construction of the excess set.  We will only be interested here in a ball $\B{\er_{i}}{x}$ which is a good ball, in the sense that $\mu(  B_{\er_{i}}(x))\geq \gamma_k \er_{i}^k$.\\

Thus define $  V(x,r)$ to be (one of) the $k$-dimensional plane minimizing $\int_{\B{r}{x}} d(y,  V)^2 d\mu$, and define also the excess set to be the set of points which are some definite amount away from the best plane $  V$.  Precisely, 
\begin{gather}\label{eq_E}
 E(x,\er_{i})= \Big(\B{\er_{i}}{x} \setminus \B{\er_{i+1}/11}{  V}\Big)\cap S\, .
\end{gather}
The points in $\supp \mu \cap  E$ are in some sense what prevents the set $S$ from satisfying a uniform one-sided Reifenberg condition at this scale. By construction, all points in $  E$ have a uniform lower bound on the distance from $  V$, so that if we assume $\mu(  B_{\er_{i}}(x))\geq \gamma_k \er_{i}^k$, i.e. $\B{\er_{i}}{x}$ is a good ball, then we can estimate
\begin{align}\label{eq_exrough}
\int_{\B{\er_{i}}{x}\setminus   E(x,\er_{i})} &d(y,  V(x,\er_{i}))^2 \ d\mu(y)+\mu\big(  E(x,\er_{i})\big) (\er_{i+1}/11)^2\leq \int_{\B{\er_{i}}{x}} d(y,  V(x,\er_{i}))^2 \ d\mu(y)
= \er_{i}^{k+2} D^k_\mu (x,\er_{i})\, .
\end{align}
\vspace{.5 cm}

\subsection{Good, bad and final balls} Inductively, let us define the remainder set to be the union of all the previous bad balls, final balls, and the excess sets:
\begin{gather}\label{eq_R}
R_i=\bigcup_{t=j}^i\ton{\bigcup_{y\in I_b^t}\B {\er_{t}}{y} \cup \bigcup_{x_s\in I_f^t} \B{r_s}{x_s} \cup \bigcup_{y\in I_g^t}E(y,\er_{t}) }\, .
\end{gather} 
The set $  R_i$ represents everything we want to throw out at the inductive stage of the proof.  We will see later in the proof how to estimate this remainder set itself.  Note that $R_j=E(x,\er_j)$.

Now consider the points $x_s\in S$ outside the remainder set, and separate the balls $\B{r_s}{x_s}$ with radius $r_s=\er_{i+1}$ from the others by defining for $y\in I_g^i$ the sets 
\begin{gather}
I_f^{i+1}(y)=\cur{x_s\in \ton{S\setminus R_i}\cap \B{\er_{i}}{y}\ \ s.t. \ \ r_s\in [\er_{i+1},\er_{i}) }\stackrel{\eqref{eq_quantum}}{=}\cur{x_s\in \ton{S\setminus R_i}\cap \B{\er_{i}}{y}\ \ s.t. \ \ r_s = \er_{i+1} }\, ,
\end{gather}
and
\begin{gather}
J^{i+1}(y)=\cur{x_s\in \ton{S\setminus R_i}\cap \B{\er_{i}}{y}\ \ s.t. \ \ r_s<\er_{i+1}}\stackrel{\eqref{eq_quantum}}{=}\cur{x_s\in \ton{S\setminus R_i}\cap \B{\er_{i}}{y}\ \ s.t. \ \ r_s\leq\er_{i+2}}\, .
\end{gather}
From this we can construct the sets
\begin{gather}\label{eq_deph_If}
I_f^{i+1}=\cup_{y\in I_g^i} I_f^{i+1}(y)\quad \text{ and }\quad J^{i+1}=\cup_{y\in I_g^i} J^{i+1}(y)\, .
\end{gather}


Note that by construction and inductive item \eqref{it_i4}, we have 
\begin{gather}\label{e:gbf_balls:1}
 S \setminus   R_i =I^{i+1}_f\cup J^{i+1} \subset \B{\er_{i+1}/10}{T_i'} \, .
\end{gather}
Recall that, as long as we are trying to prove the estimate \eqref{eq_srx}, we can assume wlog that $\mu=\mu|_{\B {\er_j}{x}}$ and $S=\supp \mu \subset \B {\er_j}{x}$.

Let us now consider a covering of \eqref{e:gbf_balls:1} given by 
\begin{align}
 S\setminus   R_i \subseteq I^{i+1}_f \cup \bigcup_{z\in I}\B {9\er_{i+1}/10}{z}\, ,
\end{align}
where $I\subseteq T_i'$, and for any $p\neq q\in I_f^{i+1}\cup I$, $\B{r_{p}/5}{p}\cap \B{r_{q}/5}{q}=\emptyset$.  Here we denote for convenience $r_p=\er_{i+1}$ if $p\in I$, and $r_p=r_s$ if $p=x_s\in I_f^{i+1}$. Note that this second property is true by definition for $p,q\in I_f^{i+1}$, we only need to complete this partial Vitali covering with other balls of the same size. To be precise, note that by \eqref{eq_R} and \eqref{eq_E}
\begin{gather}\label{eq_333}
 \ton{S\setminus R_i } \setminus \bigcup_{z\in I_f^{i+1}}\B{3 r_z /5}{z} \subset \ton{\bigcup_{y\in I_g^i} \ton{\B {\er_{i+1}/4 }{  V(y,\er_{i})} \cap \B{9\er_{i}/10}{y}}} \bigcap \ton{\bigcup_{t=j}^{i+1}\bigcup_{z\in I^{t}_f}\B{3r_{z}/5}{z}\cup \bigcup_{t=j}^{i}\bigcup_{z\in I^{t}_b}\B{3\er_{t}/5}{z}}^C\, .
\end{gather}
Take a finite covering of this last set by balls $\cur{\B {3\er_{i+1}/10}{y}}_{y\in Y}$. Note that we can pick
\begin{gather}\label{eq_disj}
 Y\cap \ton{\bigcup_{t=j}^{i+1}\bigcup_{z\in I^{t}_f}\B{3r_{z}/5}{z}\cup \bigcup_{t=j}^{i}\bigcup_{z\in I^{t}_b}\B{3\er_{t}/5}{z}}=\emptyset\, .
\end{gather}
By item \eqref{it_i5}, $T_i$ is locally a Lipschitz graph over some $k$-dimensional subspace with \eqref{eq_lipTi}, and thus we can choose $Y\subset T_i$. 

Consider a Vitali subcovering of this set, denote $I$ the set of centers in this subcovering. Such a subcovering will have the property that the balls $\cur{\B {3\er_{i+1}/10} {y}}_{y\in I}$ will be pairwise disjoint. These balls will also be disjoint from $\bigcup_{z\in I^{i+1}_f}\B{r_{z}/5}{z}$ by \eqref{eq_disj}. The (finite version of) Vitali covering theorem ensures that $\bigcup_{y\in I}\B {9\er_{i+1}/10} {y}$ will cover the whole set in \eqref{eq_333}.

Now by construction of $I_f$ and the remainder set, all the balls $\cur{\B {r_s}{x_s}}_{s\in   S}$ with $r_s\geq \er_{i+1}$ have already been accounted for. This means that 
\begin{align}
 \ton{S\setminus R_i }\setminus \bigcup_{x_s\in I^{i+1}_f}\B{r_{s}}{x_s}\subset \bigcup_{y\in I}\B {9\er_{i+1}/10} {y}\, ,
\end{align}
as desired.


We split the balls with centers in $I$ into two subsets, according to how much measure they carry. In particular, let
\begin{gather}\label{eq_deph_Igb}
 I_g^{i+1} = \cur{y\in I \ \ s.t. \ \ \mu \ton{\B{\er_{i+1}}{y}}\geq \gamma_k \er_{i+1}^k}\, , \quad I_b^{i+1} = \cur{y\in I \ \ s.t. \ \ \mu \ton{\B{\er_{i+1}}{y}}< \gamma_k \er_{i+1}^k}\, .
\end{gather}
\vspace{.5 cm}

\subsection{Map and manifold structure.}\label{sec_sigma}
Let $\cur{\lambda_{s}^{i+1}}=\cur{\lambda_{s}}$ be a partition of unity such that for each $y_s\in I_g^{i+1}$
\begin{itemize}
 \item $\supp{\lambda_{s}}\subseteq \B{3 \er_{i+1}}{y_s}$
 \item for all $z\in \cup_{y_s\in I_g^{i+1}} \B{2 \er_{i+1}}{y_s}$, $\sum_s \lambda_{s}(z)=1$
 \item $\max_s \norm{\nabla \lambda_{s}}_\infty \leq C(n)/\er_{i+1}$.
\end{itemize}
For every $y_s\in I_g^{i+1}$, let $  V(y_s,\er_{i+1})$ to be (one of) the $k$-dimensional subspace that minimizes $\int_{\B{\er_{i+1}}{y_s}} d(z,V)^2 d\mu$. By Remark \ref{r:D_scale_control}  we can estimate
\begin{gather}
 \er_{i+1}^{-k-2}\int_{\B{\er_{i+1}}{y_s} } d(z,  V(y_s,\er_{i+1}))^2 d\mu(z) \leq D^k_\mu(y_s,\er_{i+1})\, .
\end{gather}
Let $p_s\in \B{\er_{i+1}}{y_s}$ be the center of mass of $\mu|_{\B {\er_{i+1}}{y_s}}$.  It is worth observing that $p_s\in   V(y_s,\er_{i+1})$.

Define the smooth function $\sigma_{i+1}:\R^n\to\R^n$ as in Definition \ref{deph_sigma}, i.e.,
\begin{gather}
 \sigma_{i+1}(x)= x+\sum_s \lambda_s^{i+1}(x) \pi_{  V(y_s,\er_{i+1})^\perp}\ton{p_s-x} \, .
\end{gather}

With this function, we can define inductively for $i\geq j$ the sets
\begin{align}\label{e:downward_induction:map_manifold:1}
 T_j = V(x,\er_j)\, , \quad & T_j' = T_{j}\\
 T_{i+1}=\sigma_{i+1} (T_i)\, ,\quad  & T_{i+1}'=\sigma_{i+1}\ton{T_{i}'\setminus \ton{\bigcup_{y\in I_b^{i+1}}\B{\er_{i+1}/6}{y} \cup \bigcup_{x_s\in I_f^{i+1}}\B{r_s/6}{x_s} } }\, .
\end{align}

Fix any $y\in I_g^{i+1}$, and let $z\in I_g^{i}$ be such that $\B {9\er_{i+1}/10}{y} \cap \B{9\er_{i}/10}{z}\neq \emptyset $. By induction, $T_i\cap \B{10\er_{i+1}}{y}\subseteq T_i\cap \B{1.5 \er_{i}}{z}$ is the graph of a $C^1$ function over $  V(z,\er_{i})$. Consider the points $\cur{y_t}_{t\in T_y}=I_{g}^{i+1}\cap \B{6\er_{i+1}}{y}$.
By construction it is easy to see that $d(y_t,  V(z,\er_{i}))\leq \er_{i+1}/9$, and so for all $t\in T_y$ we can apply the estimates in lemma \ref{lemma_vw} to the couple $\B {\er_{i+1}}{y_t}\subseteq \B {\er_{i}}{z}$ with $M=C_1$ by the first induction. Note that by \eqref{eq_quantum} and by construction of good and final balls, for $t\in T$ and $x_s\in \B {\er_{i+1}}{y_t}$, $r_s\leq \er_{i+2}$. Using condition \eqref{e:reifenberg_L_2_discrete} and lemma \ref{l:D_pointwise_bound}, we obtain that for all $y_s$:
\begin{gather}\label{eq_deltasquash}
 \er_{i+1}^{-1} d_H\ton{  V(z,\er_{i})\cap \B{\er_{i+1}}{y_s},  V(y_s,\er_{i+1})\cap \B{\er_{i+1}}{y_s} }\leq c\ton{D^k_{\mu} (y_s,\er_{i+1})+ D^k_{\mu} (z,\er_{i}) }^{1/2}\leq c(n,\rho,C_1)\delta\, .
\end{gather}

This implies that, if $\delta(n,\rho,C_1)$ is small enough, $T_i\cap \B{2\er_{i+1}}{y}$ is a graph also over $  V(y,\er_{i+1})$ satisfying the same estimates as in \eqref{eq_lipTi}, up to a worse constant $c$.  That is, if $\delta$ is sufficiently small, we can apply lemma \ref{lemma_squash} and prove induction point \eqref{it_i4}.\\

\paragraph{Points \eqref{it_i3} and \eqref{it_i5}} Points \eqref{it_i3} and \eqref{it_i5} are proved with similar methods. We briefly sketch the proofs of these two points.\\

Let $y\in T_{i-1}$, and recall the function $\psi_{i}\equiv 1-\sum \lambda_s$. If $\psi_{i}|_{\B{2\er_{i}}{y}}$ is identically $1$, then $\sigma_{i}|_{\B{2\er_{i}}{y}}=id$, and there is nothing to prove.

Otherwise, there must exist some $z'\in I_g^{i}\cap \B{5\er_{i}}{y}$, and thus there exists a $z\in I_g^{i-1}$ such that $\B{8\er_{i}}{y}\subseteq \B{1.5 \er_{i-1}}{z}$.  By point \eqref{it_i4} in the induction, $T_{i-1}\cap \B{1.5 \er_{i-1}}{z}$ is a Lipschitz graph over $ V(z,\er_{i-1})$. Proceeding as before, by the estimates in lemma \ref{lemma_vw} and lemmas \ref{lemma_squash}, \ref{l:D_pointwise_bound}, we obtain that $T_{i}\cap \B{2\er_{i}}{y}$ is also a Lipschitz graph over $ V(z,\er_{i-1})$ with small Lipschitz constant, and that $\norm{\sigma_{i}(p)-p}\leq c\delta \er_{i}$ for all $p\in T_{i-1}$. 

Moreover, $\sigma_{i}|_{T_{i-1}}$ is locally a diffeomorphism at scale $\er_{i}$. From this we see that $\sigma_{i}$ is a diffeomorphism on the whole $T_{i-1}$.  

It is worth to remark a subtle point. In order to prove point \eqref{it_i5}, we cannot use inductively \eqref{it_i5}, we need to use point \eqref{it_i4}. Indeed, as we have seen, given any $z\in I_g^{i-1}$, then $T_{i-1}\cap \B {1.5 \er_{i-1}}{z}$ is a Lipschitz graph of a function $f$ where $\norm{\nabla f}\leq c\delta$, and this $c$ is \textit{independent} of the induction step we are considering by \eqref{it_s3} in lemma \ref{lemma_squash}. If we tried to iterate directly the bound given by \eqref{it_i5}, the constant $c$ would depend on the induction step $i$, and thus we could not conclude the estimate we want.
\\

\subsection{\texorpdfstring{Properties of the manifolds $T_i'$}{Properties of the manifolds Ti'}}\label{sec_holes} Here we want to prove the measure estimate in \eqref{eq_volFB}. The basic idea is that bad and final balls correspond to holes in the manifold $T_i$, and each of these holes carries a $k$-dimensional measure which is proportionate to the measure inside the balls. In particular, let $\B{r}{y}$ be a bad or a final ball. In the first case, $r=\er_{i+1}$, while in the second $r\in [\er_{i+1},\er_{i})$. In either case, we will see that $y$ must be $\sim \er_{i+1}$-close to $T_i$, which is a Lipschitz graph at scale $\er_{i}$. This implies that $\mu(\B{r}{y}\cap T_i) \sim r^k$, and thus we can bound the measure of a bad or final ball with the measure of the hole we have created on $T_i$.

In detail, point \eqref{it_i6} is an immediate consequence of the definition of $  R_i$. 

In order to prove the volume measure estimate, consider that 
\begin{gather}
 T_{i}' \setminus \sigma_{i+1}^{-1}(T_{i+1}')\subseteq \ton{\bigcup_{y\in I_b^{i+1}}\B{\er_{i+1}/6}{y} \cup \bigcup_{x_s\in I_f^{i+1}}\B{r_{s}/6}{x_s} }\, .
\end{gather}
Note that the balls in the collection $\cur{\B{\er_{i+1}/5}{y}}_{y\in I_f^{i+1}\cup I_b^{i+1}}$ are pairwise disjoint. Pick any $y\in I_b^{i+1}$, and let $z\in I_g^{i}$ be such that $y\in \B{\er_{i}}{z}$. By definition, $y\in T_i'$ and $\mu(\B{\er_{i+1}}{y})< \gamma_k \er_{i+1}^k<10^{-k}\omega_k r^k_{i+1}$. Since $T_i\cap \B {2\er_{i}}{z}$ is a graph over $  V(z,\er_{i})$ with $y\in T_i$, and since $\B {\er_{i+1}/6}{y}\cap T_i$ is disjoint from the ``holes'' of $T_i'$, then
\begin{gather}
\lambda^k (T_i'\cap \B{\er_{i+1}/6}{y})\geq \omega_k 7^{-k} \er_{i+1}^k\, .
\end{gather}

A similar estimate holds for the final balls. The only difference is that if $x_s\in I_f^{i+1}$, then it is not true in general that $x_s\in T_i$. However, since $D_\mu^k(z,\er_i)\leq c\delta$ and $\mu(\cur{x_s})=\omega_k \er_{i+1}^k$, we still have that $d(x_s,V(z,\er_{i}))\leq c\delta \er_{i+1}\leq \er_{i+1}/100$. Given \eqref{eq_lipTi}, we can conclude
\begin{gather}
\lambda^k (T_i'\cap \B{\er_{i+1}/7}{x_s})\geq \omega_k 10^{-k} \er_{i+1}^k\, .
\end{gather}

Now it is evident from the definition of $T_{i+1}'$ that
\begin{gather}
 \lambda^k(\sigma_{i+1}^{-1}(T_{i+1}')\cap \B {2\er_j}{x})+  \#\ton{I_b^{i+1}\cup I_f^{i+1}}\omega_k( \er_{i+1}/10)^k \leq \lambda^k(T_{i}'\cap \B {2\er_j}{x})\, .
\end{gather}

\subsection{Volume estimates on the manifold part}\label{sec_volmani} Here we want to prove that for every measurable $\Omega\subseteq T_i$
\begin{gather}\label{eq_volT}
  \lambda^k(\sigma_{i+1}(\Omega))\leq \lambda^k(\Omega) + c(n,\rho,C_1) \int_{\B{ \er_j}{x}} D(p,2\er_{i+1}) d\mu(p)\, .
\end{gather}
The main applications will be with $\Omega=T_i$ and $\Omega=T_{i}'$. In order to do that, we need to analyze in a quantitative way the bi-Lipschitz correspondence between $T_i$ and $T_{i+1}$ given by $\sigma_{i+1}$. 

As we already know, $\sigma_{i+1}=id$ on the complement of the set $G=\cup_{y\in I_g^{i+1}} \B{5\er_{i+1}}{y} $, so we can concentrate only on this set.

Using the same techniques as before, and in particular by lemmas \ref{lemma_vw} and \ref{lemma_squash}, we can prove that for each $y\in I_g^{i+1}$, the set $ T_{i}\cap \B{5\er_{i+1}}{y}$ is a Lipschitz graph over $  V(y,\er_{i+1})$ with Lipschitz constant bounded by
\begin{gather}
 c(n,\rho,C_1) \ton{D(y,\er_{i+1}) + \sum_{z\in I_g^{i}\cap \B{5\er_{i}}{y} } D(z,\er_{i}) }^{1/2}\, .
\end{gather}
In a similar manner, we also have that $T_{i+1}\cap \B {5\er_{i+1}} {y}$ is a Lipschitz graph over $  V(y,\er_{i+1})$ with Lipschitz constant bounded by
\begin{gather}
 c(n,\rho,C_1) \ton{\sum_{z\in I_g^{i+1} \cap \B{10 \er_{i+1}}{y} } D(z,\er_{i+1}) + \sum_{z\in I_g^{i}\cap \B{5\er_{i}}{y} } D(z,\er_{i}) }^{1/2}\, .
\end{gather}
Moreover, by the bi-Lipschitz estimates of lemma \ref{lemma_squash}, we also know that $\sigma_{i+1}$ restricted to $T_i \cap \B{5\er_{i+1}}{y}$ is a bi-Lipschitz equivalence with bi-Lipschitz constant bounded by
\begin{gather}
 L(y,5\er_{i+1})\leq 1+ c\ton{\sum_{z\in I_g^{i+1} \cap \B{10 \er_{i+1}}{y} } D(z,\er_{i+1}) + \sum_{z\in I_g^{i}\cap \B{5\er_{i}}{y} } D(z,\er_{i}) }\, .
\end{gather}

In order to estimate this upper bound, we use \eqref{eq_estDint} and the definition of good balls to write
\begin{gather}
 D(z,r)\leq c\fint_{\B{r}{z}} D(p,2r)d\mu(p)\leq c(n,\rho,C_1)r^{-k}\int_{\B{r}{z}} D(p,2r)d\mu(p)\, .
\end{gather}
Since by construction any point $x\in \R^n$ can be covered by at most $c(n)$ different good balls at different scales, we can bound
\begin{gather}
 L(y,5\er_{i+1})\leq 1+\frac{c(n,\rho,C_1)}{\er_{i+1}^k} \int_{ \B {5 \er_{i}}{y}} \qua{D(p,2\er_{i+1}) + D(p,2\er_{i})}d\mu(p)\, .
\end{gather}

We can also badly estimate
\begin{gather}
 D(p,2\er_{i+1}) + D(p,2\er_{i}) \leq c(n,\rho)D(p,2\er_{i})\, .
\end{gather}

Now let $P_s$ be a measurable partition of $\Omega\cap G$ such that for each $s$, $P_s\subseteq \B{5\er_{i+1}}{y_s}$. By summing up the volume contributions of $P_s$, and since evidently $\lambda^k(P_s)\leq 7^k\omega_k \er_{i+1}^k$, we get 
\begin{gather}
 \lambda^k(\sigma_{i+1}(\Omega))= \sum_s \lambda^k(\sigma_{i+1}(P_s)) \leq \sum_s \lambda^k(P_s)\ton{1+\frac{c}{\er_{i+1}^k} \int_{ \B {5 \er_{i}}{y_s}} D(p,2\er_{i}) d\mu(p)} \notag\\
 \leq \lambda^k(\Omega) + c\int_{ \bigcup_{y_s\in I_g^{i+1}} \B {5 \er_{i}}{y_s}} D(p,2\er_{i}) d\mu(p) \notag\\
 \leq \lambda^k(\Omega) + c(n,\rho,C_1) \int_{\B{\er_j}{x}} D(p,2\er_{i}) d\mu(p)\label{eq_arg}\, .
\end{gather}

\vspace{.5 cm}

\subsection{Estimates on the excess set}
In this paragraph, we estimate the total measure of the excess set, which is defined by
\begin{gather}
   E_T = \bigcup_{i=j}^A \bigcup_{y\in I_g^i}   E(y,\er_{i})\, .
\end{gather}
At each $y$ and at each scale, we have by \eqref{eq_exrough} and \eqref{eq_estDint}
\begin{gather}
 \mu(E(y,\er_{i}))\leq c(n,\rho) \er_{i}^k D^k_{\mu}(y,\er_{i})\leq c(n,\rho) \er_{i}^k \fint_{\B{\er_{i}}{x} } D^k_{\mu}(p,2\er_{i})d\mu(p)\, .
\end{gather}
Since by definition of excess set $\B{\er_{i}}{y}$ must be a good ball, then
\begin{gather}
 \mu(E(y,\er_{i}))\leq c(n,\rho)\int_{\B{\er_{i}}{y}} D^k_{\mu}(p,2\er_{i})d\mu(p)\, . \\ \notag
\end{gather}

Now by construction of the good balls, there exists a constant $c(n)$ such that at each step $i$, each $x\in \R^n$ belongs to at most $c(n)$ good balls. Thus for each $i\geq j$, we have
\begin{gather}
 \sum_{y\in I_g^i} \mu(E(y,\er_{i}))\leq c(n,\rho) \int_{\cup_{y\in I_g^i} \B{\er_{i}}{y}} D^k_{\mu}(p,2\er_{i})d\mu(p)\leq c(n,\rho)\int_{\B{\er_j}{x} } D^k_{\mu}(p,2\er_{i})d\mu(p)\, .
\end{gather}
If we sum over all scales, we get
\begin{gather}
 \mu\ton{  E_T} \leq c(n,\rho)\sum_{i=j}^A\int_{\B{\er_j}{x} }D^k_{\mu}(p,2\er_{i})d\mu(p)\, .
\end{gather}
Since $\rho=2^q$, it is clear that
\begin{gather}\label{eq_volE}
 \mu\ton{  E_T} \leq c(n,\rho)\sum_{i=j}^A\int_{\B{\er_j}{x} }D^k_{\mu}\ton{p,2^{1-qi}}d\mu(p)\leq c(n,\rho)\delta \er_j^k\, ,
\end{gather}
since the sum in the middle is clearly bounded by \eqref{eq_sum2^alpha}.  \\

This estimate is exactly what we want from the excess set.
\vspace{.5 cm}

\subsection{Volume estimates.}  

By adding \eqref{eq_volT}, with $\Omega\equiv\sigma_{i+1}^{-1}(T_{i+1}')\cap \B {2\er_{j}}{x}$, and \eqref{eq_volFB}, we prove that for all $i=j,\cdots,A+1,...$
\begin{gather}
\lambda^k(T_{i+1}'\cap \B {2\er_j}{x})+\#\ton{I_b^{i+1}\cup I_f^{i+1}}\omega_k( \er_{i+1}/10)^k \leq \lambda^k(T_{i}'\cap \B {2\er_j}{x})+ c(n,\rho,C_1) \int_{ \B{\er_j}{x}} D(p,2\er_{i+1}) d\mu(p)\, .
\end{gather}
Adding the contributions from all scales, by \eqref{e:reifenberg_L_2_discrete} we get 
\begin{align}
 &\lambda^k(T_{i+1}'\cap \B {2\er_j}{x})+\sum_{t=j}^{i+1}\#\ton{I_b^{i+1}\cup I_f^{i+1}}\omega_k (\er_{t}/10)^k \notag \\
 &\leq \lambda^k(T_j\cap \B{2\er_j}{x})+ c(n,\rho,C_1) \sum_{s=j+1}^{i+1} \int_{ \B{\er_j}{y}\cap \B 1 0} D(p,2r_{s}) d\mu(p)\notag \\
 &\leq \lambda^k\ton{T_j\cap \B{2\er_j}{x}}\qua{1+ c(n,\rho,C_1)\delta}\,\label{eq_volTFB} \, ,
\end{align}
where in the last line we estimated $\lambda^k\ton{T_j\cap \B{2\er_j}{x}}\sim \er_j^{k}$, since $T_j$ is a $k$-dimensional subspace, and we bounded the sum using \eqref{eq_sum2^alpha}.

In the same way, we can also bound the measure of $T_{i+1}$ by
\begin{gather}
 \lambda^k(T_{i+1}\cap \B {2\er_j}{x})\leq \lambda^k\ton{T_j\cap \B{2\er_j}{x}}\qua{1+ c(n,\rho,C_1)\delta}\,\label{eq_volTFB_2} .
\end{gather}

\vspace{.5 cm}

\subsection{\texorpdfstring{Upper estimates for $\mu$.}{Upper estimates for mu}}
Since we have assumed $r_s\geq \bar r=r_A$ for all $x_s\in S$, we know that for $i=A$ our construction ends, and the whole support $S$ is contained in final and bad balls and excess sets. In other words, the sets $I_g^A=I_b^A=\emptyset$, and 
\begin{gather}
   \supp \mu \cap B = \supp \mu \cap \B{\er_j}{x} \subseteq   R_A\, .
\end{gather}
This fact and the estimates in \eqref{eq_volE} and \eqref{eq_volTFB} imply
\begin{gather}
 \mu(B) \leq \sum_{t=j}^{A}\#\ton{I_b^t} \gamma_k \omega_k \er_{t}^k + \sum_{t=j}^A \#\ton{I_f^t} \omega_k \er_{t}^k + \mu(  E_T) \notag\\
 \leq 10^k \ton{\sum_{t=j}^{A}\#\ton{I_b^t} \omega_k (\er_{t}/10)^k + \sum_{t=j}^A \sum_{x_s\in I_f^t }\omega_k (r_s/10)^k }+\mu(  E_T)\leq C_3(k)(1+c(n,\rho,C_1)\delta) \er_j^k \, .
\end{gather}
In this last estimate, we can fix $C_3(k)=20^k\omega_k$, and $C_1(k)=2C_3(k)= 2\cdot 20^k\omega_k\leq 40^k\omega_k$, and $\rho(n,C_1)$ according to lemma \ref{lemma_alpharho}. Now, it is easy to see that if $\delta(n,\rho,C_1)$ is sufficiently small, then
\begin{gather}\label{eq_volR}
 \mu(  B) \leq C_1(k) \er_j^k \, ,
\end{gather}
which finishes the proof of the downward induction, and hence the actual ball estimate \eqref{eq_srx}.
\vspace{1cm}

\section{\texorpdfstring{$L^2$-Best Approximation theorems for Stationary varifolds}{L2-Best Approximation theorems}}\label{s:best_approx}

In this Section we prove the main estimate necessary for us to be able to apply the rectifiable-Reifenberg of theorems \ref{t:reifenberg_W1p_discrete} and \ref{t:reifenberg_W1p_holes} to the singular sets $S^k_\epsilon(I)$ of the stratification induced by integral varifolds with bounded mean curvature.\\

Namely, we need to understand how to estimate on a ball $B_r(x)$ the $L^2$-distance of $S^k_\epsilon$ from the best approximating $k$-dimensional subspace.  Here we carry out the computations in the Euclidean setting with mean curvature $H=0$. The generalization to the manifold case with bounded mean curvature is straightforward.


\begin{theorem}[$L^2$-Best Approximation theorem]\label{t:best_approximation}
Let $I^m\subseteq B_{9}\subset \R^n$ be an integral varifold satisfying \eqref{e:manifold_bounds}, the mean curvature bound \eqref{e:mean_curvature_bound}, the mass bound $\mu_I(B_9(p))\leq \Lambda$, and let $\epsilon>0$.  Then there exists $\delta(n,\Lambda,\epsilon)$, $ C(n,\Lambda,\epsilon)>0$ such that if $K,H<\delta$, $r\leq 1$ and $B_{4r}(p)$ is $(0,\delta)$-symmetric but {\it not} $(k+1,\epsilon)$-symmetric, then for any finite measure $\mu$ on $I^m\cap B_r(p)$ we have that
\begin{align}\label{e:best_approx:L2_estimate}
D_\mu(p,r) = r^{-2-k} \inf_{L^k} \int_{B_r(p)} d^2(x,L^k)\,d\mu(x) \leq C r^{-k} \int_{B_r(p)} W_{8r,r}(x)\,d\mu(x)+C \delta r \frac{\mu(\B r p)}{r^k} \, , 
\end{align}
where the $\inf$ is taken over all $k$-dimensional affine subspaces $L^k\subseteq T_p M$.
\end{theorem}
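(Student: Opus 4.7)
The plan is to prove this by contradiction, using Allard's compactness combined with the cone splitting theorem. Assume the estimate fails, so that for the given $\epsilon>0$ no valid constant $C$ exists. Then I can extract sequences of integral varifolds $I_j\subseteq B_9$ and of finite measures $\mu_j$ supported on $I_j\cap B_1$ satisfying the hypotheses with $\delta_j, K_j, H_j \to 0$, each $I_j$ being $(0,\delta_j)$-symmetric on $B_4$ but not $(k+1,\epsilon)$-symmetric, and
\begin{align*}
D_{\mu_j}(0,1) > j \int_{B_1} W_0^{I_j}\, d\mu_j\,.
\end{align*}
Since both sides are $1$-homogeneous in $\mu_j$, I would normalize $\mu_j(B_1) = 1$, so that $D_{\mu_j}(0,1)\leq 4$ by the diameter of $B_1$, forcing $\int W_0^{I_j}\,d\mu_j \to 0$.

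Next, I would apply Allard's compactness (Theorem \ref{th_All_cmpt}) to pass to a subsequence along which $I_j$ converges weakly as integral varifolds to a stationary integral varifold $I_\infty$ with mass at most $\Lambda$ on $B_9$. By Prokhorov, the probability measures $\mu_j$ converge weakly to a probability measure $\mu_\infty$ supported on $I_\infty\cap\overline{B_1}$. The $(0,\delta_j)$-symmetry of $B_4(0)$ passes to the limit, showing that $I_\infty$ is an exact cone with vertex $0$ on $B_4$, and analogously $I_\infty$ fails to be $(k+1,\epsilon/2)$-symmetric on $B_4$.

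The key step is to show that $\mu_\infty$-almost every point of $B_1$ is a cone vertex of $I_\infty$. To do this I would use the upper semi-continuity of the densities $\theta^{I_j}_r(x)$ under joint varifold and point convergence (a consequence of the monotonicity formula with errors vanishing since $H_j\to 0$), together with weak convergence of the $\mu_j$. A Fatou-type argument then yields
\begin{align*}
\int W_0^{I_\infty}\, d\mu_\infty \leq \liminf_j \int W_0^{I_j}\, d\mu_j = 0\, ,
\end{align*}
so $W_0^{I_\infty}(x)=0$, that is, $I_\infty$ is $0$-symmetric with vertex $x$, for $\mu_\infty$-a.e.\ $x\in B_1$.

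To conclude I would apply cone splitting (Theorem \ref{t:con_splitting}). Since $D_\mu(0,1)$ is upper semi-continuous in $\mu$ as the infimum of weakly continuous linear functionals, we have $D_{\mu_\infty}(0,1)\geq \limsup_j D_{\mu_j}(0,1) =: D^*$. In the nondegenerate case $D^*>0$, the support of $\mu_\infty$ is not contained in any affine $k$-plane, so I can extract $k+1$ points $x_1,\dots,x_{k+1}\in \operatorname{supp}(\mu_\infty)$ which are $\tau$-independent at $0$ for some $\tau>0$ depending on $D^*$. Each is a cone vertex of $I_\infty$ by the previous step, and together with the cone structure at $0$, Theorem \ref{t:con_splitting} yields that $I_\infty$ is $(k+1,\epsilon/2)$-symmetric on $B_4$ (after propagating the symmetry from $B_1$ to $B_4$ using the exact $0$-symmetry at the origin), contradicting the non-symmetry established earlier. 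The degenerate case $D^*=0$, in which $\mu_j$ concentrates on its best $k$-plane, requires an additional rescaling or blowup argument in the normal directions to that $k$-plane in order to reduce to the nondegenerate case. The main obstacle I anticipate is rigorously justifying the Fatou convergence in the displayed inequality above under joint varifold and measure convergence, and cleanly handling this degenerate case.
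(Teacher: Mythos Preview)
Your compactness strategy is genuinely different from the paper's proof, and the degenerate case you flag is not a minor technicality but a real gap that your outline does not close.

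The paper does \emph{not} argue by contradiction on the constant $C$.  Instead it proceeds by a direct quantitative computation.  One considers the second--moment matrix of $\mu$, with ordered eigenvalues $\lambda_1\geq\cdots\geq\lambda_n$ and eigenvectors $v_1,\ldots,v_n$, so that $D_\mu(p,1)=\lambda_{k+1}+\cdots+\lambda_n\leq (n-k)\lambda_{k+1}$.  The Euler--Lagrange identity for $v_\ell$ yields, for $z\in A_{3,4}(p)$,
\[
\lambda_\ell\,\langle N_zI,v_\ell\rangle=\int \langle x-x_{cm},v_\ell\rangle\,\langle N_zI,x-z\rangle\,d\mu(x)\, ,
\]
and after Cauchy--Schwarz and integration in $z$ the right-hand side is controlled by $\int W_0\,d\mu$ via the monotonicity formula (this is Proposition~\ref{p:best_subspace_estimates}).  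Summing over $\ell\leq k+1$ gives
\[
\lambda_{k+1}\int_{A_{3,4}(p)}|\langle N_zI,V^{k+1}\rangle|^2\,d\mu_I(z)\leq C(n)\int W_0\,d\mu\, .
\]
Only at this point does one use compactness, in the form of Lemma~\ref{l:best_subspace:energy_lower_bound}: the failure of $(k+1,\epsilon)$-symmetry forces $\int_{A_{3,4}}|\langle N_zI,V^{k+1}\rangle|^2\,d\mu_I\geq \delta(n,\Lambda,\epsilon)$ for \emph{every} $(k+1)$-plane $V^{k+1}$.  Dividing gives $\lambda_{k+1}\leq C\int W_0\,d\mu$ with an explicit $C$.

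Your approach breaks down precisely at $D^*=0$.  If $D_{\mu_j}(0,1)\to 0$, the limit measure $\mu_\infty$ is supported in a $k$-plane $L$, and the cone-splitting argument yields at most $k$-symmetry of $I_\infty$ along $L$ (the origin together with points of $L$ span a space of dimension $\leq k+1$, but equal to $k$ when $0\in L$, which you cannot exclude).  This gives no contradiction with the failure of $(k+1,\epsilon)$-symmetry.  The ``rescaling in normal directions'' you propose would distort $I_j$ anisotropically, destroying both the mass bound and the mean-curvature bound, so the compactness theorem no longer applies.  In effect, the ratio $D_{\mu_j}/\int W_0\,d\mu_j$ is invisible to any soft limit, and one needs a quantitative mechanism---exactly what Proposition~\ref{p:best_subspace_estimates} supplies---linking $\lambda_{k+1}$ to $\int W_0\,d\mu$ \emph{before} passing to a limit.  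Your Fatou step is also delicate (it requires joint lower semicontinuity of $x\mapsto W_0^{I_j}(x)$ against a varying measure $\mu_j$), but even granting it, the argument does not close.
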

\begin{remark}
The assumption $K,H<\delta$ is of little consequence, since given any $K$ this just means focusing the estimates on balls of sufficiently small radius after rescaling. Moreover, throughout this section we will assume for simplicity that $r=1$.
\end{remark}
\vspace{.5 cm}

\subsection{Symmetry and Gradient Bounds}\label{ss:symmetry_gradient}

In this subsection we study integral varifolds $I^m$ with bounded mean curvature which are {\it not} $(k+1,\epsilon)$-symmetric on some ball.  In particular, we show that this forces for each $k+1$-subspace $V^{k+1}$ that $|\pi^\perp_I[V](x)|=|\langle N_xI,V\rangle|$ has some definite size in $L^2$, where $\pi^\perp_I[V](x)$ is the projection of $V$ to the $N_xI$, the orthogonal compliment of the tangent space $T_xI$ of $I$ at $x$.  More precisely:

\begin{lemma}\label{l:best_subspace:energy_lower_bound}
Let $I^m\subseteq B_{9}$ be an integral varifold satisfying \eqref{e:manifold_bounds}, the mean curvature bound \eqref{e:mean_curvature_bound}, and the mass bound $\mu_I(B_{9}(p))\leq \Lambda$.  Then for each $\epsilon>0$ there exists $\delta(n,\Lambda,\epsilon)>0$ such that if $B_4(p)$ is $(0,\delta)$-symmetric but is {\it not} $(k+1,\epsilon)$-symmetric, then for every $k+1$-subspace $V^{k+1}\subseteq T_p M$ we have 
\begin{align}
\int_{A_{3,4}(p)} |\pi^\perp_I[V]|^2 d\mu_I \geq \delta\, ,
\end{align}
where $\pi^\perp_I[V](x)$ is the projection of $V$ to $N_xI$, the orthogonal compliment to the tangent space of $I$ at $x$.
\end{lemma}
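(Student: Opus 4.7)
The plan is a standard compactness/contradiction argument combined with the infinitesimal characterization of translation symmetry for varifolds. Suppose the lemma fails. Then for some fixed $\epsilon>0$ there exist $\delta_i \to 0$, integral varifolds $I_i\subseteq B_9$ on manifolds with $K_i, H_i \le \delta_i$ and mass bound $\Lambda$, and $(k+1)$-subspaces $V_i^{k+1}\subseteq T_{p_i}M_i$ such that $B_4(p_i)$ is $(0,\delta_i)$-symmetric but not $(k+1,\epsilon)$-symmetric, while
\begin{equation*}
\fint_{A_{3,4}(p_i)} |\pi^\perp_{I_i}[V_i]|^2\, d\mu_{I_i} < \delta_i .
\end{equation*}

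First I would normalize by using the exponential map at $p_i$ to identify each $I_i$ with a varifold on $B_9(0)\subseteq \mathbb{R}^n$. Since $K_i\to 0$ the ambient geometry converges to Euclidean, and $H_i\to 0$ means the limiting variations are stationary. By the Allard compactness theorem \ref{th_All_cmpt}, after passing to a subsequence we obtain weak convergence $I_i \wto I_\infty$ to a stationary integral varifold $I_\infty$ on $B_9(0)\subseteq \mathbb{R}^n$ with mass bounded by $\Lambda$, and we may assume $V_i\to V_\infty^{k+1}$ in the Grassmannian.

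Next I would extract two symmetry properties of $I_\infty$. The $(0,\delta_i)$-symmetry of $B_4(p_i)$, together with the monotonicity formula (Section \ref{ss:monotonicity}), passes to the limit exactly as in the proof of Theorem \ref{t:quantitative_0_symmetry}: the density pinching forces $\int_{A_{s,r}} d_0^{-m}\langle N_yI_\infty, n_0\rangle^2\, d\mu_{I_\infty}\to 0$ on all annuli, so $I_\infty$ is a $0$-symmetric (i.e.\ conical) stationary integral varifold with vertex at $0$. Second, the $L^2$ bound passes to the limit (using lower semicontinuity of the projection functional under varifold convergence) to give
\begin{equation*}
\int_{A_{3,4}(0)} |\pi^\perp_{I_\infty}[V_\infty]|^2\, d\mu_{I_\infty} = 0,
\end{equation*}
i.e.\ $V_\infty \subseteq T_xI_\infty$ for $\mu_{I_\infty}$-a.e.\ $x\in A_{3,4}(0)$. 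It is a standard fact (via the first variation formula applied to vector fields constant along $V_\infty$) that such a tangential containment on an open set forces $I_\infty$ to be translation-invariant along $V_\infty$ on that set.

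Finally I would upgrade translation invariance on the annulus to translation invariance globally, using the conical structure: since $I_\infty = \eta_{0,\lambda\#}I_\infty$ for every $\lambda>0$, the translation symmetry by $V_\infty$ on $A_{3,4}(0)$ rescales to translation symmetry by $\lambda^{-1}V_\infty = V_\infty$ on $A_{3\lambda,4\lambda}(0)$, and $\lambda>0$ is arbitrary, covering all of $\mathbb{R}^n$. Thus $I_\infty$ is $(k+1)$-symmetric with respect to $V_\infty$. But then for all sufficiently large $i$ the varifold $I_i$ on $B_4(p_i)$ is within weak distance $\epsilon$ of the $(k+1)$-symmetric $I_\infty$, contradicting the hypothesis that $B_4(p_i)$ is not $(k+1,\epsilon)$-symmetric.

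The only nonroutine point is the passage from the $L^2$-projection bound to genuine translation invariance of the limit; I expect this to follow cleanly from the stationarity of $I_\infty$ applied to vector fields of the form $\varphi(x)v$ for $v\in V_\infty$, but it must be verified that the convergence $I_i\wto I_\infty$ is strong enough (in $L^2$ on tangent planes) to pass the inequality, which is ensured because the monotonicity-based $0$-symmetry already gives strong control on the tangent planes in the conical limit.
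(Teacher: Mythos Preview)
Your proof is correct and follows essentially the same compactness/contradiction route as the paper: pass to a stationary limit varifold, use the $(0,\delta_i)$-symmetry to make the limit a cone, use the vanishing $L^2$-projection on $A_{3,4}$ together with conicality to force $(k+1)$-symmetry of the limit, and contradict the failure of $(k+1,\epsilon)$-symmetry along the sequence. The paper's write-up is terser (it simply asserts that $0$-symmetry plus vanishing on the annulus gives vanishing on $B_4$ and hence $(k+1)$-symmetry), but the content is the same as what you outlined.
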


\begin{proof}

The proof is by contradiction.  So with $n,\Lambda,\epsilon>0$ fixed let us assume the result fails.  Then there exists a sequence $I_i$ of integral varifolds of $B_{9}(p_i)$, with $B_4(p_i)$ being $(0,\delta_i)$-symmetric but not $(k+1,\epsilon)$-symmetric, and such that for some subspaces $V_i^{k+1}$ we have that
\begin{align}\label{e:best_subspace_lower_energy:1}
\int_{A_{3,4}(p_i)} |\pi^\perp_{I_i}[V_i]|^2 d\mu_{I_i} \leq \delta_i\to 0\, .
\end{align}
After rotation we can assume $V^{k+1}_i=V^{k+1}$, and then after passing to a subsequence we have that
\begin{align}
I_i\longrightarrow I\subseteq B_{9}(0^n)\, ,
\end{align}
in the sense of varifolds (or in the sense of the flat distance if $I$ is an integral current).  In particular, we have that $I$ is a stationary varifold and
\begin{align}
\int_{A_{3,4}(p_i)} |\pi^\perp_{I}[V]|^2 d\mu_{I} = 0\, .
\end{align}

On the other hand, the $(0,\delta_i)$-symmetry of the $I_i$ tells us that $I$ is $0$-symmetric.  Combining these tells us that 
\begin{align}
\int_{B_4(0^n)}|\pi^\perp_{I}[V]|^2 d\mu_{I} = 0\, ,
\end{align}
and hence we have that $I$ is $k+1$-symmetric.  Because the convergence $I_i\to I$ is in the varifold sense, this contradicts that the $I_i$ are not $(k+1,\epsilon)$-symmetric for $i$ sufficiently large, which proves the lemma.

\end{proof}
\vspace{1cm}

\subsection{\texorpdfstring{Best $L^2$-Subspace Equations}{Best L2-Subspace Equations}}\label{ss:best_subspaces}

In order to prove theorem \ref{t:best_approximation} we need to identify which subspace minimizes the $L^2$-energy, and the properties about this subspace that allow us to estimate the distance.  We begin in Section \ref{sss:second_moments}  by studying some very general properties of the second directional moments of a general probability measure $\mu\subseteq B_1(p)$.  We will then study in Section \ref{sss:projection_tangent} a quantitative notion of almost-symmetry for stationary varifolds.\\

\subsubsection{Second Directional Moments of a Measure}\label{sss:second_moments}

Let us consider a probability measure $\mu\subseteq B_1(0^n)$, and let
\begin{align}
x^i_{cm}=x^i_{cm}(\mu)\equiv \int x^i \, d\mu(x)\, ,
\end{align}
be the center of mass.  Let us inductively consider the maximum of the second directional moments of $\mu$.  More precisely:

\begin{definition}\label{d:second_moments}
Let $\lambda_1=\lambda_1(\mu)\equiv \max_{|v|^2=1} \int |\langle x-x_{cm},v\rangle|^2\, d\mu(x)$
and let $v_1=v_1(\mu)$ with $|v_1|=1$ be any vector obtaining this maximum.  Now let us define inductively the pair $(\lambda_{k+1},v_{k+1})$ from $v_1,\ldots,v_{k}$ by
\begin{align}
\lambda_{k+1}=\lambda_{k+1}(\mu)\equiv \max_{|v|^2=1,\ps{v}{v_i}=0 \ \forall i\leq k } \int |\langle x-x_{cm},v\rangle|^2\, d\mu(x)\, ,
\end{align}
where $v_{k+1}$ is any vector obtaining this maximum.
\end{definition}

Thus $v_1,\ldots,v_n$ defines an orthonormal basis of $\dR^n$, ordered so that they maximize the second directional moments of $\mu$.  Let us define the subspaces
\begin{align}\label{e:best_subspace:Vk}
V^k= V^k(\mu)\equiv x_{cm}+\text{span}\{v_1,\ldots,v_k\}\, .
\end{align}
The following is a simple but important exercise:

\begin{lemma}\label{l:best_subspace_Vk}
If $\mu$ is a probability measure in $B_1(0^n)$, then for each $k$ the functional 
\begin{align}
\min_{L^k\subseteq \dR^n} \int d^2(x,L^k)\,d\mu(x)\, ,
\end{align}
where the $\min$ is taken over all $k$-dimensional affine subspaces, attains its minimum at $V^k$.  Further, we have that
\begin{align}
\min_{L^k\subseteq \dR^n} \int d^2(x,L^k)\,d\mu(x) = \int d^2(x,V^k)\,d\mu(x) = \lambda_{k+1}(\mu)+\cdots+\lambda_n(\mu)\, .
\end{align}
\end{lemma}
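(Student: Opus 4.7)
The statement is essentially a coordinate-free version of principal component analysis, so my approach is to reduce it to the spectral theorem for the covariance matrix of $\mu$. First I would show that without loss of generality the optimal $L^k$ passes through the center of mass $x_{cm}$. Writing any affine $k$-plane as $L^k = y_0 + W$ with $W$ a linear $k$-subspace, a direct expansion gives
\begin{align*}
\int d^2(x,L^k)\,d\mu(x) = \int \bigl|\pi_{W^\perp}(x-x_{cm})\bigr|^2 d\mu(x) + \bigl|\pi_{W^\perp}(x_{cm}-y_0)\bigr|^2,
\end{align*}
because $\mu$ is a probability measure and the cross term integrates to $\pi_{W^\perp}\!\int (x-x_{cm})\,d\mu = 0$. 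The second term is nonnegative and vanishes exactly when $x_{cm} \in L^k$, so I may restrict to planes of the form $L^k = x_{cm} + W$.

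Next, translating so that $x_{cm}=0$, I would introduce the (symmetric, positive semidefinite) covariance matrix
\begin{align*}
A_{ij} \equiv \int (x^i-x^i_{cm})(x^j-x^j_{cm})\,d\mu(x),
\end{align*}
and note that for any unit vector $v$ one has $\int \langle x-x_{cm},v\rangle^2 d\mu = \langle v, Av\rangle$. The inductive construction of $(\lambda_i,v_i)$ in Definition \ref{d:second_moments} is then precisely the Courant--Fischer/Rayleigh--Ritz characterization of the eigenvalues and eigenvectors of $A$, so $\{v_i\}$ is an orthonormal eigenbasis with $Av_i = \lambda_i v_i$ and $\lambda_1 \geq \cdots \geq \lambda_n \geq 0$.

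The final step is to compute the functional on the candidate $V^k$. Since $V^k = x_{cm} + \mathrm{span}\{v_1,\ldots,v_k\}$ and the $\{v_i\}_{i=1}^n$ are orthonormal,
\begin{align*}
\int d^2(x,V^k)\,d\mu = \sum_{j=k+1}^n \int \langle x-x_{cm},v_j\rangle^2\,d\mu = \sum_{j=k+1}^n \lambda_j.
\end{align*}
For the minimality, given any linear $k$-subspace $W$ with orthonormal basis $w_1,\ldots,w_k$, expanding in the eigenbasis of $A$ gives $\int |\pi_W(x-x_{cm})|^2 d\mu = \sum_{i=1}^k \langle w_i, A w_i\rangle$, and the Ky Fan trace maximum principle (which follows by iterating Courant--Fischer) yields $\sum_{i=1}^k \langle w_i, Aw_i\rangle \leq \lambda_1 + \cdots + \lambda_k$. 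Combined with the identity $\int |x-x_{cm}|^2 d\mu = \mathrm{tr}(A) = \lambda_1 + \cdots + \lambda_n$ and the orthogonal decomposition $|x-x_{cm}|^2 = |\pi_W(x-x_{cm})|^2 + |\pi_{W^\perp}(x-x_{cm})|^2$, this gives $\int |\pi_{W^\perp}(x-x_{cm})|^2 d\mu \geq \lambda_{k+1} + \cdots + \lambda_n$, proving both the minimality of $V^k$ and the claimed value of the minimum.

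There is no genuine obstacle here: the whole argument is a short application of the spectral theorem and the Ky Fan inequality once the reduction to planes through $x_{cm}$ is performed. The only minor care needed is ensuring the inductive definition of $(\lambda_i,v_i)$ really does coincide with the spectral decomposition of $A$, which is immediate from the Rayleigh quotient characterization of eigenvalues on orthogonal complements of previously chosen eigenvectors.
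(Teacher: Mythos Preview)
Your argument is correct and is exactly the standard PCA/spectral-theorem proof one would expect. The paper does not actually give a proof of this lemma: it introduces it with ``The following is a simple but important exercise'' and moves on, so there is nothing to compare against beyond noting that your reduction to the covariance matrix and use of the Ky Fan/Courant--Fischer characterization is the canonical way to carry out that exercise.
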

Note that the best affine subspace $V^k$ will necessarily pass through the center of mass $x_{cm}$.

\vspace{.5cm}

Now let us record the following Euler-Lagrange formula, which is also an easy computation:

\begin{lemma}\label{l:best_subspace:euler_lagrange}
If $\mu$ is a probability measure in $B_1(0^n)$, then we have that $v_1(\mu),\ldots,v_n(\mu)$ satisfy the Euler-Lagrange equations:
\begin{align}\label{e:second_moment_EL}
\int \langle x-x_{cm},v_k\rangle (x-x_{cm})^i\, d\mu(x) = \lambda_k v_k^i \, ,
\end{align}
where 
\begin{align}
&\lambda_k = \int |\langle x-x_{cm},v_k\rangle|^2\, d\mu(x)\, .
\end{align}
\end{lemma}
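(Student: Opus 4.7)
The plan is to recognize the statement as a standard Lagrange multiplier / eigenvalue computation for the covariance matrix of $\mu$, and then verify carefully that the multipliers associated with the orthogonality constraints vanish. Introduce the symmetric second-moment (covariance) matrix
\begin{align*}
M^{ij} \equiv \int (x-x_{cm})^i (x-x_{cm})^j \, d\mu(x),
\end{align*}
so that for any unit vector $v$ we have $\int |\langle x-x_{cm},v\rangle|^2 \, d\mu(x) = \langle Mv,v\rangle$. Under this reformulation, $\lambda_k$ is the maximum of the quadratic form $v \mapsto \langle Mv,v\rangle$ over the unit sphere in $W_k \equiv \{v : \langle v, v_j\rangle = 0 \text{ for all } j < k\}$, and the target identity \eqref{e:second_moment_EL} is precisely $M v_k = \lambda_k v_k$.

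First, I would treat the unconstrained case $k=1$. Since $v_1$ maximizes $\langle Mv,v\rangle$ over the unit sphere, standard Lagrange multipliers give $Mv_1 = \mu_1 v_1$ for some scalar $\mu_1$, and pairing with $v_1$ yields $\mu_1 = \langle Mv_1,v_1\rangle = \lambda_1$. Expanding $M v_1$ componentwise gives exactly \eqref{e:second_moment_EL} for $k=1$.

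For the inductive step, assume by induction that $M v_j = \lambda_j v_j$ for all $j<k$. Since $v_k$ maximizes the smooth function $\langle Mv,v\rangle$ on the unit sphere of $W_k$, the method of Lagrange multipliers gives constants $\lambda_k^*, \mu_1, \ldots, \mu_{k-1}$ with
\begin{align*}
M v_k = \lambda_k^* v_k + \sum_{j<k} \mu_j v_j.
\end{align*}
Taking the inner product with $v_k$ and using $\langle v_k, v_j\rangle = 0$ for $j < k$ gives $\lambda_k^* = \langle M v_k, v_k\rangle = \lambda_k$. For each fixed $j<k$, take the inner product with $v_j$: the left hand side is $\langle M v_k, v_j\rangle = \langle v_k, M v_j\rangle = \lambda_j \langle v_k, v_j\rangle = 0$ by symmetry of $M$ and the inductive hypothesis, while the right hand side is $\mu_j$. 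Hence $\mu_j = 0$ for all $j<k$, leaving $M v_k = \lambda_k v_k$, which is \eqref{e:second_moment_EL}.

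There is no real obstacle here: the only subtlety is ensuring the orthogonality Lagrange multipliers drop out, which is immediate from the symmetry of $M$ and the inductive eigenvector structure. The identity $\lambda_k = \int |\langle x - x_{cm}, v_k\rangle|^2 \, d\mu(x)$ is the definition of $\lambda_k$ from Definition \ref{d:second_moments}, so nothing further is needed.
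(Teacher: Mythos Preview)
Your argument is correct: recasting the directional second moments as the quadratic form of the covariance matrix $M$ and then running the standard Lagrange-multiplier induction is exactly the right way to see that each $v_k$ is an eigenvector of $M$ with eigenvalue $\lambda_k$, which is precisely \eqref{e:second_moment_EL}. The paper does not actually write out a proof of this lemma --- it simply records it as ``an easy computation'' --- so your write-up supplies the details the authors leave implicit, and there is nothing to compare against.
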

%
%

\vspace{1cm}

\subsubsection{Projections on the tangent spaces}\label{sss:projection_tangent}

Our goal now is to study an integral varifold $I^m$ in the directions spanned by $v_1(\mu),\ldots,v_n(\mu)$, associated to a probability measure.  The main result of this subsection is the following, which holds for a general integral varifold with bounded mean curvature.
We recall that by definition
\begin{align}
W_{\alpha}(x) \equiv W_{\er_{\alpha},\er_{\alpha-3}}(x)\equiv \theta_{\er_{\alpha-3}}(x)-\theta_{\er_{\alpha}}(x)\geq 0\, ,
\end{align}
where $\er_\alpha=2^{-\alpha}$.

\begin{proposition}\label{p:best_subspace_estimates}
Let $I^m\subseteq B_{9}$ be an integral varifold 
with mass bound $\mu_I(B_{9}(p))\leq \Lambda$.  Let $\mu$ be a probability measure on $B_1(p)$ with $\lambda_k(\mu),v_k(\mu)$ defined as in Definition \ref{d:second_moments}.  Then there exists $\delta(n)>0$ such that 
\begin{align}
\lambda_k \int_{A_{3,4}(p)} |\pi^\perp_I[v_k]|^2\,d\mu_I(z) \leq \delta^{-1} \int W_0(x)\, d\mu(x)\, ,
\end{align}
where $\pi^\perp[v](x)$ is the projection of $v$ to $N_xI$, the orthogonal compliment of the tangent space $T_xI$, which exists a.e.
\end{proposition}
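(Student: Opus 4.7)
My plan is to combine the Euler--Lagrange characterization of $(\lambda_k,v_k)$ from lemma \ref{l:best_subspace:euler_lagrange} with the monotonicity identity for $\theta_r$ recalled in Section \ref{ss:monotonicity}. The idea is to produce, at each point $y\in A_{3,4}(p)\cap I$, a pointwise upper bound on $\lambda_k\,|\pi^\perp_I[v_k](y)|^2$ whose right-hand side, after a Fubini exchange, has precisely the weighted integrand $|x-y|^{-m}|\pi^\perp_I[n_x(y)](y)|^2$ appearing in the monotonicity formula.

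First I would rewrite the Euler--Lagrange identity $\lambda_k v_k=\int(x-x_{cm})\langle x-x_{cm},v_k\rangle\,d\mu(x)$ in the equivalent form
\[
\lambda_k v_k=\int(x-y_0)\langle x-x_{cm},v_k\rangle\,d\mu(x)\, ,\qquad\forall\,y_0\in\R^n\, ,
\]
which is legitimate because $\int\langle x-x_{cm},v_k\rangle\,d\mu(x)=0$. Applying the linear projection $\pi^\perp_I[\cdot](y)$ to both sides and then Cauchy--Schwarz, and using that $\int\langle x-x_{cm},v_k\rangle^2\,d\mu=\lambda_k$, yields the pointwise estimate
\[
\lambda_k\,|\pi^\perp_I[v_k](y)|^2\leq \int|\pi^\perp_I[x-y_0](y)|^2\,d\mu(x)\, ,
\]
valid for every fixed $y_0\in\R^n$.

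The crucial trick is to now specialize to $y_0=y$, which is allowed since the previous bound holds for each $y_0$. Since $\pi^\perp_I[x-y](y)=|x-y|\,\pi^\perp_I[n_x(y)](y)$ by definition of $n_x(y)=(y-x)/|y-x|$, this produces
\[
\lambda_k\,|\pi^\perp_I[v_k](y)|^2\leq \int|x-y|^2\,|\pi^\perp_I[n_x(y)](y)|^2\,d\mu(x)\, .
\]
For $y\in A_{3,4}(p)$ and $x\in B_1(p)\supseteq\supp{\mu}$ one has $|x-y|\in[2,5]$, and hence $|x-y|^2\leq 5^{m+2}|x-y|^{-m}$, which is exactly the weight appearing in the monotonicity formula. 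Integrating against $d\mu_I(y)$ over $A_{3,4}(p)$, applying Fubini, and using the inclusion $A_{3,4}(p)\subseteq A_{1,8}(x)$ for every $x\in B_1(p)$, I can bound the resulting inner integral by $C(n)\,W_0(x)$ via the monotonicity identity, finishing the argument.

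The main technical obstacle is this final monotonicity step: the identity relating $W_0(x)$ to $\int_{A_{1,8}(x)}|x-y|^{-m}|\pi^\perp_I[n_x(y)](y)|^2\,d\mu_I(y)$ is an exact equality only in the stationary Euclidean case, while in our setting the bounded mean curvature $H$ and ambient sectional curvature bound $K$ produce correction terms of order $K+H$ in the almost-monotonicity formula. Fixing the dimensional constant $\delta(n)$ small and imposing $K,H<\delta$ then lets these corrections be absorbed into the overall constant $\delta^{-1}$ multiplying $\int W_0\,d\mu$.
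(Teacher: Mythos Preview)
Your proposal is correct and follows essentially the same route as the paper's proof: both apply $\pi^\perp_I[\cdot](z)$ to the Euler--Lagrange identity of lemma \ref{l:best_subspace:euler_lagrange}, use the vanishing of $\int\langle x-x_{cm},v_k\rangle\,d\mu$ to shift the integrand by $z$, apply Cauchy--Schwarz, and then convert $|x-z|^2$ to $|x-z|^{-m}$ on $A_{3,4}(p)$ to invoke the monotonicity formula. The only cosmetic difference is that the paper normalizes $x_{cm}=0$ and $M=\dR^n$ at the outset, whereas you carry $x_{cm}$ and phrase the shift as a choice of $y_0$; the handling of the $K,H$ error terms is likewise deferred in both arguments.
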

\begin{proof} 
Note first that there is no harm in assuming that $x_{cm}\equiv 0$.  If not we can easily translate to make this so, in which case we still have that $\text{supp}(\mu)\subseteq B_2$.  Additionally, we will simplify the technical aspect of the proof by assuming that $M\equiv \dR^n$ and $H=0$.  By working in normal coordinates the proof of the general case is no different except up to some mild technical work.  \\

Now let us fix any $z\in A_{3,4}$ in the support of $I$ and such that $N_z I$ is well defined (note that this second set coincides with the support of $I$ up to a set of $\mu_I$ measure zero). Observe that
\begin{align}
\int \langle x,v_k\rangle\, d\mu(x) = \langle x_{cm},v_k\rangle = 0\, . 
\end{align}
Then combined with lemma \ref{l:best_subspace:euler_lagrange} we can inner product both sides of \eqref{e:second_moment_EL} by $N_zI$, the normal to the tangent space of $I$ at $z$, to obtain for each $k$ and $ A_{3,4}$:
\begin{align}
\lambda_k\langle N_zI, v_k\rangle &= \int \langle x,v_k\rangle\langle N_zI,x\rangle\,d\mu(x) =\int \langle x,v_k\rangle\langle N_zI,x-z\rangle\,d\mu(x) \, .
\end{align}
We can then estimate
\begin{align}
\lambda_k^2|\langle N_zI, v_k\rangle|^2\leq \lambda_k\int |\langle N_zI,x-z\rangle|^2\,d\mu(x) \, .
\end{align}
Integrating with respect to $z$ on both sides we get the estimate
\begin{align}\label{e:best_subspace_estimate:1}
\lambda_k \int_{A_{3,4}}|\langle N_zI, v_k\rangle|^2\,d\mu_I(z)\leq &\int\int_{A_{3,4}} |\langle N_zI,x-z\rangle|^2\,d\mu_I(z)\,d\mu(x)\, .
\end{align}
Set for convenience $n_x(z)=(z-x)/\norm{z-x}$, i.e., $n_x(z)$ is the radial vector from $x$ to $z$. Now for $x\in \text{supp}(\mu)$ we can estimate
\begin{align}
\int_{A_{3,4}} |\langle N_zI,x-z\rangle|^2\,d\mu_I(z) &= \int_{A_{3,4}} |\langle N_zI,n_x(z)\rangle|^2|x-z|^{-m}|x-z|^{2+m}\,d\mu_I(z)\notag\\
&\leq C(n)\int_{A_{3,4}} |\langle N_zI,n_x(z)\rangle|^2|x-z|^{-m}\,d\mu_I(z)\notag\\
&\leq C(n)\int_{A_{1,8}(x)} |\langle N_zI,n_x(z)\rangle|^2|x-z|^{-m}\,d\mu_I(z) \notag\\
&= C(n)W_0(x)\, .
\end{align}
Applying this to \eqref{e:best_subspace_estimate:1} we get the estimate
\begin{align}\label{e:best_subspace_estimate:2}
\lambda_k \int_{A_{3,4}}|\langle N_zI, v_k\rangle|^2\,d\mu_I(z)\leq &C(n)\int W_0(x)\,d\mu(x)\, .
\end{align}
%

\end{proof}
\vspace{1cm}

\subsection{Proof of theorem \ref{t:best_approximation}}\label{ss:proof_L2_best}

Let us now combine the results of this Section in order to prove theorem \ref{t:best_approximation}.  Indeed, let $\mu$ be a measure in $B_1(p)\subseteq T_p M$. We can assume that $\mu$ is a probability measure without any loss of generality, since both sides of our estimate scale.  Let $\big(\lambda_1(\mu),v_1(\mu)\big),\ldots,\big(\lambda_n(\mu),v_n(\mu)\big)$ be the directional second moments as defined in Definition \ref{d:second_moments}, with $V^k$ the induced subspaces defined as in \eqref{e:best_subspace:Vk}.  Using lemma \ref{l:best_subspace_Vk} we have that
\begin{align}\label{e:best_subspace:proof:1}
\min_{L^k\subseteq \dR^n} \int d^2(x,L^k)\,d\mu(x) = \int d^2(x,V^k)\,d\mu(x) = \lambda_{k+1}(\mu)+\cdots+\lambda_n(\mu)\leq (n-k)\lambda_{k+1}(\mu)\, ,
\end{align}
where we have used that $\lambda_j\leq \lambda_i$ for $j\geq i$.  Therefore our goal is to estimate $\lambda_{k+1}$.  To begin with, proposition \ref{p:best_subspace_estimates} tells us that for each $j$ 
\begin{align}
\lambda_j \int_{A_{3,4}(p)} |\langle N_zI, v_j\rangle|^2\,d\mu_I(z) \leq C \int W_0(x)\, d\mu(x)\, .
\end{align}
Let us sum the above for all $j\leq k+1$ in order to obtain
\begin{align}
\sum_{j=1}^{k+1}\lambda_j\int_{A_{3,4}(p)} |\langle N_zI, v_j\rangle|^2\,d\mu_I(z) \leq (k+1)C \int W_0(x)\, d\mu(x)\, ,
\end{align}
or by using that $\lambda_{k+1}\leq \lambda_{j}$ for $k+1\geq j$ we get
\begin{align}\label{e:best_subspace:proof:2}
\lambda_{k+1}\int_{A_{3,4}(p)} |\langle N_zI, V^{k+1}\rangle|^2\,d\mu_I(z)= \lambda_{k+1} \sum_{j=1}^{k+1}\int_{A_{3,4}(p)} |\langle N_zI, v_j\rangle|^2\,d\mu_I(z)\leq C \int W_0(x)\, d\mu(x)\, .
\end{align}

Now we use that $B_{9}(p)$ is $(0,\delta)$-symmetric, but {\it not} $(k+1,\epsilon)$-symmetric in order to apply lemma \ref{l:best_subspace:energy_lower_bound} and conclude that
\begin{align}
\int_{A_{3,4}(p)} |\langle N_zI, V^{k+1}\rangle|^2\,d\mu_I(z)\geq \delta\, .
\end{align}
Combining this with \eqref{e:best_subspace:proof:2} we obtain
\begin{align}\label{e:best_subspace:proof:3}
\delta\lambda_{k+1}\leq \lambda_{k+1} \int_{A_{3,4}(p)} |\langle N_zI, V^{k+1}\rangle|^2\,d\mu_I(z)\leq C \int W_0(x)\, d\mu(x)\, ,
\end{align}
or that
\begin{align}
\lambda_{k+1}\leq C(n,\Lambda,\epsilon) \int W_0(x)\, d\mu(x)\, .
\end{align}
Combining this with \eqref{e:best_subspace:proof:1} we have therefore proved the theorem. \hfill $\square$
\vspace{1cm}

\section{The Inductive Covering lemma}\label{s:covering_main}

This Section is dedicated to the basic covering lemma needed for the proof of the main theorems of the paper.  The covering scheme is similar in nature to the one introduced by the authors in \cite{NaVa,NaVa+} in order to prove structural theorems on critical and singular sets.  Specifically, let us consider an integral varifold $I^m$ with bounded mean curvature, then we will build a covering of the quantitative stratification
\begin{align}
S^k_{\epsilon,r}(I)\cap B_1(p)\subseteq U_r\cup U_+ = U_r\cup\bigcup B_{r_i}(x_i)\, ,
\end{align}
which satisfies several basic properties.  To begin with, the set $U_+$ is a union of balls satisfying $r_i>r\geq 0$, and should satisfy the packing estimate $\omega_k\sum r_i^k\leq C$.  Each ball $B_{r_i}(x_i)$ should have the additional property that there is a definite mass drop of $I$ when compared to $B_2(p)$.  To describe the set $U_r$ we should distinguish between the case $r>0$ and $r\equiv 0$.  In the case $r>0$ we will have that $U_r = \bigcup B_{r}(x^r_i)$ is a union of $r$-balls and satisfies the Minkowski estimate $\Vol(U_r)\leq C r^{n-k}$.  In the case when $r\equiv 0$ we will have that $U_0$ is $k$-rectifiable with the Hausdorff estimate $\lambda^{k}(U_0)\leq C$.  Let us be more precise:\\

\begin{lemma}[Covering lemma]\label{l:covering}
Let $I^m\subseteq B_2$ be an integral varifold satisfying the bounds \eqref{e:manifold_bounds}, the mean curvature bound \eqref{e:mean_curvature_bound}, the mass bound $\mu_I(B_2(p))\leq \Lambda$, and assume $K+H<\delta(n,\Lambda,\epsilon)$.  Let $E = \sup_{x\in B_1(p)} \theta_1(x)$ with $\epsilon>0$, $r\geq 0$, and $k\in\dN$.  Then for all $\eta\leq \eta(n,\Lambda,\epsilon)$ and $0<r \leq 1$, there exists a covering $S^k_{\epsilon,r/100}(I)\cap B_1(p)\subseteq U = U_r\cup U_+$ such that 
\begin{enumerate}
\item $U_+=\bigcup B_{r_i}(x_i)$ with $r_i>r$ and $\sum r_i^k\leq C(n,\Lambda,\epsilon)$ .
\item $\sup_{y\in B_{ r_i}(x_i)}\theta_{\eta r_i/2}(y)\leq E-\eta$.
\item If $r>0$ then $U_r=\bigcup_1^N B_{r}(x^r_i)$ with $N\leq C(n) r^{-k}$.
\item If $r=0$ then $U_0$ is $k$-rectifiable and satisfies $\Vol(B_s\, \ton{U_0})\leq C(n) s^{n-k}$ for each $s>0$.\newline  In particular, $\lambda^k(U_0)\leq C(n)$.
\end{enumerate}
\end{lemma}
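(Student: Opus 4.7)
The plan is to construct the covering by an inductive stopping-time procedure working downward in scale from $1$ to $r$, classifying balls into those that witness a definite mass drop (which are moved into $U_+$) and those that remain mass-pinched (which are either subdivided or ultimately absorbed into $U_r$). The packing estimate on $U_+$ and the final structure of $U_r$ will both be extracted by feeding the $L^2$-best approximation theorem \ref{t:best_approximation} into the discrete-Reifenberg theorem \ref{t:reifenberg_W1p_discrete} (respectively the rectifiable-Reifenberg theorem \ref{t:reifenberg_W1p_holes} when $r=0$).

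First I would set up the stopping rule. For each $x\in S^k_{\epsilon,r/10}(I)\cap\B 1 p$, let $\rho(x)\in[r,1]$ be the supremum of scales $s$ with $\sup_{\B s x}\theta_s\leq E-\eta$, if such a scale exists; otherwise declare $x$ \emph{pinched}. A standard Vitali selection on $\{\B{\rho(x)}{x}\}$ extracts a disjoint subfamily whose fivefold enlargement covers all stopped points, producing candidate balls $\{\B{r_i}{x_i}\}$ with $r_i>r$, which will be $U_+$. For pinched points $x$, every $s\in[r,1]$ satisfies $\theta_s(x)\geq E-\eta$, and almost-monotonicity forces the telescoping sum $\sum_{r_\alpha\geq r}W_\alpha(x)\leq C(n,K,H)\eta$. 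By quantitative $0$-symmetry (theorem \ref{t:quantitative_0_symmetry}), for $\eta$ small each pinched ball $\B s x$ is $(0,\delta')$-symmetric at every scale except a uniformly bounded number; since $x\in S^k_{\epsilon,r/10}$ forbids $(k+1,\epsilon)$-symmetry at scales above $r/10$, the hypotheses of theorem \ref{t:best_approximation} are in force on essentially every scale.

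Next I would verify the Reifenberg hypothesis. Consider the discrete measure $\mu\equiv\sum_i\omega_k r_i^k\delta_{x_i}$ and fix any ball $\B \rho y \subseteq \B 2 p$ with $\mu(\B \rho y)\geq\gamma_k\rho^k$. For each dyadic subscale $r_\alpha\leq\rho$ and each $z\in\supp\mu$, theorem \ref{t:best_approximation} applied with the measure $\mu|_{\B{r_\alpha}{z}}$ yields
\begin{align}
D^k_\mu(z,r_\alpha)\leq \frac{C(n,\Lambda,\epsilon)}{r_\alpha^k}\int_{\B{r_\alpha}{z}}W_{\alpha-c}(w)\,d\mu(w).
\end{align}
Summing over $\alpha$ and integrating against $\mu$, Fubini and the finite-overlap of dyadic annuli give
\begin{align}
\sum_{r_\alpha\leq\rho/2}\int_{\B \rho y}D^k_\mu(z,r_\alpha)\,d\mu(z)\leq C\int_{\B{2\rho}{y}}\Big(\sum_\alpha W_\alpha(w)\Big)d\mu(w)\leq C\eta\,\mu(\B{2\rho}{y})\leq C\eta\rho^k,
\end{align}
where the middle inequality uses the telescoping bound $\sum_\alpha W_\alpha(w)\leq C\eta$ valid at each $w=x_i$ (since the construction of $\rho(x_i)$ as the \emph{largest} mass-drop scale forces pinching at all strictly larger scales). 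Choosing $\eta=\eta(n,\Lambda,\epsilon)$ so that $C\eta<\delta(n)^2$, the discrete-Reifenberg theorem \ref{t:reifenberg_W1p_discrete} produces the packing bound $\sum r_i^k\leq D(n)\equiv C(n,\Lambda,\epsilon)$.

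Finally, for $U_r$ with $r>0$ I would cover the remaining pinched set by a Vitali family of $r$-balls centered on it; the identical estimate above, now applied to the counting measure of these centers, gives $N\leq C(n)r^{-k}$ via discrete-Reifenberg. For $r=0$ the pinched set $U_0$ is a genuine closed subset of $\B 1 p$, and the same integrated bound on $\sum_\alpha D^k_{\lambda^k_{U_0}}$ holds, so the rectifiable-Reifenberg theorem \ref{t:reifenberg_W1p_holes} directly gives $k$-rectifiability together with $\lambda^k(U_0\cap \B s y)\leq(1+\epsilon)\omega_k s^k$, from which the Minkowski estimate follows by a standard Vitali covering argument at scale $s$. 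The principal obstacle is the bookkeeping needed to propagate the $(0,\delta')$-symmetry through the inductive subdivision so that theorem \ref{t:best_approximation} may be invoked uniformly across all dyadic scales; this requires matching the $\eta$ threshold of the stopping rule, the $\delta$ of quantitative $0$-symmetry, and the Reifenberg smallness $\delta(n)$ in a single compatible choice, with the summable telescoping structure of $W_\alpha$ doing the essential work of turning scale-by-scale symmetry into the global $L^2$-integrated hypothesis required by Reifenberg.
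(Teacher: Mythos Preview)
Your overall architecture---stopping time to separate pinched from dropped balls, then feeding the resulting discrete measures into the $L^2$-best approximation theorem and the discrete/rectifiable Reifenberg theorems---matches the paper's strategy. The treatment of $U_r$ and $U_0$ is essentially correct, up to one recentering step: the condition ``for all $s$, $\sup_{\B s x}\theta_s>E-\eta$'' does not give $\theta_s(x)\geq E-\eta$ at the center itself, so one must replace each $x_i^r$ by a nearby point $x_i'$ where $\theta_r(x_i')\geq E-\eta$ before running the Reifenberg argument. The paper does exactly this in Section~\ref{ss:covering_U_r}.

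The genuine gap is in the packing estimate for $U_+$. You assert that ``the construction of $\rho(x_i)$ as the largest mass-drop scale forces pinching at all strictly larger scales,'' and conclude $\sum_\alpha W_\alpha(x_i)\leq C\eta$. This is false. By construction $\sup_{\B{r_i}{x_i}}\theta_{r_i}\leq E-\eta$, so in particular $\theta_{r_i}(x_i)\leq E-\eta$; but nothing prevents $\theta_{r_i}(x_i)$ from being much smaller still. The telescoped sum $\sum_{r_i\leq r_\alpha\leq 1}W_\alpha(x_i)$ is essentially $\theta_8(x_i)-\theta_{r_i}(x_i)$, and without a \emph{lower} bound on $\theta_{r_i}(x_i)$ close to $E$ this quantity is only bounded by $\Lambda$, not by $C\eta$. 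Consequently the hypothesis of theorem~\ref{t:reifenberg_W1p_discrete} cannot be verified directly on the measure $\mu=\sum\omega_k r_i^k\delta_{x_i}$.

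The paper devotes Sections~\ref{ss:covering:energy_drop}--\ref{ss:covering_U_+} to this difficulty, and the resolution is not bookkeeping but a new idea. One first decomposes $U_+$ into $N(n,R)$ pieces $U_+^a$ in which balls of comparable radius are $R$-separated (lemma~\ref{l:covering:break_up_U_+}). Then lemma~\ref{l:covering:energy_density} shows that whenever $\mu^a(\B{10r_i}{x_i})\geq 2\omega_k r_i^k$, the density $\theta_{r_i/10}(x_i)$ is forced back up to $E-\eta'$: the many nearby centers $\tau$-effectively span a $k$-plane $V$, cone splitting makes the ball $(k,\epsilon')$-symmetric with respect to $V$, and if $x_i$ were far from $V$ one would get $(k+1,\epsilon)$-symmetry, contradicting $x_i\in S^k_{\epsilon,r/10}$; hence $x_i$ is close to $V$, and the improved continuity lemma~\ref{l:covering:improved_continuity} transfers the high density from the spanning points to $x_i$. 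With this in hand one replaces each $x_i$ by a nearby $y_j$ with $\theta_{\bar r_j/8}(y_j)\geq E-\eta'$, builds a new measure $\mu'$ on the $y_j$, and only then runs the discrete Reifenberg induction on scale. Your proposal would need all of this machinery to close.
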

\begin{remark}
The assumption $K+H<\delta$ is of little consequence, since given any $K$ this just means focusing the estimates on balls of sufficiently small radius after rescaling.
\end{remark}

\begin{remark}
 As in the previous section, throughout this section we will assume that the ambient manifold $M$ is actually Euclidean and $I$ has zero mean curvature, that is $K=H=0$. The generalization to the general case is straightforward, but it would involve minor details that would anyway add another layer of technicalities.
\end{remark}

\begin{remark}
 Note that, up to enlarging the constants in the covering by some $C(n)$, the conclusions of this lemma clearly hold also for the set $S^k_{\epsilon,r}$, not just for the slightly smaller set $S^k_{\epsilon,r/100}$. We state this lemma with the factor $10^{-2}$ just for technical reasons.
\end{remark}

\vspace{.5 cm}

To prove the result let us begin by outlining the construction of the covering, we will then spend the rest of this section proving the constructed cover has all the desired properties.\\

Thus let us consider some $\eta>0$ fixed, and then define the mass scale for $x\in B_1(p)$ by
\begin{align}
s_x=s^{E,\eta}_x \equiv \inf\cur{r\leq t\leq 1: \sup_{B_s(x)\cap S^k_{\epsilon,r/100}}\theta_{\eta s/2}(y)\geq E-\eta\ \ \text{for all} \ \ t\leq s\leq 1}\, .
\end{align}
Note that the mass scale implicitly depends on many constants.  If $r=0$ let us define the set $U_0$ by
\begin{align}
U_0\equiv \big\{x\in S^k_{\epsilon,0}(I)\cap B_1(p): s_x=0\big\}\, ,
\end{align}
while if $r>0$ let us define
\begin{align}
U_r \equiv \bigcup B_{r}(x^r_i)\, ,
\end{align}
where 
\begin{align}
\{x_i^r\}\subseteq \big\{x\in S^k_{\epsilon,r/100}(I)\cap B_1(p): s_x=r\big\}\, ,
\end{align}
is such that $\{B_{r/8}(x^r_i)\}$ are a maximal pairwise disjoint collection of balls centered on $S^k_{\epsilon,r/100}(I)\cap B_1(p)$.\\

In order to define the covering $U_+=\{B_{r_i}(x_i)\}$ let us first consider the covering
\begin{align}
\big\{x\in S^k_{\epsilon,r/100}(I)\cap B_1(p): s_x>r\big\}\subseteq \bigcup_{s_x>r} B_{s_x/10}(x)\, ,
\end{align}
and choose from it a Vitali subcovering and set
\begin{align}
U_+\equiv \bigcup_{i\in J} B_{r_i}(x_i)\, ,
\end{align}
where $r_i\equiv s_{x_i}/2$ and $\{B_{r_i/5}(x_i)\}$ are all disjoint.  It is clear $U_+$ satisfies the version of $(2)$ given by
\begin{align}
(2')\quad \sup_{y\in B_{2r_i}(x_i)}\theta_{\eta r_i/2}(y)\leq E-\eta\, .
\end{align}
It is clear from the construction that we have built a covering
\begin{align}
S^k_{\epsilon,r/100}(I)\cap B_1(p) \subseteq U_r\cup U_+\, ,
\end{align}
so we now need to study the properties of $U_r$ and $U_+$. 

Since $\eta(n,K_N,\Lambda,\epsilon)$ is fixed, we can re-cover in a trivial way all balls $B_{r_i}(x_i)$ with smaller balls of radius $\eta r_i$ and obtain for this new covering that 
\begin{enumerate}
\item $r^{k-n}\Vol(B_r\, U_r)+\omega_k\sum r_i^k\leq C(n,K_N,\Lambda,\epsilon)\, s^k$ ,
\item $\sup_{y\in B_{r_i}(x_i)}\theta_{r_i}(y)\leq E_{x,s}-\eta$ ,
\end{enumerate}
as desired.\\

The outline of this Section is as follows.  In Sections \ref{ss:covering_U_r} and \ref{ss:covering_U_0} we will deal with estimating the set $U_r$.  In fact, from a technical standpoint $U_r$ is much easier than $U_+$ to deal with, and for the rectifiability and local $\lambda^k$-finiteness of $S^k_{\epsilon,r/100}$ this is the set which is most important.  Indeed, for $U_r$ we will be able to almost directly use the rectifiable-Reifenberg of theorems \ref{t:reifenberg_W1p_holes} and \ref{t:reifenberg_W1p_discrete}, at least when combined with an additional induction argument.  However, for the global Minkowski estimates and $k$-dimensional Hausdorff measure bounds on $S^k_{\epsilon,r/100}$ it is crucial to deal with the set $U_+$ as well.  To do this we first prove a variety of technical lemmas in Section \ref{ss:covering:energy_drop}, these will allow us to exchange $U_+$ for a more manageable collection of balls without losing much content.  Then in Section \ref{ss:covering_U_+} we will be able argue as in the $U_r$ case by applying the rectifiable Reifenberg of theorem \ref{t:reifenberg_W1p_discrete}, however we will apply it not to $U_+$ but to a more carefully chosen covering obtained by exploiting the results of Section \ref{ss:covering:energy_drop}.\\

\subsection{\texorpdfstring{Estimating $U_r$ in lemma \ref{l:covering} for $r>0$}{Estimating Ur in covering lemma  for r>0}}\label{ss:covering_U_r}

Let us begin by altering the collection $U_r=\bigcup B_{r}(x^r_i)$ slightly.  Indeed, by definition of $U_r$, we have that for each ball $B_r(x^r_i)$ there exists a point $x'_i\in B_r(x^r_i)$ such that $\theta_{\eta r}(x'_i)\geq E-\eta$.  Then we have the covering
\begin{align}
U_r\subseteq \bigcup_1^N B_{2r}(x'_i)\, ,
\end{align}
and let $U'_r\equiv \bigcup_1^{N'} B_{2r}(x'_i)$ be a maximal disjoint subset of these balls.  Let us first observe that by a standard covering argument we have $N\leq C(n)N'$, and thus to estimate $U_r$ it is enough to estimate $N'$.  Indeed, by the maximality of $U'_r$ we have that for every ball center $x^r_i\in U_r$ there exists a ball center $x'_j\in U'_r$ such that $x^r_i\in B_{4r}(x'_j)$ and $\theta_{\eta r}(x'_j)\geq E-\eta$.  Since the balls $\{B_{r/8}(x^r_i)\}$ are disjoint, we have that
\begin{align}
N\cdot \omega_n \ton{\frac{r}{8}}^n\leq \Vol(U_r)\leq \sum_1^{N'} \Vol(B_{4r}(x'_j)) \leq N'\cdot \omega_n (4r)^n\, , 
\end{align}
which in particular gives us the desired estimate $N\leq 32^n N'$.\\

Now let us proceed to estimate $N'$.  Thus consider the measure associated to $U'_r$ given by
\begin{align}
\mu = \sum \omega_k r^k \delta_{x'_i}\, .
\end{align}
Let us consider the sequence of radii $\er_\alpha = 2^{-\alpha}$.  We now wish to prove that for each $x\in B_1$ and all $r\leq \er_\alpha\leq 2^{-7}$ that
\begin{align}\label{e:covering:U_r:1}
\mu(B_{\er_\alpha}(x))\leq D(n) \er_\alpha^k\, ,
\end{align}
where $D(n)$ is from theorem \ref{t:reifenberg_W1p_discrete}.  Let us first observe that once \eqref{e:covering:U_r:1} has been proved then we have the desired estimate on $N'$.  Indeed, by a simple covering argument we obtain that $ \mu(B_{1}(0))\leq C(n)\cdot D(n) $, which in turn implies 
\begin{align}
N'\omega_n r^k = \sum_1^{N'} \omega_k r^k = \mu(B_{1})\leq C(n)\cdot D(n)\, , 
\end{align}
and this gives the claimed estimate $N'\leq C(n)r^{-k}$.\\

Thus let us now concentrate on proving \eqref{e:covering:U_r:1}.  We will prove it by induction on $\alpha$.  Let $\alpha_0$ be such that $r\leq \er_{\alpha_0}<2r$.  Let us begin by observing that the result clearly holds for $\er_{\alpha_0}$.  Therefore let us now assume we have proved \eqref{e:covering:U_r:1} for some $\er_{\alpha+1}$, and then proceed to prove it for $\er_{\alpha}$.\\

Let us first observe a rough estimate in this direction through a covering argument.  Namely, let us consider the radii $\er_{\alpha+1}\leq s\leq 4 \er_{\alpha}$. We can cover $B_s(x)$ by a collection of at most $C(n)$ balls $\{B_{\er_{\alpha+1}}(y_i)\}$, and thus by the inductive assumption we have for all $x\in B_1$ and $s\leq 4\er_{\alpha}$ that
\begin{align}\label{eq_U'rough}
\mu(B_s(x))\leq \sum \mu(B_{\er_{\alpha+1}}(y_i))\leq D(n) C(n)\er_\alpha^k\leq D'(n) s^k\, ,
\end{align}
where of course $D'(n)>>D(n)$.\\


Now let us consider a ball $B_s(y)\subseteq B_2$ with $s\leq 2\er_{\alpha}$.  If $\mu(B_s(y))\leq \epsilon_n s^k$ then $D_\mu(y,s)\equiv 0$ by definition, while if $s\leq r$ then we also have that $D_\mu(y,s)\equiv 0$, since the support of $\mu$ in $B_r(x)$ contains at most one point and thus is precisely contained in a $k$-dimensional subspace.  Thus let us consider the case when $s>r$ and $\mu(B_s(y))> \epsilon_n s^n$.  In this case notice by theorem \ref{t:quantitative_0_symmetry} that for all the points $y\in \supp \mu\cap \B s y$, the ball $\B {32s}{y}$ is $(0,\delta)$-symmetric, where $\delta=\delta(\eta|n,\Lambda)\to 0$ as $\eta\to 0$.  Thus for $\eta\leq \eta(n,\Lambda)$ we can apply theorem \ref{t:best_approximation} to see that
\begin{align}
D_\mu(y,s) \leq C(n,\Lambda,\epsilon)s^{-k}\int_{B_s(y)} W_s(z)\,d\mu(z)\, .
\end{align}
By applying this to all $r\leq t\leq s$  we have 
\begin{align}
s^{-k}\int_{B_s(x)} D_\mu(y,t)\,d\mu(y)&\leq Cs^{-k}\int_{B_s(x)}t^{-k}\int_{B_t(y)} W_t(z)\,d\mu(z)\,d\mu(y)\notag\\
&= Cs^{-k}t^{-k}\int_{B_{2s}(x)} \mu(B_t(z)) W_t(z)\,d\mu(z)\notag\\
&\leq Cs^{-k}\int_{B_{2s}(x)}W_t(z)\,d\mu(z)\, ,
\end{align}
where we have used our rough estimate \eqref{eq_U'rough} in the last line.  Let us now consider the case when $t=\er_\beta\leq s\leq 2\er_{\alpha}$.  Then we can sum to obtain:
\begin{align}
\sum_{\er_\beta\leq s} s^{-k}\int_{B_s(x)} D_\mu(y,\er_\beta)\,d\mu(y) &\leq C\sum_{r\leq \er_\beta\leq s} s^{-k}\int_{B_s(x)}W_{\er_\beta}(y)\,d\mu(y)\notag\\
&=C s^{-k}\int_{B_s(x)}\sum_{r\leq \er_\beta\leq s}W_{\er_\beta}(y)\,d\mu(y)\notag\\
&\leq C\, s^{-k}\int_{B_s(x)}\big|\theta_{8s}(y)-\theta_r(y)\big| \,d\mu(y)\leq C(n,\Lambda,\epsilon)\eta\, ,
\end{align}
where in the last line we have used both the rough estimate \eqref{eq_U'rough} on $\mu(B_s(x))$ and that in the support of $\mu$ we have that $\big|\theta_{8s}(y)-\theta_r(y)\big|\leq \eta$ by the construction of $U'_r$.  Now let us choose $\eta\leq \eta(n,\Lambda,\epsilon)$ such that we have
\begin{align}\label{e:covering:U_r:2a}
\sum_{\er_\beta\leq s} s^{-k}\int_{B_s(x)} D_\mu(y,\er_\beta)\,d\mu(y) &\leq \delta^2\, ,
\end{align}
where $\delta$ is taken from theorem \ref{t:reifenberg_W1p_discrete}.  Since the estimate \eqref{e:covering:U_r:2a} holds for all $B_{s}\subseteq B_{2\er_{\alpha}}(x)$, we can therefore apply theorem \ref{t:reifenberg_W1p_discrete} to conclude the estimate
\begin{align}
\mu(B_{\er_{\alpha}}(x))\leq D(n) \er_{\alpha}^k\, .
\end{align}
This finishes the proof of \eqref{e:covering:U_r:1}, and hence the proof of estimate of $U_r$ for $r>0$ in lemma \ref{l:covering}. \hfill $\square$
\vspace{1cm}

\subsection{\texorpdfstring{Estimating $U_0$ in lemma \ref{l:covering}}{Estimating U0 in covering lemma}}\label{ss:covering_U_0}

Let us begin by proving the Minkowski estimates on $U_0$.  Indeed, observe that for any $r>0$ that $U_0\subseteq U_r$, and thus we have the estimate
\begin{align}
\Vol(B_r\,\ton{U_0})\leq \Vol(B_r\, \ton{U_r})\leq \omega_n (2r)^n\cdot N\leq C(n,\Lambda,\epsilon) r^{n-k}\, ,
\end{align}
which proves the Minkowski claim.  In particular, we have as a consequence the $k$-dimensional Hausdorff measure estimate
\begin{align}
\lambda^k(U_0)\leq C(n,\Lambda,\epsilon)\, .
\end{align}
In fact, let us conclude a slightly stronger estimate, since it will be a convenient technical tool in the remainder of the proof.  If $B_s(x)$ is any ball with $x\in B_1$ and $s<\frac{1}{2}$, then by applying the same proof to the rescaled ball $B_s(x)\to B_1(0)$, we can obtain the Hausdorff measure estimate
\begin{align}\label{eq_U0est}
\lambda^k(U_0\cap B_s(x))\leq C s^k\, .
\end{align}

To finish the construction we need to see that $U_0$ is rectifiable.  We will in fact apply theorem \ref{t:reifenberg_W1p_holes} in order to conclude this.  To begin with, let $\mu\equiv \lambda^k\big|_{U_0}$ be the $k$-dimensional Hausdorff measure, restricted to $U_0$.  Let $B_s(y)$ be a ball with $y\in B_1\cap \supp \mu $ and $s<\frac{1}{10}$, now we will now argue in a manner similar to Section \ref{ss:covering_U_r}.  Thus, if $\mu(B_s(y))\leq \epsilon_n s^k$ then $D_{\mu}(y,s)\equiv 0$, and otherwise we then have by theorem \ref{t:best_approximation} that for $\eta\leq \eta(n,\Lambda)$
\begin{align}
D_\mu(y,s) \leq C(n,\Lambda,\epsilon)s^{-k}\int_{B_s(y)} W_s(z)\,d\mu(z)\, .
\end{align}
By applying this to all $t\leq s$  we have 
\begin{align}
s^{-k}\int_{B_s(x)} D_\mu(y,t)\,d\mu(y)&\leq Cs^{-k}\int_{B_s(x)}t^{-k}\int_{B_t(y)} W_t(z)\,d\mu(z)\,d\mu(y)\notag\\
&= Cs^{-k}t^{-k}\int_{B_{2s}(x)} \mu(B_t(z)) W_t(z)\,d\mu(z)\notag\\
&\leq Cs^{-k}\int_{B_{2s}(x)}W_t(z)\,d\mu(z)\, ,
\end{align}
where we have used our estimate \eqref{eq_U0est} in the last line.  Let us now consider the case when $t=\er_\beta=2^{-\beta}\leq s$.  Then we can sum to obtain:
\begin{align}
\sum_{\er_\beta\leq s} s^{-k}\int_{B_s(x)} D_\mu(y,\er_\beta)\,d\mu(y) &\leq C\sum_{\er_\beta\leq s} s^{-k}\int_{B_s(x)}W_{\er_\beta}(y)\,d\mu(y)\notag\\
&=C s^{-k}\int_{B_s(x)}\sum_{\er_\beta\leq s}W_{\er_\beta}(y)\,d\mu(y)\notag\\
&\leq C\, s^{-k}\int_{B_s(x)}\big|\theta_{8s}(y)-\theta_0(y)\big|\,d\mu(y)\leq C(n,\Lambda,\epsilon)\eta\, ,
\end{align}
where we have used two points in the last line.  First, we have used our estimate $\mu(B_s(x))\leq Cs^k$.  Second, we have used that by the definition of $U_0$, for each point in the support of $\mu$ we have that $\big|\theta_s(y)-\theta_0(y)\big|\leq \eta$.  Now let us choose $\eta\leq \eta(n,\Lambda,\epsilon)$ such that we have
\begin{align}\label{e:covering:U_r:2b}
\sum_{\er_\beta\leq s} s^{-k}\int_{B_s(x)} D_\mu(y,\er_\beta)\,d\mu(y) &\leq \delta^2\, ,
\end{align}
where $\delta$ is chosen from theorem \ref{t:reifenberg_W1p_holes}.  Thus, by applying theorem \ref{t:reifenberg_W1p_holes} we see that $U_0$ is rectifiable, which finishes the proof of lemma \ref{l:covering} in the context of $U_0$.  \hfill $\square$
\vspace{1cm}

\subsection{\texorpdfstring{Technical Constructions for Estimating $U_+$}{Technical Constructions for Estimating U+}}\label{ss:covering:energy_drop}

Estimating the set $U_+$ is in spirit similar to the estimates obtained in the last subsections on $U_r$ and $U_0$.  However, the estimate on $U_+$ itself is a bit more delicate, and we cannot directly apply the discrete Reifenberg of theorem \ref{t:reifenberg_W1p_discrete} to this set.  Instead, we will need to replace $U_+$ with a different covering at each stage, which will be more adaptable to theorem \ref{t:reifenberg_W1p_discrete}.  This subsection is dedicated to proving a handful of technical results which are important in the construction of this new covering.\\

Throughout this subsection we are always working under the assumptions of lemma \ref{l:covering}.  Let us begin with the following point, which is essentially a consequence of the continuity of the mass:

\begin{lemma}\label{l:covering:energy_bound}
For each $\eta'>0$ there exists $R(n,\Lambda,\eta')>3$ such that with $\eta\leq \eta(n,\Lambda,\eta')$ we have for each $z\in \B {r_i}{x_i}$ with $\B {\eta Rr_i}{z}\subseteq \B 2 0$ the estimate
\begin{align}
\theta_{\eta Rr_i}(z)>E-\eta'\, .
\end{align}
\end{lemma}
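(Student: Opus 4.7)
\textbf{Proof plan for Lemma \ref{l:covering:energy_bound}.}

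The plan is to exhibit a witness point $y$ near $x_i$ whose density at a very small scale is nearly $E$, and then transfer this density to an arbitrary $z\in \B{r_i}{x_i}$ using the almost-monotonicity of the density for BMC varifolds (Section \ref{ss:monotonicity}). First, the definition of the mass scale $s_{x_i}=2r_i$ says that for every $s>s_{x_i}$ there exists a point $y_s\in \B{s}{x_i}$ with $\theta_{\eta s}(y_s)\geq E-\eta$. Letting $s\downarrow 2r_i$ and using Allard compactness (Theorem \ref{th_All_cmpt}) together with the closedness of the set $\{\theta_{\eta s}(\cdot)\geq E-\eta\}$ under varifold limits, we extract a point $y\in\overline{\B{2r_i}{x_i}}$ with
\begin{gather}
\theta_{2\eta r_i}(y)\geq E-\eta\, .
\end{gather}

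Next, fix any $z\in \B{r_i}{x_i}$. The triangle inequality gives $d(y,z)\leq 3r_i$, so for any $R>3$ we have the inclusion $\B{(R-3)r_i}{y}\subseteq \B{Rr_i}{z}$, hence
\begin{gather}
\theta_{Rr_i}(z)\,\ge\,\Big(\tfrac{R-3}{R}\Big)^{m}\theta_{(R-3)r_i}(y)\, .
\end{gather}
Since the BMC condition $|\delta I(X)|\leq H\int|X|\,d\mu_I$ and the curvature bound $|\sec|\leq K$ imply that $e^{c(n)(K+H)\,r}\theta_r(y)$ is monotone nondecreasing in $r$, and since for $R$ larger than an absolute constant and $\eta$ small we have $(R-3)r_i\geq 2\eta r_i$, we further obtain
\begin{gather}
\theta_{(R-3)r_i}(y)\,\ge\, e^{-c(n)(K+H)(R-3)}\,\theta_{2\eta r_i}(y)\,\ge\, e^{-c(n)(K+H)(R-3)}\,(E-\eta)\, .
\end{gather}

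Combining these two displays yields
\begin{gather}
\theta_{Rr_i}(z)\,\ge\, (E-\eta)\,e^{-c(n)(K+H)(R-3)}\,\Big(\tfrac{R-3}{R}\Big)^{m}\, .
\end{gather}
Now we choose parameters. The mass bound $\mu_I(\B{2}{p})\leq \Lambda$ together with almost-monotonicity gives $E\leq C(n,\Lambda)$. Therefore we may first pick $R=R(n,\Lambda,\eta')$ so large that the geometric factor $((R-3)/R)^m$ exceeds $1-\eta'/(4E)$; absorbing any additional smallness into the standing hypothesis $K+H<\delta(n,\Lambda,\epsilon)$ guarantees that the exponential correction also exceeds $1-\eta'/(4E)$; and finally taking $\eta=\eta(n,\Lambda,\eta')\leq \eta'/2$ gives $\theta_{Rr_i}(z)>E-\eta'$, as required.

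The only subtle technical point is that the almost-monotonicity correction $e^{-c(K+H)(R-3)}$ involves $R=R(\eta')$, so to drive it close to $1$ we must further shrink the allowed value of $K+H$ depending on $\eta'$. In the proof of the full covering Lemma \ref{l:covering}, $\eta'$ will ultimately be taken as a function of the quantitative-stratification parameter $\epsilon$, so $\delta(n,\Lambda,\epsilon)$ in the hypothesis of Lemma \ref{l:covering} can absorb this dependence with no circularity. With that convention the argument above produces the claimed constants $R(n,\Lambda,\eta')$ and $\eta(n,\Lambda,\eta')$.
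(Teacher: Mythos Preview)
Your proof is correct and follows essentially the same route as the paper's: locate a witness point $y$ near $x_i$ with density close to $E$ at a small scale, use (almost-)monotonicity to push that up to scale $\sim Rr_i$, and then use the ball inclusion $\B{(R-3)r_i}{y}\subseteq \B{Rr_i}{z}$ to transfer the density to $z$, absorbing the loss into the choice of $R$ and $\eta$.

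One small remark: invoking Allard compactness (Theorem~\ref{th_All_cmpt}) to produce the witness point $y$ is not the right tool---that theorem concerns sequences of varifolds, whereas here the varifold $I$ is fixed and you are passing to a limit in the \emph{point} variable. What you actually need is the elementary upper semicontinuity of $(y,\rho)\mapsto \mu_I(\overline{\B{\rho}{y}})$ for a Radon measure, or more simply you can bypass the limit entirely: since $s_{x_i}=2r_i$, the definition directly gives, for any fixed $s$ with $2r_i< s\leq 1$ (say $s=3r_i$), a point $y\in \B{s}{x_i}$ with $\theta_{\eta s}(y)\geq E-\eta$, and the rest of your argument goes through with $3$ replaced by $4$ in the distance bound. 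The paper's proof does exactly this (it does not pass to a limit), and also treats the almost-monotonicity correction implicitly via the standing hypothesis $K+H<\delta$, which you handle more explicitly.
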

\begin{proof}
The proof relies on a straight forward comparison using the definition and monotonicity of the mass.  Namely, let $x,y\in B_{1/2}$ with $s<1$ and let us denote $d\equiv d(x,y)$.  Then we have the estimate
\begin{align}\label{e:covering:energy_bound:1}
\theta_s(y) = s^{-k}\int_{B_s(y)}d\mu_I \leq s^{-k}\int_{B_{s+d}(x)}\,d\mu_I= \ton{\frac{s}{s+d}}^{-k}\theta_{s+d}(x)\, .
\end{align}
To apply this in our context, let us note for each $x_i$ in our covering, that by our construction of $U_+$ there must exist $y_i\in B_{r_i}(x_i)$ such that $\theta_{\eta(R-2)r_i}(y_i)\geq \theta_{\eta r_i}(y_i)\geq E-\eta$.  Let us now apply \eqref{e:covering:energy_bound:1} to obtain
\begin{align}
\theta_{\eta Rr_i}(z)\geq \ton{\frac{\eta Rr_i}{\eta (R-2)r_i}}^{-k}\theta_{\eta (R-2)r_i}(y_i)\geq \ton{\frac{R}{R-2}}^{-k}\big(E-\eta\big)\, . 
\end{align}
If $R=R(n,\Lambda,\eta')>0$ and $\eta\leq \eta(n,\Lambda,\eta')$, then we obtain from this the claimed estimate.
\end{proof}

In words, the above lemma is telling us that even though we have no reasonable control over the size of $\theta_{r_i}(x_i)$, after we go up a controlled number of scales we can again assume that the mass is again close to $E$.\\

In the last lemma the proof was based on continuity estimates on the mass $\theta$.  In the next lemma we wish to show an improved version of this continuity under appearance of symmetry.  Precisely:\\

\begin{lemma}[Improved Continuity of $\theta$]\label{l:covering:improved_continuity}
Let $I^m\subseteq B_2$ be an integral varifold satisfying the bounds \eqref{e:manifold_bounds}, the mean curvature bound \eqref{e:mean_curvature_bound}, and the mass bound $\mu_I(B_4(p))\leq \Lambda$.  Then for $0<\tau,\beta,\gamma<1$ there exists $\delta(n,\Lambda,\gamma,\beta,\tau)>0$ such that if there exists $x_1,\ldots,x_k\in I\cap B_1(p)$ which are $\tau$-independent at $x_0$ with $|\theta_{3}(x_j)-\theta_{\delta}(x_j)|<\delta$, 
and if $V^k$ is the $k$-dimensional affine subspace $x_0+\operatorname{span}\cur{x_1-x_0,\ldots,x_k-x_0}$, then for all $x,y\in I\cap B_1(p)\cap B_{\delta}(V^k)$ and $10^{-4}\beta \leq s\leq 1$ we have that $|\theta_{s}(x)-\theta_{s}(y)|<\gamma$.
\end{lemma}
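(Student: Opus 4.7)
The plan is to argue by contradiction and compactness, in the spirit of the proofs of Theorems \ref{t:quantitative_0_symmetry} and \ref{t:con_splitting}. Suppose the statement fails. Then for some fixed $\eta, \tau > 0$ and $n, \Lambda$, there exist a sequence $\delta_j \to 0$, integral varifolds $I_j^m \subseteq B_2(p_j) \subseteq M_j^n$ satisfying all hypotheses with parameter $\delta_j$, points $x_0^j,\ldots,x_k^j \in I_j \cap B_1(p_j)$ which are $\tau$-independent and satisfy $|\theta_{\delta_j}(x_i^j) - \theta_3(x_i^j)| < \delta_j$, and points $x_j, y_j \in I_j \cap B_1(p_j) \cap B_{\delta_j}(V_j^k)$ (with $V_j^k = \mathrm{span}\{x_0^j,\ldots,x_k^j\}$) together with radii $s_j \in [10^{-4},1]$ such that $|\theta_{s_j}(x_j) - \theta_{s_j}(y_j)| \geq \eta$.

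Since $K_j + H_j < \delta_j \to 0$, after rescaling and passing to a subsequence we may assume $M_j \to \mathbb{R}^n$, and by Allard's compactness theorem \ref{th_All_cmpt} the varifolds converge $I_j \to I_\infty$ in the sense of varifolds, where $I_\infty$ is an integral stationary varifold in $\mathbb{R}^n$ with $\mu_{I_\infty}(B_2) \leq \Lambda$. Moreover, after passing to a further subsequence, we have $x_i^j \to x_i^\infty$ with $\{x_0^\infty,\ldots,x_k^\infty\}$ still $\tau$-independent, $x_j \to x_\infty$, $y_j \to y_\infty$ (both lying in $V_\infty^k := \mathrm{span}\{x_0^\infty,\ldots,x_k^\infty\}$), and $s_j \to s_\infty \in [10^{-4},1]$.

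The pinching condition $|\theta_{\delta_j}(x_i^j) - \theta_3(x_i^j)| < \delta_j$ together with upper semicontinuity of the density under varifold convergence forces $\theta_0(x_i^\infty) = \theta_3(x_i^\infty)$ for each $i$. By the monotonicity formula (Section \ref{ss:monotonicity}), $I_\infty$ is $0$-symmetric with respect to each point $x_i^\infty$. Since the $\{x_i^\infty\}_{i=0}^k$ are $\tau$-independent, the cone splitting principle (the stationary/Euclidean limit case of Theorem \ref{t:con_splitting}, applied directly without the quantitative $\epsilon$ since we have exact symmetries) implies that $I_\infty$ is $k$-symmetric with respect to the affine plane $V_\infty^k$. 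In particular, $I_\infty$ is invariant under translations along $V_\infty^k$, so $\theta_s^{I_\infty}(\cdot)$ is constant along $V_\infty^k$ for every $s > 0$. Hence $\theta_{s_\infty}^{I_\infty}(x_\infty) = \theta_{s_\infty}^{I_\infty}(y_\infty)$.

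The final step, which I expect to be the main technical nuisance, is to pass the assumed discrepancy $|\theta_{s_j}(x_j) - \theta_{s_j}(y_j)| \geq \eta$ to the limit. The map $s \mapsto \mu_{I_\infty}(\overline{B_s(z)})$ is monotone and hence continuous off a countable set; away from that set, varifold convergence $I_j \to I_\infty$ combined with $x_j \to x_\infty$, $y_j \to y_\infty$, $s_j \to s_\infty$ gives $\theta_{s_j}(x_j) \to \theta_{s_\infty}^{I_\infty}(x_\infty)$ and similarly for $y$; if $s_\infty$ is a jump point one perturbs $s_j$ slightly using the almost monotonicity $e^{CR}\theta_r$ to absorb the error into $\eta/2$. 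Either way one obtains $|\theta_{s_\infty}^{I_\infty}(x_\infty) - \theta_{s_\infty}^{I_\infty}(y_\infty)| \geq \eta/2 > 0$, contradicting the translation invariance established above. This completes the proof.
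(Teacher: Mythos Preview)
Your proposal is correct and follows essentially the same route as the paper's own proof: contradiction, Allard compactness, passing the pinching to the limit to obtain exact $0$-symmetry at each $x_i^\infty$, then cone splitting to get $k$-symmetry of $I_\infty$ with respect to $V_\infty^k$, which contradicts the surviving density discrepancy. Your discussion of the possible jump-point issue when passing $\theta_{s_j}$ to the limit is more careful than the paper's (which simply asserts $|\theta_s^I(x)-\theta_s^I(y)|\geq \eta$ in the limit), but the overall argument is the same.
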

\begin{proof}
The proof is by contradiction.  Thus, imagine no such $\delta$ exists.  Then there exists a sequence of integral varifolds $I_i$ on $B_4(p_i)$ satisfying $I_i(B_4(p_i))\leq \Lambda$ and such that
\begin{enumerate}
\item there exists $x_{i,1},\ldots,x_{i,k}\in B_1(p_i)$ which are $\tau$-independent at $x_{i,0}$ with $\abs{\theta^{I_i}_{\delta_i}(x_{i,j})-\theta^{I_i}_3(x_{i,j})}<\delta_i\to 0$,
\end{enumerate}
however we have that there exists $x_i,y_i\in I\cap B_1(p)\cap B_{\delta_i}(V^k_i)$ and $10^{-4}\beta\leq s_i\leq 1$ such that $\abs{\theta^{I_i}_{s_i}(x_i)-\theta^{I_i}_{s_i}(y_i)}\geq \gamma$.  Since $\partial I\cap B_2(p)=\emptyset$, the masses are uniformly bounded and the bound on the mean curvature $\delta_i$ converges to zero, we may apply Allard's compactness theorem and obtain a converging subsequence (where for convenience we will not change indexes) $I_i\to I$ as well as the collection
\begin{align}
&x_{i,j}\to x_j\in B_1(0^n)\, ,\notag\\
&x_i\to x,\, y_i\to y\in V=x_0+\text{span}\{x_1-x_0,\ldots,x_k-x_0\}\, ,\notag\\
&s_i\to s\, .
\end{align}
In particular, we have in this limit that
\begin{align}\label{e:covering:improved_cont:1}
&\big|\theta^I_{0}(x_j)-\theta^I_3(x_j)\big|=0\, ,\notag\\
&\big|\theta^I_s(x)-\theta^I_s(y)\big|\geq \gamma\, .
\end{align}
However, we have by theorem \ref{t:quantitative_0_symmetry} and the standard cone splitting of theorem \ref{t:con_splitting} we have that $I$ is $k$-symmetric with respect to the $k$-plane $V$ on $B_1$.  In particular, $I$ is invariant under translation by elements of $V$.  However, this is a contradiction to \eqref{e:covering:improved_cont:1}, and therefore we have proved the result.

\end{proof}
\vspace{.5 cm}

Now the first goal is to partition $U_+$ into a finite collection, each of which will have a few more manageable properties than $U_+$ itself.  More precisely:

\begin{lemma}\label{l:covering:break_up_U_+}
For each $R\geq 5$ there exists $N(n,R)>1$ such that we can break up $U_+$ as a union
\begin{align}\label{e:covering:U^a_decomposition}
U_+ = \bigcup_{a=1}^N U^a_+=\bigcup_{a=1}^N \bigcup_{i\in J^a} \B{r_i}{x_i}\, ,
\end{align}
such that each $U^a_+$ has the following property:  if $i\in J^a$, then for any other $j\in J^a$ we have that if $x_j\in B_{Rr_i}(x_i)$, then $r_j<R^{-2}r_i$.
\end{lemma}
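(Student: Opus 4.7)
The plan is to reformulate the statement as a graph coloring problem. Define a \emph{conflict graph} $G$ on the index set $J$ of $U_+$ by declaring $i \sim j$ (for $i \neq j$) whenever the pair $(i,j)$ violates the desired property in at least one direction, i.e.\ whenever either
\begin{align*}
\text{(a)}\quad x_j \in B_{Rr_i}(x_i)\ \text{and}\ r_j \geq R^{-2}r_i,\quad\text{or}\quad
\text{(b)}\quad x_i \in B_{Rr_j}(x_j)\ \text{and}\ r_i \geq R^{-2}r_j.
\end{align*}
By construction, any independent set in $G$ is an index set $J^a$ on which the required implication holds symmetrically. Thus a proper vertex coloring of $G$ by $N$ colors yields the decomposition \eqref{e:covering:U^a_decomposition}, so it is enough to bound the chromatic number of $G$ by some $N(n,R)$.

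To bound the chromatic number it suffices, by a greedy coloring argument, to bound the maximum degree of $G$. Fix $i \in J$ and count its neighbors. Here we exploit the Vitali property from the construction of $U_+$: the balls $\{B_{r_j/5}(x_j)\}_{j \in J}$ are pairwise disjoint, which in particular forces $|x_i - x_j| \geq (r_i + r_j)/5$ for all $j \neq i$. Neighbors of type (a) must then satisfy $r_j/5 \leq Rr_i$ and $r_j \geq R^{-2}r_i$, so their disjoint balls $B_{r_j/5}(x_j)$ all sit inside $B_{2Rr_i}(x_i)$ while each carries volume at least $\omega_n(R^{-2}r_i/5)^n$; a standard volume comparison gives at most $(10R^3)^n$ such indices. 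Neighbors of type (b) satisfy $r_j \leq R^2 r_i$ and $|x_i - x_j| < Rr_j \leq R^3 r_i$, while the Vitali disjointness forces $r_j \geq r_i/(5R)$ from below; a second volume comparison packs these disjoint balls of size $\gtrsim r_i/R$ inside $B_{2R^3 r_i}(x_i)$ and bounds their number by $(50R^4)^n$. The degree of $i$ in $G$ is therefore bounded by a constant $C(n,R)$, so greedy coloring produces a proper $N(n,R)$-coloring with $N(n,R) = C(n,R)+1$, completing the proof.

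The argument involves no analysis, only combinatorics and elementary volume counting; the one point that requires attention is that the condition in the lemma is asymmetric in $i$ and $j$, which is precisely why the conflict graph is defined as the symmetric closure of the forbidden pattern. Once the graph is defined this way, the rest is routine, and I do not expect any serious obstacle.
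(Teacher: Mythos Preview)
Your proposal is correct and follows essentially the same approach as the paper. Both arguments exploit the pairwise disjointness of $\{B_{r_j/5}(x_j)\}$ to bound, via volume comparison, the number of indices that can conflict with a fixed $i$, and then use a greedy assignment to colors; the paper runs the greedy step in decreasing-radius order (so only ``upward'' conflicts need to be counted, giving a slightly smaller constant), whereas you bound the full degree of the symmetric conflict graph, but the content is the same.
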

\begin{proof}
Let us recall that the balls in the collection $\{B_{r_i/5}(x_i)\}$ are pairwise disjoint.  In particular, given $R\geq 5$ if we fix a ball $B_{r_i}(x_i)$ then by the usual covering arguments there can be at most $N(n,R)$ ball centers $\{x_j\}_1^N\subset U_+\cap B_{R^3 r_i}(x_i)$ with the property that $r_j\geq r_i$. 
Indeed, if $\{x_j\}_1^N$ is such a collection of balls then we get 
\begin{align}
\omega_n (2R^3 r_i)^n = \Vol(B_{2R^3 r_i}(x_i))&\geq \sum_1^N \Vol(B_{r_j/5}(x_j))\geq N\omega_n (r_i/5)^n\, ,
\end{align}
which by rearranging gives the estimate $N\leq N(n,R)$ as claimed.\\

Now we wish to build our decomposition $U_+ =\bigcup_1^N U^a_+$, where $N$ is from the first paragraph.  We shall do this inductively, with the property that at each step of the inductive construction we will have that the sets $U^a_{+}$ will satisfy the desired property. In particular, for every $a$ and $i\in J^a$, if $j\in J^a$ is such that $x_j\in B_{R r_i}(x_i)$, then $r_j<R^{-2}r_i$.\\


Begin by letting each $J^a$ be empty. We are going to sort the points $\cur{x_i}_{i\in J}$ into the sets $J^a$ one at a time.  At each step let $i\in J\setminus \bigcup_{a=1}^N J^a$ be an index such that $r_i=\max_{} r_j$, where the max is taken over all indexes in $J\setminus \bigcup_{a=1}^N J^a$, i.e., over all indexes which haven't been sorted out yet.  Now let us consider the collection of ball centers $\{y_j\}_{j\in J'}$ such that $J'\subset J$, $x_i\in B_{R r_j}(y_j)$ and $r_i\leq r_j\leq R^{2}r_i$.  
Note that, by construction, either $y_j$ has already been sorted out in some $J^a$, or $r_j=r_i$. Now evidently $y_j\in \B{R^3 r_i}{x_i}$ for all $j\in J'$ and so by the first paragraph the cardinality of $J'$ is at most $N(n,R)$.   
In particular, there must be some $J^a$ such that $J^a\cap J'=\emptyset$.  Let us assign the index $i$ to this piece of the decomposition, so that $i\in J^a$ and $x_i\in U_a^+$.  Clearly the decomposition $\bigcup U^a_+$ still satisfies the inductive hypothesis after the addition of this point, and so this finishes the inductive step of construction.  Since at each stage we have chosen $x_i$ to have the maximum radius, this process will continue indefinitely to give the desired decomposition of $U_+$.


\end{proof}
\vspace{.5 cm}

Now with a decomposition fixed, let us consider for each $1\leq a\leq N$ the measures
\begin{align}
\mu^a \equiv \sum_{i\in J^a} \omega_k r_i^k \delta_{x_i}\, .
\end{align}

The following is a crucial point in our construction.  It tells us that each ball $B_{10r_i}(x_i)$ either has small $\mu^a$-volume, or the point $x_i$ must have large mass at scale $r_i$.  Precisely:

\begin{lemma}\label{l:covering:energy_density}
Let $\eta',\beta,D>0$ be fixed. There exists $R=R(n,\Lambda,\eta',D,\beta,\epsilon)>0$ such that if we consider the decomposition \eqref{e:covering:U^a_decomposition}, and if
\begin{enumerate}
\item 
      $\eta\leq \eta(n,\Lambda,\eta',\beta,\epsilon)$, $r_i<10^{-2}$,
\item we have $\mu^a(B_{10r_i}(x_i))\geq 2\omega_k r_i^k$,
\item for all ball centers $y_j\in A_{r_i/5,10r_i}(x_i)\cap U^a_+$ and for $s = 10^{-2n}D^{-1}\omega_n r_i$, we have that $\mu^a(B_{s}(y_j))\leq D s^k$,
\end{enumerate}
then we have that $\theta_{\beta r_i/10}(x_i)\geq E-\eta'$.
\end{lemma}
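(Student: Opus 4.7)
The plan is to prove the lemma by a contradiction-via-compactness argument, in the spirit of Lemmas \ref{l:best_subspace:energy_lower_bound}, \ref{l:covering:improved_continuity}, and \ref{l:covering:energy_bound}.  After rescaling so that $r_i=1$, the disjointness of $\{B_{r_j/5}(x_j)\}_{j\in J^a}$ places every center $x_j\ne x_i$ of $U^a_+$ in $B_{10}(x_i)$ inside the annulus $A_{1/5,10}(x_i)$, so hypothesis (3) applies to each; by the decomposition property of Lemma \ref{l:covering:break_up_U_+} these satisfy $r_j<R^{-2}$; and hypothesis (2), minus the $\omega_k$ contribution of $x_i$'s own atom, gives total $\mu^a$-mass at least $\omega_k$ carried by these small annulus centers.

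Assuming the conclusion fails, I pick a sequence of varifolds $I_n$ satisfying the hypotheses with $R_n\to\infty$ and $\eta_n,H_n\to 0$, but with $\theta_n(x_{i_n},1/10)<E_n-\eta'$.  By Allard's theorem (Theorem \ref{th_All_cmpt}), along a subsequence $I_n$ converges to a stationary integral limit varifold $I_\infty$ and $x_{i_n}\to x_i^\infty$.  The discrete measures $\mu^a_n$ restricted to the annulus converge weakly to a Radon measure $\mu^\infty$ of total mass at least $\omega_k$ on $\overline{A_{1/5,10}(x_i^\infty)}$, satisfying the $k$-Ahlfors upper bound $\mu^\infty(B_s(y))\le Ds^k$ inherited from hypothesis (3).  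Because $R_n\to\infty$, Lemma \ref{l:covering:energy_bound} additionally forces every point of $\mathrm{supp}(\mu^\infty)$ to be a near-cone point of $I_\infty$, with mass drop at most $\eta'/2$ across all scales.

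By the effective spanning argument of Lemma \ref{lemma_alpharho}, $\mu^\infty$ produces $k+1$ points $x_0^\infty,\ldots,x_k^\infty\in \mathrm{supp}(\mu^\infty)$ that are $\tau_0(n,D)$-independent, and Theorem \ref{t:quantitative_0_symmetry} tells us each ball $B_2(x_\beta^\infty)$ is $(0,\delta_1(\eta'))$-symmetric.  Cone splitting (Theorem \ref{t:con_splitting}) then yields $(k,\epsilon_0(\eta'))$-symmetry on $B_c(x_0^\infty)$ with spine $V^k$ the affine hull of the $x_\beta^\infty$'s; combined with the $k$-Ahlfors upper bound on $\mu^\infty$, this concentrates the continuous part of $\mathrm{supp}(\mu^\infty)$ inside a thin tube around $V^k$.

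The technical heart of the proof lies in closing the argument, and this is the main obstacle.  Because the spine $V^k$ need not pass through $x_i^\infty$, one cannot directly conclude $\theta_\infty(x_i^\infty,1/10)\approx E_\infty$ from the $k$-symmetry at $x_0^\infty$.  The argument instead exploits both the Dirac contribution $\mu^\infty(\{x_i^\infty\})\ge\omega_k$, which survives the weak limit from $\mu^a_n(\{x_{i_n}\})=\omega_k r_{i_n}^k$, and the defining property $x_{i_n}\in S^k_{\epsilon,r_n/(10r_{i_n})}(I_n)$ which in the limit forbids any $(k+1,\epsilon)$-symmetric ball about $x_i^\infty$ at scales bounded below.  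A careful case analysis on the position of $x_i^\infty$ relative to $V^k$, combining the quantitative stratification hypothesis with the $k$-symmetric cone structure so that an off-spine $x_i^\infty$ either violates the hypothesized mass drop via the continuity of density within the product $V^k\times C$, or generates an additional direction of approximate symmetry contradicting the non-$(k+1,\epsilon)$-symmetry inherited by $x_i^\infty$, yields the contradiction.  Carrying out this last step with the correct quantitative dependencies is the delicate part of the proof.
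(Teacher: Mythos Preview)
Your overall strategy is the same as the paper's: use the mass lower bound (2) together with the Ahlfors-type bound (3) to find $k+1$ $\tau$-independent ball centers $y_0,\ldots,y_k$ in the annulus $A_{r_i/5,10r_i}(x_i)$; use the decomposition property $r_j<R^{-2}r_i$ together with Lemma~\ref{l:covering:energy_bound} to pinch their mass densities; apply cone splitting to get $(k,\epsilon')$-symmetry of $B_{10r_i}(x_i)$ with respect to the spanned plane $V^k$; and close with a dichotomy on the position of $x_i$ relative to $V^k$.

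Where you diverge is in wrapping all of this in a compactness argument.  The paper never passes to a limit: it works directly at scale $r_i$ using the quantitative lemmas already in hand, and this is what makes your ``delicate case analysis'' evaporate.  If the conclusion $\theta_{r_i/10}(x_i)\geq E-\eta'$ fails, the improved-continuity Lemma~\ref{l:covering:improved_continuity} (applied to the $\tau$-independent pinched points $y_0,\ldots,y_k$) immediately forces $d(x_i,V^k)\geq \delta(n,\Lambda,\eta')\,r_i$; then the quantitative dimension reduction of Theorem~\ref{t:quant_dim_red} yields a $(k+1,\epsilon)$-symmetric ball $B_{\tau' r_i}(x_i)$ with $\tau'=\tau'(n,\Lambda,\epsilon)$.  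Having first checked $r_i\geq Rr$ and chosen $R>4(\tau')^{-1}$, one gets $\tau' r_i>4r$, contradicting $x_i\in S^k_{\epsilon,r/10}$.  That is the entire closing argument --- two lemma applications, no cases.  The Dirac contribution $\mu^\infty(\{x_i^\infty\})$ you invoke plays no role whatsoever: it is the membership $x_i\in S^k_{\epsilon,r/10}$, not the atom of $\mu^a$ at $x_i$, that produces the contradiction.

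Your compactness route can probably be pushed through, but it introduces technical overhead (weak limits of the discrete measures $\mu^a_n$, persistence of non-$(k+1,\epsilon)$-symmetry in the limit, tracking $E_n\to E_\infty$) that the paper's direct argument avoids entirely, and it is precisely this overhead that makes the final step look ``delicate'' to you when in fact it is not.
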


\begin{proof}
Let us begin by choosing $\eta''<<\eta'< \eta'(n,\Lambda,\beta,\tau,\epsilon)$, which will be fixed later in the proof, and let us also define $\tau\equiv 10^{-2n}D^{-1}\omega_n$.  Let $\delta(n,\Lambda,\beta,\eta',\tau)$ be from lemma \ref{l:covering:improved_continuity} so that the conclusions hold with $10^{-1}\eta'$.
Now throughout we will assume $\eta''<\delta$. 
We will also choose $R=R(n,\Lambda,\eta'',D,\epsilon)>\max\{\tau^{-1},\delta^{-1},\delta'^{-1}\}$ so that lemma \ref{l:covering:energy_bound} is satisfied with $\eta''$. \\

Since it will be useful later, let us first observe that $r_i\geq R r$.  Indeed, if not then for each ball center $y_j\in A_{r_i/5,10r_i}(x_i)\cap U^a_+$ we would have $r\leq r_j<R^{-2}r_i<r$.  This tells us that there can be no ball centers in $A_{r_i/5,10r_i}(x_i)\cap U^a_+$.  However, by our volume assumption we have that
\begin{align}
\mu^a\big(A_{r_i/5,10r_i}(x_i)\big) = \mu^a\big(B_{10r_i}(x_i)\big)-\mu^a\big( B_{r_i/5}(x_i)\big)\geq 2\omega_k r_i^k-\omega_k r_i^k = \omega_k r_i^k\, ,
\end{align}
which contradicts this.  Therefore we must have that $r_i\geq R r$.\\

Now our first real claim is that under the assumptions of the lemma there exists ball centers $y_0,\ldots,y_{k}\in U^a_+\cap A_{r_i/5,10r_i}(x_i)$ which are $\tau r_i$-independent in the sense of Definition \ref{d:independent_points}.  Indeed, assume this is not the case, then we can find a $k-1$-plane $V^{k-1}$ such that
\begin{align}
\cur{y_i}_{i\in J^a}\cap A_{r_i/5,10r_i}(x_i) \subseteq B_{\tau r_i} \ton{V^{k-1}}\, .
\end{align}
In particular, by covering $ \cur{y_i}_{i\in J^a} \cap B_{\tau r_i} \ton{V}\cap B_{10r_i}(x_i)$ by $C(n)\tau^{1-k}< 10^{2n} \tau^{1-k}$ balls of radius $\tau r_i$ centered on $\cur{y_i}_{i\in J^a}$, and using our assumption that $\mu^a_+(B_{\tau r_i}(y_i))\leq D \tau^k r_i^k$, we are then able to conclude the estimate
\begin{align}
\mu^a\Big(A_{r_i/5,10r_i}(x_i)\Big) \leq \mu^a\Big(B_{\tau r_i} \ton{V}\cap B_{10r_i}(x_i)\Big) \leq 10^n D \tau r_i^k<\omega_n r_i^k\, .
\end{align}
On the other hand, our volume assumption guarantees that
\begin{align}\label{e:covering:energy_density:1}
\mu^a\big(A_{r_i/5,10r_i}(x_i)\big) = \mu^a\big(B_{10r_i}(x_i)\big)-\mu^a\big( B_{r_i/5}(x_i)\big)\geq 2\omega_k r_i^k-\omega_k r_i^k\geq \omega_k r_i^k\, ,
\end{align}
which leads to a contradiction.  Therefore there must exist $k+1$ ball centers $y_0,\ldots,y_{k}\in A_{r_i/5,10r_i}(x_i)$ which are $\tau r_i$-independent points, as claimed.\\

Let us now remark on the main consequences of the existence of these $k+1$ points.  Note first that for each $y_j$ we have that $\theta_{\eta R^{-1}r_i}(y_j)>E-\eta''$, since by the construction of $U^a_+$ we have that $r_{j}\leq R^{-2}r_i$, and therefore we can apply lemma \ref{l:covering:energy_bound}.  Thus we have $k+1$ points in $B_{10r_i}(x_i)$ which are $\tau r_i$-independent, and whose mass densities are $\eta''$-pinched.  To exploit this, let us first apply lemma \ref{l:covering:improved_continuity} in order to conclude that for each $x\in B_\delta(V)$ we have
\begin{align}
\theta_{\beta r_i/10}(x)\geq \theta_{\beta r_i/10}(y_j)-|\theta_{\beta r_i/10}(x)-\theta_{\beta r_i/10}(y_j)|\geq E-\eta''-10^{-1}\eta'>E-\eta'\, .
\end{align}
In particular, if we assume that $x_i$ is such that $\theta_{\beta r_i/10}(x_i)< E-\eta'$, then we must have that $r_i^{-1}d(x_i,V)\geq \delta =\delta(n,\Lambda,\eta')$.  \\

Therefore, let us now assume $\theta_{\beta r_i/10}(x_i)< E-\eta'$, and thus to prove the lemma we wish to find a contradiction.  To accomplish this notice that we have our $k+1$ points $y_0,\ldots,y_k\in B_{10r_i}$ which are $\tau r_i$-independent and for which $|\theta_{20r_i}(y_j)-\theta_{\eta R^{-1}r_i}(y_j)|<\eta''$.  Therefore by applying the cone splitting of theorem \ref{t:con_splitting} we have for each $\epsilon'>0$ that if $\eta''\leq \eta''(n,\Lambda,\epsilon')$ then $B_{10r_i}(x_i)$ is $(k,\epsilon')$-symmetric with respect to the $k$-plane $V^k$.  However, since $d(x_i,V)>\delta r_i$, we have by theorem \ref{t:quant_dim_red} that if $\epsilon'\leq \epsilon'(n,\Lambda,\epsilon)$ then there exists some $\tau'=\tau'(n,\Lambda,\epsilon)$ such that $B_{\tau' r_i}(x_i)$ is $(k+1,\epsilon)$-symmetric.  However, we can assume after a further increase that $R=R(n,\Lambda,D,\epsilon)>4\tau'^{-1}$, and thus we have that $\tau'r_i>4R^{-1} r_i>4r$.  This contradicts that $x_i\in S^k_{\epsilon,r/100}$, and thus we have contradicted that $\theta_{\beta r_i/10}(x_i)< E-\eta'$, which proves the lemma.
\end{proof}
\vspace{.5cm}

\subsection{\texorpdfstring{Estimating $U_+$ in lemma \ref{l:covering}}{Estimating U in lemma \ref{l:covering}}}\label{ss:covering_U_+}

Now we proceed to finish the proof of lemma \ref{l:covering} by estimating the set $U_+$.  First let us pick $D'=D'(n)\equiv 2^{16n}D(n)$, where $D(n)$ is from theorem \ref{t:reifenberg_W1p_discrete}.  For some $\eta',\beta$ fixed depending only on $n,\Lambda,\epsilon$, we can choose $R$ as in lemma \ref{l:covering:energy_density}. It is then enough to estimate each of the sets $U^a_+$, as there are at most $N=N(n,\Lambda,\epsilon,\eta',\beta)$ pieces to the decomposition.  Thus we will fix a set $U^a_+$ and focus on estimating the content of this set.  Let us begin by observing that if $r>0$ then we have the lower bound $r_i\geq r$.  Otherwise, let us fix any $r>0$ and restrict ourselves to the collection of balls in $U^a_+$ with $r_i\geq r$.  The estimates we will prove in the end will be independent of $r$, and thus by letting $r\to 0$ we will obtain estimates on all of $U^a_+$.\\

Now let us make the precise statement we will prove in this subsection.  Namely, consider any of ball centers $\cur{x_i}_{i\in J^a}$ and any radius $2^{-4}r_i\leq \er_\alpha\leq 2^{-6}$, where $\er_\alpha=2^{-\alpha}$.  Then we will show that
\begin{align}\label{e:covering:U_+:1}
\mu^a\big(B_{\er_\alpha}(x_i)\big)\leq 2^{8n}D(n) \er_\alpha^k\, .
\end{align}
Let us observe that once we have proved \eqref{e:covering:U_+:1} then we have finished the proof of the Covering lemma, as a simple covering argument gives us the the estimate
\begin{align}\label{e:covering:U_+:1'}
\sum r_i^k = \mu^a\big(B_2(x_i)\big)\leq C(n)\, . \\ \notag
\end{align}

We prove \eqref{e:covering:U_+:1} inductively on $\alpha$.  To begin notice that for each $x_i$ if $\alpha$ is the largest integer such that $2^{-4}r_i\leq \er_\alpha$, then the statement clearly holds, by the definition of the measure $\mu^a$.  In fact, we can go further than this.  For each $x_i$, let $r'_i\in [r_i/5,10r_i]$ be the largest radius such that for all $r_i/5\leq s< r'_i$ we have
\begin{align}
\mu^a\big( B_s(x_i)\big)\leq 2\omega_k (5s)^k\, .
\end{align}
In particular, we certainly have the much weaker estimate $\mu^a\big( B_{s}(x_i)\big)\leq 2^{8n}D(n) s^k$, and hence \eqref{e:covering:U_+:1} is also satisfied for all $2^{-4}r_i\leq \er_\alpha \leq r'_i$.  Notice that we then also have the estimate
\begin{align}
\omega_k r_i^k\leq \mu^a\big(B_{r_i/8}(x_i)\big)\leq \mu^a\big( B_{r'_i}(x_i)\big)\leq 2\omega_k\big(5 r'_i\big)^k\, .
\end{align}

Now let us focus on proving the inductive step of \eqref{e:covering:U_+:1}.  Namely, assume $\alpha$ is such that for all $x_i$ with $2^{-4}r_i\leq \er_{\alpha+1}<2$ we have that \eqref{e:covering:U_+:1} holds.  Then we want to prove that the same estimate holds for $\er_\alpha$.  Let us begin by seeing that a weak version of \eqref{e:covering:U_+:1} holds.  
Namely, for any index $i\in J^a$ and any radius $\er_{\alpha}\leq s\leq 8\er_\alpha$, by covering $B_s(x_i)$ by at most $2^{8n}$ balls $\{B_{\er_{\alpha+1}}(y_j)\}$ of radius $\er_{\alpha+1}$ we have the weak estimate
\begin{align}\label{e:covering:U_+:2}
\mu^a\big(B_s(x_i)\big) \leq \sum \mu^a\big(B_{\er_{\alpha+1}}(y_j)\big)\leq D'(n) s^k\, ,
\end{align}
where of course $D'(n)>>2^{8n}D(n)$.\\

To improve on this, let us fix an $i\in J^a$ and the relative ball center $x_i\in U^a_+$ with $2^{-4}r_i\leq \er_\alpha$.  Now let $\{x_j\}_{j\in J}= \cur{x_j}_{j\in J^a}\cap B_{\er_{\alpha}}(x_i)$ be the collection of ball centers in $B_{\er_{\alpha}}(x_i)$.  Notice first that if $r'_j>2\er_\alpha$ for any of the ball centers $\{x_j\}$, then we can estimate
\begin{align}
\mu^a\big(B_{\er_{\alpha}}(x_i)\big)\leq \mu^a\big(B_{2\er_{\alpha}}(x_j)\big) \leq 2\omega_k\big(10 \er_{\alpha}\big)^k\leq 2^{8n}D(n) \er_\alpha^k\, ,
\end{align}
so that we may fairly assume $r'_j\leq 2\er_\alpha$ for every $j\in J$.  Now for each ball $B_{r'_j}(x_j)$ let us define a new ball $B_{\bar r_j}(y_j)$ which is roughly equivalent, but will have some additional useful properties needed to apply the discrete Reifenberg. 
Namely, for a given ball $B_{r'_j}(x_j)$, let us consider the two options $r'_j<10r_j$ or $r'_j=10r_j$.  If $r'_j<10r_j$, then we let $y_j\equiv x_j$ with $\bar r_j\equiv r_j'$.  In this case we must have that $\mu^a(B_{s}(x_j))>2\omega_k \big(5s\big)^k$ for some $s$ arbitrarily close to $\bar r_j$, and thus we can apply lemma \ref{l:covering:energy_density} in order to conclude that $\theta_{\beta \bar r_j/2}(y_j)\geq E-\eta'$.  
In the case when $r'_j=10r_j$ is maximal, let $y_j\in B_{r_j}(x_j)$ be a point such that $\theta_{\eta r_j}(y_j)=E-\eta$, such a point exists by the definition of $r_j$, and let $\bar r_j\equiv 9r_j$.  In either case we then have the estimates
\begin{align}\label{e:covering:U_+:3}
&\theta_{\bar r_j/8}(y_j)\geq E-\eta'\, ,\notag\\
&\omega_k 10^{-k} \bar r_j^k\leq \mu^a\big(B_{\bar r_j/8}(y_j)\big)\leq \mu^a\big( B_{\bar r_j}(y_j)\big)\leq 2\omega_k (5r'_j)^k \leq 10^k \omega_k \bar r_i^k\, ,\notag\\
& x_j\in \B {\bar r_j/5}{y_j}\, .
\end{align}

Since $\operatorname{supp}(\mu_a)\cap \B {\er_\alpha}{x_i}\subseteq \bigcup B_{\bar r_j/5}(y_j)$, we can choose a Vitali subcovering of the support such that 
\begin{align}
\operatorname{supp}(\mu_a)\cap B_{\er_{\alpha}}(x_i)\subseteq \bigcup B_{\bar r_j}(y_j)\, ,
\end{align}
such that $\{B_{\bar r_j/5}(y_j)\}$ are disjoint, where we are now being loose on notation and referring to $\{y_j\}_{j\in \bar J}$ as the ball centers from this subcovering.  Let us now consider the measure
\begin{align}
\mu' \equiv \sum_{j\in \bar J} \omega_k \Big(\frac{\bar r_j}{10}\Big)^k \delta_{y_j}\, .
\end{align}
That is, we have associated to the disjoint collection $\{B_{\bar r_j/10}(y_j)\}$ the natural measure.  Our goal is to prove that
\begin{align}\label{e:covering:U_+:4}
\mu'\big( B_{\er_\alpha}(x_i)\big)\leq D(n) \er_\alpha^k\, .
\end{align}
Let us observe that if we prove \eqref{e:covering:U_+:4} then we are done.  Indeed, using \eqref{e:covering:U_+:3} we can estimate
\begin{align}
\mu^a\big(B_{\er_\alpha}(x_i)\big)\leq \sum \mu^a\big(B_{\bar r_j}(y_j)\big)\leq 10^k \omega_k\sum \bar r_j^k = 10^{2k}\mu'\big(B_{\er_\alpha}(x_i)\big)\leq 2^{8n} D(n) \er_\alpha^k\, ,
\end{align}
which would finish the proof of \eqref{e:covering:U_+:1} and therefore the lemma.\\

Thus let us concentrate on proving \eqref{e:covering:U_+:4}.  We will want to apply the discrete Reifenberg in this case to the measure $\mu'$.  Let us begin by proving a weak version of \eqref{e:covering:U_+:4}.  Namely, for any ball center $y_j$ from our subcovering and radius $\bar r_j<s\leq 4\er_\alpha$ let us consider the set $\{z_\ell\}=\cur{y_t}_{t\in \bar J}\cap B_s(y_j)$ of ball centers inside $B_s(y_j)$.  Since the balls $\{B_{\bar r_k/5}(z_\ell)\}$ are disjoint we have that $\bar r_k\leq 8s$.  Using this, \eqref{e:covering:U_+:2}, and \eqref{e:covering:U_+:3} we can estimate
\begin{align}\label{e:covering:U_+:5}
\mu'\big(B_s(y_j)\big) = \sum_{z_\ell\in B_s(y_j)} \omega_k 10^{-k}\bar r_k^k\leq C(n)\sum_{z_\ell\in B_s(y_j)}\mu^a(B_{\bar r_k/8}(z_\ell))\leq C(n)\mu^a(B_{2s}(y_j))\leq C(n) s^k\, ,
\end{align}
where of course $C(n)>>2^{8n}D(n)$. \\ 

Now let us finish the proof of \eqref{e:covering:U_+:4}.  Thus let us pick a ball center $y_j\in B_{\er_\alpha}(x_i)$ and a radius $s<4\er_\alpha$.  If $\mu'(B_s(y_j))\leq \epsilon_n  s^k$ then $D_{\mu'}(y_j,s)\equiv 0$ by definition, and if $s\leq \bar r_j/5$ then $D_{\mu'}(y_j,s)\equiv 0$, since the support of $\mu'$ in $B_{\bar r_i/5}(y_j)$ contains at most one point and thus is precisely contained in a $k$-dimensional subspace. In the case when $s>\bar r_i/5$ and $\mu(B_s(y_j))> \epsilon_n s^k$, we want to apply the estimates in theorem \ref{t:best_approximation}. In order to do so, we first remark that by picking $\beta\leq \beta_0(n,\Lambda,\beta')$ sufficiently small, since $\theta_{1}(y_j)-\theta_{\beta \bar r_j}(y_j)\leq \eta'$, we can apply theorem \ref{t:quantitative_0_symmetry} in order to prove that $\B {4s}{y}$ is $(0,\beta')$-symmetric, with $\beta'=\beta'(n,\Lambda,\epsilon)$, and in turn we obtain by theorem \ref{t:best_approximation} that
\begin{align}
D_{\mu'}(y_j,s) \leq C(n,\Lambda,\epsilon)s^{-k}\int_{B_s(y)} W_s(z)\,d\mu'(z)\, .
\end{align}
Note that $\B {s}{y_j}$ is not $(k+1,\epsilon)$-symmetric since $y_j\in S^k_{\epsilon,r/100}$ and $s\geq \bar r_j/5\geq r/100$.

By applying this to all $\bar r_j/5 <t\leq s$ we can estimate 
\begin{align}
s^{-k}\int_{B_s(x)} D_{\mu'}(y,t)\,d\mu'(y)&\leq Cs^{-k}\int_{B_s(x)}t^{-k}\int_{B_t(y)} W_t(z)\,d\mu'(z)\,d\mu'(y)\notag\\
&= Cs^{-k}t^{-k}\int_{B_{2s}(x)} \mu'(B_t(z)) W_t(z)\,d\mu'(z)\notag\\
&\leq Cs^{-k}\int_{B_{2s}(x)}W_t(z)\,d\mu'(z)\, ,
\end{align}
where we have used our estimate on $\mu'(B_t(y))$ from \eqref{e:covering:U_+:5} in the last line.  Let us now consider the case when $\bar r_j/5<t=\er_\beta\leq s\leq 2\er_{\alpha}$.  Then we can sum to obtain:
\begin{align}
\sum_{\er_\beta\leq s} s^{-k}\int_{B_s(x)} D_{\mu'}(y,\er_\beta)\,d\mu'(y) &\leq C\sum_{r'_y\leq \er_\beta\leq s} s^{-k}\int_{B_{2s}(x)}W_{\er_\beta}(y)\,d\mu'(y)\notag\\
&=C s^{-k}\int_{B_{2s}(x)}\sum_{\bar r_y\leq \er_\beta\leq s}W_{\er_\beta}(y)\,d\mu'(y)\notag\\
&\leq C\, s^{-k}\int_{B_{2s}(x)}\big|\theta_{8s}(y)-\theta_{\bar r_y}(y)\big| \,d\mu'(y)\leq C(n,\Lambda,\epsilon)\eta'\, ,
\end{align}
where we are using \eqref{e:covering:U_+:3} in the last line in order to see that $\big|\theta_s(y)-\theta_{\bar r_y}(y)\big|\leq \eta'$.  Now let us choose $\eta'\leq \eta'(n,\Lambda,\epsilon)$ such that
\begin{align}\label{e:covering:U_r:2c}
\sum_{\er_\beta\leq s} s^{-k}\int_{B_{2s}(x)} D_{\mu'}(y,\er_\beta)\,d\mu'(y) &\leq \delta^2\, ,
\end{align}
where $\delta$ is chosen from the discrete rectifiable-Reifenberg of theorem \ref{t:reifenberg_W1p_discrete}.  Since the estimate \eqref{e:covering:U_r:2c} holds for all $B_{2s}\subseteq B_{2\er_{\alpha}}(x)$, we can therefore apply theorem \ref{t:reifenberg_W1p_discrete} to conclude the estimate
\begin{align}
\mu(B_{\er_{\alpha}}(x'_i))\leq D(n) \er_{\alpha}^k\, .
\end{align}
This finishes the proof of \eqref{e:covering:U_+:3} , and hence the proof of lemma \ref{l:covering}. \hfill $\square$
\vspace{1cm}

\section{Proof of Main theorem's for Integral Varifolds with Bounded Mean Curvature}\label{s:main_theorem_stationary_proofs}

In this section we prove the main theorems of the paper concerning integral varifolds with bounded mean curvature.  With the tools of Sections \ref{s:biLipschitz_reifenberg}, \ref{s:best_approx}, and \ref{s:covering_main} developed, we will at this stage mainly be applying the covering of lemma \ref{l:covering} iteratively to arrive at the estimates.  When this is done carefully, we can combine the covering lemma with the cone splitting in order to check that for $k$-a.e. $x\in S^k_\epsilon$ there exists a unique $k$-dimensional subspace $V^k\subseteq T_xM$ such that every tangent cone of $I$ at $x$ is $k$-symmetric with respect to $V$.\\

\begin{remark}\label{rem_cover}
For the proofs of the theorems of this section let us make the following remark.  For any $\delta>0$ we can cover $B_1(p)$ by a collection of balls
\begin{align}\label{e:main_theorem_stationary_proofs:1}
B_1(p)\subseteq \bigcup_1^N B_{M^{-1}\delta}(p_i)\, ,
\end{align}
where $N\leq C(n)M^n\delta^{-n}$.  Thus if $\delta=\delta(n,\Lambda,\epsilon)$, $M=K+H$, and we can analyze each such ball, we can conclude from this estimates on all of $B_1(p)$.  In particular, by rescaling $B_{M^{-1}\delta}(p_i)\to B_1(p_i)$, we see that we can assume in our analysis that $K+H<\delta$ without any loss of generality.  We shall do this throughout this section.  \\ 
\end{remark}

\subsection{Proof of theorem \ref{t:main_quant_strat_stationary}}\label{ss:proof_thm_main_quant_strat_stationary}

Let $I^m\subseteq B_2$ be an integral varifold satisfying the bounds \eqref{e:manifold_bounds}, the mean curvature bound \eqref{e:mean_curvature_bound}, and the mass bound $\mu_I(B_2(p))\leq \Lambda$.  With $\epsilon,r>0$ fixed, let us choose $\eta(n,\Lambda, \epsilon)>0$ and $\delta(n,\Lambda,\epsilon)>0$ as in lemma \ref{l:covering}.  By Remark \ref{rem_cover}, we see that we can assume that $K+H<\delta$, which we will do for the remainder of the proof.\\ 

Now let us begin by first considering an arbitrary ball $B_s(x)$ with $x\in B_1(p)$ and $r<s\leq 1$, potentially quite small.  We will use lemma \ref{l:covering} in order to build a special covering of $B_s(x)$.  Let us define
\begin{align}
E_{x,s}\equiv \sup_{y\in B_s(x)}\theta_s(y)\, ,
\end{align}
and thus if we apply lemma \ref{l:covering} with $\eta(n,K_N,\Lambda,\epsilon)$ fixed to $B_s(x)$, then we can build a covering 
\begin{align}\label{e:proof_main_quant_strat:1}
S^k_{\epsilon,r}\cap B_s(x)\subseteq U_{r}\cup U_{+} = \bigcup B_{r}(x^r_{i})\cup \bigcup B_{r_{i}}(x_{i})\, ,
\end{align}
with $r_i>r$.  Let us recall that this covering satisfies the following:
\begin{enumerate}
\item[(a)] $r^{k-n}\Vol(B_r\, U_r)+\omega_k\sum r_i^k\leq C(n,\Lambda,\epsilon)\, s^k$.
\item[(b)] $\sup_{y\in B_{r_i}(x_i)}\theta_{\eta r_i}(y)\leq E_{x,s}-\eta$.
\end{enumerate}
As remarked above, since $\eta(n,K_N,\Lambda,\epsilon)$ is fixed, we can re-cover in a trivial way all balls $B_{r_i}(x_i)$ with smaller balls of radius $\eta r_i$ and obtain for this new covering that 
\begin{enumerate}
\item[(a')] $r^{k-n}\Vol(B_r\, U_r)+\omega_k\sum r_i^k\leq C(n,K_N,\Lambda,\epsilon)\, s^k$.
\item[(b')] $\sup_{y\in B_{r_i}(x_i)}\theta_{r_i}(y)\leq E_{x,s}-\eta$.
\end{enumerate}

Now that we have built our required covering on an arbitrary ball $B_s(x)$, let us use this iteratively to build our final covering of $S^k_{\epsilon,r}(I)$.  First, let us apply it to $B_1(p)$ is order to construct a covering
\begin{align}\label{eq_ref}
S^{k}_{\epsilon,r}(I)\subseteq U^1_{r}\cup U^1_+ = \bigcup B_{r}\ton{x^{r,1}_i}\cup\bigcup B_{r^1_i}\ton{x^1_i}\, ,
\end{align}
such that
\begin{align}
r^{k-n}\Vol\ton{B_r\ton{U^1_{r}}} + \omega_k\sum \ton{r^1_{i}}^{k} \leq C(n,K_N,\Lambda,\epsilon)\, ,
\end{align}
and with
\begin{align}
\sup_{y\in B_{r^1_i}(x_i^1)}\theta_{r^1_i}(y)\leq \Lambda-\eta\, .\\ \notag
\end{align}

Now let us tackle the following claim, which is our main iterative step in the proof:\\

{\bf Claim}:  For each $\ell$ there exists a constant $C_\ell(\ell,n,\Lambda,\epsilon)$ (recall that $K+H\leq \delta)$ and a covering 
\begin{align}
 S^{k}_{\epsilon,r}(I)\subseteq U^\ell_{r}\cup U^\ell_+ = \bigcup \B{r}{x^{r,\ell}_i} \cup\bigcup \B{r^\ell_i}{x^\ell_i}\, ,
\end{align}
with $r_i^\ell>r$, such that the following two properties hold:
\begin{align}\label{e:proof_main_quant_strat:2}
&r^{k-n}\Vol\ton{\B r {U^\ell_{r}}} + \omega_k\sum \big(r^\ell_{i}\big)^{k} \leq C_\ell(\ell,n,\Lambda,\epsilon)\, ,\notag\\
&\sup_{y\in B_{r^\ell_i}(x_i^\ell)}\theta_{r^\ell_i}(y)\leq \Lambda-\ell\cdot\eta\, .
\end{align}
\vspace{.5 cm}

To prove the claim let us first observe that we have shown this holds for $\ell=1$.  Thus let us assume we have proved the claim for some $\ell$, and determine from this how to build the covering for $\ell+1$ with some constant $C_{\ell+1}(\ell+1,n,\Lambda,\epsilon)$, which we will estimate explicitly.  \\

Thus with our covering determined at stage $\ell$, let us apply the covering of \eqref{e:proof_main_quant_strat:1} to each ball $\cur{\B{r^{\ell}_i}{x^\ell_i}}$ in order to obtain a covering
\begin{align}
S^k_{\epsilon,r}\cap B_{r^\ell_i}(x_i^\ell)\subseteq U_{i,r}\cup U_{i,+} = \bigcup_j \B{r}{x^r_{i,j}}\cup  \bigcup_j \B{r_{i,j}}{x_{i,j}}\, ,
\end{align}
such that
\begin{align}\label{e:proof_main_quant_strat:3}
&r^{k-n}\Vol\ton{\B r {U_{i,r}}} + \omega_k\sum_j (r_{i,j})^{k} \leq C(n,\Lambda,\epsilon)\big(r^\ell_i\big)^{k}\, ,\notag\\
&\sup_{y\in B_{r_{i,j}}(x_{i,j})}\theta_{r_{i,j}}(y)\leq \Lambda-(\ell+1)\eta\, .
\end{align}
Let us consider the sets
\begin{align}
&U^{\ell+1}_r \equiv U^\ell_r \cup\bigcup_i U_{i,r}\, ,\notag\\
&U^{\ell+1}_+ \equiv \bigcup_{i,j} B_{r_{i,j}}(x_{i,j})\, .
\end{align}
Notice that the second property of \eqref{e:proof_main_quant_strat:2} holds for $\ell+1$ by the construction, hence we are left analyzing the volume estimate of the first property.  Indeed, for this we combine our inductive hypothesis \eqref{e:proof_main_quant_strat:2} for $U^\ell$ and \eqref{e:proof_main_quant_strat:3} in order to estimate
\begin{align}
r^{k-n}\Vol\ton{\B r {U^{\ell+1}_{r}}} + \omega_k\sum_{i,j} (r_{i,j})^{k} &\leq r^{k-n}\Vol\ton{\B r {U^\ell_{r}}}+\sum_i\Big(r^{k-n}\Vol\ton{\B r {U_{i,r}}} + \omega_k\sum_j (r_{i,j})^{k} \Big)\notag\\
&\leq C_\ell + C\sum_i (r^\ell_i)^k\notag\\
&\leq C(n,\Lambda,\epsilon)\cdot C_\ell(\ell,n,\Lambda,\epsilon)\notag\\
&\equiv C_{\ell+1}\, .
\end{align}

Hence, we have proved that if the claim holds for some $\ell$ then the claim holds for $\ell+1$.  Since we have already shown the claim holds for $\ell=1$, we have therefore proved the claim for all $\ell$. \\

Now we can finish the proof.  Indeed, let us take $\ell = \lceil\eta^{-1}\Lambda\rceil = \ell(\eta,\Lambda)$.  Then if we apply the Claim to such an $\ell$, we must have by the second property of \eqref{e:proof_main_quant_strat:2} that
\begin{align}
U^\ell_+\equiv \emptyset\, ,
\end{align}
and therefore we have a covering
\begin{align}
S^k_{\epsilon,r}\subseteq U^\ell_r = \bigcup_i B_{r}(x_i)\, .
\end{align}
But in this case we have by \eqref{e:proof_main_quant_strat:2} that
\begin{align}
\Vol\ton{\B r {S^k_{\epsilon,r}(I)}}\leq \Vol\ton{ \B r { U^\ell_r}} \leq C(n,\Lambda,\epsilon) r^{n-k}\, ,
\end{align}
which proves the theorem.  \hfill $\square$
\vspace{1cm}

\subsection{Proof of theorem \ref{t:main_eps_stationary}}\label{ss:proof_thm_main_eps_stationary}

There are several pieces to theorem \ref{t:main_eps_stationary}.  To begin with, the volume estimate follows easily now that theorem \ref{t:main_quant_strat_stationary} has been proved.  That is, for each $r>0$ we have that
\begin{align}
S^k_\epsilon(I)\subseteq S^k_{\epsilon,r}(I)\, ,
\end{align}
and therefore we have the volume estimate
\begin{align}
\Vol\ton{\B r {S^k_{\epsilon}(I)}\cap \B 1 p}\leq \Vol\ton{ \B r {S^k_{\epsilon,r}(I)} \cap \B 1 p} \leq C(n,K,H,\Lambda,\epsilon) r^{n-k}\, .
\end{align}
In particular, this implies the much weaker Hausdorff measure estimate
\begin{align}
\lambda^k\ton{S^k_{\epsilon}(I)\cap \B 1 p} \leq C(n,K,H,\Lambda,\epsilon)\, ,
\end{align}
which proves the first part of the theorem.\\

Let us now focus on the rectifiability of $S^k_\epsilon$.  We consider the following claim, which is the $r=0$ version of the main Claim of theorem \ref{t:main_quant_strat_stationary}.  We will be applying lemma \ref{l:covering}, which requires $K+H<\delta$.  As in the proof of theorem \ref{t:main_quant_strat_stationary} we can just assume this without any loss, as we can cover $B_1(p)$ by a controlled number of balls of radius $M^{-1} \delta$, so that after rescaling we can analyze each of these balls with the desired curvature assumption. Thus let us consider the following:\\

{\bf Claim:}  If $K+H<\delta$, then for each $\ell$ there exists a covering $S^{k}_{\epsilon}(I)\subseteq U^\ell_{0}\cup U^\ell_+ =U^\ell_0 \bigcup B_{r^\ell_i}(x^\ell_i)$ such that
\begin{enumerate}
\item  $\lambda^k(U^\ell_0) + \omega_k\sum \big(r^\ell_{i}\big)^{k} \leq C_\ell(\ell,n,K,H,\Lambda,\epsilon)$.
\item $U^\ell_0$ is $k$-rectifiable.
\item $\sup_{y\in B_{r^\ell_i}(x_i^\ell)}\theta_{r^\ell_i}(y)\leq \Lambda-\ell\cdot\eta$
\end{enumerate}
\vspace{.5 cm}

The proof of the Claim follows essentially the same steps as those for the main Claim of theorem \ref{t:main_quant_strat_stationary}.  For base step $\ell=0$, we consider the decomposition $S^k_\epsilon \subseteq U^0_0\cup U^0_+$ where $U^0_0=\emptyset$ and $U^0_+=B_1(p)$.  

Now let us assume we have proved the claim for some $\ell$, then we wish to prove the claim for $\ell+1$.  Thus, let us consider the set $U^\ell_+$ from the previous covering step given by
\begin{align}
U^\ell_+ = \bigcup B_{r^\ell_i}(x^\ell_i)\, .
\end{align}
Now let us apply lemma \ref{l:covering} to each of the balls $B_{r^\ell_i}(x^\ell_i)$ in order to write
\begin{align}
S^k_\epsilon\cap B_{r^\ell_i}(x^\ell_i)\subseteq U_{i,0}\cup U_{i,+} = U_{i,0}\cup \bigcup_{j} B_{r_{i,j}}(x_{i,j})\, ,
\end{align}
with the following properties:
\begin{enumerate}
\item[(a)] $\lambda^k(U_{i,0})+\omega_k\sum_j r_{i,j}^k\leq C(n,K,H,\Lambda,\epsilon,p) (r^\ell_i)^k$,
\item[(b)] $\sup_{y\in B_{r_{i,j}}(x_{i,j})}\theta_{r_{i,j}}(y)\leq \Lambda-(\ell+1)\eta$,
\item[(c)] $U_{i,0}$ is $k$-rectifiable.
\end{enumerate}

Now let us define the sets
\begin{align}
&U^{\ell+1}_0 = \bigcup U_{i,0}\cup U^{\ell}_0\, ,\notag\\
&U^{\ell+1}_+ = \bigcup_{i,j} B_{r_{i,j}}(x_{i,j})\, .
\end{align}

Conditions $(2)$ and $(3)$ from the Claim are clearly satisfied.  We need only check condition $(1)$.  Using $(a)$ and the inductive hypothesis we can estimate that

\begin{align}
\lambda^k(U^{\ell+1}_0) + \omega_k\sum_{i,j} \big(r_{i,j}\big)^{k} &\leq \lambda^k(U^\ell_0)+ \sum_i\Big( \lambda^k(U_{i,0})+\omega_k\sum_j \big(r_{i,j}\big)^k\Big)\, ,\notag\\
&\leq C_\ell+C(n,K,H,\Lambda,\epsilon)\sum_i\big(r^\ell_i\big)^k\notag\\
&\leq C(n,K,H,\Lambda,\epsilon)\cdot C_\ell\notag\\
&\equiv C_{\ell+1}\, .
\end{align}
Thus, we have proved the inductive part of the claim, and thus the claim itself.
\vspace{1cm}

Let us now finish the proof that $S^k_\epsilon(I)$ is rectifiable.  So let us take $\ell = \lceil\eta^{-1}\Lambda\rceil = \ell(\eta,\Lambda)$.  Then if we apply the above Claim to $\ell$, then by the third property of the Claim we must have that
\begin{align}
U^\ell_+\equiv \emptyset\, ,
\end{align}
and therefore we have the covering
\begin{align}
S^k_{\epsilon}\subseteq U^\ell_0\, ,
\end{align}
where $U^\ell_0$ is $k$-rectifiable with the volume estimate $\lambda^k(U^\ell_0)\leq C$, which proves that $S^k_\epsilon$ is itself rectifiable.\\

Finally, we prove that for $k$ a.e. $x\in S^k_\epsilon$ there exists a $k$-dimensional subspace $V_x\subseteq T_xM$ such that {\it every} tangent cone at $x$ is $k$-symmetric with respect to $V_x$.  To see this we proceed as follows.  For each $\eta>0$ let us consider the finite decomposition
\begin{align}
S^k_\epsilon = \bigcup_{\alpha=0}^{\lceil \eta^{-1}\Lambda \rceil} W^{k,\alpha}_{\epsilon,\eta}\, ,
\end{align}
where by definition we have
\begin{align}
W^{k,\alpha}_{\epsilon,\eta}\equiv \big\{x\in S^k_\epsilon: \theta_0(x)\in \big[\alpha\eta,(\alpha+1)\eta\big)\big\}\, .
\end{align}
Note then that each $W^{k,\alpha}_{\epsilon,\eta}$ is $k$-rectifiable, and thus there exists a full measure subset $\tilde W^{k,\alpha}_{\epsilon,\eta}\subseteq W^{k,\alpha}_{\epsilon,\eta}$ such that for each $x\in\tilde W^{k,\alpha}_{\epsilon,\eta}$ the tangent cone of $W^{k,\alpha}_{\epsilon,\eta}$ exists and is a subspace $V_x\subseteq T_xM$.

Now let us consider such an $x\in \tilde W^{k,\alpha}_{\epsilon,\eta}$, and let $V^k_x$ be the tangent cone of $W^{k,\alpha}_{\epsilon,\eta}$ at $x$.  For all $r<<1$ sufficiently small we of course have $|\theta_r(x)-\theta_0(x)|<\eta$.  Thus, by the monotonicity and continuity of $\theta$ we have for all $r<<1$ sufficiently small and all $y\in W^{k,\alpha}_{\epsilon,\eta}\cap B_r(x)$ that $|\theta_r(y)-\theta_0(y)|<2\eta$.  In particular, by theorem \ref{t:quantitative_0_symmetry} we have for each $y\in W^{k,\alpha}_{\epsilon,\eta}\cap B_r(x)$ that $B_r(y)$ is $(0,\delta_\eta)$-symmetric, with $\delta_\eta\to 0$ as $\eta\to 0$.  Now let us recall the cone splitting of theorem \ref{t:con_splitting}.  Since the tangent cone at $x$ is $V^k_x$, for all $r$ sufficiently small we can find $k+1$ points $x_0,\ldots,x_k\in B_r(x)\cap W^{k,\alpha}_{\epsilon,\eta}$ which are $10^{-1}r$-independent, see Definition \ref{d:independent_points}, and for which $B_{2r}(x_j)$ are $(0,\delta_\eta)$-symmetric.  Thus, by the cone splitting of theorem \ref{t:con_splitting} we have that $B_r(x)$ is $(k,\delta_\eta)$-symmetric with respect to $V^k_x$ for all $r$ sufficiently small, where $\delta_\eta\to 0$ as $\eta\to 0$.  In particular, every tangent cone at $x$ is $(k,\delta_\eta)$-symmetric with respect to $V_x$, where $\delta_\eta\to 0$ as $\eta\to 0$.\\

Now let us consider the sets
\begin{align}
\tilde W^k_{\epsilon,\eta} \equiv \bigcup_\alpha \tilde W^{k,\alpha}_{\epsilon,\eta}\, .
\end{align}
So $\tilde W^k_{\epsilon,\eta}\subseteq S^k_\epsilon$ is a subset of full $k$-dimensional measure, and for every point $x\in \tilde W^k_{\epsilon,\eta}$ we have seen that {\it every} tangent cone of is $(k,\delta_\eta)$-symmetric with respect to some $V_x\subseteq T_xM$, where $\delta_\eta\to 0$ as $\eta\to 0$.  Finally let us define the set
\begin{align}
\tilde S^k_{\epsilon} \equiv \bigcap_j \tilde W^k_{\epsilon,j^{-1}}\, . 
\end{align}
This is a countable intersection of full measure sets, and thus $\tilde S^k_{\epsilon}\subseteq S^k_{\epsilon}$ is a full measure subset.  Further, we have for each $x\in \tilde S^k_{\epsilon}$ that {\it every} tangent cone must be $(k,\delta)$-symmetric with respect to some $V_x$, for all $\delta>0$.  In particular, every tangent cone at $x$ must be $(k,0)=k$-symmetric with respect to some $V_x$.  This finishes the proof of the theorem.  \hfill $\square$
\vspace{1cm}

\subsection{Proof of theorem \ref{t:main_stationary}}\label{ss:proof_thm_main_stationary}

Let us begin by observing the equality
\begin{align}\label{e:proof_thm_main_stationary:1}
S^k(I) = \bigcup_{\epsilon>0} S^k_{\epsilon}(I) = \bigcup_{\beta\in \dN} S^k_{2^{-\beta}}(I)\, .
\end{align}
Indeed, if $x\in S^k_\epsilon(I)$, then no tangent cone at $x$ can be $(k+1,\epsilon/2)$-symmetric, and in particular $k+1$-symmetric, and thus $x\in S^k(I)$.  This shows that $S^k_\epsilon(I)\subseteq S^k(I)$.  On the other hand, if $x\in S^k(I)$ then we claim there is some $\epsilon>0$ for which $x\in S^k_\epsilon(I)$.  Indeed, if this is not the case, then there exists $\epsilon_i\to 0$ and $r_i>0$ such that $B_{r_i}(x)$ is $(k+1,\epsilon_i)$-symmetric.  If $r_i\to 0$ then we can pass to a subsequence to find a tangent cone which is $k+1$-symmetric, which is a contradiction.  On the other hand, if $r_i>r>0$ then we see that $B_r(x)$ is itself $k+1$-symmetric, and in particular {\it every} tangent cone at $x$ is $k+1$-symmetric.  In either case we obtain a contradiction, and thus $x\in S^k_\epsilon(I)$ for some $\epsilon>0$.  Therefore we have proved \eqref{e:proof_thm_main_stationary:1}.\\

As a consequence, $S^k(I)$ is a countable union of $k$-rectifiable sets, and therefore is itself $k$-rectifiable.  On the other hand, theorem \ref{t:main_eps_stationary} tells us that for each $\beta\in\dN$ there exists a set $\tilde S^k_{2^{-\beta}}(I)\subseteq S^k_{2^{-\beta}}(I)$ of full measure such that
\begin{align}
\tilde S^k_{2^{-\beta}}\subseteq \big\{x: \, \exists\, V^k\subseteq T_xM\text{ s.t. every tangent cone at $x$ is $k$-symmetric wrt $V$}\big\}\, .
\end{align}
Hence, let us define 
\begin{align}
\tilde S^k(I)\equiv \bigcup \tilde S^k_{2^{-\beta}}(I)\, .
\end{align}
Then we still have that $\tilde S^k(I)$ has $k$-full measure in $S^k(I)$, and if $x\in \tilde S^k(I)$ then for some $\beta$ we have that $x\in \tilde S^k_{2^{-\beta}}$, which proves that there exists a subspace $V\subseteq T_xM$ such that every tangent cone at $x$ is $k$-symmetric with respect to $V$.  We have finished the proof of the theorem.  \hfill $\square$

\vspace{1cm}

\section{Proof of Main theorem's for Minimizing Hypersurfaces}\label{s:main_theorem_minimizing_proofs}

In this section we prove the main theorems of the paper concerning minimizing hypersurfaces.  That is, we finish the proofs of theorem \ref{t:main_min_finite_measure} and theorem \ref{t:main_min_weak_L7}. In fact, the proofs of these two results are almost identical, though the first relies on theorem \ref{t:main_eps_stationary} and the later on theorem \ref{t:main_quant_strat_stationary}.  However, for completeness sake we will include the details of both.\\

\subsection{Proof of theorem \ref{t:main_min_finite_measure}}

We wish to understand better the size of the singular set $\Sing\ton{I^{n-1}}$, where $I^{n-1}\subset M^n$, of a minimizing hypersurface.  Let us recall that the $\epsilon$-regularity of theorem \ref{t:eps_reg} tells us that if $I$ is minimizing, then there exists $\epsilon(n,K,H,\Lambda)>0$ 
with the property that if $x\in B_1(p)$ and $0<r<r(n,K,H,\Lambda)$ is such that $B_{2r}(x)$ is $(n-7,\epsilon)$-symmetric, then $r_I(x)\geq r$.  In particular, $x$ is a smooth point, and we have for $\epsilon(n,K,H,\Lambda)>0$ that
\begin{align}
\text{Sing}(I)\cap B_1(p)\subseteq S^{n-8}_\epsilon(I)\, .
\end{align}
Thus by theorem \ref{t:main_eps_stationary} there exists $C(n,K,H,\Lambda)>0$ such that for each $0<r<1$ we have
\begin{align}
\Vol\ton{\B r {\text{Sing}(I)}\cap B_1(p)}\leq \Vol\ton{\B r{S^{n-8}_\epsilon(I)} \cap B_1(p) }\leq C r^8\, .
\end{align}
This of course immediately implies, though of course is much stronger than, the Hausdorff measure estimate
\begin{align}
\lambda^{n-8}\big(\text{Sing}(I)\cap B_1(p)\big)\leq C\, ,
\end{align}
which finishes the proof of the first estimate in \eqref{e:main_min_finite_measure}. As a simple corollary of this and the uniform bound $\theta(x,r)\leq c\Lambda$ for all $x\in \B 1 p$ and $r<1$, we obtain also the second estimate in \eqref{e:main_min_finite_measure}. \hfill $\square$
\vspace{1cm}

\subsection{Proof of theorem \ref{t:main_min_weak_L7}}

We begin again by considering the $\epsilon$-regularity of theorem \ref{t:eps_reg}.  This tells us that if $I$ is a minimizing hypersurface, then there exists $\epsilon(n,K,H,\Lambda)>0$ 
with the property that if $x\in B_1(p)$ and $0<r<r(n,K,H,\Lambda)$ are such that $B_{2r}(x)$ is $(n-7,\epsilon)$-symmetric, then $r_I(x)\geq r$.  In particular, we have for such $\epsilon,r$ that
\begin{align}
\{x\in B_1(p): r_I(x)<r\}\subseteq S^{n-8}_{\epsilon,r}(I)\, .
\end{align}
Thus by theorem \ref{t:main_eps_stationary} there exists $C(n,K,H,\Lambda)>0$ such that for each $0<r<1$ we have
\begin{align}
\Vol\big(B_r\{x\in B_1(p): r_I(x)<r\}\big)\leq \Vol\ton{\B r{S^{n-8}_{\epsilon,r}(I)} \cap B_1(p) }\leq C r^8\, ,
\end{align}
which proves the second estimate of \eqref{e:main_min_weak_L7:1}.  To prove the first we observe that $|A|(x)\leq r_I(x)^{-1}$, while the third is a corollary of the bound $\theta(x,r)\leq c \Lambda$ for all $x\in \B 1 p$ and $r<1$. This concludes the proof of the theorem. \hfill $\square$

\section*{Acknowledgments}
We would like to thank the referees for their very precise comments on earlier versions of this article.

\bibliographystyle{aomalpha}
\bibliography{qstrat}

\end{document}